\newcommand{\chapter}{\part}
\definecolor{link}{RGB}{11,0,128}
\def\cond{\mathrm{cond}}
\def\iHom{\underline{\mathrm{Hom}}}
\def\sol{\blacksquare}
\def\soten{\tensor^{\sol}}
\newcommand{\bA}{\mathbb{A}}
\newcommand{\bC}{\mathbb{C}}
\newcommand{\bD}{\mathbb{D}}
\newcommand{\bN}{\mathbb{N}}
\newcommand{\bZ}{\mathbb{Z}}
\newcommand{\bperf}{\pmb{\mathrm{Perf}}}
\newcommand{\bnuc}{\pmb{Nuc}}
\newcommand{\cA}{\mathcal{A}}
\newcommand{\cB}{\mathcal{B}}
\newcommand{\cC}{\mathcal{C}}
\newcommand{\cD}{\mathcal{D}}
\newcommand{\cE}{\mathcal{E}}
\newcommand{\cF}{\mathcal{F}}
\newcommand{\cG}{\mathcal{G}}
\newcommand{\cH}{\mathcal{H}}
\newcommand{\cK}{\mathcal{K}}
\newcommand{\cM}{\mathcal{M}}
\newcommand{\cO}{\mathcal{O}}
\newcommand{\cP}{\mathcal{P}}
\newcommand{\cR}{\mathcal{R}}
\newcommand{\cS}{\mathcal{S}}
\newcommand{\cT}{\mathcal{T}}
\newcommand{\cW}{\mathcal{W}}
\newcommand{\cX}{\mathcal{X}}
\newcommand{\cY}{\mathcal{Y}}
\newcommand{\cZ}{\mathcal{Z}}
\DeclareSymbolFontAlphabet{\mathbb}{AMSb}
\DeclareSymbolFontAlphabet{\mathbbl}{bbold}
\DeclareMathOperator{\Affine}{\mathrm{Affine}}
\DeclareMathOperator{\Ani}{Ani}		
\DeclareMathOperator{\Hom}{Hom}		
\DeclareMathOperator{\rqcoh}{\mathrm{Qcoh}}
\DeclareMathOperator{\AnRing}{AnRing}
\DeclareMathOperator{\AffRing}{AffRing}
\DeclareMathOperator{\AniAlg}{AniAlg}
\DeclareMathOperator{\extdis}{Extdis}
\DeclareMathOperator{\Aff}{Aff}
\DeclareMathOperator{\Anspec}{AnSpec}
\DeclareSymbolFont{cyrletters}{OT2}{wncyr}{m}{n}
\DeclareMathSymbol{\Sha}{\mathalpha}{cyrletters}{"58}	
\DeclareMathOperator{\Sp}{Sp}		
\DeclareMathOperator{\Spa}{Spa}		
\DeclareMathOperator{\Spec}{Spec}
\DeclareMathOperator{\colim}{\mathop{colim}}
\DeclareMathOperator{\overdisc}{\mathbb{D}^{\dagger}}
\DeclareMathOperator{\closedisc}{\Bar{\mathbb{D}}}
\DeclareMathOperator{\disc}{\mathbb{D}}
\DeclareMathOperator{\rind}{\mathrm{Ind}}
\DeclareMathOperator{\rmod}{\mathrm{Mod}}
\DeclareMathOperator{\rfun}{\mathrm{Fun}}
\DeclareMathOperator{\rperf}{\mathrm{Perf}}
\newcommand{\alg}{\mathrm{alg}}		
\newcommand{\et}{\mathrm{\acute{e}t}}	
\newcommand{\isomto}{\overset{\sim}{\longrightarrow}}
\newcommand{\op}{{\mathrm{op}}}			
\newcommand{\rmap}{\mathrm{Map}}    
\newcommand{\rsh}{\mathrm{Sh}}
\newcommand{\rpsh}{\mathrm{PSh}}
\newcommand{\surjects}{\twoheadrightarrow}
\newcommand{\tensor}{\otimes} 			
\newcommand{\csub}[1]{\centering\subsection{\text{#1} \nopunct} \hfill \justify}
\providecommand{\up}[1]{{\upshape(}#1{\upshape)}}
\renewcommand{\b}{\textbf}
\newcommand{\brems}{\begin{rems} \hfill \begin{enumerate}[label=\b{\thenumberingbase.},ref=\thenumberingbase]}
\newcommand{\erems}{\end{enumerate} \end{rems}}
\newcommand{\begs}{\begin{egs} \hfill \begin{enumerate}[label=\b{\thenumberingbase.},ref=\thenumberingbase]}
\newcommand{\eegs}{\end{enumerate} \end{egs}}
\newcommand{\bsm}{\begin{smallmatrix}}
\newcommand{\esm}{\end{smallmatrix}}
\newcommand{\blem}{\begin{lemma}}
\newcommand{\elem}{\end{lemma}}
\newcommand{\bconj}{\begin{conj}}
\newcommand{\econj}{\end{conj}}
\newcommand{\bprob}{\begin{Problem}}
\newcommand{\eprob}{\end{Problem}}
\newcommand{\bq}{\begin{Q}}
\newcommand{\eq}{\end{Q}}
\newcommand{\benum}{\begin{enumerate}[label={{\upshape(\alph*)}}]}
\newcommand{\benuma}{\begin{enumerate}[label={{\upshape(\arabic*)}}]}
\newcommand{\benumb}{\begin{enumerate}[label={{\upshape\b{\arabic*.}}}]}
\newcommand{\benumr}{\begin{enumerate}[label={{\upshape(\roman*)}}]}
\newcommand{\eenum}{\end{enumerate}}
\newcommand{\bitem}{\begin{itemize}}
\newcommand{\eitem}{\end{itemize}}
\newcommand{\bc}{}
\newcommand{\bd}{\begin{defn}}
\newcommand{\ed}{\end{defn}}
\newcommand{\beg}{\begin{eg}}
\newcommand{\eeg}{\end{eg}}
\newcommand{\bcl}{\begin{claim}}
\newcommand{\ecl}{\end{claim}}
\newcommand{\ba}{\begin{aligned}}
\newcommand{\ea}{\end{aligned}}
\newcommand{\be}{\begin{equation}}
\newcommand{\ee}{\end{equation}}
\newcommand{\bpf}{\begin{proof}}
\newcommand{\epf}{\end{proof}}
\newcommand{\bthm}{\begin{thm}}
\newcommand{\ethm}{\end{thm}}
\newcommand{\bprop}{\begin{prop}}
\newcommand{\eprop}{\end{prop}}
\newcommand{\bcor}{\begin{cor}}
\newcommand{\ecor}{\end{cor}}
\newcommand{\brem}{\begin{rem}}
\newcommand{\erem}{\end{rem}}
\newaliascnt{numberingbase}{subsubsection}
\numberwithin{equation}{numberingbase}
\newtheoremstyle{thms}{0.5em}{0.5em}{\itshape}{}{\bfseries}{.}{ }{}
\theoremstyle{thms}
\newtheorem{conj}[numberingbase]{Conjecture}
\newtheorem{cor}[numberingbase]{Corollary}
\newtheorem{lemma}[numberingbase]{Lemma}
\newtheorem{prop}[numberingbase]{Proposition}
\newtheorem{Q}[numberingbase]{Question}
\newtheorem{thm}[numberingbase]{Theorem}
\newtheorem{theorem}[numberingbase]{Theorem}
\newtheorem{defprop}[numberingbase]{Definition/Proposition}
\newtheoremstyle{claims}{0.5em}{0.5em}{}{}{\itshape}{.}{ }{}
\theoremstyle{claims}
\newtheorem{claim}[equation]{Claim}
\newtheoremstyle{defs}{0.5em}{0.5em}{}{}{\bfseries}{.}{ }{}
\theoremstyle{defs}
\newtheorem{defn}[numberingbase]{Definition}
\newtheorem{definition}[numberingbase]{Definition}
\newtheorem{Construction}[numberingbase]{Construction}
\newtheorem{propcons}[numberingbase]{Proposition/Construction}
\newtheorem{defcons}[numberingbase]{Definition/Construction}
\newtheorem{eg}[numberingbase]{Example}
\newtheorem*{egs}{Examples}
\newtheorem{rem}[numberingbase]{Remark}
\newtheorem*{rems}{Remarks}
\newtheorem*{remw}{Remark}
\Crefname{claim}{Claim}{Claims}
\Crefname{bclaim}{Claim}{Claims}
\Crefname{sublemma}{Lemma}{Lemmas}
\Crefname{conj}{Conjecture}{Conjectures}
\Crefname{cor}{Corollary}{Corollaries}
\Crefname{defn}{Definition}{Definitions}
\Crefname{eg}{Example}{Examples}
\Crefname{prop}{Proposition}{Propositions} 
\Crefname{Q}{Question}{Questions}
\Crefname{rem}{Remark}{Remarks}
\Crefname{thm}{Theorem}{Theorems}
\Crefname{Theorem}{Theorem}{Theorems}
\Crefname{variant}{Variant}{Variants}
\Crefname{caution}{Caution}{Cautions}
\Crefname{thmenumi}{Theorem}{Theorems}
\setlist[enumerate,1]{label={\textit{(\arabic*)}},ref={\thethm.(\arabic*)}}}
\Crefname{propenumi}{Proposition}{Proposition}
\setlist[enumerate,1]{label={\textit{(\arabic*)}},ref={\theprop.(\arabic*)}}}
\theoremstyle{thms}
\newtheorem{thm-tweak}[subsection]{Theorem}
\Crefname{thm-tweak}{Theorem}{Theorems}
\newtheorem{lemma-tweak}[subsection]{Lemma}
\Crefname{lemma-tweak}{Lemma}{Lemmas}
\newtheorem{cor-tweak}[subsection]{Corollary}
\Crefname{cor-tweak}{Corollary}{Corollaries}
\newtheorem{prop-tweak}[subsection]{Proposition}
\Crefname{prop-tweak}{Proposition}{Propositions} 
\newtheorem{conj-tweak}[subsection]{Conjecture}
\Crefname{conj-tweak}{Conjecture}{Conjectures} 
\newtheorem{q-tweak}[subsection]{Question}
\Crefname{q-tweak}{Question}{Questions} 
\theoremstyle{defs}
\newtheorem{defn-tweak}[subsection]{Definition}
\Crefname{defn-tweak}{Definition}{Definitions}
\newtheorem{eg-tweak}[subsection]{Example}
\Crefname{eg-tweak}{Example}{Examples}
\newtheorem*{rems-tweak}{Remarks}
\newtheorem{rem-tweak}[subsection]{Remark}
\Crefname{rem-tweak}{Remark}{Remarks}
\newtheoremstyle{subsection-tweak}
   {2pt}
   {3pt}%
   {}
   {}%
   {\bfseries}
   {}%
   {.5em}
   {\thmnumber{\@{#1}{}\@{#2}.}%
    \thmnote{~{\bfseries#3.}}}    
\theoremstyle{subsection-tweak}
\newtheorem{pp}[numberingbase]{}
\newcommand{\bpp}{\begin{pp}}
\newcommand{\epp}{\end{pp}}
\theoremstyle{subsection-tweak}
\newtheorem{pp-tweak}[subsection]{}
\def\@tocline#1#2#3#4#5#6#7{
    \begingroup 
    \@ifempty{#4}{}{}

    \parindent\z@ \leftskip#3\relax \advance\leftskip\@tempdima\relax
    #5\hskip-\@tempdima
      \ifcase #1
       \or\or \hskip 2em \or \hskip 1em \else \hskip 3em \fi%
      #6\nobreak\relax
    \dotfill\hbox to\@pnumwidth{\@tocpagenum{#7}}\par
    \nobreak
    \endgroup
 }
 \def\l@section{\@tocline{1}{0pt}{1pc}{}{}}
\renewcommand{\tocsection}[3]{%
  \indentlabel{\@ifnotempty{#2}{\makebox[1.3em][l]{%
    \ignorespaces#1 \bfseries{#2}.\hfill}}}\bfseries{#3}
    \vspace{-5pt}}
\renewcommand{\tocsubsection}[3]{%
  \indentlabel{\@ifnotempty{#2}{\hspace*{-0.5em}\makebox[2.1em][l]{%
    \ignorespaces#1#2.\hfill}}}#3
    \vspace{-5pt}}
\newcounter{hiddenss}[subsection]
\renewcommand{\thehiddenss}{\thesubsection.\arabic{hiddenss}}
\newcommand{\hiddensubsubsection}[1]{%
  \refstepcounter{hiddenss}
  \par\vspace{1.5ex}
  \noindent\textbf{\thehiddenss\quad #1}\par
  \vspace{0.5ex}
}
\newcommand\appendix@section[1]{%
  \refstepcounter{section}%
  \orig@section*{Appendix \@Alph\c@section. #1}%
}
\let\orig@section\section
\g@addto@macro\appendix{\let\section\appendix@section}
\def\l@subsubsection{\@tocline{3}{1.8em}{3.2em}{\small}{}}
\title{The relative GAGA Theorem and an application to the analytic mapping stacks}
\author{Qixiang Wang}
\address{Universit\'{e} Paris-Saclay,   Laboratoire de math\'{e}matiques d'Orsay, F-91405, Orsay, France}
\email{qixiang.wang@universite-paris-saclay.fr}
\date{September 6, 2025}
\begin{document}

\maketitle
\hypersetup{
    linktoc=page,     
}

\renewcommand*\contentsname{}
\begin{abstract}
We prove a relative GAGA theorem for perfect and pseudo-coherent complexes in non-archimedean analytic geometry, allowing bases given by Fredholm analytic rings, including those associated from affinoid perfectoid spaces. This answers a question raised in \cite{heuer2024padicnonabelianhodgetheory}. As an application, we show that for a proper scheme \(X\) and an Artin stack \(Y\) with suitable conditions, the analytification of the algebraic mapping stack \(\mathrm{Map}(X,Y)\) agrees with the intrinsic analytic mapping stack \(\mathrm{Map}(X^{\mathrm{an}},Y^{\mathrm{an}})\).
\end{abstract}

\tableofcontents
\newpage

\section{Introduction}
\csub{Analytification of mapping stacks}

In algebraic geometry, given two stacks $X$, $Y$, one may define the \textit{mapping stack} $\underline{\mathrm{Map}}(X,Y)$ as $\underline{\mathrm{Map}}(X,Y) (A)= \mathrm{Map}(X\times \Spec A, Y)$ for any affine scheme $\Spec A$. This simple construction in fact encodes many moduli problems (e.g., Picard stack and moduli stack of vector bundles). 

In rigid analytic geometry, one attempts to do the same, where the relevant notion of ‘‘analytic stacks'' in the rigid analytic setting was introduced in \cite{camargo2024analyticrhamstackrigid} and \cite{anschütz2025analyticrhamstacksfarguesfontaine}. More precisely, fixing a non-archimedean field $K$, given two \textit{Gelfand stacks} (see precise definition in \Cref{Gelfand stack}) $\cX$, $\cY$ over $K$, one can form the \textit{mapping stack} $\underline{\mathrm{Map}}(\cX,\cY)$ simply as $\underline{\mathrm{Map}}(\cX,\cY)(\cA)=\mathrm{Map}(X\times_{\mathrm{GSpec} K} \mathrm{GSpec}\cA,\cY)$ for any affinoid test object $\mathrm{GSpec} \cA$ over $\Spa K$. However when $\cX$ and $\cY$ are the \textit{analytifications} \up{see \Cref{analytification def}} of algebraic stacks $X$ and $Y$ over $K$, there is a potential alternative natural definition of mapping stacks between them, that is, $\underline{\mathrm{Map}}(X,Y)^{\mathrm{an}}$ the \textit{analytification} of $\underline{\mathrm{Map}}(X,Y)$. In many cases, the latter is easier to control, by applying representability of algebraic stacks.

In this paper, we show that in many cases (e.g., $X$ is proper and $Y$ is a quotient stack), these two definitions are the same:
\begin{thm}[\Cref{analytification}]\label{intro main}
    Let $X$ be a proper scheme over $K$, and let $Y$ be a \textit{perfectly Tannakian} \textit{Artin stack} of \textit{geometric nature} \up{\textup{\Cref{perfectly tannakian}}}. Suppose that $\underline{\rmap} (X, Y)$ is an \textit{Artin stack}. Then we have\upshape:
    \[
    \underline{\rmap} (X^{\mathrm{an}}, Y^{\mathrm{an}}) \simeq \underline{\rmap} (X, Y)^{\mathrm{an}}.
    \]
\end{thm}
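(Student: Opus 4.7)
The plan is to reduce the claimed equivalence to a comparison of symmetric monoidal exact functors between categories of perfect complexes via the Tannakian formalism, and then invoke the relative GAGA theorem proved earlier in the paper. Since equivalences of analytic stacks can be tested on affinoid Fredholm analytic rings $\cA$, it suffices to produce a natural equivalence
\[
\underline{\rmap}(X, Y)^{\mathrm{an}}(\cA) \simeq \underline{\rmap}(X^{\mathrm{an}}, Y^{\mathrm{an}})(\cA)
\]
for every such $\cA$, functorially in $\cA$.

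I would first unfold the right-hand side. The perfectly Tannakian property of $Y$, together with the hypothesis that $Y$ is \emph{of geometric nature} (which should guarantee that $Y^{\mathrm{an}}$ inherits a perfectly Tannakian description in the analytic category and that pullback along $Y^{\mathrm{an}} \to Y$ identifies $\rperf(Y) \simeq \rperf(Y^{\mathrm{an}})$ symmetric monoidally), yields
\[
\underline{\rmap}(X^{\mathrm{an}}, Y^{\mathrm{an}})(\cA) \simeq \mathrm{Fun}^{\otimes,\mathrm{ex}}\!\bigl(\rperf(Y),\ \rperf(X^{\mathrm{an}} \times_K \mathrm{GSpec}\,\cA)\bigr).
\]
Next, I would unpack the left-hand side through the universal property of analytification applied to the Artin stack $\underline{\rmap}(X, Y)$. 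On algebraic test objects, the perfectly Tannakian property gives $\underline{\rmap}(X, Y)(A) \simeq \mathrm{Fun}^{\otimes,\mathrm{ex}}(\rperf(Y), \rperf(X \times_K \mathrm{Spec}\,A))$. Choosing a smooth presentation of $\underline{\rmap}(X, Y)$ by affines, analytifying term by term, and invoking the relative GAGA theorem of the paper, which furnishes a symmetric monoidal equivalence
\[
\rperf(X \times_K \mathrm{Spec}\,A) \simeq \rperf(X^{\mathrm{an}} \times_K \mathrm{GSpec}\,\cA)
\]
for suitable pairs $(A,\cA)$, one obtains
\[
\underline{\rmap}(X, Y)^{\mathrm{an}}(\cA) \simeq \mathrm{Fun}^{\otimes,\mathrm{ex}}\!\bigl(\rperf(Y),\ \rperf(X^{\mathrm{an}} \times_K \mathrm{GSpec}\,\cA)\bigr),
\]
which matches the description of the right-hand side.

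The hardest part will be the second step: making rigorous the interaction between the analytification of the Artin stack $\underline{\rmap}(X, Y)$ (which is built by analytifying and stackifying a smooth affine presentation) and its Tannakian description on algebraic test objects. One needs to check that analytification commutes with the relevant groupoid quotient in a way compatible with the Tannakian identifications, and that the symmetric monoidal GAGA equivalence of perfect complexes extends naturally across the presentation and behaves well under pullback and base change along $\mathrm{GSpec}\,\cA$. The ``of geometric nature'' hypothesis on $Y$ should be exactly what is required to guarantee these compatibilities. A secondary technical point is verifying that Fredholm analytic rings, the setting in which the relative GAGA theorem is proved, form a sufficiently rich class of test objects to detect equivalence of analytic stacks in the topology under consideration.
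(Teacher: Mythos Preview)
Your overall strategy—Tannaka duality plus relative GAGA—matches the paper's, but there is a genuine gap: you treat the Tannakian maps as \emph{equivalences} onto all of $\mathrm{Fun}^{\otimes,\mathrm{ex}}$, when in fact they are only fully faithful embeddings with a constrained essential image. The perfectly Tannakian hypothesis says that $\rmap(-,Y)\hookrightarrow\mathrm{Fun}^{\otimes}(\rqcoh(Y),\rqcoh(-))$ is fully faithful with essential image those colimit-preserving symmetric monoidal functors that \emph{preserve connective pseudo-coherent objects}; the analytic analogue likewise gives only full faithfulness. So your displayed equivalences for both sides are not correct as stated, and the substance of the proof lies in matching essential images inside the common target—something your sketch does not address.

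The paper's argument runs as follows. Since $\underline{\rmap}(X,Y)$ is Artin, its analytification agrees with the ``simple analytification'' $(-)^{\mathrm{S\text{-}an}}$ (the sheafification of $R\mapsto\underline{\rmap}(X,Y)(R(*))$), which gives a concrete functor-of-points description of the left-hand side. Both $\rmap(X\times\Spec A(*),Y)$ and $\rmap(X^{\mathrm{an}}\times\mathrm{GSpec}\,A,Y^{\mathrm{an}})$ then embed fully faithfully into $\mathrm{Fun}^{\otimes}(\rperf(Y),\rperf(X\times\Spec A(*)))$, the first by algebraic Tannaka and the second by an analytic Tannaka lemma combined with relative GAGA; this yields full faithfulness of the comparison map $\delta$ immediately. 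The hard remaining step is essential surjectivity: one must show that for any $g\colon X^{\mathrm{an}}\times\mathrm{GSpec}\,A\to Y^{\mathrm{an}}$, the induced $g^{*}\colon\rqcoh(Y)\to\rqcoh(X\times\Spec A(*))$ preserves connective pseudo-coherent objects. \emph{This} is where ``geometric nature'' actually enters: \'etale-locally on $|Y|$ one has $Y\simeq[Z/G]$, so after a suitable $!$-cover the $G$-torsor corresponding to $g$ trivializes and the claim reduces to a map from an affinoid to a scheme, where it is straightforward; one then descends using that the relevant \'etale covers are Pcoh-detectable. Your guess that geometric nature furnishes an analytic Tannaka duality for $Y^{\mathrm{an}}$ or an identification $\rperf(Y)\simeq\rperf(Y^{\mathrm{an}})$ is not what is used, and no such statement is proved.
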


    One current unsatisfying point of the state of the art in the theory of analytic stacks is that it lacks the notion of a ‘‘geometric stack'' (i.e., an Artin stack), More precisely, even though abstractly one can define it, it lacks an abstract criterion to show that some stack $Z$ admit a ``smooth affinoid chart'': a $!$-surjection $\pi: \underset{i}{\bigcup} Z_{i}=\Spa \cA_i \surjects Z$ such that $\pi$ is representable (or $!$-locally representable) and ``\textit{solid smooth}'' (\cite{camargo2024analyticrhamstackrigid}*{Definition 3.5.5}).
    
    The main motivation of this paper is to prove ‘‘geometricity'' of certain very basic analytic stacks, e.g., those mapping stacks stated in the \Cref{intro main}. Indeed, in algebraic geometry, any mapping stack of proper schemes $X$ of finite Tor-dimension (sometimes stacks even) towards an Artin stack $Y$ is a geometric stack (by the presence of cotangent complex, see \cite{Adeel}*{Theorem 10.6}). 
    As an easy consequence of \Cref{intro main}, the (analytic) moduli stack of $G$-bundles on smooth proper rigid analytic curves are “Artin'' in the sense above (posteriorly, the analytic cotangent complex descends along with this cover \cite{camargo2024analyticrhamstackrigid}*{Theorem 3.5.6}).

The strategy we are going to use to prove \Cref{intro main} is to apply Tannaka duality (originated from \cite{Dag3}). Given any test affine object $\Anspec (\cA) $, by definition, on the right hand side we have that 
$$ \underline{\rmap}(X^{\mathrm{an}}, Y^{\mathrm{an}})(\cA)\simeq\mathrm{Map}(X^{\mathrm{an}}\times \Anspec \cA, Y^{\mathrm{an}}).$$
And on the left hand side (under some conditions), it is very close to 
$$\mathrm{Map}(X\times \Spec \underline{\cA}(*),Y).$$
 Via the pullback functor on ‘‘quasi-coherent sheaves'', both sides admit maps (in $\Ani$) as follows
 $$\mathrm{Map}(X^{\mathrm{an}}\times \Anspec \cA, Y^{\mathrm{an}})\rightarrow \mathrm{Fun}^{L}(\rind(\rperf(Y)), \rind(\bperf(X^{\mathrm{an}}\times \Anspec \cA)))$$
 $$\mathrm{Map}(X\times \Spec \underline{\cA}(*),Y)\rightarrow \mathrm{Fun}^{L}(\rind(\rperf(Y),\rind(\rperf(X\times \Spec \underline{\cA}(*)))).$$
 Here, $\bperf(-)$ stand for the $\infty$-(sub)category consisting of dualizable objects.
 Later on we will show that both functors are fully faithful. So it is then natural that, in order to compare two mapping space, we first identify $\bperf(X^{\mathrm{an}}\times\Anspec \cA)$ and $\rperf(X\times \Spec \underline{\cA}(*))$, which requires a generalized version of relative GAGA result. For this reason, we will restrict our test objects to those analytic ring $\cA$ are \textit{Fredholm bounded affinoid algebras}. 
 
 Here the notion of bounded affinoid algebra was defined in \cite{camargo2024analyticrhamstackrigid}*{Definition 2.6.10}, and we will briefly review the relevant theory (\Cref{non-arch ana stack}), one should think it as the associated norm function $|f|:\Anspec \cA\rightarrow [0,\infty]$ of every element in $\underline{\cA}(*)$ is a bounded function  (with image in $[0,\infty)$). Typical examples are analytic rings associated to complete analytic Huber pairs.
 
\csub{Relative GAGA theorem}
    In \cite{heuer2024padicnonabelianhodgetheory}*{Remark 8.1.2} there is a question posed\upshape: for a proper scheme $X$ over $K$ a complete $p$-adic field, if it is true that for any $S=\Spa (A,A^+)$ perfectoid space over $K$, we have that every vector bundle on $X^{\mathrm{an}}\times_{\Spa K} S$ is \textit{algebraic}, that is, coming from a vector bundle on $X\times_{\Spec K} \Spec A$ via pullback functor. 
    
    This will be a special case of the relative GAGA theorem \cite{MR422671} if $S$ is a \textit{Tate} affinoid space in the sense of classical rigid geometry instead of a perfectoid space. Indeed, relative GAGA theorem relative to such (with non-finite type nature) objects are missing in the literature.
\begin{definition}
    An analytic ring $\cA$ is called $Fredholm$ if all the dualizable objects in $D(\cA)$ are discrete. Equivalently, $\cA$ is \textit{Fredholm} if $\bperf(\cA)\simeq\mathrm{Perf}(\underline{\cA}(*))$. 
\end{definition}

    A ``non-classical'' example of \textit{Fredholm bounded affinoid algebra} is the analytic ring associated to a perfectoid space $S=\Spa (A,A^+)$.

 The following theorem answers (a vast generalization of) the question in \cite{heuer2024padicnonabelianhodgetheory}*{Remark 8.1.2}.
\begin{thm}[\Cref{main}] \label{thm1}
    Let $K$ be a nonarchimedean field, $X/K$ a proper scheme, and $\mathcal{A}/K$ a Fredholm bounded affinoid algebra. Denote by $X^{\mathrm{an}}$ the analytification \up{as \textit{Tate stack} , see \Cref{analytification def}} of $X$ with respect to $K$. Then the pullback functor induces an equivalence of categories\upshape:
    \[
    \rperf(X \times_{\Spec{K}} \Spec{\underline{\cA}(*)}) \isomto \bperf(X^{\mathrm{an}} \times_{\Spa (K,K^+)} \Spa\mathcal{A}).
    \]
    Analogous statement for Gelfand rings also holds. 
\end{thm}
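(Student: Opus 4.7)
The plan is to prove the equivalence in two stages: fully faithfulness by a derived $\Hom$ comparison via proper pushforward, then essential surjectivity by reduction to projective space followed by an analytic Serre-type argument. The Fredholm hypothesis provides the key bridge: since $\bperf(\cA) \simeq \rperf(\underline{\cA}(*))$, dualizable complexes on the analytic side can be compared to algebraic perfect complexes by passing through their underlying $\underline{\cA}(*)$-modules.

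For fully faithfulness, given $F, G \in \rperf(X \times_{\Spec K} \Spec \underline{\cA}(*))$, properness of $X/K$ yields that
\[
R\Hom(F,G) \simeq R\Gamma\bigl(X \times \Spec \underline{\cA}(*),\, G \otimes F^{\vee}\bigr)
\]
is perfect over $\underline{\cA}(*)$. The analytic analogue computes $R\Hom(F^{\mathrm{an}}, G^{\mathrm{an}})$ as a dualizable object over $\cA$, using that proper pushforward preserves dualizability in the analytic six-functor formalism, which by Fredholmness is again a perfect $\underline{\cA}(*)$-complex. Fully faithfulness then reduces to showing that the two pushforwards agree as $\underline{\cA}(*)$-modules. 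I would first verify this for $X = \mathbb{P}^n_K$ via a direct \v{C}ech computation on the standard affine cover, and then extend to arbitrary proper $X$ via Chow's lemma together with cohomological descent on both sides.

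For essential surjectivity, a Chow's lemma reduction together with a closed embedding brings the problem to $X = \mathbb{P}^n_K$. For a dualizable complex $\cF$ on $(\mathbb{P}^n)^{\mathrm{an}} \times \Spa \cA$, the aim is to establish analytic Serre vanishing and global generation: for $d \gg 0$, $\cF(d)$ has vanishing higher cohomology and is generated by global sections. This produces a bounded-length resolution of $\cF$ by finite direct sums of shifted line bundles $\cO(-d)$, each of which is manifestly the analytification of the corresponding algebraic sheaf, placing $\cF$ in the essential image; the bounded Tor-amplitude of $\cF$ as a dualizable complex ensures the resolution terminates.

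The main obstacle is proving the analytic Serre-type statements over a non-Noetherian Fredholm bounded affinoid base such as a perfectoid $\cA$, where classical Noetherian induction is unavailable. The plan is to exploit the boundedness of $\cA$ for the needed norm and convergence estimates, and to use the Fredholm property to translate the required analytic cohomological vanishing into vanishing for perfect complexes over $\underline{\cA}(*)$, which can then be deduced from relative GAGA over $K$ applied sectionwise. A complementary strategy is approximation: realize $\cA$ as a suitable filtered (co)limit of classical Tate affinoid pieces compatibly with dualizable complexes, and bootstrap from K\"opf's Noetherian relative GAGA.
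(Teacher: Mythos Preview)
Your fully faithfulness argument is in the right spirit and close to what the paper does: both compute $\Hom$ via dualizability and proper pushforward. The paper's version is slightly cleaner, using proper base change and the projection formula directly (no reduction to $\mathbb{P}^n$ or Chow's lemma needed), but your approach would also work.

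The essential surjectivity is where your proposal diverges from the paper and runs into a genuine gap. The paper does \emph{not} use Serre vanishing or any reduction to projective space. Instead, it proves an enhanced GAGA theorem
\[
D_{\sol}(X^{\mathrm{an}}) \simeq \rqcoh(X)\tensor_{\rqcoh(K)} D_{\sol}(\cK),
\]
from which a categorical K\"unneth formula $D_{\sol}(X^{\mathrm{an}}\times\Spa\cA)\simeq \rqcoh(X)\tensor_{\rqcoh(K)} D_{\sol}(\cA)$ follows. Essential surjectivity is then the conservativity of the Fourier--Mukai functor $E\mapsto (M\mapsto \pi_{2*}(\pi_1^*M\otimes E))$ from $\bperf(X^{\mathrm{an}}\times\Spa\cA)$ to $\mathrm{Fun}^{\mathrm{ex}}_{\rperf(K)}(\rperf(X),\rperf(\underline{\cA}(*)))$. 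This conservativity is automatic because the ambient Fourier--Mukai functor on all of $D_{\sol}$ is an equivalence (rigidity of $D_{\sol}(X^{\mathrm{an}})$ over $D_{\sol}(\cK)$), and the key identification $\rqcoh(X)\simeq\rind(\rperf(X))$ lets one restrict the target to functors out of $\rperf(X)$.

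Your Serre-vanishing plan has a real obstruction you yourself flag but do not resolve. Over a non-Noetherian Fredholm base like a perfectoid $\cA$, you need to know that for $d\gg 0$ the higher pushforwards of $\cF(d)$ vanish and global sections generate. Your proposed fix---translate via Fredholmness into a statement about perfect $\underline{\cA}(*)$-complexes---is circular: you need to already know the pushforward is algebraic to invoke Fredholm, but that is precisely what you are trying to prove. The approximation idea is also problematic: a perfectoid algebra is a \emph{completed} colimit of Tate affinoids, not an algebraic filtered colimit, and there is no evident mechanism to pass dualizable complexes through such a completion compatibly with K\"opf's theorem. The paper's categorical route sidesteps all of this by never needing any finiteness or vanishing statement specific to the base $\cA$.
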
 

Notice that here the condition of $\cA$ being Fredholm is necessary, since when $X=\Spec K$, the above statement would force $\bperf(\cA)\simeq \mathrm{Perf}(\underline{\cA}(*))$, which implies that $\cA$ is Fredholm.

It turns out that with the notion of ``quasi-coherent sheaves'' in analytic geometry introduced by Clausen-Scholze \cite{analytic}, proving such a theorem—along with an enhanced version of the classical GAGA theorem—can be accomplished purely categorically.

The key ingredient in our proof (of \Cref{thm1}) is the categorical Künneth formula for solid modules, combined with an enhanced version of the GAGA theorem (\Cref{newGAGA}). The categorical Künneth formula is proved (as a special case) in \cite{kesting2025categoricalkunnethformulasanalytic}:

\begin{thm}[\cite{kesting2025categoricalkunnethformulasanalytic}] \label{kunnethnuc}
    In the setting above, we have the following equivalence of symmetric monoidal $\infty$-categories\upshape:
    \[
    D_{\sol}(X^{\mathrm{an}}) \tensor_{D_{\sol}(K, K^+)} D_{\sol}(\cA) \isomto D_{\sol}(X^{\mathrm{an}} \times_{\Spa(K, K^+)} \Spa\mathcal{A}).
    \]
\end{thm}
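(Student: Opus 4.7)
The plan is to reduce the Künneth statement to the affine case, where it becomes a tautological identification of module categories over a tensor product of analytic rings, and then globalize by analytic descent along a finite affinoid cover of $X^{\mathrm{an}}$ afforded by the properness of $X$.

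First, suppose $X = \Spec B$ is affine, so $X^{\mathrm{an}} = \Spa \cB$ for the affinoid analytic ring $\cB$ attached to $B$. Then the product $X^{\mathrm{an}} \times_{\Spa(K,K^+)} \Spa \cA$ is $\Spa(\cB \otimes^{\sol}_{(K,K^+)} \cA)$, and the theorem reduces to
$$D_{\sol}(\cB) \otimes_{D_{\sol}(K,K^+)} D_{\sol}(\cA) \simeq D_{\sol}\bigl(\cB \otimes^{\sol}_{(K,K^+)} \cA\bigr).$$
This is essentially built into the Clausen-Scholze formalism: $D_{\sol}(\cR)$ is the $\cR$-module category inside the ambient solid $\infty$-category, and the relative tensor product of module categories over a base ring coincides with the module category over the pushout, by the general $\infty$-categorical machinery of the Lurie tensor product. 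So the affine case is formal once the analytic ring structure on the product is identified with the pushout.

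Next, globalize. Since $X$ is proper over $K$, $X^{\mathrm{an}}$ is qcqs and admits a finite affinoid cover $\{U_i = \Spa \cB_i\}$ with affinoid intersections; let $U_\bullet \to X^{\mathrm{an}}$ be the \v{C}ech nerve. Analytic $\ast$-descent for $D_{\sol}$ along finite affinoid covers yields, in $\mathrm{Pr}^L_{D_{\sol}(K,K^+)}$,
$$D_{\sol}(X^{\mathrm{an}}) \simeq \lim_{[n]\in\Delta} D_{\sol}(U_n),$$
and analogously after base change to $\Spa \cA$:
$$D_{\sol}\bigl(X^{\mathrm{an}} \times_{\Spa(K,K^+)} \Spa \cA\bigr) \simeq \lim_{[n]\in\Delta} D_{\sol}\bigl(U_n \times_{\Spa(K,K^+)} \Spa \cA\bigr),$$
whose terms, by the affine case already proved, identify with $D_{\sol}(U_n) \otimes_{D_{\sol}(K,K^+)} D_{\sol}(\cA)$. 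It then remains to pull the tensor product past the simplicial limit.

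The main obstacle is exactly this commutation, since tensor products in $\mathrm{Pr}^L$ do not in general commute with limits. I would resolve it in one of two standard ways: (a) show that $D_{\sol}(\cA)$ is \emph{dualizable} as an object of $\mathrm{Pr}^L_{D_{\sol}(K,K^+)}$, which is a key input of \cite{analytic} for nuclear analytic rings and is plausible for Fredholm bounded affinoid $\cA$ via nuclear/compact generation, so that tensoring with $D_{\sol}(\cA)$ is a two-sided adjoint and hence preserves arbitrary limits; or (b) pass to the right adjoints of $\ast$-pullback along the cover, exhibiting the descent as a colimit in $\mathrm{Pr}^L$ via a Beck-Chevalley argument, after which the colimit commutes freely with any tensor product. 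Properness of $X$ is essential in making the descent diagram small enough for either strategy to apply levelwise; the technical verification of the chosen input---dualizability of $D_{\sol}(\cA)$ or the adjointability of $\ast$-pullback along the affinoid cover---is the hardest part of the argument.
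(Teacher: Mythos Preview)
Your proposal is essentially the paper's approach, carried out in the first half of the proof of Theorem~\ref{catkunne} together with Lemma~\ref{affine kunneth}: establish the affinoid case, globalize by analytic descent along an affinoid cover of $X^{\mathrm{an}}$, and commute the Lurie tensor product past the descent limit using dualizability of $D_{\sol}(\cA)$ over $D_{\sol}(K,K^+)$---exactly your option (a), with dualizability imported from \cite{kesting2025categoricalkunnethformulasanalytic}*{Corollary 3.15.1}.

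Two small corrections are worth making. First, your opening reduction is off: the analytification of an affine $K$-scheme is in general \emph{not} affinoid (e.g.\ $(\mathbb{A}^1_K)^{\mathrm{an}}$ is an increasing union of closed disks), so one must work directly with an affinoid open cover of the rigid space $X^{\mathrm{an}}$, as you in fact do in the next paragraph; the affinoid Künneth itself (Lemma~\ref{affine kunneth}) is not entirely formal but proceeds by factoring a $!$-able map into a proper part (where $D_{\sol}$ is a module category and \Cref{mod} applies) and an open part (where $D_{\sol}$ is a retract, handled via the associated idempotent algebra). Second, dualizability of $D_{\sol}(\cA)$ is deduced from this same $!$-able factorization, not from nuclearity or compact generation; and once you have it the limit may be arbitrary, so properness of $X$ is used only to guarantee that $X^{\mathrm{an}}$ is a qcqs rigid space admitting an affinoid cover at all, not to make the descent diagram finite.
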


The relevant formalism of Lurie's tensor product of presentable $\infty$-categories (\cite{SAG}*{section 4.8}) will be reviewed in \Cref{sect2}.

\begin{rem} \label{remark1} 
    By \cite{benzvi2010integraltransformsdrinfeldcenters}*{Proposition 4.6}, the right-hand side in \Cref{thm1} is equivalent to the tensor product of categories $\rperf(X) \tensor_{\rperf(K)} \rperf(\Spec \underline{\cA}(*))$. Moreover, by the classical GAGA theorem combined with  $Fredholm$ property of $\cA$, this is also equivalent to the tensor product $\bperf(X^{\mathrm{an}}) \tensor_{\bperf(k)} \bperf(\cA)$. Thus, \Cref{thm1} essentially establishes a categorical Künneth formula for perfect complexes on $X^{\mathrm{an}} \times_{\Spa(K, K^+)} \Spa\mathcal{A}$.
\end{rem}

A naive approach to proving \textit{Künneth} formula for categories of dualizable objects would be to argue that dualizable objects commute with Lurie's tensor product, and then apply \Cref{kunnethnuc}. Unfortunately, this is false in general. 

\begin{eg}
Let $\cA=(\mathbb{Q}_p\langle T \rangle,\mathbb{Z}_p\langle T \rangle)_{\sol}$ to be the closed unit disk over $\mathbb{Q}_p$. Consider the category of dualizable objects in $D_{\sol}(\cA)\tensor_{D_{\sol}(\mathbb{Q}_p)}D_{\sol}(\cA)(\simeq D_{\sol}(\cA\tensor_{\mathbb{Q}_{p\sol}}
\cA)$). 

Then we know (and we will prove later) that $\cA\tensor_{\mathbb{Q}_{p\sol}}\cA\simeq (\mathbb{Q}_p\langle T_1,T_2 \rangle, \mathbb{Z}_p\langle T_1,T_2 \rangle)_{\sol}$, and it is a Fredholm anlalytic ring. Thus we have that $$\bperf(\cA\tensor_{\mathbb{Q}_{p\sol}}\cA)\simeq \rperf(\mathbb{Q}_p\langle T_1,T_2\rangle).$$ But on the other hand, by Künneth formula of perfect complexes on schemes (\cite{benzvi2010integraltransformsdrinfeldcenters}*{Proposition 4.6}), we have that $$\bperf(\cA)\tensor_{\bperf(\mathbb{Q}_p)}\bperf (\cA)\simeq \rperf(\underline{\cA}(*)\tensor_{\mathbb{Q}_p}\underline{\cA}(*))\simeq \rperf(\mathbb{Q}_p\langle T_1\rangle \tensor_{\mathbb{Q}_p} \mathbb{Q}_p\langle T_2\rangle)$$ which is not equivalent to $\rperf (\mathbb{Q}_p \langle T_1, T_2 \rangle) $ since $\mathbb{Q}_p\langle T_1\rangle \tensor_{\mathbb{Q}_p} \mathbb{Q}_p\langle T_2\rangle \neq \mathbb{Q}_p\langle T_1, T_2 \rangle$ (here it is algebraic tensor product!).  
  
\end{eg}

However, with the following enhancement of the GAGA theorem, this naive approach essentially works in our specific setting.

\begin{thm}[Enhanced GAGA] \label{newGAGA}
    In the setting above, we have\upshape:
    \[
    D_{\sol}(X^{\mathrm{an}}) \isomto \rqcoh(X) \tensor_{\rqcoh(\Spec K)} D_{\sol}(K, K^+).
    \]
\end{thm}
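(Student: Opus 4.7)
The plan is to construct the candidate comparison functor $\Phi$ from the universal property of the Lurie tensor product, and then show it is an equivalence by a compact-generator argument combined with classical cohomological GAGA.

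To build $\Phi$: the analytification morphism $X^{\mathrm{an}} \to X$ and the structure map $X^{\mathrm{an}} \to \Spa(K, K^+)$ yield two $D(K)$-linear, symmetric monoidal, colimit-preserving pullback functors
\[
\rqcoh(X) \longrightarrow D_{\sol}(X^{\mathrm{an}}), \qquad D_{\sol}(K, K^+) \longrightarrow D_{\sol}(X^{\mathrm{an}}),
\]
which agree on $\rqcoh(\Spec K) = D(K)$. Applying the universal property of $\otimes$ in $\mathrm{Pr}^L$ then produces a $D_{\sol}(K, K^+)$-linear symmetric monoidal comparison $\Phi$.

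To prove $\Phi$ is an equivalence I would use compact generation. Since $X$ is proper, hence quasi-compact and quasi-separated, Bondal--Van den Bergh gives $\rqcoh(X) = \rind(\rperf(X))$; the source of $\Phi$ is therefore compactly generated by objects $\cE \boxtimes N$ with $\cE \in \rperf(X)$ and $N$ a compact object of $D_{\sol}(K, K^+)$. On such generators I would verify: \textbf{(a)} fully faithfulness of $\Phi$, which by dualizability of $\cE$ and linearity in the solid variable $N$ reduces to the classical cohomological GAGA identification
\[
\mathbb{R}\Gamma(X, \cF \otimes \cE^{\vee}) \xrightarrow{\;\sim\;} \mathbb{R}\Gamma(X^{\mathrm{an}}, \cF^{\mathrm{an}} \otimes \cE^{\mathrm{an}, \vee})
\]
in $D(K) \hookrightarrow D_{\sol}(K, K^+)$; and \textbf{(b)} essential surjectivity onto a generating class of $D_{\sol}(X^{\mathrm{an}})$. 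By classical GAGA $\rperf(X) \simeq \rperf(X^{\mathrm{an}})$, so the image of $\Phi$ contains every $\cG \boxtimes N$ with $\cG \in \rperf(X^{\mathrm{an}})$ and $N \in D_{\sol}(K, K^+)^\omega$; it thus suffices to show that these objects generate $D_{\sol}(X^{\mathrm{an}})$ under colimits.

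The main obstacle will be this generation claim \textbf{(b)} on the analytic side. My approach is to first treat the case $X = \mathbb{P}^n$, where the Beilinson exceptional collection $\rperf(\mathbb{P}^{n,\mathrm{an}}) = \langle \cO, \cO(1), \ldots, \cO(n) \rangle$ should present $D_{\sol}(\mathbb{P}^{n,\mathrm{an}})$ as the category of modules over the solid endomorphism algebra of $\bigoplus_i \cO(i)$, viewed as an algebra object in $D_{\sol}(K, K^+)$; classical GAGA identifies this algebra with its algebraic counterpart, and the statement for $\mathbb{P}^n$ collapses to the tautological case of a point. For a general proper $X$, I would embed $X$ (or a projective blowup $\widetilde{X} \to X$ obtained from Chow's lemma) as a closed subscheme of some $\mathbb{P}^n$ and propagate the generation statement by pushforward, together with a finite Čech descent along the Chow cover. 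Finiteness of the cover is crucial here, since one can circumvent the failure of Lurie's $\otimes$ to commute with general totalizations by re-expressing everything via $\rind$ of perfect complexes, thereby reducing all the relevant descent to filtered colimits with which the tensor product is compatible.
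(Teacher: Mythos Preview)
Your approach is genuinely different from the paper's, and it has a circularity problem: both steps (a) and (b) tacitly require that $p\colon X^{\mathrm{an}}\to\Spa(K,K^+)$ satisfy the projection formula, equivalently that $p_*$ preserve colimits (in the paper's language, that $1_{X^{\mathrm{an}}}$ is $p$-\emph{prim}). For (a), pulling the solid variable $N'$ out of $p_*$ is exactly projection formula; for (b), your Beilinson argument needs $\cO(i)^{\mathrm{an}}$ to be \emph{compact} in $D_{\sol}(\mathbb{P}^{n,\mathrm{an}})$, i.e.\ $R\Gamma(\mathbb{P}^{n,\mathrm{an}},-)$ must commute with filtered colimits. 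But the standard affinoid cover of $\mathbb{P}^{n,\mathrm{an}}$ is by closed polydisks whose analytic ring structure is \emph{not} induced from $(K,K^+)$, so the individual \v{C}ech terms do not preserve colimits, and whether their finite totalization does is precisely the primness of $p$. In the paper, primness of $X^{\mathrm{an}}$ is Proposition~4.1.24, and its proof explicitly uses the enhanced GAGA equivalence $D_{\sol}(X^{\mathrm{an}})\simeq D_{\sol}(X^{\mathrm{alg}})$ (primness of $X^{\mathrm{alg}}$ being easy since each affine chart has induced analytic structure). So your argument takes as input a consequence of the theorem you are proving.

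The paper avoids this by never analyzing compact generators of $D_{\sol}(X^{\mathrm{an}})$ directly. It first proves $D_{\sol}(X^{\mathrm{an}})\simeq D_{\sol}(X^{\mathrm{alg}})$ via a categorical-locale argument (after Clausen--Scholze): one constructs a sheaf of $\infty$-categories on a space $X^{\mathrm{ad}'/K^+}$ built from valuations, whose closed subsets are cut out by overconvergent-disc conditions $\{|f|\ll|g|\}$; the analytic category $D_{\sol}(X^{\mathrm{an}})$ appears as the sections over the open subspace $(X,X)^{\mathrm{ad}'/K^+}$, and the valuative criterion of properness shows this open subspace is the whole space when $X$ is proper. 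The remaining identification $D_{\sol}(X^{\mathrm{alg}})\simeq\rqcoh(X)\otimes_{\rqcoh(K)}D_{\sol}(K,K^+)$ is then an easy Barr--Beck/dualizability argument on affine charts. Your Beilinson/Chow strategy could in principle be salvaged by supplying an independent proof of primness for $\mathbb{P}^{n,\mathrm{an}}$, but that is exactly the non-trivial geometric input that the overconvergent-disc machinery provides.
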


Given this result, the proof of \Cref{thm1} crucially relies on the fact that $\rqcoh(X)$ is compactly generated and rigid, and we have an equivalence of categories \cite{Thomason1990HigherAK}\upshape: 
\begin{equation}\label{indperf}
   \rqcoh(X) \isomto \rind(\rperf(X)). 
\end{equation}

In fact, In order to check the functor $\bperf(X^{\mathrm{an}})\tensor_{\bperf(K)}\bperf(\cA)\rightarrow \bperf(X^{\mathrm{an}}\times_{\Spa(K, K^+)}\Spa \cA)$ is essential surjective, by formal resaon, it is equivalent to check that the \textit{Fourier-Mukai} functor:
$$\begin{aligned}
    \pmb{\mathrm{FM}}\colon \bperf(X^{\mathrm{an}}\times_{\Spa(K, K^+)}\Spa \cA)&\rightarrow \mathrm{Fun}_{D_{\sol}(K, K^+)}^{L}(D_{\sol}(X^{\mathrm{an}}), D_{\sol}(\cA))\\
              \quad E &\mapsto (M\mapsto\pi_{2 *}(\pi_{1}^{*}M\tensor E))
\end{aligned}$$
 is conservative. But by the presense of \Cref{indperf} one can see that $\pmb{\mathrm{FM}}$ actually factors through $\mathrm{Fun}_{\rperf(K)}^{ex}(\rperf(X),\rperf(\underline{\cA}(*)))$. The conservativity then becomes easier to check.  \\    \\  
In classical rigid analytic geometry, relative GAGA theorem states the following:

\begin{thm}[\cite{MR422671}, \cite{AIF_2006__56_4_1049_0}, \cite{kedlaya2015relativepadichodgetheory}*{Theorem 2.7.7}, \cite{andreychev2021pseudocoherentperfectcomplexesvector}*{Theorem 1.4}]\label{gaga of coh}
   Let $K$ be a nonarchimedean field. Let $X/K$ be a proper scheme and $(A,A^+)$ be a sheafy analytic Huber pair over $\Spa(K, K^+)$. Then we have the following equivalence of categories\upshape: 
    $$\mathrm{Coh}(X^{\mathrm{an}}\times_{\Spa(K, K^+)} \Spa(A,A^+))\simeq \mathrm{Coh}(X\times_{\Spec K} \Spec A).$$
\end{thm}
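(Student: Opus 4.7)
The plan is to follow Serre's original GAGA strategy adapted to the relative setting: reduce to $X$ projective, then compare Hom-groups and prove essential surjectivity through the twist-presentation of coherent sheaves.

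First, via Chow's lemma, fix a projective $K$-scheme $\widetilde X$ with a birational proper surjection $p\colon \widetilde X \to X$. Any coherent sheaf on $X$ can be resolved (via a convergent spectral sequence argument) from pushforwards of coherent sheaves on $\widetilde X$, using Grothendieck's coherence of higher direct images. The corresponding analytic reduction uses Kiehl's proper direct image theorem for analytic coherent sheaves. This reduces the statement to the case where $X$ embeds as a closed subscheme of $\mathbb{P}^n_K$.

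For such projective $X$, by relative Serre vanishing every coherent sheaf $\cF$ on $X_S := X \times_{\Spec K} \Spec A$, resp.\ on $X^{\mathrm{an}}_S := X^{\mathrm{an}} \times_{\Spa(K, K^+)} \Spa(A, A^+)$, admits a two-term presentation $\cO(-m)^N \to \cO(-n)^M \to \cF \to 0$ for $m, n \gg 0$. Comparing Hom-groups reduces to $\Hom(\cO(-a), \cO(-b)) = H^0(X_S, \cO(a-b))$, which in both worlds equals $H^0(X, \cO(a-b)) \tensor_K A$, by flat base change (algebraic side) and Kiehl's base-change theorem for coherent cohomology (analytic side). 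This yields full faithfulness. Essential surjectivity follows from the analytic Cartan A: any analytic coherent $\cG$ becomes globally generated after sufficient twisting, producing a surjection $\cO(-n)^M \twoheadrightarrow \cG$; iterating gives a two-term presentation whose maps are algebraic by the previous step.

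The main obstacle is the relative analytic Cartan A (equivalently, relative Serre vanishing) when $(A, A^+)$ is not a classical strictly affinoid algebra --- for instance when it arises from a perfectoid space. In the classical affinoid setting this is Kiehl's foundational work \cite{MR422671}. For more general sheafy Huber pairs one either approximates by classical bases and passes to the limit (the strategy adopted in \cite{kedlaya2015relativepadichodgetheory}), or adopts the paper's categorical viewpoint: \Cref{newGAGA} combined with \Cref{kunnethnuc} yields an equivalence $D_{\sol}(X^{\mathrm{an}} \times_{\Spa(K, K^+)} \Spa \cA) \simeq \rqcoh(X) \tensor_{\rqcoh(\Spec K)} D_{\sol}(\cA)$ which, after a $t$-exactness and almost-perfectness analysis (using the Fredholm property of $\cA$), restricts to the claimed equivalence of coherent hearts, bypassing the direct appeal to Cartan A.
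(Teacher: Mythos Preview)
The paper does not supply its own proof of this statement: Theorem~\ref{gaga of coh} is quoted from the literature (K\"opf, Conrad, Kedlaya--Liu, Andreychev) as background motivating the paper's results. So there is no proof in the paper to compare against directly.

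Your classical sketch --- Chow's lemma, twist presentations, comparison of $H^0(\cO(a-b))$, Cartan~A --- is essentially the route taken in the cited references, and you correctly flag relative Cartan~A over general sheafy Huber pairs as the real input. That part of the proposal is an accurate summary of the existing proofs, not a new argument.

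Your final paragraph, invoking \Cref{newGAGA} and \Cref{kunnethnuc} to obtain the coherent statement via the categorical K\"unneth formula, is closer to what the paper itself contributes, but the passage ``after a $t$-exactness and almost-perfectness analysis (using the Fredholm property of $\cA$)'' is too quick. The Fredholm condition alone controls dualizable objects (\Cref{main}); to pass from the solid equivalence to pseudo-coherent or coherent hearts the paper needs the further hypothesis that the underlying condensed ring is \emph{nuclear} over $R_{\sol}$ (see the standing assumption at the start of \S4.4 and the proof of \Cref{GAGA of pseudo}). For a sheafy analytic Huber pair this nuclearity holds, so the argument can be completed, but you should name that ingredient rather than the Fredholm property: it is what makes $\mathrm{Nuc}(-)$ a sheaf (\Cref{cons of nuc sheaf}) and allows the characterization $\mathrm{Pcoh}(\underline{\cA}(*)) \simeq \mathrm{Pcoh}(D_{\sol}(\cA)) \cap \mathrm{Nuc}(\cA)$ of \Cref{discrete pseudo} to go through.
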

In fact \cite{MR422671} proved a more general version of GAGA theorem. Namely, for any $\cK$ Tate affinoid algebra, denote its underlying ring by $K$, one has a relative analytification functor which sending a scheme $X$ over $K$ to a rigid analytic space $X^{\mathrm{an},\cK}$ over $\cK$. Then one has $\mathrm{Coh}(X^{\mathrm{an},\cK})\simeq \mathrm{Coh}(X).$ And it is well known to experts that there is also a derived enhancement, namely $\mathrm{Pcoh}(X)\simeq \mathrm{Pcoh}(X^{\mathrm{an},\cK}).$ Here $\mathrm{Pcoh}$ denote the $\infty$-category of pseudo coherent objects (see \Cref{pseudo-coherent}, also know as almost perfect complexes). 

One may argue that \Cref{thm1} is not a full version of (generalization) of relative GAGA theorem, indeed, since \Cref{thm1} only dealt with perfect complexes while the classical version (implicitly) dealt with almost perfect complexes. Due to certain technicality of the very definition of nuclear modules for general analytic stack, we will only be able to push relative GAGA theorem to the generality of almost perfect complexes in the case where $\cA$ is a analytic ring over $\cK$ whose underlying condensed ring $\underline{\cA}$ is a nuclear $
\cK$-module. Which is the following:
\begin{theorem}[\Cref{GAGA of pseudo}]\label{thm2}
    Let $X/K$ be a proper scheme, then the pullback functor $j_X^{*}\colon\mathrm{Qcoh}(X)\rightarrow D_{\sol}(X^{an,\cK})$ induces
    an equivalence of categories $$\mathrm{Pcoh}(X^{\mathrm{an},\cK})\simeq \mathrm{Pcoh}(X).$$
    Here the notion of $\mathrm{Pcoh}(X^{\mathrm{an},\cK})$  stands for ``pseuodo-coherent complexes'' on $X^{\mathrm{an},\cK}$ for which will be defined in \upshape\Cref{def of pseudo}.
\end{theorem}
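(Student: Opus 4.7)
The plan is to bootstrap \Cref{thm2} from the perfect-complex version \Cref{thm1} via a Postnikov-type argument, using the fact that pseudo-coherent complexes are left-complete limits of perfect approximations. Recall the standard characterization: a connective object $M \in \mathrm{Qcoh}(X)^{\leq 0}$ is pseudo-coherent iff for every $n \geq 0$ there exists a perfect complex $P_n \in \rperf(X)$ together with a morphism $P_n \to M$ whose cofiber lies in $\mathrm{Qcoh}(X)^{\leq -n-1}$. The definition of $\mathrm{Pcoh}(X^{\mathrm{an},\cK})$ to be introduced in \Cref{def of pseudo} is expected to pin down the analogous characterization via the solid $t$-structure on $D_{\sol}(X^{\mathrm{an},\cK})$.

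First I would check that $j_X^{\ast}$ is $t$-exact for the relevant $t$-structures (or at least preserves the connectivity bounds needed to transport perfect approximations); this should reduce to an affine-local computation combined with solid flatness of the analytification map. Combined with \Cref{thm1}'s inclusion $j_X^{\ast}(\rperf(X)) \subset \bperf(X^{\mathrm{an},\cK})$, this immediately shows $j_X^{\ast}$ sends $\mathrm{Pcoh}(X)$ into $\mathrm{Pcoh}(X^{\mathrm{an},\cK})$. For fully faithfulness, take $M, N \in \mathrm{Pcoh}(X)$ with perfect approximations $P_n \to M$ and compute
\[
\mathrm{Map}(j_X^{\ast}M, j_X^{\ast}N) \simeq \lim_n \mathrm{Map}(j_X^{\ast}P_n, j_X^{\ast}N);
\]
by the fully faithful part of \Cref{thm1} each term agrees with $\mathrm{Map}(P_n, N)$, and the inverse limit recovers $\mathrm{Map}(M, N)$ by running the same truncation argument on the algebraic side.

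For essential surjectivity, given $M^{\mathrm{an}} \in \mathrm{Pcoh}(X^{\mathrm{an},\cK})$ with perfect approximations $P_n^{\mathrm{an}} \to M^{\mathrm{an}}$, \Cref{thm1} produces a unique algebraic model $P_n \in \rperf(X)$ for each $P_n^{\mathrm{an}}$, and the transition maps $P_n^{\mathrm{an}} \to P_{n-1}^{\mathrm{an}}$ lift uniquely to morphisms $P_n \to P_{n-1}$ by the fully faithfulness established above. The homotopy limit $M := \lim_n P_n \in \mathrm{Qcoh}(X)$ is then pseudo-coherent (its truncations being computed from the $P_n$), and $j_X^{\ast}M \simeq M^{\mathrm{an}}$ by construction.

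The main obstacle lies on the analytic side: ensuring that the definition of $\mathrm{Pcoh}(X^{\mathrm{an},\cK})$ in \Cref{def of pseudo} admits perfect approximations in the above sense, that the solid $t$-structure on $D_{\sol}(X^{\mathrm{an},\cK})$ is sufficiently left-complete for the Postnikov-type limits to converge, and that $j_X^{\ast}$ is $t$-exact in the appropriate range. The $t$-exactness should follow from the affine-local description of $X^{\mathrm{an},\cK}$ together with solid flatness of the structural analytification map, while the other points should be built into the definition. Once these analytic-side prerequisites are in place, the reduction to \Cref{thm1}, aided at the level of the hearts by the classical coherent GAGA \Cref{gaga of coh}, completes the argument.
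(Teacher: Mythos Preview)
Your approach differs substantially from the paper's and contains a genuine gap.

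The paper's definition of $\mathrm{Pcoh}(X^{\mathrm{an},\cK})$ (\Cref{def of pseudo}) is \emph{not} via perfect approximations in $D_{\sol}(X^{\mathrm{an},\cK})$: it is the global sections of the sheaf $U \mapsto \mathrm{Pcoh}(\underline{\cO_U}(*))$ of pseudo-coherent complexes over the \emph{underlying discrete ring} of each affinoid open. The key intrinsic local characterization (\Cref{discrete pseudo}) is $\mathrm{Pcoh}(\underline{\cA}(*)) \simeq \mathrm{Pcoh}(D_{\sol}(\cA)) \cap \mathrm{Nuc}(\cA)$ for Fredholm $\cA$. With this definition, a global pseudo-coherent object is \emph{not} known a priori to admit global approximations by objects of $\bperf(X^{\mathrm{an},\cK})$; establishing that is essentially equivalent to the theorem itself. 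The ``main obstacle'' you identify is therefore not a technicality to be absorbed into the definition but the actual content of the result, and you offer no mechanism to overcome it.

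The paper's route is entirely different and does not invoke \Cref{thm1}. It first shows (\Cref{coherent pushforward}) that $f_* \colon D_{\sol}(X^{\mathrm{an},\cK}) \to D_{\sol}(\cK)$ preserves both nuclearity and pseudo-coherence, the latter via a cohomological bound on $p^!$ (\Cref{homological bound}). Then, for any affine open $j \colon \Spec A \hookrightarrow X$, after a blow-up one may write $\Spec A$ as the complement of a Cartier divisor $D$, so that $j_* j^* M \simeq \colim_n M \otimes \cO(-nD)$ is a filtered colimit of objects in $\mathrm{Pcoh}(X^{\mathrm{an},\cK})$; pushing forward and applying the preservation result gives that each term has discrete cohomology, hence $j^* M$ is discrete and therefore algebraic.

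Two secondary issues: (i) $j_X^*$ is a left adjoint, so $j_X^*(\lim_n P_n) \simeq \lim_n j_X^*(P_n)$ in your essential-surjectivity step needs a left-completeness and $t$-exactness argument you have not supplied; (ii) the classical \Cref{gaga of coh} you invoke is stated for sheafy analytic Huber pairs, not the general bounded affinoid $\cK$ with nuclear underlying ring treated here.
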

\begin{remw}
  Although at this generality (assuming $\underline{\cK}$ being nuclear), the proof of \Cref{thm2} is essentially a repetition  of arguments in \cite{complex}. For the completeness we will present the proof in details but we claim no originality in this work.  
\end{remw}

As a simple application of \Cref{intro main}, we deduce the following result of cotangent complex on analytic stacks.
\begin{thm}
     Let $X$ be a proper scheme over $K$, and let $Y$ be a \textit{perfeclty Tannakian} \textit{Artin stack} of \textit{geometric nature}. Suppose that $\underline{\rmap}(X, Y)$ is an \textit{Artin stack}. Then we have that $\underline{\rmap}(X^{\mathrm{an}}, Y^{\mathrm{an}})$ admits a relative cotangent complex \textup{(see \upshape\Cref{def of cotangent})} $\mathbb{L}_{\underline{\rmap}(X^{\mathrm{an}}, Y^{\mathrm{an}})/\Spa{K,K^{+}}}$ and the canonical map $$j_{\underline{\rmap}(X^{\mathrm{an}}, Y^{\mathrm{an}})}^{*}\mathbb{L}_{\underline{\mathrm{Map}}(X,Y)/\Spec K}\rightarrow \mathbb{L}_{\underline{\rmap}(X^{\mathrm{an}}, Y^{\mathrm{an}})/\Spa{K,K^{+}}}$$ is an isomorphism. Here, $j_{\underline{\rmap}(X^{\mathrm{an}}, Y^{\mathrm{an}})}\colon\underline{\rmap}(X, Y)^{\mathrm{an}}\rightarrow \underline{\mathrm{Map}}(X,Y)$ denote the counit map of \textit{analytification} functor.
\end{thm}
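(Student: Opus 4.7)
The plan is to combine Theorem~\ref{intro main} with a comparison of cotangent complexes under analytification. By Theorem~\ref{intro main} we have $\underline{\rmap}(X^{\mathrm{an}}, Y^{\mathrm{an}}) \simeq \underline{\rmap}(X,Y)^{\mathrm{an}}$, and since $\underline{\rmap}(X,Y)$ is assumed to be an Artin stack, it carries a cotangent complex $\mathbb{L}_{\underline{\rmap}(X,Y)/\Spec K}$. Thus the theorem reduces to the following general assertion: for any Artin stack $Z$ over $K$ of geometric nature, $Z^{\mathrm{an}}$ admits a relative cotangent complex over $\Spa(K,K^{+})$ and the canonical comparison map $j_{Z}^{*}\mathbb{L}_{Z/\Spec K}\to\mathbb{L}_{Z^{\mathrm{an}}/\Spa(K,K^{+})}$ is an equivalence.

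I would prove this by smooth descent. Pick a smooth surjection $p\colon U\to Z$ with $U$ an affine scheme of finite type over $K$; the cotangent complex $\mathbb{L}_{Z/\Spec K}$ is characterized via the fiber sequence $p^{*}\mathbb{L}_{Z/\Spec K}\to \mathbb{L}_{U/\Spec K}\to \mathbb{L}_{U/Z}$ together with descent data along the \v{C}ech nerve of $p$. Analytification is a left adjoint and preserves smoothness and surjectivity, so $p^{\mathrm{an}}\colon U^{\mathrm{an}}\to Z^{\mathrm{an}}$ is a smooth surjection of analytic stacks whose \v{C}ech nerve is the analytification of that of $p$. Granting smooth descent for the relative analytic cotangent complex, one can transport the algebraic presentation to the analytic side and reduce the comparison to the affine case.

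In the affine case the statement becomes a first-order GAGA. For $U=\Spec A$ of finite type over $K$, one must show that the analytic module of K\"ahler differentials of $A^{\mathrm{an}}$ agrees with the analytification of $\Omega^{1}_{A/K}$, and similarly for the higher stages of the cotangent complex. I would establish this by unwinding universal properties: for any solid $A^{\mathrm{an}}$-module $M$ that corresponds, under Theorems~\ref{thm1}--\ref{thm2}, to an (almost) perfect complex on $\Spec A$, analytic derivations $A^{\mathrm{an}}\to M$ and algebraic derivations $A\to M$ classify the same square-zero extensions, because $A^{\mathrm{an}}$ is the universal analytic ring receiving a map from $A$. An induction on the Postnikov tower (or equivalently a simplicial resolution argument) then propagates the comparison to the full cotangent complex.

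The main obstacle, as I see it, is two-fold: first, one needs a sufficiently robust theory of smooth descent for the relative cotangent complex in the analytic setting, so that the presentation of $\mathbb{L}_{Z^{\mathrm{an}}/\Spa(K,K^{+})}$ via the analytified \v{C}ech nerve is well-defined and actually computes a cotangent complex in the sense of \Cref{def of cotangent}; second, on affine patches one must import the relative GAGA equivalence for perfect and pseudo-coherent complexes (Theorems~\ref{thm1}, \ref{thm2}) into a statement about square-zero extensions, which requires a careful unwinding of the definition of analytic derivations in terms of solid modules. Once these two ingredients are in place, the passage from affine charts to $Z^{\mathrm{an}}$, and hence to $\underline{\rmap}(X^{\mathrm{an}},Y^{\mathrm{an}})$, is formal.
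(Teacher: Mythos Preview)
Your overall reduction is exactly what the paper does: apply \Cref{analytification} to identify $\underline{\rmap}(X^{\mathrm{an}},Y^{\mathrm{an}})$ with $\underline{\rmap}(X,Y)^{\mathrm{an}}$, and then invoke a general comparison of cotangent complexes under analytification of Artin stacks (\Cref{analytification of cotangent complex}), argued via a smooth chart $U\to Z$. So the skeleton is correct.

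The affine step, however, is where your proposal goes off track. First, $(\Spec A)^{\mathrm{an}}$ is not an affinoid analytic ring but a colimit of closed discs (\Cref{an for affine}), so ``the universal analytic ring receiving a map from $A$'' does not quite parse. Second, testing derivations only against (almost) perfect $M$ via Theorems~\ref{thm1}--\ref{thm2} cannot characterize the cotangent complex, which is corepresented on \emph{all} connective modules; and those GAGA results concern proper schemes, not affine charts, so they are not the relevant input here. The paper bypasses all of this with a structural one-liner: the map $j_U\colon U^{\mathrm{an}}\to U^{\mathrm{alg}}$ is an \emph{open immersion} of analytic stacks (pulled back from $\mathbb{A}^{n,\mathrm{an}}\hookrightarrow\mathbb{A}^n$, see \Cref{an for affine} and \Cref{analytic affine line is open}), hence idempotent, and \Cref{idempotent cotangent} gives $\mathbb{L}_{U^{\mathrm{an}}}\simeq j_U^{*}\mathbb{L}_U$ immediately --- no first-order computation of K\"ahler differentials is needed.

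Your two flagged obstacles are likewise handled more cheaply than you anticipate. For the comparison, full smooth descent for analytic cotangent complexes is not required: since $U^{\mathrm{an}}\to Z^{\mathrm{an}}$ is a $!$-cover it suffices to check the map is an equivalence after pullback, and there the fiber sequences of \Cref{fiber seq of cotan} together with the idempotent trick (applied to $U$ and to $U\times_Z U$) finish the argument; see the diagram in the proof of \Cref{analytification of cotangent complex}. For \emph{existence} of $\mathbb{L}_{\underline{\rmap}(X^{\mathrm{an}},Y^{\mathrm{an}})}$, the paper does not attempt smooth descent for arbitrary $Z^{\mathrm{an}}$ but uses the direct mapping-stack formula \Cref{cot of mapp}: $1_{X^{\mathrm{an}}}$ is prim and suave by \Cref{proper is prim}, and $Y^{\mathrm{an}}$ admits a cotangent complex because Gelfand rings are Fredholm and $Y^{\mathrm{an}}(\cA)\simeq Y(\underline{\cA}(*))$ by \Cref{map from fredholm}.
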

In particular, this justifies the following\upshape: for $X$ smooth proper rigid analytic curve over $\Spa(K, K^+)$, $G/K$ a reductive group. The moduli stack $\mathrm{Higgs}_{G}(X)$ of $G$-Higgs bundles on $X$, is the (total space of) cotangent complex of $\mathrm{Bun}_{G}(X)$.

As a final remark, the fact that we are working with Gelfand stack is partially due to artificial reason: in fact we only need to work with the category of certain version of ``analytic stacks'' that forms a $\infty$-topos with the test objects being Fredholm (in particular, there is no reason to require the whole category lives over $\mathrm{AnSpec}({\mathbb{Q}_p}_{\sol})$), however it is unclear to the author that the (opposite) category of all Fredholm analytic rings are stable under finite limits. On the other hand, this paper motivated by a joint work in progress with Ben Heuer on a $p$-adic version of geometric Langlands correspondence, where we need to construct certain $\mathbb{G}_m^{\mathrm{an}}$-gerbe (Simpson gerbe \cite{Simpsongerbe}) over the analytic moduli stack of $G$-Higgs bundle on smooth proper rigid curves via descending along smooth charts, which a priori certain representability result of analytic stacks are needed. Since the construction of Simpson gerbe indeed have a $p$-adic nature, for our application its enough to work with Gelfand stacks.
\begin{pp-tweak}[\!\!Acknowledgements]
The author would like to thank Kęstutis Česnavičius for his guidance and support. The author is also grateful to Nick Rozenblyum, Johannes Anschütz, Matteo Montagnani, Yuanyang Jiang, Ayman Toufik, Hari Sudarsan, Hyungseop Kim, and Ruichuan Zhang for many helpful discussions.
\end{pp-tweak}

\section{Recollection on \(\mathrm{Pr}^L\)}\label{sect2}
\numberwithin{equation}{subsection}
In this section, we briefly review the theory of presentable \(\infty\)-categories and state a few basic properties that will be used in this note. For details, we refer to \cite{lurie2008highertopostheory} and \cite{SAG}.

Given a small \(\infty\)-category \(\cB\) and a regular cardinal \(\kappa\), we denote by \(\rind_{\kappa}(\cB)\) the \(\infty\)-category freely generated by \(\cB\) via \(\kappa\)-filtered colimits (equivalently, the full subcategory of the category of presheaves \(\rfun(\cB^{\mathrm{op}}, \Ani\footnote{\text{We denote $\Ani$ by the $\infty$-category of anima.}})\), consisting of \(\kappa\)-filtered colimits of representable presheaves \cite{lurie2008highertopostheory}*{Definition 5.3.5.1}). For \(\kappa = \omega\), we will simply write \(\rind(\cB)\) instead of \(\rind_{\omega}(\cB)\).

An \(\infty\)-category \(\cC\) is called \(\kappa\)-accessible for some regular cardinal \(\kappa\) if \(\cC\simeq \rind_{\kappa}(\cC_{0})\), where \(\cC_0\) is a small \(\infty\)-category. 
An \(\infty\)-category \(\cC\) is \(\kappa\)-presentable if \(\cC\) is \(\kappa\)-accessible and cocomplete (\cite{lurie2008highertopostheory}*{Definition 5.5.0.1}). Equivalently, this means that \(\cC\simeq \rind_{\kappa}(\cC_{0})\), where the category \(\cC_{0}\) has \(\kappa\)-small colimits (\cite{lurie2008highertopostheory}*{Theorem 5.5.1.1}). Furthermore, \(\cC\) is presentable if it is \(\kappa\)-presentable for some \(\kappa\). Following \cite{lurie2008highertopostheory}, we denote by \(\mathrm{Pr}^L\) the \((\infty, 1)\)-category of all presentable categories and colimit-preserving functors (i.e., left adjoint functors).

If \(\cC\) and \(\cD\) are presentable, we will denote by \(\rfun^{L}(\cC, \cD)\) the $\infty$-category of colimit-preserving functors. The category \(\mathrm{Pr}^L\) has a natural symmetric monoidal structure, for which the internal \(\underline{\mathrm{Hom}}\) from \(\cC\) to \(\cD\) is given by \(\rfun^{L}(\cC, \cD)\). The tensor product of presentable categories is usually called the \textit{Lurie tensor product} and can be defined as \(\cC\tensor\cD\simeq \rfun^{L}(\cC, \cD^{\mathrm{op}})^{\mathrm{op}}\) for \(\cC, \cD \in \mathrm{Pr}^L\) (\cite{HA}*{Proposition 4.8.1.17}). The unit object is given by \(\Ani\).

The full subcategory \(\mathrm{Pr}_{\mathrm{st}}^L\) of \(\mathrm{Pr}^L\), spanned by presentable stable \(\infty\)-categories, admits a standard symmetric monoidal structure whose tensor product is given by the Lurie tensor product, with unit object \(Sp\), the \(\infty\)-category of spectra (\cite{SAG}*{Variant D.2.3.3}). For each \(\cC \in \mathrm{CAlg}(\mathrm{Pr}_{\mathrm{st}}^L)\), we denote \(\rmod_{\cC}(\mathrm{Pr}_{\mathrm{st}}^L)\) as the symmetric monoidal \(\infty\)-category of \(\cC\)-module objects in \(\mathrm{Pr}_{\mathrm{st}}^L\). A \(\cC\)-module \(\cD\) is called \textit{dualizable} over \(\cC\) if it is a dualizable object in \(\rmod_{\cC}(\mathrm{Pr}_{\mathrm{st}}^L)\) (\cite{HA}*{Section 4.6.1}). There are many equivalent useful descriptions of dualizable categories in general, but in this note, we will only crucially use the fact that the Lurie tensor product with a dualizable category commutes with limits, since tensoring with a dualizable object always commutes with limits.

We recall a few properties that will be used later, from which one could also illustrate the concrete meaning of the Lurie tensor product.
\subsection{Lurie tensor product}
\begin{prop}[Universal property of the Lurie tensor product \cite{lurie2008highertopostheory}*{Proposition 4.8.1.17}]\label{universal_lurie}
    Let \(\cA \in \mathrm{CAlg}(\mathrm{Pr}_{\mathrm{st}}^L)\). For \(\cA\)-linear stable presentable \(\infty\)-categories \(\cC, \cD\), and \(\cE\), we have
    \[
    \rfun_{\cA}^{L,L}(\cC\times\cD, \cE) \isomto \rfun_{\cA}^L(\cC\tensor_{\cA}\cD, \cE),
    \]
    Here \(\rfun^{L,L}(\cC\times\cD, \cE)\) denotes the \(\infty\)-category of functors \(\cC\times\cD\to\cE\) that preserve colimits separately in \(\cC\) and \(\cD\). We denote by \(c\boxtimes d\) in \(\cC\tensor_{\cA}\cD\) the image of \((c,d)\in \cC\times\cD\) via the functor \(p\colon\ \cC\times\cD\rightarrow\cC\tensor_{\cA}\cD\).
\end{prop}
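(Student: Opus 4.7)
The plan is to first establish the absolute ($Sp$-linear) case directly from a concrete model of the Lurie tensor product, and then deduce the $\cA$-linear statement by a two-sided bar resolution.

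For the absolute case, I would model $\cC \tensor \cD$ as the full subcategory of $\rfun(\cC^{\mathrm{op}} \times \cD^{\mathrm{op}}, Sp)$ consisting of functors preserving small limits in each variable separately, with canonical map $p \colon \cC \times \cD \to \cC \tensor \cD$ sending $(c,d)$ to the bi-representable $(c',d') \mapsto \rmap_{\cC}(c',c) \tensor \rmap_{\cD}(d',d)$. Separate cocontinuity of $p$ is immediate from the Yoneda lemma. For the universal property, given any separately cocontinuous $\cF \colon \cC \times \cD \to \cE$, its extension along $p$ is forced on bi-representables by Yoneda, and the presentability of $\cC, \cD, \cE$ guarantees a unique colimit-preserving extension to all of $\cC \tensor \cD$. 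This yields the $\cA = Sp$ case of the desired equivalence.

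For the relative case, I would present $\cC \tensor_{\cA} \cD$ as the geometric realization in $\mathrm{Pr}^{L}_{\mathrm{st}}$ of the simplicial object $[n] \mapsto \cC \tensor \cA^{\tensor n} \tensor \cD$ whose face maps use the $\cA$-module structures on $\cC, \cD$ and the multiplication on $\cA$. Since $\mathrm{Pr}^{L}_{\mathrm{st}}$ is closed symmetric monoidal with internal Hom $\rfun^{L}(-,-)$, applying $\rfun^{L}(-,\cE)$ converts this geometric realization into the totalization of $[n] \mapsto \rfun^{L}(\cC \tensor \cA^{\tensor n} \tensor \cD, \cE)$. By the absolute case, each term identifies with separately cocontinuous functors $\cC \times \cA^{n} \times \cD \to \cE$, and the simplicial coherences record precisely the compatibilities with the $\cA$-actions on $\cC$ and $\cD$. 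The totalization therefore computes exactly $\rfun_{\cA}^{L,L}(\cC \times \cD, \cE)$, giving the claim.

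The main obstacle I anticipate is verifying that the bar construction actually computes the relative tensor product in $\mathrm{Pr}^{L}_{\mathrm{st}}$ and that $\rfun^{L}(-,\cE)$ carries this geometric realization to the expected totalization with the correct simplicial identifications. Both facts are standard consequences of the closed symmetric monoidal structure on $\mathrm{Pr}^{L}_{\mathrm{st}}$ and of the general theory of module $\infty$-categories over algebra objects, but care is needed to match the canonical map $p \colon \cC \times \cD \to \cC \tensor_{\cA} \cD$ coming from the $0$-simplex factor with the universal separately-cocontinuous functor predicted by the bar resolution.
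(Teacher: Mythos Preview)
The paper does not prove this proposition at all: it is stated purely as a recollection, with a citation to Lurie (the label reads ``\cite{lurie2008highertopostheory}*{Proposition 4.8.1.17}''), and no argument is given. So there is nothing to compare your proposal against on the paper's side.

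That said, your outline is a correct and standard way to establish the result. The absolute case via the model $\cC \tensor \cD \subset \rfun(\cC^{\mathrm{op}} \times \cD^{\mathrm{op}}, Sp)$ of separately-limit-preserving functors is exactly how Lurie sets things up in Higher Algebra \S4.8.1, and your bar-resolution reduction of the relative case to the absolute one is the usual mechanism (this is how relative tensor products over an $\bE_\infty$-algebra in any presentably symmetric monoidal $\infty$-category are computed, cf.\ HA \S4.4.2). The obstacles you flag are real but routine: the bar construction computes the relative tensor in $\mathrm{Pr}^L_{\mathrm{st}}$ because the latter is presentably symmetric monoidal, and the compatibility of $\rfun^L(-,\cE)$ with the simplicial structure is precisely the closed monoidal adjunction. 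One small point to be careful about in your totalization step: you should also track that the $\cA$-linearity on the target $\cE$ enters, so that the totalization really yields $\rfun_{\cA}^{L,L}$ rather than just $\rfun^{L,L}$ with some extra coherence data; this comes out once you note that the cosimplicial object you obtain is the cobar resolution computing $\cA$-linear maps.
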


\begin{prop}[Objects in the Lurie tensor product]\label{obj}
    In the above setting, let \(x\in \cC\tensor_{\cA}\cD\). Then there exists a \textup{(small)} diagram \(f\colon\ I\rightarrow \cC\times\cD\) such that
    \[
    x\simeq \colim_{I } p\circ f.
    \]
    Here \(p\) is the functor \(p\colon\ \cC\times\cD\rightarrow\cC\tensor_{\cA}\cD\).
\end{prop}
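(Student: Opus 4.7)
The plan is to apply the universal property of the Lurie tensor product (\Cref{universal_lurie}) to show that the smallest full subcategory of $\cT \ce \cC \otimes_{\cA} \cD$ which contains every pure tensor $c \boxtimes d$ and is closed under small colimits is in fact all of $\cT$. Call this subcategory $\cE$. Once we know $\cE = \cT$, the object $x$ lies in the colimit-closure of the essential image of $p$, and a standard argument packaging iterated colimits into a single small diagram produces the desired presentation $x \simeq \colim_I p \circ f$.

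The first step is to verify that $\cE$ is a presentable $\cA$-module subcategory of $\cT$. Choosing a regular cardinal $\kappa$ for which $\cC$ and $\cD$ are both $\kappa$-presentable, let $\cC^\kappa$ and $\cD^\kappa$ denote the essentially small full subcategories of $\kappa$-compact objects. Since $p$ preserves colimits separately in each variable, every pure tensor $c \boxtimes d$ is a small colimit of pure tensors $c_i \boxtimes d_j$ with $c_i \in \cC^\kappa$ and $d_j \in \cD^\kappa$; hence $\cE$ coincides with the colimit-closure in $\cT$ of the essentially small set $p(\cC^\kappa \times \cD^\kappa)$, which is presentable by standard accessibility results. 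Moreover, for each $a \in \cA$ the action functor $a \otimes (-) \colon \cT \to \cT$ preserves colimits and sends pure tensors to pure tensors, hence preserves $\cE$; thus $\cE$ inherits an $\cA$-module structure and the inclusion $i \colon \cE \hookrightarrow \cT$ is an $\cA$-linear, colimit-preserving morphism in $\rmod_{\cA}(\mathrm{Pr}_{\mathrm{st}}^L)$.

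The second step is to construct an inverse to $i$ via the universal property. By construction, $p$ factors through $\cE$ as an $\cA$-balanced bifunctor $p' \colon \cC \times \cD \to \cE$ preserving colimits separately in each variable. Applying the $\cA$-linear version of \Cref{universal_lurie}, $p'$ extends uniquely to a colimit-preserving $\cA$-linear functor $\tilde{p} \colon \cT \to \cE$. The composition $i \circ \tilde{p} \colon \cT \to \cT$ is then a colimit-preserving $\cA$-linear extension of $p$, so by the uniqueness clause of the universal property applied to $\cT$ itself, one has $i \circ \tilde{p} = \id_\cT$. In particular, $i$ is essentially surjective, and $\cE = \cT$.

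The main delicate point is the first step: a priori $\cE$ is the colimit-closure of the large class $p(\cC \times \cD)$, and one must exploit $\kappa$-presentability of $\cC$ and $\cD$ together with the separate cocontinuity of $p$ to reduce to an essentially small generating set before invoking general accessibility to conclude presentability. After this reduction, the remainder of the argument is a purely formal diagram chase from the universal property.
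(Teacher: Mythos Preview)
Your argument is correct and takes a genuinely different route from the paper. The paper's proof defers to \cite{dualizability}*{Lemma 0.1} and adapts it to the $\cA$-linear setting by observing that every $\cA$-module in $\mathrm{Pr}^L$ is an $\cA$-linear localization of an $\cA$-valued presheaf category; the claim is then verified first in the presheaf case (where it is concrete) and propagated through the localization. Your argument instead stays intrinsic to $\cT=\cC\otimes_\cA\cD$: you show that the colimit-closure $\cE$ of the pure tensors is a presentable $\cA$-submodule, then invoke the universal property of $\cT$ to produce a retraction $\tilde p\colon \cT\to\cE$ with $i\circ\tilde p\simeq\id_\cT$, forcing $\cE=\cT$. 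This is cleaner in that it avoids any auxiliary model for $\cC$ and $\cD$ and uses nothing beyond \Cref{universal_lurie}; the paper's route, by contrast, gives a slightly more hands-on description via presheaves but relies on an external reference.

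One small point worth tightening: the passage from ``each $a\otimes(-)$ preserves $\cE$'' to ``$\cE$ inherits an $\cA$-module structure with $i$ $\cA$-linear'' hides an $\infty$-categorical coherence check. The cleanest way to justify it is to first run your argument over the base (i.e.\ with $\cA=\mathrm{Sp}$), where no module structure on $\cE$ is needed; this already tells you that $\cA\otimes\cE$ is generated under colimits by objects $a\boxtimes e$, so the action $\cA\otimes\cE\to\cA\otimes\cT\to\cT$ visibly lands in $\cE$, and the full subcategory $\cE$ then inherits the $\cA$-module structure by restriction. This bootstrap is standard, but worth a sentence.
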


\begin{proof}
    This essentially follows from \cite{dualizability}*{Lemma 0.1}. For the \(\cA\)-linear version of his proof, one only needs to observe that any \(\cA\)-module \(\cC\in \mathrm{Pr}^{L}\) is an (\(\cA\)-linear) localization of some \(\cA\)-valued presheaf category.
\end{proof}

\begin{prop}[Lurie tensor product of module categories]\label{mod}
    Let \(\cC\) be a stable presentable symmetric monoidal \(\infty\)-category, and let \(A \in \cC\) be an associative algebra object.

    \benuma
        \item For any \(\cC\)-module \(\cM\), there is an equivalence of \(\infty\)-categories\upshape:
        $$
        \rmod_{A}(\cC)\tensor \cM \simeq \rmod_{A}(\cM).
        $$
        \item For \(B\) a second associative algebra, there is an equivalence of \(\infty\)-categories\upshape:
        $$
        \rmod_{A}(\cC)\tensor_{\cC}\rmod_{B}(\cC)\simeq \rmod_{A\tensor B}(\cC).
        $$
    \eenum
\end{prop}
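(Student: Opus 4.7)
The plan is to deduce (a) from the Barr--Beck--Lurie monadicity theorem and then to derive (b) by iterating (a). For (a), observe that the free--forgetful adjunction $F \colon \cC \rightleftarrows \rmod_A(\cC) \colon U$ is $\cC$-linear (i.e., lives in $\rmod_\cC(\mathrm{Pr}^L_{\mathrm{st}})$) and monadic with monad $T = A \tensor (-)$. Tensoring this adjunction over $\cC$ with a $\cC$-module $\cM$ (an operation on $\rmod_\cC(\mathrm{Pr}^L_{\mathrm{st}})$ which preserves adjunctions) yields a $\cC$-linear adjunction
\[
F' \colon \cM \simeq \cC \tensor_\cC \cM \rightleftarrows \rmod_A(\cC) \tensor_\cC \cM \colon U'
\]
whose associated monad on $\cM$ is again $A \tensor (-)$. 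By definition, the Eilenberg--Moore category for this monad on $\cM$ is $\rmod_A(\cM)$, so it will suffice to verify the hypotheses of Barr--Beck--Lurie (\cite{HA}*{Theorem 4.7.3.5}) for $U'$: preservation of $U'$-split geometric realizations, which is automatic for a colimit-preserving right adjoint between presentable stable $\infty$-categories, and conservativity.

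The main obstacle is the conservativity of $U'$, since tensoring a conservative functor in $\rmod_\cC(\mathrm{Pr}^L_{\mathrm{st}})$ by an arbitrary $\cC$-module need not preserve conservativity. The key input is that $A$, viewed as the free module in $\rmod_A(\cC)$, is a $\cC$-linear generator under colimits: every $M \in \rmod_A(\cC)$ is recovered as the geometric realization of its bar resolution $A \tensor A^{\tensor \bullet} \tensor_A M$, in which every term is a free $A$-module lying in the essential image of $F$. Tensoring this resolution with $\cM$ (which is exact, since $\mathrm{Pr}^L_{\mathrm{st}}$-colimits commute with the tensor product) exhibits each object of $\rmod_A(\cC) \tensor_\cC \cM$ as a colimit of objects in the essential image of $F'$, from which conservativity of $U'$ follows. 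Alternatively, (a) may be invoked directly from \cite{HA}*{Theorem 4.8.4.6}, which characterizes $\rmod_A(\cC)$ as the universal presentable $\cC$-linear stable $\infty$-category carrying an $A$-module.

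For (b), apply (a) with $\cM = \rmod_B(\cC)$ to obtain
\[
\rmod_A(\cC) \tensor_\cC \rmod_B(\cC) \simeq \rmod_A(\rmod_B(\cC)).
\]
Since $\cC$ is symmetric monoidal, a pair of commuting left actions of algebras $A$ and $B$ on an object of $\cC$ is the same datum as a single left action of $A \tensor B$; at the $\infty$-categorical level this identifies $\rmod_A(\rmod_B(\cC))$ with $\rmod_{A \tensor B}(\cC)$ (cf.\ \cite{HA}*{Theorem 4.3.2.7}, using the interchange of commuting module structures provided by the symmetric monoidal $\infty$-operad), which completes the proof.
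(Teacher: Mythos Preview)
Your proof is correct and more detailed than the paper's own proof, which consists solely of the citation ``See \cite{HA}*{Theorem 4.8.4.6}.'' Your argument via Barr--Beck--Lurie and the bar resolution is precisely the strategy underlying that reference, and you in fact cite it yourself as an alternative, so the approaches coincide.
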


\begin{proof}
    See \cite{HA}*{Theorem 4.8.4.6}.
\end{proof}
For later application towards \Cref{intro main}, we will need a alternative description of Lurie tensor product of two presentable categories, in the case of one of categories is \textit{rigid}, in terms of \textit{Fourier-Mukai transform}. 
\begin{definition}
    Let $\mathrm{L}_{\cC} \colon \cC\rightarrow\cD$ be a morphism in $\mathrm{CAlg}(\mathrm{Pr}_{\mathrm{st}}^{\mathrm{L}})$, then we call $\cD$ is \textit{rigid} over $\cC$ if 
    \benuma
    \item The morphism $m_{\cD}\colon \cD\tensor_{\cC}\cD\rightarrow \cD$ admits a right adjoint in $\mathrm{Mod}_{\cD\tensor_{\cC}\cD}(\mathrm{Pr}_{\mathrm{st}}^{\mathrm{L}})$.
    \item $\mathrm{L}_{\cC}$ admits a right adjoint $\mathrm{R}_{\cD}$ in $\mathrm{Mod}_{\cC}(\mathrm{Pr}_{\mathrm{st}}^{\mathrm{L}}).$
    \eenum
\end{definition}

\begin{definition}
   Let $\cA \in \mathrm{CAlg}(\mathrm{Pr}_{\mathrm{st}}^{L})$ and $ \cB, \cC \in \mathrm{CAlg}_{\cA}(\mathrm{Pr}_{\mathrm{st}}^{L}).$ Moreover, we assume that the unit map $\mathrm{L}_\cB\colon \cA\rightarrow \cB$ is \textit{cocontinuous}, i.e., admits a right adjoint $\mathrm{R}_{\cB}$ that is also colimit preserving. Then we define the \textit{Fourier-Mukai transform} functor 
   $$\pmb{\mathrm{FM}}\colon \cB\tensor_{\cA}\cC\rightarrow \mathrm{Fun}_{\cA}^{\mathrm{L}}(\cB,\cC)$$
   as follows$\colon$ it corresponds to, by adjunction, the composition 
   $$\cB\tensor_{\cA}(\cB\tensor \cC)\underset{(\mathrm{id}_{\cB}\tensor L_{\cC})\tensor \mathrm{id}_{\cB\tensor_{\cA}\cC}}{\rightarrow}(\cB\tensor_{\cA}\cC)\tensor_{\cA}(\cB\tensor_{\cA}\cC)\underset{m_{\cB\tensor_{\cA}\cC}}{\rightarrow} \cB\tensor_{\cA}\cC \underset{\mathrm{R}_{\cB}\tensor \mathrm{id}_{\cC}}{\rightarrow} \cC.$$
\end{definition}

\begin{prop}\label{fm is conservative}
 In the above settings, if $\cB$ is over $\cA$, then we have that $\pmb{\mathrm{FM}}$ is an equivalence.
\end{prop}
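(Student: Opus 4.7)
The plan is to interpret the rigidity hypothesis as exhibiting $\cB$ as a self-dual object in $\mathrm{Mod}_{\cA}(\mathrm{Pr}_{\mathrm{st}}^L)$. Once this is established, the equivalence $\cB \otimes_{\cA} \cC \simeq \mathrm{Fun}_{\cA}^L(\cB, \cC)$ follows automatically from tensor-hom adjunction, and it then suffices to match the resulting functor with $\pmb{\mathrm{FM}}$.

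First I would assemble the duality data. Take as evaluation $\varepsilon := \mathrm{R}_{\cB}\circ m_{\cB}\colon \cB\otimes_{\cA}\cB \to \cA$ and as coevaluation $\eta := m_{\cB}^{R}\circ \mathrm{L}_{\cB}\colon \cA \to \cB\otimes_{\cA}\cB$, where $m_{\cB}^{R}$ denotes the right adjoint of $m_{\cB}$ provided by rigidity condition (a). Both maps lie in $\mathrm{Pr}_{\mathrm{st}}^L$ and are $\cA$-linear: the left adjoints $m_{\cB}$ and $\mathrm{L}_{\cB}$ are colimit-preserving by construction, while $\mathrm{R}_{\cB}$ and $m_{\cB}^{R}$ are colimit-preserving by conditions (b) and (a) respectively, and the $(\cB\otimes_{\cA}\cB)$-linearity of $m_{\cB}^{R}$ forces $\cA$-linearity of $\eta$.

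Next I would verify the two triangle identities. The crucial input is a projection formula for $m_{\cB}^{R}$ arising from its $(\cB \otimes_{\cA} \cB)$-linearity, which allows one to rewrite $m_{\cB}^{R}(b) \simeq (b \boxtimes 1)\cdot m_{\cB}^{R}(1) \simeq (1 \boxtimes b)\cdot m_{\cB}^{R}(1)$. Combined with the unit/counit of the adjunction $(m_{\cB}, m_{\cB}^{R})$ and the unital axiom $m_{\cB}\circ(\mathrm{L}_{\cB}\otimes_{\cA} \mathrm{id}_{\cB}) \simeq \mathrm{id}_{\cB}$ of the $\cA$-algebra structure on $\cB$, the triangle identities reduce to formal manipulations. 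This is the standard argument in the theory of rigid presentable stable $\infty$-categories, going back to Gaitsgory--Rozenblyum and Lurie.

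With self-duality $\cB \simeq \cB^{\vee}$ in $\mathrm{Mod}_{\cA}(\mathrm{Pr}_{\mathrm{st}}^L)$ in hand, tensor-hom adjunction produces a canonical equivalence $\Phi \colon \cB \otimes_{\cA} \cC \isomto \mathrm{Fun}_{\cA}^L(\cB, \cC)$, sending $b\boxtimes c$ to the functor $b' \mapsto (\mathrm{R}_{\cB}\circ m_{\cB})(b'\boxtimes b)\otimes c$. Comparing with the definition of $\pmb{\mathrm{FM}}$ in the excerpt, which is built from $m_{\cB\otimes_{\cA}\cC}$ followed by $\mathrm{R}_{\cB}\otimes \mathrm{id}_{\cC}$, one sees after expanding the multiplication on $\cB\otimes_{\cA}\cC$ as the tensor product of the multiplications on $\cB$ and $\cC$ that $\Phi \simeq \pmb{\mathrm{FM}}$. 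The main obstacle in the whole argument is bookkeeping the higher coherences needed for the triangle identities; but this is handled once and for all by the established formalism of rigid presentable categories, so no new difficulty appears beyond verifying that conditions (a) and (b) in the definition of rigidity supply precisely the adjoints required.
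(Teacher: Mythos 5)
Your proposal takes essentially the same route as the paper: establish that rigidity makes $\cB$ self-dual in $\mathrm{Mod}_{\cA}(\mathrm{Pr}^{L}_{\mathrm{st}})$ with counit $\mathrm{R}_{\cB}\circ m_{\cB}$, then identify the resulting tensor-hom equivalence with $\pmb{\mathrm{FM}}$. The only difference is emphasis: the paper treats self-duality as known and spends its effort on the diagram chase matching $\pmb{\mathrm{FM}}$ with the counit-induced equivalence, whereas you reconstruct the self-duality data in detail and leave the final comparison as a sketch ("after expanding the multiplication on $\cB\otimes_{\cA}\cC$\ldots"), which is exactly the step the paper makes explicit.
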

\begin{proof}
Since $\cB$ is relatively rigid, we have that $\cB\simeq \cB^{\vee}$ and by dualizability of $\cB$, we have $f\colon \cB\tensor_{\cA}\cC \simeq \mathrm{Fun}_{\cA}^{L}(\cB^{\vee},\cC)\simeq \mathrm{Fun}_{\cA}^{L}(\cB,\cC),$ thus it suffices to show that this natural equivalence is induced by $\pmb{\mathrm{FM}}.$ Consider the following commutative diagram
$$\begin{tikzcd}
\cB\tensor_{\cA}\cB\tensor_{\cA}\cC \arrow[r, "\mathrm{id}_{\cB}\tensor\mathrm{L}_{\cC}\tensor \mathrm{id}"'] \arrow[d, "\mathrm{id}"'] \arrow[ddd, "\mathrm{counit}\tensor \mathrm{id}_{\cC}"', bend right=67] & \cB\tensor_{\cA}\cC\tensor_{\cA}\cB\tensor_{\cA}\cC \arrow[d, "\mathrm{id}\tensor m_{\cC}"] \arrow[dd, "m_{\cB\tensor_{\cA}\cC}", bend left=74] \\
\cB\tensor_{\cA}\cB\tensor_{\cA}\cC \arrow[r, "\mathrm{id}"] \arrow[d, "m_{\cB}\tensor \mathrm{id}_{\cC}"']                                                                                                     & \cB\tensor_{\cA}\cB\tensor_{\cA}\cC \arrow[d, "m_{\cB}\tensor \mathrm{id}_{\cC}"]                                                                     \\
\cB\tensor_{\cA}\cC \arrow[d, "\mathrm{R}_{\cB}\tensor \mathrm{id}_{\cC}"'] \arrow[r, "\mathrm{id}"]                                                                                                                  & \cB\tensor_{\cA}\cC \arrow[d, "\mathrm{R}_{\cB}\tensor \mathrm{id}_{\cC}"]                                                                            \\
\cC \arrow[r, "\mathrm{id}"]                                                                                                                                                                                    & \cC                                                                                                                                            
\end{tikzcd}$$
 Here we used that there is a natural equivalence between the counit functor $\mathrm{counit}\colon \cB\tensor_{\cA} \cB^{\vee}\rightarrow \cA$ and the composition $\cB\tensor_{\cA}\cB\underset{m_{\cB}}{\rightarrow} \cB\underset{\mathrm{R}_{\cB}}{\rightarrow} \cA.$ Since by adjunction the functor $f$ corresponds to $\mathrm{counit}\tensor \mathrm{id}_{\cC}\in \mathrm{Fun}(\cB\tensor_{\cA}\cB\tensor_{\cA}\cC,\cC)$, the above commutative diagram shows that there is a natural equivalence from $f$ to $\pmb{\mathrm{FM}}.$ This finishes the proof. 
\end{proof}
    It is unclear how to describe the \(\Hom\) space inside the Lurie tensor product of categories. Still, there is an explicit description for compactly generated categories, we will see later (\Cref{hom}).

\csub{Small stable categories}
For the purpose of our note we also need the tenor product of small stable idempotent complete $\infty$-categories. Indeed, as explained in \Cref{remark1}, the statement of \Cref{thm1} is actually about taking tensor product of small categories.

Let $\mathrm{Cat}_{\infty}^{\mathrm{st},\mathrm{perf}}$ be the full $\infty$-subcategory of the stable $\infty$-categories (with morphisms exact functors) consiting of those $\infty$-categories that are idempotent complete. Recall that an $\infty$-category $\cC$ is idempotent complete if the essential image of the Yoneda embedding $\cC\rightarrow \cP(\cC)$ is closed under retracts. 

\begin{definition}
 For $\cC_{1}, \cC_{2}\in \mathrm{Cat}_{\infty}^{\mathrm{st},\mathrm{perf}}$, we define their tensor product by $\cC_{1}\tensor{\cC_2}=(\rind(\cC_{1})\tensor \rind(\cC_{2}))^c$, where $(-)^c$ denotes the subcategory consists of compact objects. This is well defined since, $\rind(\cC_1)\tensor \rind(\cC_2)$ is idempotent complete and retract of compact object is also compact. Similarly, for $\cA\in  \mathrm{CAlg}(\mathrm{Cat}_{\infty}^{\mathrm{st},\mathrm{perf}})$, we denote by ${\mathrm{Cat}_{\infty,\cA}^{\mathrm{st},\mathrm{perf}}}$ the $\infty$-category of $\cA$-linear idempotent complete stable $\infty$-categories.

\end{definition}

This definition satisfies the universal properties one should expect for tensor product\upshape: 

\begin{prop}\label{Karoubian}
The $\infty$-category $\mathrm{Cat}_{\infty}^{\mathrm{st},\mathrm{perf}}$ carries a symmetric monoidal structure characterized by the property
that for $\cC_1,\cC_2,\cD\in \mathrm{Cat}_{\infty}^{\mathrm{st},\mathrm{perf}}$, the $\infty$-category of exact functors
 $\rfun_{ex}(\cC_1\otimes \cC_2, \cD)$ is equivalent to the full $\infty$-subcategory 
of all functors $\cC_1\times \cC_2 \to \cD$ that preserve finite colimits in 
$\cC_1$ and $\cC_2$ separately. Furthermore, passing to the corresponding 
stable presentable $\infty$-categories of $\rind$-objects 
is a symmetric monoidal functor.
\end{prop}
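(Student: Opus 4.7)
The plan is to transport the symmetric monoidal structure on $\mathrm{Pr}_{\mathrm{st}}^L$ along the classical equivalence
\[
\rind\colon \mathrm{Cat}_{\infty}^{\mathrm{st},\mathrm{perf}} \isomto \mathrm{Pr}_{\mathrm{st},\omega}^L,
\]
where the right-hand side denotes compactly generated stable presentable $\infty$-categories with morphisms the colimit-preserving functors that moreover preserve compact objects, and whose inverse is $(-)^c$ \up{see \cite{HA}*{\S 5.5.7}}. In particular, this equivalence induces, for any $\cC, \cD \in \mathrm{Cat}_{\infty}^{\mathrm{st},\mathrm{perf}}$,
\[
\rfun_{ex}(\cC, \cD) \simeq \rfun_\omega^L(\rind(\cC), \rind(\cD)),
\]
where $\rfun_\omega^L$ denotes colimit-preserving functors that also preserve compact objects.

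The key technical step is to show that the full subcategory $\mathrm{Pr}_{\mathrm{st},\omega}^L \subset \mathrm{Pr}_{\mathrm{st}}^L$ is closed under the Lurie tensor product and inherits from it a symmetric monoidal structure. Concretely, for $\cD_1, \cD_2 \in \mathrm{Pr}_{\mathrm{st},\omega}^L$ the tensor product $\cD_1 \tensor \cD_2$ is again compactly generated, with compact objects generated under finite colimits and retracts by elementary tensors $c_1 \boxtimes c_2$ with $c_i \in \cD_i^c$. Compactness of each $c_1 \boxtimes c_2$ follows from the internal $\Hom$ identification $\rfun^L(\cD_1 \tensor \cD_2, -) \simeq \rfun^L(\cD_1, \rfun^L(\cD_2, -))$, combined with the characterization of compactness via preservation of filtered colimits; that such objects generate under colimits is a direct consequence of \Cref{obj}. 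This is essentially \cite{HA}*{Lemma 5.3.2.11} together with its immediate corollaries.

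Granted this, define $\cC_1 \tensor \cC_2 \ce (\rind(\cC_1) \tensor \rind(\cC_2))^c$ as in the statement; by construction $\rind$ intertwines this tensor product with the Lurie tensor product, and is therefore symmetric monoidal. For the universal property, chaining the $\rind$-equivalence with \Cref{universal_lurie} gives
\[
\rfun_{ex}(\cC_1 \tensor \cC_2, \cD) \simeq \rfun_\omega^L(\rind(\cC_1) \tensor \rind(\cC_2), \rind(\cD)),
\]
and a colimit-preserving functor out of $\rind(\cC_1) \tensor \rind(\cC_2)$ preserves compact objects iff the corresponding separately-colimit-preserving bilinear functor on $\rind(\cC_1) \times \rind(\cC_2)$ sends pairs of compact objects to compact objects; under the identifications $\cC_i \simeq \rind(\cC_i)^c$ and $\cD \simeq \rind(\cD)^c$, this matches exactly the condition of preserving finite colimits separately as a functor $\cC_1 \times \cC_2 \to \cD$.

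The main obstacle is the key step: while compactness of the elementary tensors $c_1 \boxtimes c_2$ is essentially formal, establishing that such objects generate \emph{all} compacts in $\cD_1 \tensor \cD_2$ (and hence that $\mathrm{Pr}_{\mathrm{st},\omega}^L$ is closed under the Lurie tensor product) is a nontrivial structural fact that rests on Lurie's general machinery on compactly generated presentable $\infty$-categories.
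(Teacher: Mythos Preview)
Your proposal is correct and is essentially the argument behind the reference the paper cites: the paper does not give an independent proof but simply refers to \cite{benzvi2010integraltransformsdrinfeldcenters}*{Proposition 4.4}, whose proof proceeds exactly as you outline, by transporting the Lurie tensor product along the equivalence $\rind\colon \mathrm{Cat}_{\infty}^{\mathrm{st},\mathrm{perf}} \simeq \mathrm{Pr}_{\mathrm{st},\omega}^L$ and identifying compact objects in the tensor product.
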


\begin{proof}
    \cite{benzvi2010integraltransformsdrinfeldcenters}*{proposition 4.4}.
\end{proof}
As a corollary this tells us the following:
\bcor\label{dual}
In the setting above, under this symmetric monoidal structure on $\mathrm{Cat}_{\infty}^{\mathrm{st},\mathrm{perf}}$, we have that any $\cC\in \mathrm{Cat}_{\infty}^{\mathrm{st},\mathrm{perf}} $ is dualizable, and $\cC^{\vee}\simeq \cC^{\mathrm{op}}$. 
\ecor
From now on all the $\infty$-categories appears are symmetric monoidal. We give some useful criterion to check whether a symmetric monoidal functor  $\cC_1\tensor\cC_2\rightarrow \cD$ is an equivalence of categories.
\bprop[Computation of Hom space in Lurie tensor product]\label{hom}
    Assume that $\cC_1,\cC_2,\cA\in \mathrm{Cat}_{\infty}^{\mathrm{st},\mathrm{perf}}$, where $\cC_1$ and $\cC_2$ are $\cA$-linear and $\cA$ is \textit{self dual} \up{$\cA^{\mathrm{op}}\simeq \cA$}. For $c\boxtimes d, c^{\prime}\boxtimes d^{\prime} \in \cC_1\tensor_{\cA}\cC_2$, we have
$$\mathrm{Hom}(c\boxtimes d, c^{\prime}\boxtimes d^{\prime})=\mathrm{Hom}(c,c^{\prime})\tensor \mathrm{Hom}(d, d^{\prime})\in \mathrm{Ind}(\cA).$$
\eprop
\begin{proof}
We denote $\bC_{1}\coloneq \rind(\cC_1), \bC_{2}\coloneq \rind(\cC_{2})$, $\bA\coloneq\rind(\cA)$ and $\bD\coloneq \rind(\cD)$.  
Note that we have a functor as follow:
\begin{align}\label{long} 
\bC_{1}\tensor_{\bA} \bC_{2}\rightarrow \rfun_{\bA}^{L}(\bC_{1}^{\vee}\tensor \bC_{2}^{\vee}, \bA)\rightarrow \rfun_{\cA-linear}((\cC_{1})^{\op}\times (\cC_2)^{\mathrm{op}}, \bA) 
\end{align}
Here, the first arrow defined via counit map of $(\bC_{1}\tensor_{\bA} \bC_{2}) \tensor_{\mathbb{A}} (\bC_{1}\tensor_{\bA} \bC_{2})^{\vee}$ and the second defined by \Cref{universal_lurie}.
The corresponding functor
$$\bC_1\times_{\bA}\bC_2 \rightarrow \rfun_{\cA}((\cC_{1})^{\op}\times (\cC_2)^{\mathrm{op}}, \bA) $$
is by definition given by 
\begin{align}\label{short}
(c_1,c_2)\mapsto ((c_1^{\prime},c_2^{\prime})\mapsto \mathrm{Hom}_{\cC_1}(c_1^{\prime},c_1)\tensor \mathrm{Hom}_{\cC_2}(c_2^{\prime}, c_2)).
\end{align}

By construction, the essential image of  \Cref{long} is contained in the full subcategory 
$$\rfun_{\mathrm{bi}\text{-}\mathrm{ex}, \cA}(\cC_{1}^{\mathrm{op}}\times \cC_2^{\mathrm{op}},\bA)\subset \rfun_{\cA}(\cC_{1}^{\mathrm{op}}\times \cC_{2}^{\mathrm{op}}, \bA)$$
that consists of functors that are exact in each variable. We denote the resulting functor 
\begin{align}\label{equiv}
    h_{1,2}\colon\bC_1\tensor \bC_2\rightarrow \rfun^{L,L}(\cC_{1}^{\mathrm{op}}\times \cC_2^{\mathrm{op}},\bA).
\end{align}
We claim that this functor is an equivalence. Indeed, by \Cref{dual} one observe that 
$$\bC_1\tensor_{\bA}\bC_2\simeq \rfun_{\mathrm{ex}, \cA}(\cC_{1}^{\mathrm{op}},\bC_2)\simeq \rfun_{\mathrm{ex}, \cA-linear}(\cC_{1}^{\mathrm{op}}, \rfun_{\mathrm{ex}}(\cC_{2}^{\mathrm{op}}, \bA))\simeq \rfun_{\mathrm{bi}\text{-}\mathrm{ex},\cA-linear}(\cC_{1}^{\mathrm{op}}\times \cC_{2}^{\mathrm{op}}, \bA).$$
Now apply this equivalent functor $h_{1,2}$ to  (\ref{short}) finish the proof.
\end{proof}
\bprop[Criterion for fully faithful functor]\label{ffaith}
For  $ \cA\in \mathrm{CAlg}(\mathrm{Cat}_{\infty}^{\mathrm{st},\mathrm{perf}})$, with $\cA$-linear categories $\cC_1,\cC_2,\cD\in \mathrm{Cat}_{\infty}^{\mathrm{st},\mathrm{perf}}$, a symmetric monoidal functor $h\colon\ \cC_1\tensor_{\cA}\cC_2\rightarrow \cD$ is fully faithful if and only if for any $(c_1,c_2), (c_{1}^{\prime}, c_{2}^{\prime})\in \cC_1\times \cC_2$, $$\mathrm{Hom}_{\cD}(h(c_1\boxtimes c_2), h(c_{1}^{\prime}\boxtimes c_{2}^{\prime}))\simeq \mathrm{Hom}_{\cC_1}(c_1, c_{1}^{\prime})\tensor \mathrm{Hom}_{\cC_2}(c_2, c_{2}^{\prime}).$$
\eprop
\begin{proof}
    By  \Cref{hom}, the ``only if'' part is obvious. We prove the ``if'' direction. 
    
    We denote ${\bC}_{1}\coloneq \rind(\cC_1), \bC_{2}\coloneq \rind(\cC_{2})$ and $\bD\coloneq \rind(\cD)$. First by assumption, we know that this functor is fully faithful after restricts to the full sub-category $\cC_{fin}$ generated by exterior tensor products under finite colimit. Then we notice that by \Cref{obj}, any object $c\in\bC_1\tensor \bC_2$ can be written as a colimit of the form $\mathop{\mathrm{colim}}\limits_{I} (c_{1,i}\boxtimes c_{2,i})$. If $c\in \cC_1\tensor\cC_2$, then by compactness of $c$, one can assume that $c$ is a retract $c\stackrel{i}\hookrightarrow\mathop{\mathrm{colim}}\limits_{J} (c_{1,j}\boxtimes c_{2,j})\stackrel{f}\to c$ of such colimit,  where the index $J\subset I$ is finite. Then we can further write it as the sequential colimit $\mathop{\mathrm{colim}}\limits_{i\circ f} \mathbf{c}_{fin}$ where $\mathbf{c}_{fin} \in \cC_{fin}$, and also as a sequential limit $\mathop{\mathrm{lim}}\limits_{\mathrm{Id}-f\circ i}\mathbf{c}_{fin}$. Since retract are carried to retract under $h$, one also have $h(c)=\mathop{\mathrm{colim}}\limits_{\mathrm{Id}-h(i\circ f)} h(\mathbf{c}_{fin})=\mathop{\lim}\limits_{\mathrm{Id}-h(f\circ i)}h(\mathbf{c}_{fin})$.
    
    Now the full faithfulness follows from for any $c,c'\in \cC$,
$$\begin{aligned}
\mathrm{Hom}(c, c')&=\mathrm{Hom}(\mathop{\mathrm{colim}}\limits_{\mathrm{Id}-f\circ i} \mathbf{c}_{fin}, \mathop{\mathrm{lim}}\limits_{\mathrm{Id}-f'\circ i'}\mathbf{c'}_{fin})\\
    &=\mathop{\mathrm{lim}}\limits_{\mathrm{Id}-f\circ i}\mathop{\mathrm{lim}}\limits_{\mathrm{Id}-f'\circ i'}\mathrm{Hom}(\mathbf{c}_{fin}, \mathbf{c'}_{fin})\\
    &=\mathop{\mathrm{lim}}\limits_{\mathrm{Id}-h(f\circ i)}\mathop{\mathrm{lim}}\limits_{\mathrm{Id}-h(f'\circ i')}\mathrm{Hom}(h(\mathbf{c}_{fin}), h(\mathbf{c'}_{fin}))\\
    &=\mathrm{Hom}(\mathop{\mathrm{colim}}\limits_{\mathrm{Id}-h(f\circ i)} h(\mathbf{c}_{fin}), \mathop{\mathrm{lim}}\limits_{\mathrm{Id}-h(f'\circ i')}h(\mathbf{c'}_{fin}))\\
    &=\mathrm{Hom}(h(c), h(c')).
\end{aligned}$$
   
\end{proof}
\bprop[Criterion for equivalence]\label{esurj}
In the setting above, if $h$ is fully faithful, then $h$ is an equivalence if and only if the functor
$$g\colon \cD\rightarrow \mathrm{Fun}_{\mathrm{bi}\text{-}\mathrm{ex}}(\cC_{1}^{\mathrm{op}}\times \cC_{2}^{\mathrm{op}}, \rind(\cA))$$
Given by $d\mapsto ((c_1,c_2)\mapsto \mathrm{Hom}_{\cD}(h(c_1\tensor c_2), d)) $ is conservative.
\eprop
\begin{proof}
    Passing to Ind category, by \Cref{obj} the ``only if'' direction is straitforward. We now deal with ``if'' part.
    
  We notice that under the equivalence $h_{1,2}$ from \Cref{equiv}, g is the right adjoint of h. Indeed, by Yoneda $\rfun_{\mathrm{bi}\text{-}\mathrm{ex}}(\cC_{1}^{\mathrm{op}}\times \cC_{2}^{\mathrm{op}}, \rind(\cA))$ is compactly generated by Yoneda sheaf $\eta((c_1\times c_2))$ which corresponce to $c_1\tensor c_2$ under \Cref{equiv}. Thus one only needs to check the adjoint pair on $(c_1,c_2)$. That is:
  $$\mathrm{Hom}_{\rfun_{\mathrm{bi}\text{-}\mathrm{ex}}(\cC_{1}^{\mathrm{op}}\times \cC_{2}^{\mathrm{op}}, Sp)}(\eta(c_1,c_2),g(d))=\mathrm{Hom}_{\cD}(h(\eta(c_1\times c_2)), d).$$
  But again by Yoneda, $\mathrm{Hom}_{\rfun_{\mathrm{bi}\text{-}\mathrm{ex}}(\cC_{1}^{\mathrm{op}}\times \cC_{2}^{\mathrm{op}}, Sp)}(\eta(c_1,c_2),F)=F(c_1\times c_2)$, in particular, apply to $F=g(d), d\in \cD$, this gives $\mathrm{Hom}_{\cD}(h(c_1\tensor c_2), d)$, which is exactly $\mathrm{Hom}(h(\eta(c_1\times c_2)), d)$.

  Now, the statement follows by the following formal argument:

  \blem
   Assume $\cE,\cF \in \mathrm{Cat}_{\infty}^{\mathrm{st},\mathrm{perf}}$, have exact functor $L\colon \cE\rightarrow \cF$ which is a fully faithful embbeding and admits right ajoint $R\colon \cF\rightarrow \cE$. Then if $R$ being conservative we have $L$ is essential surjective.
  \elem
  \begin{proof}[Proof of the lemma]
      since $L$ fullyfaithful, $R\circ L\simeq \mathrm{Id} $, now we claim for any $f\in F$, $LR(f)\simeq f$. 
      Indeed, consider $R(\mathrm{Fib}(LR(f)\rightarrow f))\simeq \mathrm{Fib}(RLR(f), Rf)\simeq \mathrm{Fib}(Rf, Rf)\simeq 0$. 
      But by conservativity of $R$ this means $\mathrm{Fib}(LR(f)\rightarrow f)\simeq 0$, thus $LR(f)\simeq f$. This proves essential surjectivity.
  \end{proof}
\end{proof}

\section{Recollection on analytic geometry}

It has long been a challenge for rigid analytic geometry to incorporate a reasonable notion of stacks or derived enhancements together with their category of quasi-coherent sheaves, mainly due to the topological nature of basic affine objects in the rigid setting descent theory is harder to establish. Especially, a construction of Gabber \cite{gabbercount} prevent the naive notion of quasi-coherent sheaves in rigid analytic geometry behave nicely. In recent years, however, the work of Clausen and Scholze on condensed mathematics has provided a new perspective on this issue. In particular, the foundational work of \cite{camargo2024analyticrhamstackrigid} and \cite{anschütz2025analyticrhamstacksfarguesfontaine} has established a profound framework for the theory of analytic stacks over non-archimedean base.
As mentioned in the Introduction, the very formulation of our main result (\Cref{intro main}) is based on this notion. Therefore, we devote this chapter to recalling the relevant aspects of the theory of analytic stacks.

Most of the material presented here is well known to experts, but for the sake of completeness, we include detailed proofs.

\csub{Non-archimedean analytic stacks}\label{non-arch ana stack}

Let $\mathrm{Prof}^{\mathrm{light}}$ be the category of the metrizable profnite sets (i.e., can be presented as countable limit of finite sets), and we endow it with the Grothendieck topology whose covers are finitely many jointly surjective maps between the profinite sets.  Typical examples of metrizable profinite sets includes\upshape:
\benuma
\item The one point compactification $\mathbb{N}\cup\{\infty\}$ of $\mathbb{N}$ (endow $\mathbb{N}$ with discrete topology) is a profinite set. Indeed, we can write it as $\mathbb{N}\cup\{\infty\}=\lim_n \{1,2,...n,\infty\}.$
\item The Cantor set $\{0,1\}^{\mathbb{N}}.$
\eenum 
\hiddensubsubsection{Basic condensed mathematics}
We recall from \cite{mann2022padic6functorformalismrigidanalytic}*{Definition 2.1.1} that, for an $\infty$-category $\cC$, a \textit{light condensed object in $\cC$} is a sheaf $$T: \mathrm{Prof}^{\mathrm{light}, \mathrm{op}}\rightarrow \cC$$ 
The $\infty$-category of light condensed objects in $\cC$ are denoted by $\mathrm{Cond(\cC))}$. Frequently used condensed objects includes the following
\benuma
\item $\cC=\mathrm{Set}$, the category of sets.  \textit{condensed set}, we denote by $\mathrm{CondSet}.$
\item$\cC=\Ani$, the $\infty$-category of anima. The corresponding category of condensed objects is called \textit{condensed anima}, we denote by $\mathrm{CondAni}.$
\item $\cC=\mathrm{Sp}$, the $\infty$-category of spectra. The corresponding category of condensed objects is called \textit{condensed spectra}, we denote by $\mathrm{CondSp}.$
\item $\cC=D(\mathbb{Z})$, the derived ($\infty$-)category of abelian groups. The corresponding category of condensed objects is called \textit{\up{derived} condensed abelian group}, we denote by $D(\mathrm{CondAb}).$
\item $\cC=\mathrm{CAlg}(D(\mathbb{Z}))$, the $\infty$-category of commutative algebra objects in $D(\mathbb{Z})$. The corresponding category of condensed objects is called \textit{condensed ring}, we denote by $\mathrm{CondAlg}.$
\eenum
The infinite loop space functor $\Omega^{\infty}\colon D(\mathbb{Z})\rightarrow \Ani$ induces functor between condensed objects
\begin{equation}\label{adjunction between ani and ab}
   \begin{tikzcd}
D(\mathrm{CondAb}) \arrow[r, "\mathrm{Cond}(\Omega^{\infty})", shift left=2] & \mathrm{CondAni.} \arrow[l, "{\mathbb{Z}[-]}", shift left=2]
\end{tikzcd} 
\end{equation}

It admits a left adjoint $\mathbb{Z[-]}$, which is the sheafification of the presheaf sending a profinite set $T$ to the condensed abelian group $\mathbb{Z}[T]: S\mapsto \mathbb{Z}[\mathrm{Cont}(S,T)].$

The category of condensed sets includes a large class topological space. More precisely, for any locally compact Hausdorff space $T\in \mathrm{LCHaus},$ we can associate a condensed set $\underline{T}\colon S\mapsto \mathrm{Cont}(S,T).$  In particular, by choosing $\mathbb{Z}$ with discrete topology, this gives a condensed ring $\underline{\mathbb{Z}}.$
\hiddensubsubsection{Solid modules}

There is a class of (derived) condensed abelian group that are of ``non-archimedean'' nature called \textit{solid abelian groups}. It is a very important feature in non-archimedean analysis that null sequences are ``integrable'', indeed, for a $p$-complete $\mathbb{Z}_p$ module $M$, if there is a sequence $\{m_i\}_{i\in\mathbb{N}}$ such that the $m_i$ converge in the $p$-adic topology (i.e., null sequence), then $\Sigma_{i\in\mathbb{N}} m_i$ exist in $M.$  We denote the ``space'' of null sequence as above by $\mathrm{Null}(M)$, then the feature of integrability of null-sequence can be interpret as the map
$$\mathrm{Null}(M)\underset{\{m_i\}_{i\in\mathbb{N}}\mapsto \{m_i-m_{i+1}\}_{i\in \mathbb{N}}}{\longrightarrow}\mathrm{Null}(M)$$
being invertible. 
As an abstract incarnation of this feature, denote by $\mathrm{Shift}$ the endomorphism of $\mathbb{N}\cup  \{\infty\}$ induced by mapping $n$ to $n+1$, we call a derived condensed abelian group $M$ is \textit{solid} if 
 $$\mathrm{Map}(\mathbb{Z[\mathbb{N}\cup\{\infty\}]}, M)\underset{1-\mathrm{shift}}{\longrightarrow} \mathrm{Map}(\mathbb{Z}[\mathbb{N}\cup\{\infty\}], M)$$
is an isomorphism. 

We denote by $D_{\sol}(\mathbb{Z})$ the $\infty$-category of solid abelian groups, which by \cite{condensed}*{Proposition 7.5} is a full subcategory of $D(\mathrm{CondAb})$ that is stable under all limits and colimits (in particular, the forgetful functor admits a left adjoint $(-)^{\sol}$). The $\infty$-category $D_{\sol}(\mathbb{Z})$ admits a natural symmetric monoidal structure $-\tensor_{\mathbb{Z}}^{\sol}-$ that is computed as $M\tensor_{\mathbb{Z}}^{\sol}N= (M\tensor_{\underline{\mathbb{Z}}} N)^{\sol}.$ The first non-trivial computation about solid abelian groups is the following 
$$\mathbb{Z}[\![T]\!]\tensor_{\mathbb{Z}}^{\sol}\mathbb{Z}[\![T]\!]\simeq \mathbb{Z}[\![T_1,T_2]\!]$$
where the formal power series rings endowed with product topology. 

This suggests that $D_{\sol}(\mathbb{Z})$ provides a natural environment for performing homological algebra in non-archimedean analysis. Moreover, one can attempt to do ``geometry'' over this category, which leads to the concept of an \textit{analytic ring}.

\hiddensubsubsection{Analytic rings}
We recall from \cite{mann2022padic6functorformalismrigidanalytic}*{Definition 2.3.10} that an \textit{analytic ring} $\cA$ is a pair $(\underline{\cA}, D_{\sol}(\cA))$ consisting of a condensed ring $\underline{\cA}$ and the category of $\cA$-\textit{complete} modules $D_{\sol}(\cA)$, which is a full subcategory of $\mathrm{Mod}_{\underline{\cA}}(D_{\mathrm{Cond}}(\mathbb{Z}))$ satisfying  the following conditions
    \benuma
    \item It is a full subcategory of $D(\underline{\cA})$ that is stable under all limit and colimits.
    \item $D_{\sol}(\cA)$ admits a unique symmetric monoidal structure such that the left adjoint (which exist by (1)) $(-)\tensor_{\underline{\cA}}\cA: D(\underline{\cA})\rightarrow D_{\sol}(\cA)$ is symmetric monoidal and preserves connective objects.
    \item  $\underline{\cA}\in D_{\sol}(\cA)$
    \eenum
    We denote by $\AnRing$ the $\infty$-category of analytic rings, and by $\AnRing_{\cA}$ the slice category of $\AnRing$ over a analytic ring $\cA$.
\begin{eg}\label{example of analytic ring} We list some of the typical examples of analytic rings.
\benuma
\item   The pair $\mathbb{Z}_{\sol}\coloneq(\underline{\mathbb{Z}}, D_{\sol}(\mathbb{Z}))$ gives an analytic ring.
\item   The Laurent series $\mathbb{Z}(\!(T^{-1})\!)$ is an idempotent algebra in $\mathrm{Mod}_{\mathbb{Z}[T]}(D_{\sol}(\mathbb{Z}))$, and the pair $\mathbb{Z}[T]_{\sol}\coloneq(\underline{\mathbb{Z}[T]}, \mathrm{Mod}_{\mathbb{Z}[T]}(D_{\sol}(\mathbb{Z}))/\mathrm{Mod}_{\mathbb{Z}(\!(T^{-1})\!)}(D_{\sol}(\mathbb{Z})))$ defines an analytic ring.
\item   For any set $S$, $(\mathbb{Z}[\mathbb{N}[S]],\mathbb{Z})_{\sol}\coloneq(\underline{\mathbb{Z}}[\mathbb{N}[S]], \mathrm{Mod}_{\underline{\mathbb{Z}}[\mathbb{N}[S]]}D_{\sol}(\mathbb{Z}))$ defines an analytic ring. In fact, we have a much more general construction\upshape. Let $\cA$ be a analytic ring, similar to (\ref{adjunction between ani and ab}), we have the following adjunction 
$$\begin{tikzcd}
\AnRing_{\cA} \arrow[r, "\textit{forget}", shift left=2] & \mathrm{CondAni} \arrow[l, "{\underline{\cA}[\mathbb{N}[-]]}", shift left=2]
\end{tikzcd}.$$
In particular, for any condensed anima $M$, $(\underline{\cA}[\mathbb{N}[S]], \mathrm{Mod}_{\underline{\cA}[\mathbb{N}[S]]}D_{\sol}(\cA))$ defines an analytic ring.
\item   For any finite set $S$, $(\mathbb{Z}[\mathbb{N}[S]])_{\sol}\coloneq\underset{s\in S}{\tensor}\mathbb{Z}[T_s]_{\sol}$ defines an analytic ring.
\item  For any set $S$, $\mathbb{Z}[\mathbb{N}[S]]_{\sol}\coloneq \underset{S'\subset S, |S'|<\infty}{\colim} \mathbb{Z}[\mathbb{N}[S']]_{\sol}$ defines an analytic ring.
\item  Let $A\in \mathrm{CAlg}(D_{\sol}^{\leq 0}(\mathbb{Z}))$ and $S$ a subset of $\pi_{0}A(*)$, $(A,S)_{\sol}\coloneq (A,\mathbb{Z})_{\sol}\tensor_{(\mathbb{Z}[\mathbb{N}[S]],\mathbb{Z})_{\sol}}\mathbb{Z}[\mathbb{N}[S]]_{\sol}$ defines an analytic ring.
\item\label{tate base}   Given a complete Huber ring $(A,A^+)$, one can associate an analytic ring $(A,A^+)_{\sol}$ by taking the solid algebra $\underline{A}$ and the set $A^+\subset \underline{A}(*)$ as above (see also \cite{andreychev2021pseudocoherentperfectcomplexesvector}*{Proposition 3.23}). In particular, associated to the complete Huber ring $(\mathbb{Z}(\!(\pi)\!), \mathbb{Z}[\![\pi]\!])$ we have the analytic ring $R_{\sol}=(R,R^{+})_{\sol}\coloneq(\mathbb{Z}(\!(\pi)\!), \mathbb{Z}[\![\pi]\!])_{\sol}$ which is equivalent to the pair $(\underline{\mathbb{Z}(\!(\pi)\!)}, \mathrm{Mod}_{\mathbb{Z}(\!(\pi)\!)}D_{\sol}(\mathbb{Z})).$
\item\label{tate generalized algebra} for any profinite set $S$, let $R^{+}\langle\mathbb{N[S]}\rangle\coloneq  \lim_{n} (R^{+}/\pi^{n})_{\sol}[\bN[{S}]]$ and $R\langle\mathbb{N}[S]\rangle\coloneq R^{+}\langle\mathbb{N[S]}\rangle[1/\pi]$. Then $(R\langle\mathbb{N}[S]\rangle), \pi_0R^{+}\langle\mathbb{N[S]}\rangle(*))_{\sol}$ defines an analytic rings. This is a generalized version of Tate algebras.
    \eenum
\end{eg}

There is a natural Grothendieck topology on the $\infty$-category of analytic rings. Recall from \cite{camargo2024analyticrhamstackrigid}*{Definition 3.2.2} that, a morphism $f:\cA\rightarrow \cB$ of analytic rings is called\upshape:
\benuma
        \item \textit{Open} immersion if $f^*\colon D_{\sol}(\cA)\rightarrow D_{\sol}(\cB)$ is an \textit{localization} \up{that is, admits a fully faithful right adjoint}.
        \item \textit{Proper} if the analytic structure on $\cB$ is induced from $\cA$.
        \item \textit{$!$-able} if $f=j\circ p$ can be write as a composition of \textit{proper} map $p$ with \textit{open} immersion $j$.
        \eenum
This allow us to define the following $!$-\textit{topology}.

    \begin{definition}[\cite{camargo2024analyticrhamstackrigid}*{Definition 3.2.7}]
    Let $\{f_i\colon \cA\rightarrow 
        \cB_i\}$ be a family of maps in $\Aff_{\bZ_{\sol}}$.
        \benuma
      \item The maps $f_i$ statisfied \textit{universal *-descent} if for all base changes  $\{f_{i}^{'}\colon\cA'\rightarrow\cB_{i}'\}$ along any morphism of analytic rings $\cA\rightarrow\cA'$, the functor $D_{\sol}(\_)$ satisfies descent along $f_{i}'^*$, that is, let $\cB'=\Pi_{i} \cB_{i}'$ then we have equivalence of categories:  $$D_{\sol}(\cA')\simeq  \underset{[n]\in \Delta^{op}}{\lim} D_{\sol}(\cB'^{\tensor n/\cA'}).$$
      Where the limit is taking along pullback functors.
      \item Assume the maps $f_i$ are \textit{$!$-able}, then we call the maps $f_i$ satisfy \textit{universal $!$-descent} if for all base changes $\{f_{i}^{'}\colon\cA'\rightarrow\cB_{i}'\}$ along any morphism of analytic rings $\cA\rightarrow\cA'$, the functor $D_{\sol}(\_)$ satisfies descent along $f_{i}'^{!}$ \up{that is, all transition maps are $f_{i}^{\prime\ !}$}. That is, let $\cB'=\Pi_{i} \cB_{i}'$ then we have equivalence of categories:
      $$D_{\sol}(\cA')\simeq \underset{[n]\in \Delta^{op}}{\colim} D_{\sol}(\cB'^{\tensor n/\cA'}) $$
      Where the colimit is taking along $!$-functors.
      \item A $!$-cover is a family $\{f_i\colon \cA\rightarrow 
        \cB_i \}$ of \textit{$!$-able} maps such that they satify \textit{universal *-descent} and \textit{universal $!$-descent}.
      \eenum
    \end{definition}
\hiddensubsubsection{Solid stacks}

Following \cite{camargo2024analyticrhamstackrigid}, in order to define a reasonable class of analytic stacks capture the ``solid'' (resp., rigid analytic) counter part of analytic theory, we restrict our test category of affine objects to the \textit{solid affinoid algebras} (resp., \textit{bounded affinoid algebras}).

Let $\cA\in \AnRing_{\mathbb{Z}_{\sol}}$ be an analytic ring over $\mathbb{Z}_{\sol}$. Recall from \cite{camargo2024analyticrhamstackrigid}*{Definition 2.6.1}, that the subring of $solid$ element is the discrete animated ring given by the mapping space $$\cA^{+}= \rmap_{\AnRing_{\mathbb{Z}_{\sol}}}(\mathbb{Z}[T]_{\sol}, \cA).$$
We say  that $\cA$ is \textit{solid affinoid ring} if the natural map $(\underline{\cA}, \pi_0(\cA^{+}))\rightarrow \cA$ is an equivalence of analytic rings. Given $\cA$ a solid affinoid ring we let $\AffRing_{\cA}$ denote the slice category of solid affinoid $\cA$-algebras and $\Aff_{\cA}\coloneq \AffRing_{\cA}^{\mathrm{op}}$ the \textit{solid affinoid spaces} over $\cA$.

  We define a $\kappa\text{-}solid\ !\text{-} stack$ to be an object of the $\infty$-category $\rsh_{!}(\Aff_{\mathbb{Z}_{\sol},\kappa})$ the sheaf category on anima over $\kappa$-small solid affinoid spaces. We will usually omit $\kappa$ in the notation and write $\rsh_{!}(\Aff_{\mathbb{Z}_{\sol}})$ instead, and call it objects \textit{solid stacks}.

\hiddensubsubsection{Tate stacks}
In rigid analytic geometry, basic affine objects are complete analytic Huber pairs. Typical features of such Huber pairs $(A,A^+)$ are
\benuma
\item  There exist a \textit{pseudo-uniformizer} $\pi\in A^{*}$, such that $\pi$ is topological nilpotent.
\item   Any $f\in A$, there exist $N\gg0$ such that $\{(\pi^Nf)^n\}_{n\in \mathbb{N}}$ is summable (\textit{boundedness}).
\eenum

In the language of analytic rings, an analytic ring $\cA$ satisfy the condition (1) means that $\cA\in \AnRing_{R_{\sol}}$ (\Cref{example of analytic ring} \ref{tate base}). We now give the abstract incarnation of the condition (2).
Let $A\in \AniAlg_{\mathbb{Z}_{\sol}}$ be an animated algebra over $R_{\sol}$. Recall from \cite{camargo2024analyticrhamstackrigid}*{Definition 2.4.5} the the condensed subring of \textit{pwoer bounded elements} is the condensed animated ring with values at $S\in \extdis$ given by 
    \[A^{0}(S)= \rmap_{\AniAlg_{R_{\sol}}}(R_{\sol}\langle\mathbb{N}[S]\rangle, A).\]
    \item 
  The condensed $R$-subring of $bounded\ elements$ is defined as $A^{b}=A^{0}[\frac{1}{\pi}].$ An animated $R_{\sol}$-algebra $A$ is called $bounded$ if the map $A^{b}\rightarrow A$ is an equivalence, unwinding definition, this exactly means condition (2). We let $\AniAlg^{b}_{R_{\sol}}$ be the full subcategory of $\AniAlg$ consisting of bounded animated $R_{\sol}$-algebras.

Typical examples of bounded animated $R_{\sol}$-algebras includes $(A,A^+)_{\sol}$ for any complete analytic Huber pairs. It also contains another important class of condensed $R$-algebras, for example, the \textit{overconvergent function} $R\{T\}^{\dagger}\coloneq \colim_{n\in \mathbb{N}}R\langle \frac{T}{\pi^n}\rangle.$ In fact, we have much more general construction of such condensed rings: Let $S$ be profinite set, then the \textit{free overconvergent function} over $S$ is defined to be $R_{\sol}\{\bN[S]\}^{\dagger}= \underset{n\rightarrow +\infty}{\mathrm{colim}}R_{\sol}\langle \bN[\frac{S}{\pi^n}]\rangle$. For an analytic ring $\cA$ over $R_{\sol}$, the \textit{$\dagger$-nil-radical ideal} is the $\underline{A}^b$-ideal whose values on $T\in \mathrm{Prof}^{\mathrm{light}}$ are 
$$\mathrm{Nil}^{\dagger}(A)(T)=\rmap_{\AniAlg_{R_{\sol}}}(R_{\sol}\{\bN[T]\}^{\dagger}, A).$$

\brem
In the above definition, we implicitly used the fact of $R_{\sol}\langle S\rangle= R_{\sol}^{+}\langle S\rangle [1/\pi] $ and $R_{\sol}\{\bN[S]\}^{\dagger}= \underset{n\rightarrow +\infty}{\mathrm{colim}}R_{\sol}\langle \bN[\frac{S}{\pi^n}]\rangle$ are idempotent algebras over $R_{\sol}[\bN[S]]$ (see \cite{camargo2024analyticrhamstackrigid}*{Lemma 2.4.7}).
\erem

Finally, we arrive at the definition of \textit{Tate stack}, which serve as an analytic stack incarnation of rigid analytic geometry.

  \begin{definition}[\cite{camargo2024analyticrhamstackrigid}*{Definition 2.6.10}]\label{tate stack}\hfill
\benuma
    
    \item A solid affinoid $R_{\sol}$ algebra is $bounded$ if its underlying condensed ring is bounded. We let $\AffRing^b_{R_{\sol}}\subset \AffRing_{R_{\sol}}$ be the full subcategory of bounded affinoid $R_{\sol}$-algebras. For $\cA\in \AffRing^{b}_{R_{\sol}}$ we let $\AffRing^b_{\cA}$ be the slice category of bounded affinoid $\cA$-algebras.
    We let $\Aff_{\mathbb{Z}_{\sol}}\coloneq\AffRing^{\mathrm{op}}_{\mathbb{Z}_{\sol}}$ be the $\infty$-category of \textit{solid affinoid spaces}. For a ring $\cA\in \AffRing_{\mathbb{Z}_{\sol}}$, we let $\Aff_{\cA}$ be the slice category of solid affinoid spaces over $\cA$. We denote the \textit{analytic spectrum} $\Anspec(\cA)$ the presheaf on $\Aff_{\mathbb{Z}_{\sol}}$ valued in anima by $\cA$.
    We let $\Aff^{b}_{R_{\sol}}\coloneq\AffRing^{b,op}_{R_{\sol}}$ be the category of \textit{bounded affinoid spaces}. 
     \item We define $\kappa$-small \textit{Tate stack} over $R_{\sol}$ by the category $\mathrm{Sh}_{!}(\Aff^{b}_{R_{\sol},\kappa})$ of sheaves on anima over $\Aff^{b}_{R_{\sol,\kappa}}$ under $!$-topology. Usually we will omit $\kappa$ and call it \textit{Tate stack}.
    \item For a \textit{Tate stack} $X\in \mathrm{Sh}_{!}(\Aff^{b}_{R_{\sol},\kappa})$, the functor $D_{\sol}(\_)$ naturally extend as $D_{\sol}(X)=\underset{\Anspec \cA\rightarrow X}{\lim}D_{\sol}(\cA)$. We denote by $\bperf(X)$ the full subcategory of dualizable objects in $D_{\sol}(X)$.
    \eenum
    \end{definition}

For later application we also recall here another class of analytic stack. 
\hiddensubsubsection{Gelfand stack}
Analogous to Banach algebra, which admit a (semi)norm towards $\mathbb{R}_{\geq0}$, one attempts to define a norm on the abstract bounded algebras. This can be done partially in the sense that one can define what is the subring (resp., submodule) of norm $r\leq 1$ (resp., $r\leq r'$, $r'\in \mathbb{R}_{\geq 0}$). 

Recall from \cite{anschütz2025analyticrhamstacksfarguesfontaine}*{Definition 2.2.3} that, for any $r\in \mathbb{R}_{\geq 0}$ and any light profinite set $S$, one can define ${\mathbb{Q}_p}_{\sol}\{\mathbb{N}[S]\}_{\leq r}^{\dagger}$ the \textit{free overconvergent function of norm $r$ over $S$}. This is a solid subring of ${\mathbb{Q}_{p}}_{\sol}[\![\mathbb{N}[S]]\!]$ whose $S'$ points are series $\underset{n}{\sum} a_n$, with $a_n\in {\mathbb{Q}_p}_{\sol}[S^n/\Sigma_n](S')$ such that there is $r'>r$ with $|a_n|r'^n\rightarrow 0$, where the norm $|\cdot|$ is taken by the $p$-adic topology on ${\mathbb{Q}_p}_{\sol}[\mathbb{N}[S]](S')$.

For a solid ring $A$ over $\mathbb{Q}_p$, we define $A^{\leq 1}$ the \textit{subring of norm 1} by subsheaf of $A$ whose $S$ points are given by 
$$A^{\leq 1}(S)=\mathrm{Map}_{\mathrm{CAlg(D_{\sol}(\mathbb{Q}_p))}}({\mathbb{Q}_p}_{\sol}\{\mathbb{N}[S]\}_{\leq 1}^{\dagger},A).$$
Similarly, we have the solid $A^{\leq 1}$ submodule $A^{\leq r}$ define as a subsheaf of $A$ whose $S$ points are given by 
$$A^{\leq r}(S)=\mathrm{Map}_{\mathrm{CAlg(D_{\sol}(\mathbb{Q}_p))}}({\mathbb{Q}_p}_{\sol}\{\mathbb{N}[S]\}_{\leq r}^{\dagger},A).$$
For each $r\leq 1$, $A^{\leq r}$ is an ideal of $A^{\leq 1}$, and we denote by $A^{u}\coloneq \underset{r}({\lim} A^{\leq 1}/A^{\leq r})[1/p]$ the \textit{uniform completion} of $A$.
\begin{definition}\label{Gelfand stack}
     \benuma
     \item  A bounded $\mathbb{Q}_p$ algebra $\cR$ is called a \textit{Gelfand ring} if $\cR^{u}$ is a separated $\mathbb{Q}_p$ Banach algebra. We denote the site of \textit{Gelfand rings} equipped with topology generated by $!$-covers by $\mathrm{GelfRing}$, and the slice category over a Gelfand ring $\cR$ by $\mathrm{GelfRing}_{\cR}$. We denote the category of sheaves on $\mathrm{GelfRing}_{\cR}$ under $!$-topology by $\mathrm{GelfStk}_{\cR}$. We call a Gelfand stack an \textit{affinoid space} if it lies inside the essential image of Yoneda functor $\mathrm{GSpec}\colon \mathrm{GelRing}^{\mathrm{op}}\rightarrow \mathrm{GelStk}.$
    \item For a Gelfand stack $X\in \mathrm{GelfStk}$, the functor $D_{\sol}(-)$ on $\mathrm{GelfRing}^{\mathrm{op}}$ naturally extend as $D_{\sol}(X)\coloneq \underset{\mathrm{GSpec A}\rightarrow X}{\lim}D_{\sol}(A.)$. We denote by $\bperf(X)$ the full subcategory of dualizable objects in $D_{\sol}(X)$.
   \item  Denote functor $j_*\colon\mathrm{GelfStk}_{\cR}\rightarrow \rsh_{!}(\Aff^{b}_{\cR}) $ by the left Kan extension induced from forgetful functor $j\colon \mathrm{GelfRing}_{\cR}\rightarrow \AffRing_{\cR}^{b}$. 
    \eenum
\end{definition}
As a non-trivial example of Gelfand stack, associate to any condensed set $S$, one can construct the \textit{Betti stack} $S^{\mathrm{Betti}}$ of $S$ via left Kan extending of the association $S'\mapsto C(S,\mathbb{Z})\tensor \mathbb{Q}_p$ for any profinite set $S'$. 

To any Gelfand ring $A$, since $A^{u}$ is a Banach algebra, it admits a\textit{Berkovich spectrum} $\cM(A^{\mathrm{u}})$ the space of semi-norms on it. Recall from \cite{anschütz2025analyticrhamstacksfarguesfontaine}*{Section 4.3} that, to any Gelfand stack $Y$, one can associate funtorially its underlying \textit{Berkovich spectrum} $|Y|$ which is a topological space obtained via left Kan extending of the association $\mathrm{GSpec}A \mapsto \cM(A^{\mathrm{u}})$. Moreover, there is a map of Gelfand stacks $Y\rightarrow |Y|^{\mathrm{Betti}}.$ We call a morphism of Gelfand stacks $j\colon U\rightarrow Y$ is an \textit{open immersion} if it give rise to a Cartesian diagram
$$
\begin{tikzcd}
U \arrow[d] \arrow[r, "j"]            & Y \arrow[d]          \\
{|U|}^{\mathrm{Betti}} \arrow[r, "|j|"] & {|Y|}^{\mathrm{Betti}} 
\end{tikzcd}
$$
and $|j|$ is an open embedding of topological spaces.
\begin{definition}
 Let $\cR$ be a Gelfand ring. A \textit{derived Berkovich space} over $\mathrm{GSpec}\cR$ is a Gelfand stack $X$ such that there is a cover $\{U_i\rightarrow X\}_{i\in I}$ consist of open immersions, such that each $U_i$ admits an open immersion into an affinoid space over $\mathrm{GSpec}\cR$.
\end{definition}
Later on, we will see that sometimes it is more convenient to work with $\mathrm{GelfStk}_{\cK}$ rather than Tate stack. For our application, one of the reasons is that nil-perfectoid algebras are \textit{Fredholm}.

\begin{definition}
    An analytic ring $\cA$ is called \textit{Fredholm} if all the dualizable objects in $D_\sol({\cA})$ are discrete. Or equivalently: $\bperf(\cA)\simeq \mathrm{Perf}(\underline{\cA}(*))$. Here, we denote by $\bperf(\cA)$ the $\infty$-sub category consisting of dualizable objects in $D_{\sol}(\cA)$. 
\end{definition}
\begin{eg}
   Gelfand rings are Fredholm by \cite{anschütz2025analyticrhamstacksfarguesfontaine}*{Proposition 3.3.5}.
\end{eg}

\csub{Realization functors}
Let $\cK/R_{\sol}$ be a $!$-able \textit{bounded affinoid algebra} over $R_{\sol}$, with underlying discrete ring $K\coloneq\underline{\cK}(*)$ and $K^+\coloneq\underline{\cK^+}(*)$. We denote by $K^{\mathrm{disc}}$ the analytic ring associated to the discrete Huber pair $(K,\mathbb{Z})_{\sol}$.

Start from a classical algebraic geometric object, there are many ways to realize it as an object in the \textit{Tate stack} For our purpose, in this paper we only consider two of the realization functors, namely the \textit{algebraic realization} and \textit{analytification} functor. 

\begin{definition}\label{analytification def}\hfill
\benuma
    \item For any $A\in \AniAlg_K$, we can define the solid stack $\Spec{A}^{\mathrm{alg}}=\Anspec(A, \mathbb{Z})_{\sol}\times_{\Anspec K} \Spa \cK$. This defines a functor $\mathrm{Aff}_{K}^{\cK\text{-}\mathrm{alg}}\colon \mathrm{Affine}_K\rightarrow \Aff_{\cK}$ We define the \textit{algebraic realization} functor for algebraic stacks $(\_)_{\bullet}^{\mathrm{\cK\text{-}alg}}\colon \rsh_{\bullet}(\mathrm{Affine}_{K}^{\mathrm{op}})\rightarrow \mathrm{Sh}_{!}(\Aff_{\cK}^{\mathrm{op}})$  by left Kan extension of $\mathrm{Aff}_{K}^{\mathrm{alg}}$, where $\bullet\in \{\et, fppf\}$. We will simply denote by $(-)^{\mathrm{alg}}$ if the topology we are using and the choice of $\cK$ are clear from the context.
    \item We define the functor $\mathrm{Affine}_{K}^{\mathrm{an},\cK}\colon \mathrm{Affine}_K\rightarrow \rsh_{!}(\Aff_{\cK}^{b})$ to be the composite 
    $\\\mathrm{Affine}_K \stackrel{(\_)^{\mathrm{alg}}}\rightarrow \mathrm{PSh}(\Aff_{K^{\mathrm{disc}}})\stackrel{\times_{K^{\mathrm{disc}}}\cK}\rightarrow{\mathrm{PSh}}(\Aff_{\cK})\stackrel{k^{*}}\rightarrow \mathrm{PSh}(\Aff^{b}_{\cK})\rightarrow \mathrm{Sh}_{!}(\Aff^{b}_{\cK})$\\ where $k^*$ is the pullback functor along $k\colon \Aff^b_{\cK}\rightarrow \Aff_{\cK}$, the last is sheafification.
    And we define the \textit{analytification} functor $(-)_{\bullet}^{\mathrm{an},\cK}\colon \rsh_{\bullet}(\mathrm{Affine}_{K}^{\mathrm{op}})\rightarrow \rsh_{!}(\Aff_{\cK}^{b})$ to be the left Kan extension of $\mathrm{Affine}_K^{\mathrm{an}}$. We will simply denote by $(-)^{\mathrm{an}}$ if the topology we are using and the choice of $\cK$ are clear from the context.
    \item If $\cK$ is a Gelfand ring, then for $\Spec A \in \Affine_{K}$, we define $(\Spec A)^{\mathrm{Gel}\text{-}\mathrm{an}}$ as the the restriction of $(\Spec A)^{\mathrm{an}}\in \mathrm{Sh}_{!}(\Aff^{b}_{\cK})$. 
    We define the analytification functor $()^{\mathrm{Gel}\text{-}\mathrm{an}}\colon\rsh_{\bullet}(\Affine_{K}^{\mathrm{op}})\rightarrow \rsh_{!}(\mathrm{GelfRing}_{\cK}) $  as the left Kan extension of  $(\_)^{\mathrm{Gel}\text{-}\mathrm{an}}:\Affine_K^{\mathrm{op}}\rightarrow \mathrm{GelfStk}_{\cK}$.
 \eenum
  By construction, for each $A\in \AniAlg_K$, there is a natural sequence of functors 
    $$\mathrm{Qcoh}(A)\rightarrow \mathrm{Qcoh}(A)\tensor_{\mathrm{Qcoh(K)}}D_{\sol}(\cK)\simeq D_{\sol}((\Spec A)^{\mathrm{alg}})\rightarrow D_{\sol}((\Spec A)^{\mathrm{an}}).$$
    We denote by $j_{\Spec A}$ the composite functor. By descent, this associate to a functor $$j_Y\colon \mathrm{Qcoh}(Y)\rightarrow D_{\sol}(Y^{\mathrm{an}})$$ for any stack $Y$.
\end{definition}

Given a stack $X$ and an affine chart $\pi: U\surjects X$, one can recover $X$ via the geometric realization of $\check{C}ech$-nerve of $\pi$. It is a priori not clear that in general, for these realization functors $(-)^?$, $\check{C}ech$-nerve of $\pi^?$ can recover $X^{?}$ via geometric realization. In fact, there should be counter examples. We will show in the next two sections that, under certain topologies, these realization functors preserve colimits. In particular, this will imply preservation of geometric realization.

\subsubsection{Algebraic realization}

We will show the following.

\begin{thm}\label{alg preserve colimit}
    The \textit{analytification} functor $(-)^{\mathrm{alg}}\colon \rsh_{\et}(\Affine_K^{\mathrm{op}})\rightarrow \rsh_{!}(\Aff_{\cK})$ preserves colimit.
\end{thm}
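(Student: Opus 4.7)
The plan is to exploit the fact that $(-)^{\mathrm{alg}}$ is constructed as a left Kan extension. By definition it is the LKE through Yoneda of the functor $\Spec A \mapsto (\Spec A)^{\mathrm{alg}} \in \rsh_{!}(\Aff_{\cK})$, so it is automatically colimit-preserving as a functor from $\rpsh(\Affine_K^{\mathrm{op}})$. To obtain a colimit-preserving functor from the reflective localization $\rsh_{\et}(\Affine_K^{\mathrm{op}})$, it suffices by the universal property of étale sheafification to show that the LKE inverts every étale covering sieve. Equivalently, for every étale cover $\{U_i \to \Spec A\}$ of affines over $K$, the induced family $\{U_i^{\mathrm{alg}} \to (\Spec A)^{\mathrm{alg}}\}$ must be an effective epimorphism in the $\infty$-topos $\rsh_{!}(\Aff_{\cK})$. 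A sufficient condition is that this family is a $!$-cover in $\Aff_{\cK}$, since $!$-covers become effective epimorphisms after sheafification.

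Reducing to a single cover $U = \bigsqcup_i U_i \to \Spec A$ (using that $(-)^{\mathrm{alg}}$ respects finite disjoint unions of affines), I would verify the two conditions defining a $!$-cover: universal $*$-descent and universal $!$-descent. For universal $*$-descent, the key identification is
\[
D_{\sol}((\Spec A)^{\mathrm{alg}}) \simeq D(A) \tensor_{D(K)} D_{\sol}(\cK),
\]
which follows from \Cref{mod} together with the pullback definition $(\Spec A)^{\mathrm{alg}} = \Anspec(A,\mathbb{Z})_{\sol} \times_{\Anspec K^{\mathrm{disc}}} \Spa \cK$. Classical étale descent for quasi-coherent sheaves gives $D(A) \simeq \lim_{[n]\in\Delta} D(B^{\tensor (n+1)/A})$ when $U = \Spec B$; tensoring this cosimplicial limit over $D(K)$ with $D_{\sol}(\cK)$ preserves the equivalence, because $D_{\sol}(\cK)$ is dualizable and tensoring with a dualizable object commutes with all limits (as recalled in \Cref{sect2}). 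Universality under further base change follows from the stability of étale morphisms of $K$-algebras under algebraic base change. For universal $!$-descent, I would argue that étale morphisms are smooth of relative dimension zero, so in the six-functor formalism the upper-$!$ and upper-$*$ functors coincide up to a trivial twist, reducing $!$-descent to the $*$-descent just established.

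Combining these verifications, $\{U_i^{\mathrm{alg}} \to (\Spec A)^{\mathrm{alg}}\}$ is a $!$-cover, hence an effective epimorphism in $\rsh_{!}(\Aff_{\cK})$, and the universal property of the étale $\infty$-topos delivers the claimed colimit preservation of $(-)^{\mathrm{alg}}$. The principal technical obstacle is the rigorous verification of the identification $D_{\sol}((\Spec A)^{\mathrm{alg}}) \simeq D(A) \tensor_{D(K)} D_{\sol}(\cK)$ together with the preservation of the descent limit under tensoring with $D_{\sol}(\cK)$: the first requires care because the algebraic realization combines the discrete analytic structure on $A$ with the non-discrete structure on $\cK$, while the second hinges on sufficient dualizability properties of $D_{\sol}(\cK)$ as a $D(K)$-algebra extracted from the general theory of bounded affinoid algebras.
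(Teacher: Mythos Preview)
Your overall framework matches the paper exactly: both recognize that $(-)^{\mathrm{alg}}$ is a left Kan extension from affines, hence colimit-preserving on presheaves, and that the remaining task is to show \'etale covers are sent to effective epimorphisms in $\rsh_{!}(\Aff_{\cK})$ so that the LKE factors through the \'etale sheaf category. This is precisely the content of the paper's general site-theoretic \Cref{continuous}.

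Where you diverge is in verifying that \'etale covers become $!$-covers. You attempt to check universal $*$-descent and universal $!$-descent separately and by hand, using the K\"unneth decomposition $D_{\sol}((\Spec A)^{\mathrm{alg}}) \simeq D(A) \tensor_{D(K)} D_{\sol}(\cK)$ and then tensoring the algebraic descent limit with $D_{\sol}(\cK)$. The paper instead dispatches this in one line via \Cref{fppf to tate}: any fppf (in particular \'etale) cover $A \to B$ of discrete rings is \emph{descendable} in the sense of Mathew \cite{MATHEW2016403}, and descendable algebra maps are $!$-covers. Since $(\Spec B)^{\mathrm{alg}} \to (\Spec A)^{\mathrm{alg}}$ is a base change of the proper map $\Anspec(B,\mathbb{Z})_{\sol} \to \Anspec(A,\mathbb{Z})_{\sol}$ along $\Spa\cK \to \Anspec K^{\mathrm{disc}}$, and $!$-surjections are stable under base change, the conclusion is immediate.

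The obstacle you flag --- dualizability of $D_{\sol}(\cK)$ over the \emph{discrete} category $D(K)$ --- is genuine and nowhere established in the paper. The descendability argument sidesteps it entirely: descendability of $A \to B$ is a statement internal to $D(A)$ that propagates to $\mathrm{Mod}_A(\cC)$ for \emph{any} $D(A)$-linear $\cC$, with no dualizability hypothesis on $\cC$. This is exactly the robustness your tensor-product argument lacks. Your separate treatment of $!$-descent via cohomological smoothness of \'etale maps is likewise unnecessary once descendability is invoked, since for these proper maps descendability packages both descent conditions at once.
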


Before we proceed to the proof, we first need to show the following necessary condition of \Cref{alg preserve colimit}.

\begin{lemma}\label{fppf to tate}
    The functor $(\_)^{\mathrm{alg}}\colon \Affine_{K}^{\mathrm{op}}\rightarrow \rsh_{!}(\Aff_{\cK})$  preserves fppf covers \up{i.e., if $R\rightarrow R'$ is a fppf map then $(\Spec R')^{\mathrm{alg}}\rightarrow (\Spec R)^{\mathrm{alg}}$ is $!$-surjection}.
\end{lemma}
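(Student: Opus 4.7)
The plan is to verify, for any fppf morphism $R\to R'$ of discrete $K$-algebras, that the induced map $\varphi\colon \Anspec(R',\mathbb{Z})_{\sol}\to\Anspec(R,\mathbb{Z})_{\sol}$ of solid affinoid spaces is a $!$-cover. Since $(-)^{\mathrm{alg}}$ is defined as the base change along $\Spa\cK\to\Anspec K$ and $!$-surjections are stable under base change, this will immediately yield the required $!$-surjectivity of $(\Spec R')^{\mathrm{alg}}\to(\Spec R)^{\mathrm{alg}}$ in $\Aff_{\cK}$.

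First, I would verify that $\varphi$ is proper in the sense of \cite{camargo2024analyticrhamstackrigid}*{Definition 3.2.2}, and hence $!$-able: both $(R,\mathbb{Z})_{\sol}$ and $(R',\mathbb{Z})_{\sol}$ carry the analytic structure induced from $(\mathbb{Z},\mathbb{Z})_{\sol}$ along the respective ring maps, so the analytic structure on the target is a fortiori induced from the source. For universal $\ast$-descent, I would unwind the identification of $D_{\sol}$ of a discrete-ring analytic ring with the category of solid modules over the underlying discrete ring; the descent condition then reduces, after an arbitrary base change $(R,\mathbb{Z})_{\sol}\to\cA$, to classical fppf descent of solid modules, which follows from classical faithfully flat descent together with the compatibility of solidification with flat base change.

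The subtler ingredient is universal $!$-descent. Since $\varphi$ is proper, $\varphi^{!}$ is the right adjoint of $\varphi_{\ast}$, and I would translate $!$-descent into a compatibility for $\varphi_{\ast}$ via the proper base change formula, reducing it to the $\ast$-descent established above by a formal adjunction argument. The main obstacle, in my estimation, is to handle fppf covers that are not even locally étale---typically purely inseparable covers such as a relative Frobenius---where one cannot simply pass to an étale refinement. To circumvent this, I would either invoke the structure result that every fppf cover is refined, after a suitable base change, by a smooth surjection, and then reduce smooth maps Zariski-locally to a composition of a polynomial extension and an étale cover (each of which is straightforward to handle separately), or appeal directly to descent results for the fppf topology on discrete rings already established in the framework of \cite{camargo2024analyticrhamstackrigid}.
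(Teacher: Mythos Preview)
Your overall reduction is correct: it suffices to show that the map $\Anspec(R',\mathbb Z)_{\sol}\to\Anspec(R,\mathbb Z)_{\sol}$ is a $!$-cover, and then base change. The paper does exactly this. However, your execution for the $!$-descent part is where you diverge from the paper and where your argument becomes shaky.

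The paper's proof is one line: any fppf cover $R\to R'$ is \emph{descendable} in the sense of Mathew (\cite{MATHEW2016403}*{Corollary 3.33}), and descendable maps of analytic rings are $!$-covers. Descendability means that $R$ lies in the thick tensor-ideal of $D(R)$ generated by $R'$; this single finiteness condition gives both universal $\ast$-descent and universal $!$-descent at once, and is stable under base change by construction. You never mention descendability, which is the key input.

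Your first proposed route for $!$-descent, ``translate $!$-descent into a compatibility for $\varphi_\ast$ via proper base change and reduce to $\ast$-descent by a formal adjunction argument,'' is too vague as stated and does not obviously go through: $!$-descent is a colimit condition along $f^{!}$, and there is no purely formal passage from the limit condition along $f^{\ast}$ without some finiteness input (which is precisely what descendability supplies). Your second route, refining an fppf cover by a smooth surjection and decomposing into polynomial plus \'etale pieces, can be made to work but is substantially more laborious than simply quoting Mathew. In short: the structure of your argument is right, but you are missing the one-word answer (descendable) that collapses the hard step.
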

\begin{proof}
This is straightforward since any fppf cover is descendable maps $R\rightarrow R'$ of algebras (\cite{MATHEW2016403}*{Corollary 3.33}), thus $\Anspec(R')\rightarrow \Anspec(R)$ is a $!$-surjection. Since $!$-surjection is stable under base change, this shows $$(\Spec R')^{\mathrm{alg}}=\Anspec (R')\times_{\Anspec K}\Spa \cK\rightarrow \Anspec (R)\times_{\Anspec K}\Spa \cK=(\Spec R)^{\mathrm{alg}}$$ is a $!$-surjection.

\end{proof}
\begin{proof}[Proof of \Cref{alg preserve colimit}]
    This directly follows from \Cref{continuous} and \Cref{fppf to tate}.
\end{proof}

\begin{lemma}\label{continuous}
    Given two sites $\cC$, $\cD$, and a functor $F:\cC \rightarrow \mathrm{Sh}(\cD)$. Suppose $F$ commutes with fiber product and if for any cover $\{C_j\rightarrow C\}_{i\in I}\in  \mathrm{Cov}(\cC)$, we have $\{F(C_j)\rightarrow F(C)\}_{i\in I}$ is an $epimorhpism$, then we have:\\
(1)   The functor $F^{p}\colon \mathrm{PSh}(\cD)\rightarrow \mathrm{PSh}(\cC)$ via $\cG\mapsto \cG\circ F$ preserves sheaf. We denote $F^{s}$ by its restriction to $\mathrm{Sh}(\cD)$\\
(2)    the left Kan extension functor $(F_{p})^{sh}\colon \mathrm{PSh(\cC)\rightarrow \mathrm{Sh}(\cD)}$ factors through $\mathrm{Sh}(\cC)$ via sheafification functor $()^{sh}\colon \mathrm{PSh}(\cC)\rightarrow \mathrm{Sh}(\cC)$. And the resulting functor $F_s\colon \mathrm{Sh}(\cC)\rightarrow \mathrm{Sh}(\cD)$ is left adjoint to $F^s$. In particular it commutes with colimit.

\end{lemma}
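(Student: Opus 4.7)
The plan is to establish (1) by direct verification of the sheaf axiom, and then to deduce (2) as a formal consequence via an adjunction argument; once (1) is in hand, (2) is essentially bookkeeping. Throughout, for $X,\cG\in \mathrm{Sh}(\cD)$ I use the natural extension ``$\cG(X)$''$:=\mathrm{Hom}_{\mathrm{Sh}(\cD)}(X,\cG)$, which agrees with the ordinary value when $X$ is representable.

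For part (1), fix $\cG\in \mathrm{Sh}(\cD)$ and pick any cover $\{C_j\to C\}_{j\in I}\in \mathrm{Cov}(\cC)$. Because $F$ commutes with fiber products, the \v{C}ech nerve of the augmented map $\coprod_{j\in I} F(C_j)\to F(C)$ in $\mathrm{Sh}(\cD)$ is obtained by applying $F$ term-wise to the \v{C}ech nerve of $\{C_j\to C\}$. By hypothesis this augmentation is an epimorphism in $\mathrm{Sh}(\cD)$, hence an effective epimorphism since $\mathrm{Sh}(\cD)$ is a topos, so $F(C)$ is the colimit of its own \v{C}ech nerve. Applying $\mathrm{Hom}_{\mathrm{Sh}(\cD)}(-,\cG)$ converts this colimit into a limit, which is exactly the totalization expressing the sheaf condition for $F^p(\cG)=\mathrm{Hom}_{\mathrm{Sh}(\cD)}(F(-),\cG)$ with respect to the cover $\{C_j\to C\}$.

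For part (2), start from the standard adjunction $F_p\dashv F^p$ between $\mathrm{PSh}(\cC)$ and $\mathrm{PSh}(\cD)$ obtained as left Kan extension along Yoneda (after composing $F$ with the inclusion $\mathrm{Sh}(\cD)\hookrightarrow \mathrm{PSh}(\cD)$). For $\cF\in \mathrm{PSh}(\cC)$ and $\cG\in \mathrm{Sh}(\cD)$, chaining this with the sheafification/inclusion adjunctions on both sides produces natural equivalences $\mathrm{Hom}_{\mathrm{Sh}(\cD)}((F_p\cF)^{sh},\cG)\simeq \mathrm{Hom}_{\mathrm{PSh}(\cD)}(F_p\cF,\cG)\simeq \mathrm{Hom}_{\mathrm{PSh}(\cC)}(\cF,F^p\cG)\simeq \mathrm{Hom}_{\mathrm{Sh}(\cC)}(\cF^{sh},F^s\cG)$, where the last step uses that $F^p\cG=F^s\cG$ is already a sheaf by part (1). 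This shows that $(F_p\cF)^{sh}$ depends on $\cF$ only through $\cF^{sh}$, so $(F_p)^{sh}$ factors through $(-)^{sh}\colon \mathrm{PSh}(\cC)\to \mathrm{Sh}(\cC)$ via a well-defined functor $F_s\colon \mathrm{Sh}(\cC)\to \mathrm{Sh}(\cD)$, and the same chain of equivalences exhibits $F_s\dashv F^s$; colimit preservation is then automatic.

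The only nontrivial input is (1), and the subtle point there is interpreting the epimorphism hypothesis correctly in the ambient topos: one needs that a jointly surjective family in $\mathrm{Sh}(\cD)$ becomes an effective epimorphism after taking the disjoint union, so that the \v{C}ech object of $\coprod F(C_j)\to F(C)$ genuinely computes $F(C)$ as a colimit. For the sites $\Aff_{\cK}$, $\Aff^b_{\cK}$, and $\mathrm{GelfRing}_{\cK}$ to which this lemma will be applied, this holds since the associated $\infty$-categories of $!$-sheaves are $\infty$-topoi.
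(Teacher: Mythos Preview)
Your proof is correct and follows essentially the same approach as the paper. Part (1) is identical in substance: both use that $F$ preserves fiber products so the \v{C}ech nerve is preserved, then invoke the hypothesis to get an effective epimorphism in $\mathrm{Sh}(\cD)$, and apply $\mathrm{Hom}(-,\cG)$. Part (2) is also the same Yoneda/adjunction argument; the paper spells out the equivalence $(F_p(\cG))^{sh}\simeq (F_p\circ\eta\circ\cG^{sh})^{sh}$ explicitly via four lines of adjunction identities, while you package the same chain more concisely.
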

 Note that here if the target of $F$ is just $\cD$, then this is the  classical result of $continuous$ functor between sites. It is very possible there are many much better references recorded this lemma, but since the author did not managed to find it and the proof is almost identical, we decide to reproduce it here. 

\begin{proof}
For (1) it sufficient to show that for $\cG \in \mathrm{Sh(\cD)}$, every $\{f\colon C^{'}\rightarrow C\}\in  \mathrm{Cov}(\cC)$ satisfies the following:
$$\underset{[n]\in\Delta}{\lim} F^{p}(\cG((C')^{n/C})\simeq F^{p}(\cG)(C).$$
But by definition, $\underset{[n]\in\Delta}{\lim} F^{p}(\cG)((C')^{n/C})\simeq \underset{[n]\in\Delta}{\lim} \cG (F(C')^{n/F(C)})$. By our assumption, $F(f)\colon F(C')\rightarrow F(C)$ is an epimorphism (thus effective epimorphism), so we have that $F(C)\simeq \underset{[n]\in\Delta}{\colim}(F(C')^{n/F(C)})$, which implies the equivalence.

On the presheaf level $F_p$ is left ajoint to $F^p$ by definition, indeed since every presheaf is the colimit of $Yoneda$ sheaf $h_c$ for some $c\in \cC$, it sufficient to check the following\upshape: For any $c\in \cC$, $\cH \in \rpsh(\cD)$
$$\rmap(F_{p}(c), \cH)\simeq \rmap(c, F^p(\cH)).$$
But this is exactly the defination of presheaf $F^p(\cH)$.
Also recall that sheafification functor $()^{sh}$ is left adjoint to forgetful functor $\eta$ from sheaf category to presheaf category. By \Cref{continuous}(1), restricting to $\mathrm{Sh}(\cC)$ and $\mathrm{Sh}(\cD)$, we see that $(F_p\circ \eta)^{sh}$ is left adjoint to $F^s$. \\
To prove $(F^p)^{sh}$ factors through $()^{sh}$, sufficient to check that every $\cG\in \mathrm{PSh}(\cC)$, $(F_p(\cG))^{sh}\simeq (F_{p}\circ\eta\circ(\cG)^{sh})^{sh}$. This can be checked directly as follow\upshape: for any $\cF\in \mathrm{Sh}(\cD)$ we have
\begin{align*}
    \mathrm{Map}_{\mathrm{Sh}(\cD)}((F_{p}(\eta\circ(\cG)^{sh}))^{sh}, \cF)&= \mathrm{Map}_{\rsh(\cD)}((\cG)^{sh}, F^{s}(\cF))\\
                                                                           &= \rmap_{\rsh(\cD)}(\cG, F^p(\eta\circ \cF))\\
                                                                           &= \rmap_{\rpsh(\cD)}(F_{p}(\cG), \eta\circ \cF)\\
                                                                           &= \rmap_{\rsh(\cD)}((F_{p}(\cG))^{sh}, \cF).
\end{align*}

Which by Yoneda equivalence this is exactly the desired equivalence we want.
\end{proof}

\subsubsection{Analytification}
We will show the following.
\begin{thm}\label{main of analytification section}
    The \textit{analytification} functor $(-)^{\mathrm{an}}\colon \rsh_{\et}(\Affine_K^{\mathrm{op}})\rightarrow \rsh_{!}(\Aff_{\cK}^{b})$ preserves colimit.
\end{thm}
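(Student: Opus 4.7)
The strategy is to parallel the proof of \Cref{alg preserve colimit}: apply \Cref{continuous} to the functor $F \coloneq \mathrm{Affine}_K^{\mathrm{an},\cK}\colon \Affine_K \to \rsh_!(\Aff_{\cK}^{b})$, whose left Kan extension is $(-)^{\mathrm{an}}$ by definition. The two hypotheses to verify are that $F$ preserves fiber products and that $F$ sends étale covers to effective $!$-epimorphisms in $\rsh_!(\Aff_{\cK}^{b})$.

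Preservation of fiber products is formal, since $F$ is a composite of limit-preserving steps: the algebraic realization $\Spec R \mapsto \Anspec(R,\mathbb{Z})_{\sol}$ is representable; the base change along $K^{\mathrm{disc}} \to \cK$ is a pullback; the restriction $k^{*}$ admits both adjoints on presheaf categories; and sheafification is left exact. For the cover condition, let $\Spec R' \to \Spec R$ be an étale (hence fppf) cover. By \Cref{fppf to tate}, the map $(\Spec R')^{\mathrm{alg}} \to (\Spec R)^{\mathrm{alg}}$ is a $!$-surjection in $\rsh_!(\Aff_{\cK})$. To transfer this to $\rsh_!(\Aff_{\cK}^{b})$, one tests on $\Spa \cB \in \Aff_{\cK}^{b}$: a morphism $\Spa \cB \to (\Spec R)^{\mathrm{an}}$ corresponds $!$-locally to a ring map $R \to \underline{\cB}(*)$, and the base change of $(\Spec R')^{\mathrm{alg}}$ against it is the solid affinoid attached to $\underline{\cB}\tensor_R R'$. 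Zariski-locally on $\Spec R'$ the map is standard étale $R \to (R[x]/f)[1/f']$ with $f$ monic, so the base change factors as a finite extension $\underline{\cB}[x]/f$ (which is bounded over $\cK$ because $x$ is integral over $\underline{\cB}^{0}$) followed by a Laurent localization, and the latter admits a $!$-cover by bounded affinoid spaces via standard rational subdomains.

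With both hypotheses verified, \Cref{continuous}\,(2) identifies $(-)^{\mathrm{an}}$ with the left adjoint of the restriction functor $F^{s}$, and hence it preserves all colimits. The principal technical obstacle is the last step: verifying that the algebraic base change of a bounded affinoid along a standard étale extension is $!$-locally bounded. The integral part is immediate; the delicate point is handling the Laurent localization, which requires importing the standard rational-subdomain covers from classical rigid analytic geometry to ensure that the resulting Zariski open in $\Anspec(\underline{\cB}[x]/f)$ is $!$-covered by objects of $\Aff_{\cK}^{b}$.
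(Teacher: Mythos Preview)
Your high-level strategy matches the paper's: invoke \Cref{continuous} after checking that $(-)^{\mathrm{an}}$ on affines preserves fiber products and sends \'etale covers to $!$-epimorphisms. The paper packages the cover condition as \Cref{et to nilperf} and the theorem is then the one-line ``follows from \Cref{continuous} and \Cref{et to nilperf}.'' The substantive difference is in how the cover condition is established.

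The paper does \emph{not} try to transfer the algebraic $!$-surjection from \Cref{fppf to tate} by checking boundedness of base changes by hand. Instead it first proves \Cref{descenting solid sheaf of an}: Zariski covers of affine $K$-schemes analytify to open $!$-covers. This is done by reducing (via \Cref{an for affine}) to the universal case $\{f\neq 0\}\cup\{1-f\neq 0\}$ in $\mathbb{A}^{1,\mathrm{an}}$, where the two pieces are identified with complements of overconvergent disks around $0$ and $1$, and one checks directly that the associated idempotent algebras do not intersect. This is the content you flag as ``the delicate point,'' but note that the argument is specific to the overconvergent disk calculus developed in \S4 rather than an import of classical rational subdomains. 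With Zariski covers in hand, the paper then uses Bhatt's refinement trick to replace a standard \'etale cover by a finite \emph{free} cover $R\to R_1=R[x]/f$ followed by a Zariski cover of $\Spec R_1$; for the finite free piece it cites the identity $(\Spec R_1)^{\mathrm{an}}\simeq(\Spec R)^{\mathrm{an}}\times_{(\Spec R)^{\mathrm{alg}}}(\Spec R_1)^{\mathrm{alg}}$ and observes that $R\to R_1$ is split, hence descendable. This sidesteps entirely the Laurent localization step you isolate, and also avoids the need to argue directly that $\underline{\cB}[x]/f$ is bounded (your justification ``$x$ is integral over $\underline{\cB}^{0}$'' is not quite right as stated: $x$ is integral over $\underline{\cB}$, and one would need a scaling argument to get into $\underline{\cB}^{0}$).

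In short: your outline is sound and would work if completed, but the paper's route through \Cref{descenting solid sheaf of an} plus Bhatt's trick is cleaner and handles precisely the obstacle you identify without appealing to rigid-analytic rational subdomains.
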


Before we proceed to the proof, we will need collect more basic properties of analytification functor.

\begin{lemma}\cite{camargo2024analyticrhamstackrigid}*{lemma 2.7.25}
    Let $\mathbb{D}_{R}\coloneq\Anspec R\langle T\rangle_{\sol}$ be the unit affinoid disc. For $S$ a profinite set let us write $\mathbb{A}_{R,S}^{\mathrm{alg}}\coloneq\Anspec(R_{\sol}[\mathbb{N}[S]])$ and $\mathbb{A}_{R,S}^{\mathrm{an}}\coloneq\underset{n\in \mathbb{N}}\bigcup \Anspec(R_{\sol}\langle\mathbb{N}[\pi^n S]\rangle)$. Then there is an equivalence 
    $$(\mathbb{D}_{R}^n\times \mathbb{A}_{R,S}^{\mathrm{alg}})^{\mathrm{an}}\simeq \mathbb{D}_{R}^n\times \mathbb{A}_{R,S}^{\mathrm{an}} $$
    It follows that the statement is also true when replace $R$ by any further base change to $\Spa \cK$.
\end{lemma}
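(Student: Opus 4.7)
The plan is to compare both sides as functors on bounded affinoid test objects $\Anspec \cB$ over $\Spa \cK$. Since both sides belong to $\rsh_{!}(\Aff^{b}_{\cK})$ and are built from objects that already behave as sheaves on bounded affinoid (disc powers are bounded, and both $\mathbb{A}^{\mathrm{alg}}_{R,S}$ and $\mathbb{A}^{\mathrm{an}}_{R,S}$ are representable by analytic rings or filtered unions thereof), the sheafification in the definition of $(-)^{\mathrm{an}}$ is harmless once we match presheaf values on $\Anspec \cB$, because products with the bounded object $\mathbb{D}_R^n$ distribute over the relevant colimits. So it suffices to show that, for any bounded affinoid $\cK$-algebra $\cB$, the natural map
\[
\Hom(\Anspec \cB,\, \mathbb{D}_R^n\times \mathbb{A}^{\mathrm{an}}_{R,S})\;\longrightarrow\; \Hom_{\Aff_\cK}(\Anspec \cB,\, \mathbb{D}_R^n\times \mathbb{A}^{\mathrm{alg}}_{R,S})
\]
is an equivalence.

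First I would unwind both sides. A map $\Anspec \cB\to \mathbb{D}_R^n$ is just an $n$-tuple of maps $R\langle T\rangle_{\sol}\to \cB$ of analytic rings, i.e.\ $n$ power-bounded elements of $\cB$, on both sides. For the second factor, on the left a map $\Anspec \cB\to \mathbb{A}^{\mathrm{alg}}_{R,S}=\Anspec(R_{\sol}[\mathbb{N}[S]])$ is a map of analytic rings $R_{\sol}[\mathbb{N}[S]]\to \cB$, which by the universal property of $\mathbb{N}[-]$ recalled in \Cref{example of analytic ring} is the same as a map of condensed sets $S\to \underline{\cB}$. On the right, since $\Anspec \cB$ is quasi-compact and $\mathbb{A}^{\mathrm{an}}_{R,S}=\bigcup_{n}\Anspec(R_{\sol}\langle\mathbb{N}[\pi^n S]\rangle)$ is an increasing union of open affinoids, the map factors through some $\Anspec R_{\sol}\langle\mathbb{N}[\pi^n S]\rangle$, which (by the idempotency of the Tate algebra over $R_{\sol}[\mathbb{N}[\pi^n S]]$) is the datum of a condensed map $S\to \underline{\cB}$ whose image lies in $\pi^{-n}\underline{\cB}^{\,0}$.

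The comparison then reduces to the following key claim: for $\cB$ a bounded affinoid algebra over $\cK$ and $S$ a light profinite set, every condensed map $f\colon S\to \underline{\cB}$ factors through $\pi^{-n}\underline{\cB}^{\,0}$ for some $n$. This is where boundedness is used in an essential way: by definition, $\underline{\cB}=\underline{\cB}^{\,b}=\underline{\cB}^{\,0}[\tfrac{1}{\pi}]=\colim_{n}\pi^{-n}\underline{\cB}^{\,0}$, and the profinite (hence quasi-compact) condensed set $S$ corepresents a compact object in $\mathrm{CondSet}$, so any map out of it into a filtered colimit of sub-condensed-sets factors through a finite stage. Equivalently, this is the statement that in the Huber-style picture, a continuous map from a compact set into $\cB$ has bounded image. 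Combining this with the description of both sides, the two sets of maps agree.

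The main obstacle is this compactness-against-boundedness step; once it is in place, everything else is a bookkeeping argument using that $\mathbb{D}_R^n$ is already bounded (so pulling back along $k\colon \Aff^b_{\cK}\to \Aff_{\cK}$ and sheafifying leave it unchanged) and that open unions commute with the pertinent finite products. One should verify moreover, to close the sheafification step, that the presheaf $\cB\mapsto \Hom(\Anspec \cB, \mathbb{D}_R^n\times \mathbb{A}^{\mathrm{an}}_{R,S})$ is already a $!$-sheaf on $\Aff^b_{\cK}$, which follows from $!$-descent for the structure functor $D_{\sol}(-)$ applied separately to the two factors.
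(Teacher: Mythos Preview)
The paper does not prove this lemma; it is quoted verbatim from \cite{camargo2024analyticrhamstackrigid}*{Lemma 2.7.25}. So there is no in-paper proof to compare against, and your sketch has to be judged on its own.

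Your core idea is the right one and is exactly how the result is proved in the source: for bounded $\cB$ one has $\underline{\cB}=\underline{\cB}^{\,0}[1/\pi]=\colim_n \pi^{-n}\underline{\cB}^{\,0}$, and since a light profinite $S$ is a compact object of $\mathrm{CondSet}$, any condensed map $S\to\underline{\cB}$ factors through some finite stage. Together with the universal properties of $R_{\sol}[\mathbb{N}[S]]$ and $R_{\sol}\langle\mathbb{N}[\pi^n S]\rangle$ this identifies the two functors of points. The $\mathbb{D}_R^n$ factor is harmless because it is already a bounded affinoid, so $k^*$ followed by sheafification leaves it alone, and $k^*$ together with sheafification preserves finite products.

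Two small points of organization. First, you do not need to invoke ``quasi-compactness of $\Anspec\cB$'' to factor a map through a stage of the union $\mathbb{A}^{\mathrm{an}}_{R,S}$: the cleaner route is to observe that $\mathbb{D}_R^n\times\mathbb{A}^{\mathrm{alg}}_{R,S}$ is representable in $\Aff_{\cK}$, hence its restriction $k^*(\ldots)$ is already a $!$-sheaf on $\Aff^b_{\cK}$; your pointwise computation then shows that the \emph{presheaf} colimit $\cB\mapsto\colim_n\Hom(\Anspec\cB,\Anspec R_{\sol}\langle\mathbb{N}[\pi^nS]\rangle)$ coincides with this sheaf, hence is itself a sheaf and therefore agrees with the sheaf colimit $\mathbb{A}^{\mathrm{an}}_{R,S}$. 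Second, your final sentence about closing the sheafification step via ``$!$-descent for $D_{\sol}(-)$'' is not the right justification: the right-hand side is a sheaf simply because it is a finite product taken inside $\rsh_{!}(\Aff^b_{\cK})$, and no further descent argument is needed.
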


We now describe analytification functor for general finite type $K$-algebras.
\begin{lemma}\label{an for affine}
    Let $A$ be an finite type $K$-animated ring \up{i.e., $\pi_0$ being finite type}. Choosing a map of animated ring $f\colon K[x_1,x_2,...x_n]\surjects A$ such that induces surjection on $\pi_0$, we have that $k_{*}(\Spec A)^{\mathrm{an}}$ fit into the following Cartesian diagram.
    $$\begin{tikzcd}
k_{*}\Spec(A)^{\mathrm{an}} \arrow[d] \arrow[r] & \mathbb{A}^{\mathrm{an},n} \arrow[d] \\
\Anspec (A) \arrow[r]           & \mathbb{A}^n          
\end{tikzcd}$$

\end{lemma}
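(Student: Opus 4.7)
The plan is to reduce the statement to a general fact: both the algebraic realization and the analytification functors, when restricted to affine schemes of finite type, convert a certain derived pullback description of $\Spec A$ into the corresponding fiber product of analytic spectra.

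\emph{Step 1 (presentation of $A$).} Since $A$ is finite type over $K$ and $f\colon K[T_1,\dots,T_n]\surjects A$ is $\pi_0$-surjective, we may choose a finite collection $g_1,\dots,g_m \in K[T_1,\dots,T_n]$ generating (up to homotopy) the fiber of $f$. This yields an animated pushout square of $K$-algebras with
\[
A \simeq K[T_1,\dots,T_n] \otimes^{L}_{K[S_1,\dots,S_m]} K,
\]
where $K[S]\to K[T]$ sends $S_i\mapsto g_i$ and $K[S]\to K$ sends $S_i\mapsto 0$. Geometrically, $\Spec A \simeq \mathbb{A}^n_K \times_{\mathbb{A}^m_K} \Spec K$ in $\Affine_K$.

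\emph{Step 2 (both realizations preserve this fiber product).} For the algebraic realization, the functor $(-,\mathbb{Z})_{\sol}$ preserves the relevant animated tensor product because the underlying rings are discrete, and $\Anspec\colon \AnRing^{\op}\to \Aff$ is a Yoneda-style embedding sending pushouts of analytic rings to fiber products of analytic affines. Base change along $\Anspec K^{\mathrm{disc}}\to\Spa\cK$ then preserves fiber products, giving $\Anspec A \simeq \mathbb{A}^n \times_{\mathbb{A}^m} \Spa\cK$ in $\Aff_\cK$. For analytification, we further compose with $k^{*}$ (which preserves limits) and sheafification in the $!$-topology (which preserves finite limits). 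Combined with the previous lemma $(\mathbb{A}^n)^{\mathrm{an}} \simeq \mathbb{A}^{\mathrm{an},n}$, this produces
\[
(\Spec A)^{\mathrm{an}} \simeq \mathbb{A}^{\mathrm{an},n} \times_{\mathbb{A}^{\mathrm{an},m}} \Spa\cK \qquad \text{in } \rsh_{!}(\Aff^{b}_{\cK}).
\]

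\emph{Step 3 (reconciliation with the statement).} The structural map $\Spa\cK \to \mathbb{A}^m$ is the zero section, which factors through $\mathbb{A}^{\mathrm{an},m}$, and the composite $\mathbb{A}^{\mathrm{an},n}\to \mathbb{A}^n \to \mathbb{A}^m$ given by the polynomials $g_i$ factors through $\mathbb{A}^{\mathrm{an},m}$ because polynomials take bounded inputs to bounded outputs. Hence
\[
\mathbb{A}^{\mathrm{an},n}\times_{\mathbb{A}^{\mathrm{an},m}}\Spa\cK \;\simeq\; \mathbb{A}^{\mathrm{an},n}\times_{\mathbb{A}^{m}}\Spa\cK \;\simeq\; \mathbb{A}^{\mathrm{an},n}\times_{\mathbb{A}^{n}} \bigl(\mathbb{A}^n\times_{\mathbb{A}^m}\Spa\cK\bigr) \;\simeq\; \mathbb{A}^{\mathrm{an},n}\times_{\mathbb{A}^{n}}\Anspec A.
\]
Applying $k_{*}$ (a right adjoint, hence limit-preserving) to both sides transports the identity to $\rsh_{!}(\Aff_\cK)$ and yields the claimed Cartesian square.

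The main obstacle is technical rather than conceptual: one must verify that each step in the explicit composite defining $(-)^{\mathrm{an}}$ on affines preserves the particular fiber product from Step 1. The delicate points are that $(-,\mathbb{Z})_{\sol}$ commutes with the animated tensor product for these discrete finitely generated $K$-algebras (where the solid tensor product reduces to the ordinary one), and that sheafification in the $!$-topology commutes with the finite limit in question; the rest is formal.
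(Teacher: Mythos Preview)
Your Steps 1--3 correctly establish the equivalence
\[
(\Spec A)^{\mathrm{an}} \;\simeq\; k^*\mathbb{A}^{\mathrm{an},n}\times_{k^*\mathbb{A}^{n}}k^*\Anspec A
\]
in $\rsh_!(\Aff^b_\cK)$, which is the paper's first claim (though the paper gets there in one line: any map $\Anspec\cB\to\Anspec A$ with $\cB$ bounded automatically sends the generators $x_i$ to bounded elements, so the composite to $\mathbb{A}^n$ factors through $\mathbb{A}^{n,\mathrm{an}}$; your presentation via an auxiliary $\mathbb{A}^m$ is not needed).

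The gap is in your final sentence. In this paper $k_*$ is the \emph{left} Kan extension along $k\colon\Aff^b_\cK\hookrightarrow\Aff_\cK$, hence the \emph{left} adjoint to restriction $k^*$; it does not preserve limits. Even granting limit-preservation, you would still need $k_*k^*\mathbb{A}^n\simeq\mathbb{A}^n$ and $k_*k^*\Anspec A\simeq\Anspec A$, and both fail: neither $(K[x_1,\dots,x_n],K^+)_\sol$ nor $(A,K^+)_\sol$ is bounded, so the counit $k_*k^*\to\mathrm{id}$ is not an equivalence on these objects (indeed $k_*k^*\mathbb{A}^n$ is essentially $\mathbb{A}^{n,\mathrm{an}}$, not $\mathbb{A}^n$).

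What the paper actually proves is that $k_*k^*$ is the identity on the \emph{whole fiber product} $\Anspec A\times_{\mathbb{A}^n}\mathbb{A}^{n,\mathrm{an}}$, not on its individual pieces. This requires a genuine argument: given any solid affinoid $\cS$ mapping to this fiber product, the factor $\mathbb{A}^{n,\mathrm{an}}$ forces the images of $x_1,\dots,x_n$ in $\underline{\cS}(*)$ to lie in the bounded subring $\underline{\cS}^b$, and since these generate $A$ the whole map $A\to\underline{\cS}(*)$ lands in $\underline{\cS}^b$. One then builds a bounded affinoid $\cS^b$ through which the original map factors, and invokes the general criterion (the paper's auxiliary lemma on left Kan extensions) to conclude $k_*k^*\simeq\mathrm{id}$ on this object. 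This step is the substance of the lemma, and your proposal skips it.
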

\begin{proof}
    First we claim that $k^*(\Anspec(A)\times_{\mathbb{A}^n}\mathbb{A}^{n,\mathrm{an}})\simeq (\Anspec A)^{\mathrm{an}}$. Indeed, this amount to saying that for any bounded ring $\cB$ and a map $g\colon\Anspec(\cB)\rightarrow \Anspec A$, the composition $g\circ f\colon \Anspec(\cB)\rightarrow \mathbb{A}^n$ factor through $\mathbb{A}^{n,\mathrm{an}}$. But this is true by definition since any element in $\underline{\cB}(*)$ is \textit{bounded}, and the map $g\colon\Anspec(\cB)\rightarrow \Anspec A$ is equivalent to a map $g^{*}\colon A\rightarrow \underline{\cB}(*)$.

    Now we need to show that $k_{*}k^{*}(\Anspec(A)\times_{\mathbb{A}^n}\mathbb{A}^{n,\mathrm{an}})\simeq \Anspec(A)\times_{\mathbb{A}^n}\mathbb{A}^{n,\mathrm{an}}$. Since $k_{*}$ is defined via left kan extension, by \Cref{left kan being id} this amount to prove that for any \textit{solid affinoid space} $\cS$ and a map $h\colon \cS\rightarrow \Anspec(A)\times_{\mathbb{A}^n}\mathbb{A}^{n,\mathrm{an}}$, $h$ factors through some \textit{bounded affinoid space} $\Anspec(\cS^{b})$.
    $$\begin{tikzcd}
\Anspec(\cS) \arrow[rd, "h"] \arrow[r, "\exists", dotted] & {\Anspec(\cS^{b})} \arrow[d, "h'", dashed] \arrow[rd, "p"] \arrow[dd, "h^b" description, bend right, shift right=7] &                                    \\
                                                          & \Spec(A)^{\mathrm{an}} \arrow[d, "j"] \arrow[r, "f^{\mathrm{an}}"]                                                              & {\mathbb{A}^{n,\mathrm{an}}} \arrow[d, "j"] \\
                                                          & \Anspec (A) \arrow[r, "f"]                                                                                           & \mathbb{A}^n                      
\end{tikzcd}$$
We now construct $\cS^b$. By functor of point the map $h$ correspond to a map $(j\circ h)^*\colon A\rightarrow \underline{\cS}(*)$ where the image of $f(x_i)$ lies inside $(\underline{\cS})^b$, thus since bounded elements forms a sub-condensed ring, $(j\circ h)^*$ factors though $\underline{\cS}^b$. Now we denote $\cS$ by the analytic ring whose underlying condensed ring is $\underline{\cS}^b$ with induced analytic structure from $\Anspec A$. Then we know by construction, there exist a map $h^b\colon\Anspec(\cS^b)\rightarrow \Anspec(A)$ factorize $j\circ h$. Now we claim $h^b$ also factors through some $h'$ such that $j\circ h'\simeq h^b$, for this we only need to show that $f\circ h^b $ factors through $p\colon\Anspec(\cS^b)\rightarrow \mathbb{A}^{n,\mathrm{an}}$, but again this follows from $f(x_i)$ lands inside $(\underline{\cS^b})^b=\underline{\cS^b}$.
\end{proof}
\begin{lemma}\label{left kan being id}
    Let $k\colon \cC\rightarrow \cD$ be a functor of sites. Let $X\in \mathrm{Sh}(\cD)$ and assume there exist a idempotent functor $F\colon \cD\rightarrow \cC$ and a natural transformation $\eta\colon id\rightarrow F$, such that for any $d\in \cD$, $X(d)\underset{\eta}{\simeq} X(F(d))$. Then we have that $k_{*}k^{*}X\simeq X$. Here $k_{*}$ denote the left Kan extension functor $\mathrm{Sh}(\cC)\rightarrow \mathrm{Sh}(\cD)$ induced by $k$, and $k^{*}$ by the restrication functor on sheaves.
\end{lemma}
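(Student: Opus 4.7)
The plan is to interpret the data $(F,\eta)$ as a reflective adjunction $F \dashv k$ with $\eta$ as the unit, and then reduce the claim to a purely categorical identification of the left Kan extension along $k$ on presheaves with pullback along $F$. Indeed, the ``idempotent'' hypothesis on $F\colon \cD\to\cC$ together with $\eta\colon \id_{\cD}\to k\circ F$ is precisely the data of a reflective subcategory $\cC\hookrightarrow \cD$ with reflector $F$, i.e., an adjunction $F \dashv k$ whose unit is $\eta$.

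I first reduce to a statement on presheaves. The functor $k_{*}$ is constructed \up{as recalled in the preceding lemma} as the sheafification of the presheaf left Kan extension $k_{!}\colon \mathrm{PSh}(\cC)\to \mathrm{PSh}(\cD)$. Since $X$ is already a sheaf, it suffices to produce a natural equivalence $k_{!}(k^{*}X)\simeq X$ in $\mathrm{PSh}(\cD)$, after which sheafifying both sides yields $k_{*}k^{*}X\simeq X$.

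The key input is the equivalence $k_{!}\simeq F^{*}$ of functors $\mathrm{PSh}(\cC)\to \mathrm{PSh}(\cD)$, where $F^{*}$ denotes precomposition with $F$. Both functors preserve colimits: $k_{!}$ by construction \up{as left adjoint to $k^{*}$}, and $F^{*}$ because presheaf colimits are computed pointwise. It therefore suffices to check the equivalence on representables $y_{c}\in \mathrm{PSh}(\cC)$. On one hand, $k_{!}(y_{c})\simeq y_{kc}$ by the standard identity for left Kan extension of Yoneda embeddings. On the other hand, $(F^{*}y_{c})(d)=y_{c}(Fd)=\Hom_{\cC}(Fd,c)$, which matches $y_{kc}(d)=\Hom_{\cD}(d,kc)$ via the adjunction bijection $\Hom_{\cD}(d,kc)\simeq \Hom_{\cC}(Fd,c)$.

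Applying this identity with $Y=k^{*}X=X\circ k$ then gives $k_{!}(k^{*}X)\simeq X\circ k\circ F$ as presheaves on $\cD$. By hypothesis, the map $X(\eta_{d})\colon X(kFd)\to X(d)$ is an equivalence for every $d$, furnishing a natural presheaf equivalence $X\circ k\circ F\simeq X$. Sheafifying produces $k_{*}k^{*}X\simeq X$, as claimed. The most delicate point is extracting the genuine adjunction $F\dashv k$ from the somewhat loose ``idempotent'' formulation of the hypothesis; once that is in place, the rest is a formal manipulation of adjoints.
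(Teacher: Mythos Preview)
Your argument is correct and takes a genuinely different route from the paper. The paper proceeds by writing both sides as colimits of representables and showing the two indexing diagrams are mutually cofinal: it expresses $k_*k^*X$ as $\colim_{k_*c\to X}k_*c$ and $X$ as $\colim_{d\to X}d$, then uses the hypothesis $X(d)\simeq X(F(d))$ and idempotence of $F$ to replace each $d$ by $k_*F(d)$ and match the diagrams. Your approach instead packages the idempotent data as a reflective adjunction $F\dashv k$, which yields the clean identification $k_{!}\simeq F^{*}$ on presheaves; the computation $k_{!}k^{*}X\simeq X\circ k\circ F\simeq X$ then follows in one line from the hypothesis. Your version is more conceptual and makes transparent why the result holds, while the paper's cofinality argument is more hands-on and avoids explicitly invoking the adjunction machinery. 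The caveat you flag---that the loosely stated ``idempotent functor with $\eta$'' must be read as the unit of a genuine reflection---is real, but the paper's proof implicitly leans on the same interpretation when it asserts the two colimit diagrams factor through each other.
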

\begin{proof}
    By definition, $k_*k^{*}X=\underset{c\rightarrow k^{*}X,c\in \cC}{\colim} k_{*}c\simeq \underset{k_* c\rightarrow X,c\in \cC}{\colim}k_*{c}\simeq \underset{k_*F(k_{*}c)\rightarrow X,c\in \cC}{\colim}k_*F(k_*c)$. \\
   On the other hand $X\simeq \underset{d\rightarrow X,d\in \cD}{\colim}d\simeq \underset{k_*F(d)\rightarrow X,d\in \cD}{\colim}k_*F(d)$. \\
   But these two diagrams $\{k_*F(k_{*}c)\rightarrow X,c\in \cC\}$ and $\{k_*F(d)\rightarrow X,d\in \cD\}$ factor through each other by idempotentness of $F$.
    
\end{proof}

To prove \Cref{main of analytification section}, we will need the following immediate consequences of it.

\begin{prop}\label{descenting solid sheaf of an}
  For affine $K$-scheme $X$, any Zariski cover $X=\bigcup_{i\in I} \Spec(A_i)$. The corresponding \textit{analytification} $\bigcup_{i\in I}\Spec(A_i)^{\mathrm{an}}\rightarrow X^{\mathrm{an}}$ forms an universal \up{open} $!$-cover. In particular, $D_{\sol}(X^{\mathrm{an}})\simeq \lim_{i\in I}D_{\sol}(\Spec(A_i)^{\mathrm{an}}) $.
\end{prop}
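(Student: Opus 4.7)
The plan is to reduce to a finite cover by principal open subsets and then identify each analytified principal open as an explicit open subfunctor of $X^{\mathrm{an}}$. Since $X$ is affine (hence quasi-compact) and $!$-covers are stable under refinement, it suffices to treat the case $X = \Spec B$ with a finite Zariski cover by $U_i = \Spec(B[1/f_i])$, where $(f_1,\ldots,f_r) = (1)$ in $B$.

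For such a principal open $U = \Spec(B[1/f])$, I would use \Cref{an for affine} twice. Fixing a surjection $K[x_1,\ldots,x_n] \twoheadrightarrow B$, we have $k_*(\Spec B)^{\mathrm{an}} \simeq \Anspec(B) \times_{\mathbb{A}^n} \mathbb{A}^{n,\mathrm{an}}$, and extending by the variable $T$ via $K[x_1,\ldots,x_n,T]/(fT-1) \twoheadrightarrow B[1/f]$ gives $k_*U^{\mathrm{an}} \simeq \Anspec(B[1/f]) \times_{\mathbb{A}^{n+1}} \mathbb{A}^{n+1,\mathrm{an}}$. The projection forgetting $T$ identifies $U^{\mathrm{an}}$ inside $X^{\mathrm{an}}$ as the locus where $1/f$ lifts to a bounded element, i.e., the union over $N\ge 0$ of the rational subdomains $\{|f|\geq \pi^N\}$. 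This is an open subfunctor in the Berkovich/$|{-}|^{\mathrm{Betti}}$ sense, so $U^{\mathrm{an}} \hookrightarrow X^{\mathrm{an}}$ is an open immersion.

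For the $!$-cover property, let $\Anspec \cB \to X^{\mathrm{an}}$ be any map from a bounded affinoid space. The pulled-back sections $\overline{f_i} \in \underline{\cB}^b(*)$ still satisfy a relation $\sum \overline{a_i}\,\overline{f_i} = 1$ with $\overline{a_i}$ bounded. Since every seminorm on $\cB^u$ is multiplicative and normalized so that $|1|=1$, for each point of the Berkovich spectrum at least one $|\overline{f_i}|$ is bounded below, whence the pulled-back open subsets $U_i^{\mathrm{an}}\times_{X^{\mathrm{an}}}\Anspec\cB$ jointly cover $\Anspec\cB$ as a finite union of rational subdomains. By the standard theory of rational open covers of bounded affinoid spaces in \cite{camargo2024analyticrhamstackrigid} (Zariski-type $!$-descent), such a cover is a $!$-cover, and this conclusion is stable under further base change. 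The final descent equivalence $D_{\sol}(X^{\mathrm{an}}) \simeq \lim_{i} D_{\sol}(U_i^{\mathrm{an}})$ then follows from the definition of universal $*$-descent.

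The main obstacle I anticipate is making the identification of $U^{\mathrm{an}}$ with the ``analytic locus where $f$ is invertible'' fully rigorous: it requires comparing the two pullback presentations coming from \Cref{an for affine} and checking that the projection $\mathbb{A}^{n+1,\mathrm{an}} \to \mathbb{A}^{n,\mathrm{an}}$ really does cut out the expected rational subdomain after imposing $fT = 1$. The rest (quasi-compact reduction, the partition-of-unity argument for the $!$-covering, and invoking descent) should be formal.
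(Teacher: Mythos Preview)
Your reduction to a finite principal-open cover and your use of \Cref{an for affine} to present each $U^{\mathrm{an}}$ as a pullback are both sound and match the paper's opening moves. However, the paper takes a different and more robust route from that point on, and your proposed endgame has a real gap.

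The paper reduces further to the two-element case $f + (1-f) = 1$ and then, via \Cref{an for affine}, pulls everything back to the \emph{universal} situation $B = K[T]$, $f = T$. There it identifies $\Spec(B_f)^{\mathrm{an}}$ and $\Spec(B_{1-f})^{\mathrm{an}}$ with the complements in $\mathbb{A}^{1,\mathrm{an}}$ of the overconvergent discs $\mathbb{D}^\dagger_{\leq 0}$ centered at $0$ and $1$ respectively (this is \Cref{analytic affine line is open}, applied after translation). Since these two overconvergent discs are disjoint as closed substacks (\Cref{list of disk}), the two opens have disjoint idempotent-algebra complements, and \cite{camargo2024analyticrhamstackrigid}*{Proposition 3.1.15} then gives the universal open $!$-cover directly. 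No Berkovich-spectrum argument is needed.

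Your argument instead tries to verify the $!$-cover property by pulling back to an arbitrary bounded affinoid $\cB$ and invoking seminorms on $\cB^u$. This is problematic for two reasons. First, the statement lives in $\mathrm{Sh}_!(\Aff^b_\cK)$, where test objects are bounded affinoid rings, not Gelfand rings; the uniform completion $\cB^u$ and its Berkovich spectrum are Gelfand-ring machinery, and a general bounded affinoid $\cB$ need not be Gelfand. Second, and more seriously, even granting a surjection on Berkovich points, you still owe the reader an argument that surjectivity at the level of $|{-}|$ forces universal $*$-descent and universal $!$-descent. The paper sidesteps this entirely by working with the idempotent-algebra picture: once the complements of the two opens are disjoint closed substacks, the cover is automatically a $!$-cover by the cited result. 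Your identification of $U^{\mathrm{an}}$ as the locus where $1/f$ is bounded is morally the same as the paper's overconvergent-disc description, but you should prove it by the same pullback-to-$\mathbb{A}^1$ method rather than asserting it.
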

\begin{proof}
   Let $X=\Spec(B)$ and we may assume $A_{i}=B_{f_i}$ for some $f_i\in B$, with condistion that $f_1+f_2+...f_n=1$. Furthermore, one can assume $n=2$. So sufficient to show $\Spec(B_f)^{\mathrm{an}}\bigcup\Spec(B_{1-f})^{\mathrm{an}} $ forms a universal $!$-cover of $(\Spec B)^{\mathrm{an}}$, but by \Cref{an for affine}, we have the following pull-back diagram:
    $$\begin{tikzcd}
\Spec(B_f)^{\mathrm{an}} \arrow[rdd] \arrow[rr]     &                                                    & \mathbb{G}_m^{\mathrm{an}} \arrow[rdd] \arrow[rd] &                               \\
\Spec(B_{1-f})^{\mathrm{an}} \arrow[rd] \arrow[rru] &                                                    &                                          & {\mathbb{A}^{1,\mathrm{an}}} \arrow[d] \\
                                             & \Spec(B)^{\mathrm{an}} \arrow[rr, "f"] \arrow[rru, "1-f"] & {} \arrow[r]                             & {\mathbb{A}^{1,\mathrm{an}}}          
\end{tikzcd}$$
Which, up to a change of coordinate, can be again expressed as the following \up{two} pullback diagrams:
$$\begin{tikzcd}
\Spec(B_{1-f})^{\mathrm{an}} \arrow[d] \arrow[rr]&  & \mathbb{G}_{m}^{\mathrm{an}} \arrow[d, "x \mapsto 1+x"] \\
\Spec(B)^{\mathrm{an}} \arrow[rr, "f"]            &  & {\mathbb{A}^{\mathrm{an},1}}                            \\
\Spec(B_f)^{\mathrm{an}} \arrow[u] \arrow[rr]     &  & \mathbb{G}_{m}^{\mathrm{an}} \arrow[u, "x\mapsto x"']  
\end{tikzcd}$$
Thus to prove the claim we only need to deal with the universal case where $B=K[X]$ and $f=X$.  By \Cref{list of disk} (3) $\Spec (B_{f})^{\mathrm{an}}$(resp. $\Spec (B_{1-f})^{\mathrm{an}}$) is complement of \textit{overconvergent disc} arround $0$ (resp. $1$), and by \Cref{list of disk} (4.5.2), the corresponding idempotent algebra doesn't intersect. Thus by \cite{camargo2024analyticrhamstackrigid}*{prop 3.1.15} this forms a (open) universal $!$-cover.
\end{proof}

\begin{lemma}\label{et to nilperf}
    The functor $(\_)^{\mathrm{an}}\colon \Affine_{K}^{\mathrm{op}}\rightarrow\mathrm{GelfStk}_{\cK}$\up{resp., $ \rsh_{!}(\Aff_{\cK}^{b})$}  preserves covers \up{i.e., if $R\rightarrow R'$ is an étale cover then $(\Spec R')^{\mathrm{an}}\rightarrow (\Spec R)^{\mathrm{an}}$ is $!$-surjection}.
\end{lemma}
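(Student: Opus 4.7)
The plan is to reduce to the case of standard étale covers via the Zariski-local structure of étale morphisms, then verify $!$-surjectivity for a standard étale map using an analytic Hensel-type argument on bounded affinoid test objects.

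First, by the structure theorem for étale morphisms of finite type $K$-algebras, any étale map $R \to R'$ is, Zariski-locally on both source and target, of standard form $R \to R[x]/(f)_g$ with $f'$ invertible after localizing at $g$. Since the property of being a $!$-surjection is local on the target, and \Cref{descenting solid sheaf of an} shows that Zariski covers analytify to (open) $!$-covers, it suffices to treat standard étale maps. This reduction step is entirely formal once \Cref{descenting solid sheaf of an} is in hand.

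Next, since analytification commutes with fiber products (as exploited in \Cref{an for affine}), for any bounded affinoid test object $\Anspec(\cB) \to (\Spec R)^{\mathrm{an}}$, the base change $(\Spec R')^{\mathrm{an}} \times_{(\Spec R)^{\mathrm{an}}} \Anspec(\cB)$ identifies with the zero locus of $f$ in $\mathbb{A}^{\mathrm{an},1}_{\cB}$, intersected with the open where $g$ is invertible. The goal is then to produce a $!$-cover of $\Anspec(\cB)$ along which this base change admits a section. This is where Hensel's lemma enters: at each point $x$ of the Berkovich spectrum $\cM(\cB^u)$, the completed residue field is a complete non-archimedean field, and, because $f'$ is a unit on the relevant locus, $f$ factors over $\kappa(x)$ into distinct simple roots. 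This factorization spreads to a bounded affinoid (rational) neighborhood of $x$ in $\Anspec(\cB)$, on which a section of the étale cover is produced. By quasi-compactness of $\cM(\cB^u)$, finitely many such neighborhoods suffice, giving the desired $!$-cover. The same strategy applies uniformly in both targets $\mathrm{GelfStk}_{\cK}$ and $\rsh_!(\Aff^b_{\cK})$.

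The main obstacle will be upgrading pointwise Hensel lifts on $\cM(\cB^u)$ to a genuine $!$-cover in the formalism of condensed analytic rings, which requires simultaneously verifying $*$-descent and $!$-descent in the sense of \cite{camargo2024analyticrhamstackrigid}. I expect that the rational neighborhoods produced by Hensel's lemma can be identified with open immersions in the analytic-stack sense, by exhibiting their defining idempotent algebras in the overconvergent framework already used to define analytification; once this is done, the $!$-cover property follows by the same argument as in \Cref{descenting solid sheaf of an}. The only genuinely delicate bookkeeping is ensuring that the passage from solid to bounded affinoid sites via the pullback $k^*$ does not lose the surjectivity, but this should be formal given that $!$-surjections are stable under restriction along morphisms of sites.
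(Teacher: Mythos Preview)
Your reduction to standard étale covers via \Cref{descenting solid sheaf of an} matches the paper exactly. From there, however, the paper takes a much more direct route: rather than invoking Hensel's lemma and spreading pointwise sections, it appeals to Bhatt's refinement trick (\cite{bhatt2011annihilatingcohomologygroupschemes}*{Lemma 2.2}), which refines any standard étale cover by a finite \emph{free} cover $R \to R_1 = R[x]/f$ (with $f$ monic, so $R_1$ is free as an $R$-module) followed by a Zariski open cover. The analytification of the finite free piece is then identified with the pullback $(\Spec R)^{\mathrm{an}} \times_{(\Spec R)^{\mathrm{alg}}} (\Spec R_1)^{\mathrm{alg}}$, and since $R \to R_1$ splits as an $R$-module map, this is descendable---hence a $!$-cover---for purely categorical reasons. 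No analytic input beyond the Zariski case is needed.

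Your Hensel-type approach is the classical rigid-geometry picture, and it could in principle be made to work, but the ``main obstacle'' you flag is genuine and not merely bookkeeping. Spreading a simple root from a completed residue field to a rational neighborhood requires an analytic implicit function theorem (or equivalently, that étale maps of affinoids admit local analytic sections), and then you must still check that the resulting rational cover is a $!$-cover in the sense of \cite{camargo2024analyticrhamstackrigid}---both $*$- and $!$-descent. None of this is impossible, but it is substantially more work than the paper's argument, and for arbitrary bounded affinoid test objects (not just classical Tate algebras) the Berkovich-spectrum reasoning would need careful justification. The paper's finite-free refinement sidesteps all of this: descendability is automatic from the module splitting, with no appeal to the analytic topology of the test object.
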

\begin{proof}
    It is enough to show that every étale cover $R \rightarrow R'$ of $R\in \Affine_K$, $(\Spec R')^{\mathrm{an}}\rightarrow (\Spec{R})^{\mathrm{an}}$ is a $!$-surjection.
     By \Cref{descenting solid sheaf of an} we may pass to Zariski covers thus reduce to standard étale covers. By the proof of \cite{bhatt2011annihilatingcohomologygroupschemes}*{Lemma 2.2}, This cover is always refined by a finite flat cover $R\rightarrow R_1=R[x]/f$ (where $f$ is a monic polynomial and $R_1$ is finite free over $R$) then a Zariski open cover of $\Spec(R_1)$. By \cite{complex}*{Lemma 6.11}, we have $$(\Spec R_1)^{\mathrm{an}}=(\Spec R)^{\mathrm{an}}\times_{(\Spec R)^{\mathrm{alg}}}\Spec(R_1)^{\mathrm{alg}}.$$ 
      This is a descentable cover since $R\rightarrow R_1$ is a split map.
      Thus the lemma follows.

\end{proof}

As a immediate consequence, by reasons below, sometime we don't need to distinguish $(-)^{\mathrm{an}}$ and $(-)^{\mathrm{Ge
l}\text{-}an}$.
\begin{prop}
   If $\cK$ is the analytic ring assocaited to a separable Banach algebra, then for any $K$-scheme $X$ of finite type, $j_{*}(X)^{\mathrm{Gel}\text{-}\mathrm{an}}\simeq X^{\mathrm{an}}$.
\end{prop}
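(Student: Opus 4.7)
The plan is to use the adjunction $j_{*} \dashv j^{*}$ given by left Kan extension and restriction along the fully faithful embedding $j \colon \mathrm{GelfRing}_{\cK} \hookrightarrow \AffRing_{\cK}^{b}$. Since by definition $(\Spec A)^{\mathrm{Gel}\text{-}\mathrm{an}}$ is the restriction $j^{*}(\Spec A)^{\mathrm{an}}$, it suffices to show that the counit map $j_{*}j^{*}X^{\mathrm{an}} \to X^{\mathrm{an}}$ is an equivalence in $\rsh_{!}(\Aff_{\cK}^{b})$. Equivalently, one needs to show that $X^{\mathrm{an}}$ lies in the essential image of $j_{*}$, and this will follow from exhibiting an explicit $!$-cover of $X^{\mathrm{an}}$ by representables coming from Gelfand rings.

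First, I reduce the assertion to the case where $X = \Spec A$ is affine of finite type. The functor $j_{*}$ preserves colimits as a left adjoint, the analytification $(-)^{\mathrm{an}}$ preserves colimits by \Cref{main of analytification section}, and $(-)^{\mathrm{Gel}\text{-}\mathrm{an}}$ preserves colimits by the same argument, combining \Cref{continuous} with the Gelfand half of \Cref{et to nilperf}. Any finite type $K$-scheme is an étale colimit of open affines, so the claim for general $X$ reduces to the affine case.

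For $X = \Spec A$, I choose a presentation $A = K[x_{1},\ldots,x_{n}]/(f_{1},\ldots,f_{m})$. Applying $k^{*}$ to the Cartesian square of \Cref{an for affine} identifies $(\Spec A)^{\mathrm{an}}$ with the closed analytic subspace of $\mathbb{A}^{n,\mathrm{an}}$ cut out by $f_{1} = \cdots = f_{m} = 0$. Writing $\mathbb{A}^{n,\mathrm{an}} = \bigcup_{r \geq 1} \mathbb{D}^{n}_{r}$ with $\mathbb{D}^{n}_{r} = \Anspec(\cK\langle \pi^{r}x_{1},\ldots,\pi^{r}x_{n}\rangle)$, this yields an open $!$-cover $(\Spec A)^{\mathrm{an}} = \bigcup_{r} X_{r}$ where $X_{r} = \Anspec(\cK\langle \pi^{r}x\rangle/(f_{1},\ldots,f_{m}))$. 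Since $\cK$ is the analytic ring associated to a separable Banach $\mathbb{Q}_{p}$-algebra, each Tate algebra $\cK\langle \pi^{r}x\rangle$ is again a separable Banach $\cK$-algebra, and quotienting by a closed ideal preserves the Banach structure; hence each $X_{r}$ is affinoid Gelfand in the sense of \Cref{Gelfand stack}. This furnishes the required $!$-cover of $X^{\mathrm{an}}$ by representables from $\mathrm{GelfRing}_{\cK}$, so $X^{\mathrm{an}}$ lies in the essential image of $j_{*}$, completing the proof.

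The principal obstacle I anticipate is the careful verification that the closed subspaces $X_{r}$ really are Gelfand rings, namely that their uniform completions are separated Banach $\cK$-algebras; this rests on the preservation of the Banach (and separability) structure under quotient by the finitely generated closed ideal $(f_{1},\ldots,f_{m})$ inside the Tate algebra. A secondary issue is matching the pullback description of \Cref{an for affine} with the classical union-of-polydiscs presentation $\bigcup_{r} X_{r}$, which is essentially bookkeeping once the idempotent structure of $\cK\langle \pi^{r}x\rangle$ inside $\cK[x]$ is invoked.
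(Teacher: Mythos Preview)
Your proof is correct. The paper leaves this proposition without an explicit proof, treating it as an ``immediate consequence'' of the surrounding material; your argument supplies precisely the expected details — reduction to affines via colimit preservation of $j_{*}$, $(-)^{\mathrm{an}}$, and $(-)^{\mathrm{Gel}\text{-}\mathrm{an}}$, then the polydisk $!$-cover of $(\Spec A)^{\mathrm{an}}$ by Gelfand affinoids to show $X^{\mathrm{an}}$ lies in the essential image of $j_{*}$ — and correctly flags the one substantive verification (that Tate-algebra quotients over a separated Banach $\cK$ remain Gelfand) as the point requiring care.
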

From now on we will abuse the notation $(\_)^{\mathrm{Gel}\text{-}\mathrm{an}}$ by $(\_)^{\mathrm{an}}$ if the target category is clear from context.

Finally, we arrives at our main result of this section.

\begin{thm}\label{an preserve colimit}
    The \textit{analytification} functor $(-)^{\mathrm{an}}\colon \rsh_{\et}(\Affine_K^{\mathrm{op}})\rightarrow \mathrm{GelfStk}_{\cK}$ \up{resp.,  $\rsh_{!}(\Aff_{\cK}^{b})$} preserves colimit.
\end{thm}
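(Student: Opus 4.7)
The plan is to follow verbatim the argument used for \Cref{alg preserve colimit}, namely to invoke \Cref{continuous} applied to the functor $F = (-)^{\mathrm{an}}\colon \Affine_K \to \rsh_!(\Aff^b_{\cK})$ (respectively $F\colon \Affine_K \to \mathrm{GelfStk}_{\cK}$). The cover hypothesis of \Cref{continuous} is already established in \Cref{et to nilperf}: this functor sends étale covers in $\Affine_K$ to $!$-surjections, and these are by construction effective epimorphisms in the $!$-topology on both target $\infty$-toposes. So only the fiber product condition remains to be checked.

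To verify that, I would unwind the definition of analytification in \Cref{analytification def} as the four-step composite
\[
\Affine_K \xrightarrow{(-)^{\mathrm{alg}}} \rpsh(\Aff_{K^{\mathrm{disc}}}) \xrightarrow{\times_{K^{\mathrm{disc}}}\cK} \rpsh(\Aff_{\cK}) \xrightarrow{k^*} \rpsh(\Aff^b_{\cK}) \xrightarrow{(-)^{\sh}} \rsh_!(\Aff^b_{\cK}),
\]
and check that each of these steps preserves fiber products in its source. The first is the Yoneda-type functor $\Spec A \mapsto \Anspec(A,\mathbb{Z})_{\sol}$ followed by the base change along $\Spa \cK \to \Anspec K$; both preserve all limits. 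The second is itself a base change in presheaves, hence preserves limits. The third is restriction along $k\colon \Aff^b_{\cK} \hookrightarrow \Aff_{\cK}$, which preserves all limits on presheaves. The fourth is $!$-sheafification, which in any $\infty$-site is a left exact localization (cf.\ \cite{lurie2008highertopostheory}*{Proposition 6.2.1.1}) and therefore preserves finite limits.

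With both hypotheses of \Cref{continuous} in place, part (2) of that lemma produces a left adjoint $(-)^{\mathrm{an}}\colon \rsh_{\et}(\Affine_K^{\mathrm{op}}) \to \rsh_!(\Aff^b_{\cK})$ to the restriction functor, which is automatically colimit-preserving. The argument for the target $\mathrm{GelfStk}_{\cK}$ is entirely parallel, using that $\mathrm{GelfStk}_{\cK}$ is by definition a $!$-sheaf $\infty$-category on $\mathrm{GelfRing}_{\cK}$. The only step requiring genuine care is the left exactness of $!$-sheafification in the fourth arrow above, but this is standard.
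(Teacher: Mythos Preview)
Your proposal is correct and follows exactly the paper's approach: the paper's proof simply reads ``This directly follows from \Cref{continuous} and \Cref{et to nilperf}.'' You have supplied the same argument, with the additional service of explicitly verifying the fiber-product hypothesis of \Cref{continuous} (which the paper leaves implicit); your four-step decomposition and the observation that sheafification is left exact are correct and make the application of \Cref{continuous} fully justified.
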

\begin{proof}
    This directly follows from \Cref{continuous} and \Cref{et to nilperf}.
\end{proof}

\section{GAGA theorem revisited}

In this chapter we will prove an ``enhancement'' of classical GAGA theorem. Namely, for any non-archimedean field $K$ (in fact much more general) and a $K$-scheme $X$, we will show that there is a notion of \textit{solid quasi-coherent sheaves} $D_{\sol}(X^{\mathrm{an}})$ of the analytificaion $X^{an}$ and certain enlargement $D_{\sol}(X^{\mathrm{alg}})$ of $\mathrm{Qcoh}(X)$ so that a natural pullback functor induces equivalences of categories $D_{\sol}(X^{\mathrm{an}})\simeq D_{\sol}(X^{\mathrm{alg}})$. To do this we will need to first recall the theory of analytic geometry developed in \cite{condensed}, \cite{analytic}, \cite{camargo2024analyticrhamstackrigid}. The strategy presented in this chapter is essentially due to \cite{complex}.

\csub{Enhanced GAGA theorem}
Let $\cK$ be a bounded analytic ring and we denote by $K\coloneq \underline{\cK}(*)$, we will abbreviate the analytification functor $(-)^{\cK\text{-}an}$ by $(-)^{\mathrm{an}}$.

In this section we will prove the following enhanced GAGA theorem:
\bthm\label{gagaga}
If $X$ is proper scheme over $\Spec{K}$, then $$D_{\sol}(X^{\cK\text{-}\mathrm{an}})\simeq D_{\sol}(X^{\mathrm{\cK\text{-}alg}}).$$
\ethm
We will carry out the proof by first recalling the axiomatic GAGA theorem from \cite{complex}. This consist of first recalling certain basic properties about non-archimedean disks, then using these basic properties to construct certain $categorical\ locale$ for $X^{\mathrm{an}}$ and $X^{\mathrm{alg}}$. The key observation is that in general the categorical locale for $X^{\mathrm{an}}$ is open in the one for $X^{\mathrm{alg}}$, but in the case of proper a scheme the corresponding open immersion becomes equivalence by valuation criterion. The proof essentially repeats the argument from \cite{complex}, still, we decide to present the detailed version of their proof.

\hiddensubsubsection{Overconvergent closed disc}\label{intro of overconve}
The key observation towards \Cref{gagaga} is the following: for simplicity let us assume $X$ is just projective line $\mathbb{P}_{K}^1$, we observe that $\mathbb{P}_K^{1}$ not only can be covered by two algebraic affine lines $\mathbb{A}_{K}^1$, it can also be covered by an algebraic affine line around $0$ and the ``overconvergent closed disc'' $\mathbb{D}_{\geq \infty}^{\dagger}$ \footnote{which is analogous to the \textit{analytic germ} in complex geometry, see \cref{overconvergent disc}} around $\infty$ (this is \cref{union of disc cover everything}). Unwinding definition, the ``complement'' of that overconvergent closed disc is the analytic affine line $\mathbb{A}_K^{1,\mathrm{an}}$, but this means that one can also cover $\mathbb{P}_K^1$ by two analytic affine lines (by enlarging the overconvergent disc, using \cref{disk equal intersect of disk}), which shows that in fact the algebraic projective line is the ``same'' as the analytic one since the latter is precisely glued from two analytic affine line in the same manner. To make this observation precise, we will start by recalling basic properties about an overconvergent closed disc.

\begin{definition}\label{overconvergent disc}Let $\pi\in K$ a pseudo-uniformizer corresponding to the map $R_{\sol}=(\mathbb{Z}(\!(\pi)\!), \mathbb{Z}[\![\pi]\!])_{\sol}\rightarrow \cK$.
\benuma
\item We define $\mathbb{D}_{R, \leq |\pi|^n}$ the \textit{open disk of radius $|\pi|^n$} over $\Spa R_{\sol}$  as $\mathbb{D}_{R, \leq |\pi|^n}\coloneq\mathbb{D}_{R }\times_{\mathbb{A}_{R}^1} \mathbb{A}_{R}^{1}$, where the map $\mathbb{A}_R^1\rightarrow \mathbb{A}_R^1$ is defined via $T\mapsto \pi^n T$. Let the \textit{open disk of radius $|\pi|^n$} over $\Spa \cK$ to be $\mathbb{D}_{\cK,\leq |\pi|^n}\coloneq\mathbb{D}_{R, \leq |\pi|^n}\times_{\Spa R} \Spa \cK$.  We define the \textit{open disk at infinity of radius $|\pi|^n$} to be $\mathbb{D}_{R, \geq |\pi|^{-n}}\coloneq \mathbb{G}_{m, R}\times _{\mathbb{A}_{R}^1} \mathbb{D}_{R, \leq |\pi|^n}$, where the map $\mathbb{G}_{m,R}\rightarrow \mathbb{A}_{R}^1$ is the composition of natural embedding and inverse map on $\mathbb{G}_{m,R}$. 
\item We define the \textit{overconvergent disk} over $\Spa R_{\sol}$ to be the limit (in the category of \textit{Tate stack}) $\mathbb{D}_{R}^{\dagger}\coloneq\lim_n \mathbb{D}_{R, \leq |\pi|^n}$. Notice that this correpond to, in the category of analytic rings, to the colimit of analytic rings $\colim_n (\mathbb{Z}(\!(\pi)\!)\langle \frac{T}{\pi^n}\rangle,\mathbb{Z}[\![\pi]\!]\langle \frac{T}{\pi^n}\rangle)_{\sol}.$ Similarly we define the \textit{overconvergent disk at infinity} to be the limit $\mathbb{D}_{R, \geq \infty}^{\dagger}\coloneq\underset{i\in \mathbb{N}}{\lim} \bar{\mathbb{D}}_{R, \geq i}.$
\item We define the \textit{overconvergent disk} (resp., \textit{overconvergent disk at infinity}) over $\cK$ to be $\mathbb{D}_{\cK}^{\dagger}\coloneq\mathbb{D}_{R}^{\dagger}\times_{\Spa R_{\sol}} \Spa\cK$ (resp., $\mathbb{D}_{\cK, \geq \infty}^{\dagger}\coloneq \mathbb{D}_{R, \geq \infty}^{\dagger}\times_{\Spa R_{\sol}}\Spa \cK$).
\item We define the \textit{closed disk of radius $|\pi|^n$} over $\Spa R_{\sol}$ to be $\Bar{\mathbb{D}}_{R, \leq |\pi|^n}\coloneq\Bar{\mathbb{D}}_{R }\times_{\mathbb{A}_{R}^1} \mathbb{A}_{R}^{1}$. And \textit{closed disk of radius $|\pi|^n$} over $\Spa \cK$ to be $\Bar{\mathbb{D}}_{\cK,\leq |\pi^{n}|}\coloneq \Bar{\mathbb{D}}_{R,\leq|\pi|^n}\times_{\Spa R_{\sol}} \Spa \cK$. We define the \textit{closed disk at infinity of radius $|\pi|^{-n}$ } to be $\Bar{\mathbb{D}}_{R, \geq |\pi|^{-n}}\coloneq \mathbb{G}_{m, R}\times _{\mathbb{A}_{R}^1} \Bar{\mathbb{D}}_{R, \leq |\pi|^n}$, where the map $\mathbb{G}_{m,R}\rightarrow \mathbb{A}_{R}^1$ is the composition of embedding and inverse map on $\mathbb{G}_{m,R}.$ 
\eenum
\end{definition} 

We defined overconvergent disk as the interesection of open disks, but in fact it is equivalent to the interesection of closed disks.

\begin{lemma}\label{disk equal intersect of disk}
   $ \mathbb{D}_{\cK}^{\dagger}\simeq \lim_{r}\Bar{\mathbb{D}}_{\cK, \leq |\pi|^r}.$
\end{lemma}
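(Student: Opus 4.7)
The plan is to prove the equivalence by showing that the two inverse systems are mutually cofinal in a common combined diagram, so that their limits in the category of Tate stacks coincide. Concretely, I will exhibit interleaving morphisms
\[
\cdots \to \bar{\mathbb{D}}_{\cK, \leq |\pi|^{n+1}} \to \mathbb{D}_{\cK, \leq |\pi|^n} \to \bar{\mathbb{D}}_{\cK, \leq |\pi|^n} \to \mathbb{D}_{\cK, \leq |\pi|^{n-1}} \to \cdots,
\]
and then invoke cofinality.

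The ``easy'' arrows $\mathbb{D}_{\cK, \leq |\pi|^n} \to \bar{\mathbb{D}}_{\cK, \leq |\pi|^n}$ come by base-changing the canonical morphism of unit disks $\mathbb{D}_R \to \bar{\mathbb{D}}_R$ along $T \mapsto \pi^n T$ in the defining fiber product. For the interleaving $\bar{\mathbb{D}}_{\cK, \leq |\pi|^{n+1}} \to \mathbb{D}_{\cK, \leq |\pi|^n}$, I factor $T \mapsto \pi^{n+1} T$ as $T \mapsto \pi T$ followed by $T \mapsto \pi^n T$; using compatibility of fiber products, this identifies $\bar{\mathbb{D}}_{\cK, \leq |\pi|^{n+1}}$ with the pullback of $\bar{\mathbb{D}}_{R, \leq |\pi|}$ along $T \mapsto \pi^n T$, base-changed to $\cK$. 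It then suffices to construct a morphism $\bar{\mathbb{D}}_{R, \leq |\pi|} \to \mathbb{D}_R$, reflecting the classical observation that a closed disk of radius $|\pi|<1$ sits inside the open unit disk. Dually, this amounts to the restriction-of-functions map of bounded analytic rings $O(\mathbb{D}_R) \to O(\bar{\mathbb{D}}_{R, \leq |\pi|})$, i.e.\ a series which is convergent on the larger ``open'' domain certainly yields one on the smaller closed disk.

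Given the interleaving, both $(\bar{\mathbb{D}}_{\cK, \leq |\pi|^r})_{r \in \mathbb{N}}$ and $(\mathbb{D}_{\cK, \leq |\pi|^n})_{n \in \mathbb{N}}$ appear as cofinal subdiagrams of the combined inverse system in $\rsh_!(\Aff^b_{\cK})$. Since limits restrict along cofinal subdiagrams, we obtain
\[
\mathbb{D}_{\cK}^{\dagger} = \lim_n \mathbb{D}_{\cK, \leq |\pi|^n} \simeq \lim_r \bar{\mathbb{D}}_{\cK, \leq |\pi|^r},
\]
which is the claim.

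The main obstacle I expect is carrying out the second interleaving rigorously within the solid / bounded analytic framework: while the set-theoretic inclusion $\{|T| \leq |\pi|\} \subset \{|T| < 1\}$ is transparent classically, one must verify it as an honest morphism of Tate stacks (equivalently, as a map of bounded analytic rings in the opposite direction), and this verification depends sensitively on the definition of the ``open'' unit disk $\mathbb{D}_R$ relative to $\bar{\mathbb{D}}_R$ in the chosen setup. Once this single comparison at the level of unit disks is in hand, every other interleaving is obtained from it purely by functorial base change, and the cofinality argument is formal.
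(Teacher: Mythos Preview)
Your strategy matches the paper's exactly: both argue by cofinality of the two inverse systems via an interleaving of maps, and you correctly isolate the single nontrivial input as the existence of the morphism $\bar{\mathbb{D}}_{R,\leq|\pi|}\to\mathbb{D}_R$ (equivalently $\bar{\mathbb{D}}_{\leq|\pi|^r}\to\mathbb{D}_{\leq|\pi|^{r'}}$ for $r'<r$).

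However, your proposal explicitly leaves this step unverified, and your phrasing that it ``amounts to the restriction-of-functions map'' understates what is actually needed. A map of underlying condensed rings is indeed immediate; the entire content lies in checking that the \emph{analytic ring structures} are compatible, since in this framework the open and closed disks over the same Tate algebra are distinguished precisely by their $(+)$-structures. The paper carries this out: after normalizing radii it shows that $\pi T$ is \emph{solid} in $D_{\sol}(R\langle T\rangle, R^{+})$, which unwinds to the vanishing
\[
R\langle T\rangle \otimes_{\mathbb{Z}}^{\sol} \mathbb{Z}[\![q]\!][x]/(\pi T x - 1) \simeq 0.
\]
This holds because $(1-\pi T x)^{-1} = \sum_{n\geq 0}(\pi T x)^n$ converges in the completed tensor product $(\lim_n (R^{+}/\pi^n)[\![q]\!][x])[1/\pi]$, so $1-\pi T x$ is already a unit there. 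That convergence computation is the missing ingredient in your sketch; once you have it, your base-change and cofinality argument proceeds exactly as written.
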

\begin{proof}
    It is sufficient to prove the statement for disks over $\Spa R_{\sol}$.\\
    One need to check that two limit diagrams from definition and the lemma are cofinal to each other. That is to say, if $r'<r$, then we have the factorization as follow\upshape: 
 $$   
\begin{tikzcd}
                                              & \closedisc_{\leq |\pi|^r} \arrow[rd, "(2)", hook] &   \\
\disc_{\leq |\pi|^r} \arrow[ru, "(1)", hook] \arrow[rr, "(3)", hook] &                         & \disc_{\leq |\pi|^{r'}}
\end{tikzcd}$$
The existence of $(1)$ is clear.\\
For $(2)$ is sufficient to prove that $\Anspec(R\langle\pi^rT\rangle, R^{+})\simeq\Anspec(R\langle\pi^rT\rangle, R^{+}\langle\pi^{r}T\rangle)$. Without loss of generality, we may assume that $r=0, r'=1$, and thus it is enough to see that $\pi T$ is \textit{solid} in $D_{\sol}(R\langle T\rangle,R^{+})$ which by definition equivalent to say that 
\begin{align}\label{idem}
    R\langle T\rangle\tensor_{\mathbb{Z}}^{\sol}\mathbb{Z}[\![q]\!][x]/(\pi Tx-1)\simeq 0 
\end{align}
but $R\langle T\rangle\tensor_{\mathbb{Z}}^{\sol}\mathbb{Z}[\![q]\!]\simeq (\lim_{n}(R^{+}/\pi^n)[\![q]\!][x])[1/\pi]$. Thus $(1-\pi Tx)^{-1}=1+(\pi Tx)+(\pi Tx)^2+ (\pi Tx)^3+....$ is convergent in this ring, implies $1-\pi Tx$ is invertible in  \Cref{idem} which finish the proof.
\end{proof}

In particular, this alternative description of overconvergent disk showed that it is a closed subspace of $\mathbb{A}_{\cK}^1$.

\begin{prop}[see also \cite{anschütz2025analyticrhamstacksfarguesfontaine}*{Lemma 2.2.11}]\label{list of disk}
    The \textit{closed disks} have the following properties\upshape:
\begin{enumerate}
  \item $\closedisc_{\cK, \leq |\pi|^r}\cap \closedisc_{\cK, \geq |\pi|^r{'}} = \emptyset \ \ \ if \ r<r' $\label{intersect of disc empty}
  \item  $\closedisc_{\cK, \leq |\pi|^r}\bigcup \closedisc_{\geq |\pi|^r{'}} = \mathbb{A}^1 \ forms\ a\ closed\ cover \textit{if r<r'}$\label{union of disc cover everything}
  \item $\closedisc_{\leq r}\times \closedisc_{\leq r'}\subseteq \closedisc_{rr'}$
  \item $\closedisc_{\leq r} + \closedisc_{\leq r'} \subseteq \closedisc_{\max\{r,r'\}}$\label{addition of disc}
  
\end{enumerate}
\end{prop}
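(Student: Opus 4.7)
The plan is to reduce all four items to computations with solid analytic rings over $\Spa R_{\sol}$; the statements for $\Spa \cK$ then follow by base change. Throughout I use the identification $\closedisc_{\leq r} = \Spa R\langle \pi^{-a}T\rangle$ when $r = |\pi|^{a}$, so that the rescaled coordinate $\pi^{-a}T$ is the canonical solid generator. The four claims split into two flavors: (3) and (4) are direct identities in a tensor product, (1) is a vanishing in a pushout, and (2) is a descent statement that has to be checked against the $!$-cover criterion.

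For (3) and (4) the assertion is that the multiplication (resp.\ addition) map $\mathbb{A}^{1} \times \mathbb{A}^{1} \to \mathbb{A}^{1}$ carries $\closedisc_{\leq r}\times\closedisc_{\leq r'}$ into the appropriate smaller closed disk. Writing $r = |\pi|^{a}$ and $r' = |\pi|^{b}$, both claims reduce to identities in the solid tensor product $R\langle\pi^{-a}T_{1}\rangle\tensor^{\sol}_{R}R\langle\pi^{-b}T_{2}\rangle$. For (3) the identity $(\pi^{-a}T_{1})(\pi^{-b}T_{2}) = \pi^{-(a+b)}(T_{1}T_{2})$ exhibits the rescaled coordinate for $\closedisc_{\leq rr'}$ as a product of solid elements. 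For (4) the non-archimedean triangle identity $\pi^{-\min(a,b)}(T_{1}+T_{2}) = \pi^{\min(a,b)-a}(\pi^{-a}T_{1}) + \pi^{\min(a,b)-b}(\pi^{-b}T_{2})$ expresses the rescaled coordinate for $\closedisc_{\leq \max(r,r')}$ as an $R^{+}$-linear combination of solid elements.

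For (1), the intersection is the analytic spectrum of the pushout $R\langle\pi^{-r}T\rangle \tensor^{\sol}_{R[T]} R\langle\pi^{r'}T^{-1}\rangle$. In this pushout both $\pi^{-r}T$ and $\pi^{r'}T^{-1}$ are solid, so is their product $\pi^{r'-r}$. But under the stated inequality, $\pi^{r'-r}$ is a scalar in $K$ of norm $>1$, which cannot be solid in any $R_{\sol}$-algebra; this forces $1=0$ in the pushout, so the ring is zero and the intersection is the empty Tate stack. For (2), the claim is that the two closed immersions form a universal $!$-cover of $\mathbb{A}^{1}$. By the criterion \cite{camargo2024analyticrhamstackrigid}*{Proposition 3.1.15} it suffices to check that each map is proper, that their strict intersection is empty (which follows from (1) after a small rescaling via \Cref{disk equal intersect of disk}), and that every test map $\Spa\cB\to\mathbb{A}^{1}$ from a bounded affinoid space can be $!$-locally factored through one of the two closed disks. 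The last condition is supplied by the two standard rational localizations of $\Spa\cB$ cut out by $|f|\leq|\pi|^{r}$ and $|f|\geq|\pi|^{r'}$, where $f\in\underline{\cB}^{b}$ is the image of $T$; the strict overlap $|\pi|^{r'}<|\pi|^{r}$ coming from $r<r'$ ensures that these two rational opens exhaust $\Spa\cB$.

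I expect the main obstacle to be item (2): establishing that the two closed immersions form not merely a set-theoretic cover but a universal $!$-cover in the sense of \cite{camargo2024analyticrhamstackrigid}, which requires descent of $D_{\sol}$ in both the $*$- and $!$- senses. The strict overlap $r<r'$ supplies exactly the margin needed for the rational-localization argument; items (3)--(4) are formal consequences of the analytic-ring structure, and (1) is a direct solid computation, so neither should present serious difficulty.
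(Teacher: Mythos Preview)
Your treatments of (3) and (4) are correct and close to the paper's; for (1) your idea is right but the key step --- that a negative power of $\pi$ being solid forces the ring to vanish --- is not a tautology and needs an argument (for instance: any $R=\mathbb{Z}(\!(\pi)\!)$-algebra is already a module over the idempotent $\mathbb{Z}(\!(T^{-1})\!)$ via $T\mapsto\pi^{-k}$, so solidity of $\pi^{-k}$ kills it). The paper instead shows directly that $1-T_1T_2$ is a unit in the explicit solid tensor product by summing the geometric series $\sum (T_1T_2)^n$ and checking the coefficients tend to zero. Note also a sign slip: with $r<r'$ you get $|\pi^{r'-r}|<1$, not $>1$; the paper's own proof uses the opposite inequality, so the printed statement carries a typo you have inherited.

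The genuine gap is in (2). Proposition 3.1.15 of \cite{camargo2024analyticrhamstackrigid} is a criterion for \emph{open} $!$-covers (orthogonal idempotent algebras), whereas the closed-disk inclusions are proper closed immersions and the relevant condition is that of a \emph{closed cover}: the fiber sequence $1\to A\oplus B\to A\tensor B$. Your ``strict intersection is empty'' is also off --- under $r<r'$ the two closed disks overlap in an annulus (it is the \emph{complements} that are disjoint), so item (1) does not feed into (2) in the way you suggest, and the rational-localization argument you sketch is addressing the wrong criterion. The paper proves (2) by directly verifying the short exact sequence
\[
0\to R[T]\to O(\closedisc_{\leq |\pi|^r})\oplus O(\closedisc_{\geq |\pi|^{r'}})\to O(\closedisc_{\leq |\pi|^r})\tensor_{R[T]}O(\closedisc_{\geq |\pi|^{r'}})\to 0
\]
from the explicit power-series descriptions of all three rings, which is both simpler and avoids any general $!$-cover machinery.
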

 \begin{proof}
Again, it is sufficient to prove the corresponding statements for $R_\sol$.
      (2) follow from vanishing of tensor product of their corresponding idempotent algebra. More precisely, sufficient to prove that $O(\closedisc_{R, \leq  |\pi|^r})\tensor_{R[T]}^{\sol}O(\closedisc_{R,|\pi|^\geq r'})=0$, but by \Cref{overconvergent disc} we have that 
      \begin{align*}O(\closedisc_{\leq r})\tensor_{R_\sol}^{\sol}O(\closedisc_{\leq (r')^{-1}})/(1-T_1T_2)&\simeq O(\closedisc^2_{\leq r,\leq (r')^{-1}})/(1-T_1T_2)\\&= \bigg\{\Sigma_{m,n}a_{m,n}T_1^mT_2^n\ r^m/r'^n\bigg||a_{m,n}|\mapsto0\bigg\}. 
      \end{align*}
      
      Thus to show this algebra is zero, sufficient to show $1-T_1T_2$ is invertible in this algebra. Indeed, we claim that $(1-T_1T_2)^{-1}=1+T_1T_2+T_1^2T_2^2+....T_1^nT_2^n+....$ converge (in $O(\closedisc^2_{\leq r,\leq (r')^{-1}})$), since viewing it as a power series of the form $\Sigma_{m,n}a_{m,n}T_1^mT_2^n\ r^m/r'^n$,  we have that 
$$a_{m,n}=\begin{cases}
    (r'/r)^n &\text{m=n}\\
    0 & \text{m$\neq$n}
\end{cases}$$

      But $|r'/r|^n\mapsto 0$ as $n\mapsto \infty$ since $r> r'$. This shows $(1-T_1T_2)^{-1}\in O(\closedisc^2_{\leq r,\leq (r')^{-1}})$. 

      For (3) suffices to show that we have exact sequence as follow:
      $$0\rightarrow K[T]\rightarrow O(\closedisc_{\leq r})\oplus O(\closedisc_{\geq r'})\rightarrow O(\closedisc_{\leq r})\tensor_{K[T]}O(\closedisc_{\geq r'})\rightarrow 0 $$
      But this follows directly from the explicit description of function on discs\upshape: 
\begin{align*}
      &O(\closedisc_{\leq r})=\bigg\{\Sigma_{n \geq 0}a_nT^n\bigg | |a_n|r^n\mapsto0\bigg\}\\
      &O(\closedisc_{\geq r'})=\bigg\{\Sigma_{n\geq 0}b_nT^n\bigg | |b_n|r'^n\mapsto 0\bigg\}\\
      &O(\closedisc_{[r,r']})\coloneq O(\closedisc_{\leq r})\tensor_{K[T]}O(\closedisc_{\geq r'})=\bigg\{\Sigma_{n\in\mathbb{Z}}c_nT^n\bigg | |c_n|r^n\mapsto 0\ if\ n\mapsto +\infty, |c_n|r'^n\mapsto 0\ if\ n\mapsto -\infty \bigg\}
\end{align*}
For (4) this amount to show there is natural factorization as follows:
$$ \begin{tikzcd}
K[T] \arrow[r, "T\mapsto T_1T_2"] \arrow[d] & K[T_1, T_2] \arrow[d] \\
O(\closedisc_{\leq r+r'}) \arrow[r, dashed]        & O(\closedisc_{\leq r})\tensor O(\closedisc_{\leq r'})          
\end{tikzcd}$$
But the obvious map sending $\Sigma a_nT^n$ to $\Sigma a_n T_1^nT_2^n$ satisfy the convergence condition, thus defines the factorization.

 (5) is similar as (4), one needs to check that $\Sigma a_n(T_1+T_2)^n$ has the corresponding convergent condistion, but this equals to $\Sigma a_{m+n}\binom{m+n}{n}T_1^mT_2^n$, notice that $|a_{m+n}\binom{m+n}{n}|\leq |a_{m+n}|$ and $r^mr'^n\leq (r+r')^{m+n}$.
 \end{proof}
 
\begin{prop}\label{analytic affine line is open}
    The open immersion $\mathbb{A}^{1,\mathrm\mathrm{an}}\hookrightarrow \mathbb{A}^1$ is the complement of $\overdisc_{\geq \infty}\coloneq \underset{i\in \mathbb{N}}{\cap} \bar{\mathbb{D}}_{\geq i}$.
\end{prop}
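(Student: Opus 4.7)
The plan is to realise \(\mathbb{A}^{1,\mathrm{an}}\) as a filtered union of open subspaces of \(\mathbb{A}^1\), each of which is the open complement of a closed subspace, in such a way that the cofiltered intersection of those closed parts is exactly \(\overdisc_{\geq \infty}\).

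First I would unwind the definitions recalled earlier in the paper. By construction,
\[
\mathbb{A}^{1,\mathrm{an}} \;=\; \bigcup_{n \in \mathbb{N}} \Anspec\bigl(R_{\sol}\langle \pi^n T\rangle\bigr) \;=\; \bigcup_{n \in \mathbb{N}} \bar{\mathbb{D}}_{\leq |\pi|^{-n}},
\]
while, after the trivial reindexing \(i \leftrightarrow n+1\),
\[
\overdisc_{\geq \infty} \;=\; \lim_{n \in \mathbb{N}} \bar{\mathbb{D}}_{\geq |\pi|^{-(n+1)}}.
\]

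Next, I would fix \(n\) and apply \Cref{list of disk}\,\eqref{intersect of disc empty}--\eqref{union of disc cover everything} to the pair \((r,r') = (|\pi|^{-n},|\pi|^{-(n+1)})\), which satisfies \(r<r'\). These items say that \(\bar{\mathbb{D}}_{\leq |\pi|^{-n}}\) and \(\bar{\mathbb{D}}_{\geq |\pi|^{-(n+1)}}\) are disjoint and together cover \(\mathbb{A}^1\). Categorically (in the language of \cite{camargo2024analyticrhamstackrigid}), this is precisely the statement that the two associated idempotent algebras in \(D_{\sol}(\mathbb{A}^1)\) are orthogonal complements of one another. In particular \(\bar{\mathbb{D}}_{\leq |\pi|^{-n}} \hookrightarrow \mathbb{A}^1\) is an open immersion, and its closed complement is the closed disk at infinity \(\bar{\mathbb{D}}_{\geq |\pi|^{-(n+1)}}\). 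These open immersions fit into the directed system given by the inclusions \(\bar{\mathbb{D}}_{\leq |\pi|^{-n}} \hookrightarrow \bar{\mathbb{D}}_{\leq |\pi|^{-(n+1)}}\).

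Finally I would take the filtered colimit over \(n\). Since open immersions into \(\mathbb{A}^1\) (equivalently, smashing localisations of \(D_{\sol}(\mathbb{A}^1)\)) are closed under filtered joins, and passing to the orthogonal complement converts such a join of opens into the corresponding cofiltered meet of closeds, the colimit yields an open immersion
\[
\mathbb{A}^{1,\mathrm{an}} \;=\; \bigcup_{n}\bar{\mathbb{D}}_{\leq |\pi|^{-n}} \;\hookrightarrow\; \mathbb{A}^1
\]
whose closed complement is \(\bigcap_{n} \bar{\mathbb{D}}_{\geq |\pi|^{-(n+1)}} = \overdisc_{\geq \infty}\), as required. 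The main technical hurdle will be this final step: justifying that ``open complement'' commutes with the relevant filtered colimit/cofiltered limit. This should reduce to checking that the idempotent algebra presenting \(\bar{\mathbb{D}}_{\leq |\pi|^{-n}}\) is computed as a filtered colimit in \(D_{\sol}(\mathbb{A}^1)\) (so that its orthogonal complement is the corresponding filtered limit), which is a formal consequence of the framework of \cite{camargo2024analyticrhamstackrigid} together with the disjointness/covering established in the previous paragraph.
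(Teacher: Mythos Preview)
Your overall strategy is sound, and you correctly identify the final technical point (passing to the filtered colimit of opens and the cofiltered intersection of their closed complements, which is indeed handled by the locale machinery of \Cref{from locale to sheaf}). However, the middle step contains a genuine error.

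You apply items \eqref{intersect of disc empty} and \eqref{union of disc cover everything} of \Cref{list of disk} simultaneously to the pair $\bar{\mathbb D}_{\le |\pi|^{-n}}$ and $\bar{\mathbb D}_{\ge |\pi|^{-(n+1)}}$, concluding that they are both disjoint \emph{and} cover $\mathbb A^1$, hence are ``orthogonal complements''. But these two items never apply to the same ordered pair of radii: two closed disks $\bar{\mathbb D}_{\le s}$ and $\bar{\mathbb D}_{\ge s'}$ are disjoint precisely when $s<s'$ and cover precisely when $s\ge s'$. With your choice $s=|\pi|^{-n}<|\pi|^{-(n+1)}=s'$ they are disjoint but leave the annulus $|\pi|^{-n}<|z|<|\pi|^{-(n+1)}$ uncovered. (The apparent agreement of hypotheses in the statement of \Cref{list of disk} is a typo; compare the proof there.) In particular $\bar{\mathbb D}_{\le |\pi|^{-n}}\hookrightarrow\mathbb A^1$ is a \emph{closed} immersion, not an open one, and $\bar{\mathbb D}_{\ge |\pi|^{-(n+1)}}$ is not its complement.

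This is exactly why the paper's argument is more indirect. The paper works with the \emph{open} disks $\mathbb D_{\le n}\hookrightarrow\mathbb A^1$ (these are the genuine open immersions; their analytic ring structure differs from that of the closed disks), takes the associated complementary idempotent algebras $A_n$, and then proves by a sandwiching argument that the filtered system $\{A_n\}$ is cofinal with $\{O(\bar{\mathbb D}_{\ge i})\}$. The sandwiching uses the interleaving $\mathbb D_{\le n}\subset\bar{\mathbb D}_{\le n+1}\subset\mathbb D_{\le n+2}$ from \Cref{disk equal intersect of disk}, together with one instance of disjointness and one instance of covering from \Cref{list of disk} (applied to \emph{different} pairs of radii), to produce factorizations $\bar{\mathbb D}_{\ge n+2}\to\Anspec A_n$ and $\Anspec A_{n}\to\bar{\mathbb D}_{\ge n-2}$. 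Only after this cofinality is established does one conclude $\colim_n A_n\simeq\colim_i O(\bar{\mathbb D}_{\ge i})$, which is the desired identification of the closed complement with $\overdisc_{\ge\infty}$. Your shortcut of declaring the closed disks themselves to be the open pieces bypasses precisely the content of the proof.
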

\begin{proof}
    Denote by $\{A_i\}_{i\in \mathbb{N}}$ the corresponding idempotent algebras of (complement of) open immersions $\mathbb{D}_{\leq n}\hookrightarrow \mathbb{A}^{1}$. By \Cref{from locale to sheaf}, $\underset{i\in \mathbb{N}}{\colim} A_{i}=\underset{i\in \mathbb{N}}{\colim}O(\bar{\mathbb{D}}_{\geq i})$ is the corresponding idempotent algebra of $\mathbb{A}^{1,\mathrm{an}}$. Sufficient to prove that $\underset{i\in \mathbb{N}}{\colim} A_{i}\simeq \underset{i\in\mathbb{N}}{\colim}K\langle \pi^{-n}T^{-1}\rangle[T]$. By proof of \Cref{disk equal intersect of disk}, we know that the open immersion $\mathbb{D}_{\leq n}\hookrightarrow \mathbb{A}^1$ factors through $\bar{\mathbb{D}}_{\leq n+1}\rightarrow \mathbb{A}^1$, thus $\Anspec A_n$ and $\bar{\mathbb{D}}_{\leq n+1}$ form a closed cover of $\mathbb{A}^1$. By \Cref{list of disk} (1), we see that $\bar{\mathbb{D}}_{\leq n+1}\cap \bar{\mathbb{D}}_{\geq n+2}=\emptyset$. This implies that $\bar{\mathbb{D}}_{\leq n+1}\rightarrow \mathbb{A}^1$ factors through $\Anspec A_n
    \rightarrow \mathbb{A}^1$. Indeed, since $A_n$ and $O(\bar{\mathbb{D}}_{\leq n+1})$ forms a closed cover, we have fiber sequence \begin{equation}\label{lemma 3.3.6}
        1_{\mathbb{A}^1}\rightarrow O(\bar{\mathbb{D}}_{\leq n+1})\oplus A_n\rightarrow A_n\tensor O(\bar{\mathbb{D}}_{\leq n+1}).
    \end{equation}  But $\bar{\mathbb{D}}_{\leq n+1}\cap \bar{\mathbb{D}}_{\geq n+2}=\emptyset$, i.e., $O(\bar{\mathbb{D}}_{\leq n+1})\tensor O(\bar{\mathbb{D}}_{\geq n+2})=0$. Thus tensoring \Cref{lemma 3.3.6} with $O(\bar{\mathbb{D}}_{\geq n+2})$ we see that $A_n\tensor O(\bar{\mathbb{D}}_{\geq n+2})\simeq O(\bar{\mathbb{D}}_{\geq n+2})$. 
    
    Simlimarly, we deduce $\bar{\mathbb{D}}_{\geq n}\hookrightarrow \mathbb{A}^1$ factors through $\Anspec A_{n+2}\hookrightarrow \mathbb{A}^1$ via the facts of $\bar{\mathbb{D}}_{\leq n}\hookrightarrow \mathbb{A}^1$ factors through $\mathbb{D}_{\leq n+1}\hookrightarrow \mathbb{A}^1$, and the fact that (by \Cref{list of disk} (3)), $\bar{\mathbb{D}}_{\leq n}$ and $\bar{\mathbb{D}}_{\geq n}$ forms a closed cover of $\mathbb{A}^1$.
    
    This shows that $\underset{i\in \mathbb{N}}{\colim} A_{i}=\underset{i\in \mathbb{N}}{\colim}O(\bar{\mathbb{D}}_{\geq i})$ since two filtered diagram factor through each other.
\end{proof} 
 \hiddensubsubsection{Recollection of axiomatic GAGA}
As explained in the begining of \Cref{intro of overconve}, the main ingredient in \Cref{gagaga} is the usage of \textit{overconvergent disc} $\mathbb{D}_{\geq \infty}^{\dagger}$ which on the level of analytic ring correspond to an idempotent algebra $\cO(\mathbb{D}_{\geq \infty}^{\dagger})$ in $\mathrm{Mod}_{K[X]}(D_{\sol}(\cK))$. Indeed, one can in fact record the ``topology'' of a category (symmetric monoidal) by the idempotent algebras in that category, which leads to the notion of \textit{locale} that can trace back to the work of \cite{https://doi.org/10.1112/plms/pdq050}, see also \cite{complex}.
\begin{definition}
    A poset $\mathcal{G}$ is called a \textit{locale} if:
\benuma
\item 
Any subset $S\subset \cG$ has a supremum $\lor S$. \up{This also implies any subset $S\subset \cG$ has a infimum $\land S$ given by the supremum of the set $\{g\in\cG:(\forall V\in S) U\leq V \}$. In particular every pair of elements $U,V\in\cG$ have a meet $U\land V$}
\item
$ X\land(\lor_{I} Y_i)=\land_{I}(X\lor Y_i)$ For any \up{set} index $I$ and subset $X, Y_i$.

\eenum
Morphisms of locales are those order preserving map that preserve all \textit{meets} \up{i.e., $\land$} and \textit{joins} \up{i.e., $\lor$}.
\end{definition}

\begin{definition}
    A \textit{categorical locale} is the following paris $(X, C,f)$ where $X$ is a \textit{locale}, $C$ is a symmetric monoidal category and $f\colon \cS(\cC)\rightarrow X$ a morhpism of $locale$. Here $\cS(C)$ is the $locale$ consist of idempotent algebras in $C$.
\end{definition}
Now we recall some basic properties about \textit{categorical locale}.\\
Given symmetric monoidal $\infty$-category $C$, every closed subset $Z\in \cS(C)$ correspond to some idempotent algebra $A_Z\in C$, we denote $C(Z)\coloneq\rmod_{A_Z}(C)$. Formally we can denote $U$ the ``complement'' of $Z$ in $\cS(C)$ while we can define the Verdier quotient:
$$C(U)=C/C(Z)$$

\begin{prop}\cite{complex}*{Proposition 5.5}\label{from locale to sheaf}
    The functor $U\mapsto \cC(U)$ defines a sheaf of $\infty$-categories on the $locale$ $\cS(C)$.  In particular, for any $\textit{categorical locale}$ $(X, C, f\colon\cS(C)\rightarrow X)$, the assignment $U\underset{open}\subset X \mapsto C(f^{-1}(U))$ defines a sheaf of $\infty$-categories on $X$.
\end{prop}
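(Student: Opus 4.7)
The strategy is to reduce the sheaf property to a Mayer--Vietoris statement for Verdier quotients in $\mathrm{Pr}_{\mathrm{st}}^{L}$. First, the second claim will follow formally from the first: since $f\colon\cS(\cC)\to X$ is a morphism of locales, it preserves joins and finite meets, so covers in $X$ pull back to covers in $\cS(\cC)$; hence the sheaf property for $V\mapsto\cC(V)$ on $\cS(\cC)$ transfers to $U\mapsto\cC(f^{-1}(U))$ on $X$ by composition.

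For the sheaf property on $\cS(\cC)$ itself, recall that each open $U\in\cS(\cC)$ has closed complement corresponding to an idempotent algebra $A_Z$, and $\cC(U) = \cC/\rmod_{A_Z}(\cC)$ is realized as a Bousfield localization with kernel $\rmod_{A_Z}(\cC)$, giving a semi-orthogonal decomposition of $\cC$. Since this construction is compatible with filtered colimits of idempotent algebras in $\mathrm{Pr}_{\mathrm{st}}^{L}$, the sheaf condition for arbitrary joins reduces to finite covers; by induction on the number of elements in the cover, it further reduces to the case of a two-element cover $U = U_{1}\cup U_{2}$.

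For such a two-element cover the content is the pullback square
\[
\cC(U) \isomto \cC(U_{1})\times_{\cC(U_{1}\cap U_{2})}\cC(U_{2}).
\]
Writing $Z, Z_{1}, Z_{2}$ for the closed complements of $U, U_{1}, U_{2}$, one has $Z = Z_{1}\cap Z_{2}$ and $(U_{1}\cap U_{2})^{c} = Z_{1}\cup Z_{2}$. The plan is to verify the pullback by showing that both sides represent the same universal problem in $\mathrm{Pr}_{\mathrm{st}}^{L}$, using that the full subcategories $\rmod_{A_{Z_{1}}}(\cC)$ and $\rmod_{A_{Z_{2}}}(\cC)$, together with their intersection $\rmod_{A_{Z_{1}}\otimes A_{Z_{2}}}(\cC)$, form a compatible diagram of Bousfield localizations of $\cC$.

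The main obstacle is to identify the join of two closed sets at the level of idempotent algebras: while intersections correspond cleanly to the tensor product $A_{Z_{1}}\otimes A_{Z_{2}}$, the join $Z_{1}\cup Z_{2}$ does not admit such an elementary description. The key input is that $\rmod_{A_{Z_{1}\cup Z_{2}}}(\cC)$ can be characterized as the smallest localizing subcategory of $\cC$ containing both $\rmod_{A_{Z_{1}}}(\cC)$ and $\rmod_{A_{Z_{2}}}(\cC)$; once this is in hand, the Mayer--Vietoris square becomes a pullback by a formal argument on semi-orthogonal decompositions in stable presentable $\infty$-categories. The full execution is carried out in \cite{complex}*{Proposition 5.5}, whose argument we follow.
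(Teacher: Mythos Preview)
Your proposal is correct and follows essentially the same route as the paper: the paper's own proof is simply the citation ``See \cite{complex}*{Proposition 5.3} and \cite{complex}*{Theorem 6.7},'' so both you and the paper defer to the argument in \cite{complex}. Your outline of that argument (reduction to two-element covers via compatibility with filtered colimits, then Mayer--Vietoris for the resulting Verdier quotients) is an accurate summary of what happens there; note only that the paper points to Proposition~5.3 and Theorem~6.7 of \cite{complex} rather than Proposition~5.5.
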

\begin{proof}
    See \cite{complex}*{Proposition 5.3} and \cite{complex}*{Theorem 6.7}.
\end{proof}

We will proceed the proof of  enhanced GAGA theorem via identifying \textit{categorical locale} associated to scheme and its analytification.

We start to define a categorical locale out of a scheme of finite type (over $K$).
\begin{propcons}\label{construction}
For any ring $A$ and $f\in A$, we define $\Spec(A)^{\mathrm{f-an}},\Spec(A)^{f,\dagger} \in \rsh_{!}(\Aff_{K}^b)$ by the following Cartesian diagrams
$$
\begin{tikzcd}
\Spec(A)^{\mathrm{f-an}} \arrow[r] \arrow[d] & \mathbb{A}^{1,\mathrm{an}}\arrow[d] & \Anspec(A)^{f,\dagger}\arrow[r] \arrow[d] & \overdisc_{\leq 0} \arrow[d] \\
\Anspec(A) \arrow[r, "f"]   & \mathbb{A}^1    & \Anspec(A) \arrow[r]           & \mathbb{A}^1  
\end{tikzcd}
$$

Denote by $\mathcal{S}(A)$ the $locale$ of idempotent algebras in $D(A,K^+)$, then we have that 
\benuma
\item  $\mathcal{S}(A,f)\coloneq\cS(D((\Spec A)^{\mathrm{f-an}}))\subset \cS(A)$ is a open subset.
\item $\mathcal{S}(A,f)^{\dagger}\coloneq \cS(\Anspec(A)^{f,\dagger})\subset \cS(A)$ is a closed subset.
\eenum

\end{propcons}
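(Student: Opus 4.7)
The plan is to derive both claims by base change from the universal situation over $\mathbb{A}^1$. The operative dictionary, built into the setup of a \textit{categorical locale}, is that an open sub-locale of $\cS(A) = \cS(D(\cA))$ corresponds to a $\tensor$-localization of $D(\cA)$ by modules over an idempotent algebra, whereas a closed sub-locale is presented by a subcategory of the form $\rmod_E(D(\cA))$ for such an algebra.

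For part (1), the key input is \Cref{analytic affine line is open}, which realizes $\mathbb{A}^{1,\mathrm{an}}\hookrightarrow \mathbb{A}^1$ as the open complement of the closed sub-stack $\overdisc_{\geq\infty}$: the pullback on solid modules exhibits $D(\mathbb{A}^{1,\mathrm{an}})$ as the $\tensor$-localization of $D(\mathbb{A}^1)$ by the idempotent algebra $\colim_i\cO(\closedisc_{\geq i})$. Open immersions in $\rsh_{!}(\Aff_{\cK}^b)$ are stable under base change, so pulling back along $f\colon \Anspec(A)\rightarrow \mathbb{A}^1$ gives $\Spec(A)^{\mathrm{f-an}}\hookrightarrow \Anspec(A)$ as an open immersion, whose pullback on solid modules is a $\tensor$-localization of $D(\cA)$ by the pulled-back idempotent algebra. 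Unwinding the definition of $\cS$ then places $\cS(A,f)$ as an open sub-locale of $\cS(A)$.

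For part (2), I would first apply \Cref{disk equal intersect of disk} to rewrite $\overdisc_{\leq 0}\simeq \lim_r \closedisc_{\leq|\pi|^r}$, so that on analytic rings $\cO(\overdisc_{\leq 0})\simeq \colim_r \cO(\closedisc_{\leq|\pi|^r})$. Each $\cO(\closedisc_{\leq|\pi|^r})$ is an idempotent $K[T]$-algebra since $\closedisc_{\leq|\pi|^r}\rightarrow \mathbb{A}^1$ is a proper map (its analytic structure is induced from $\mathbb{A}^1$), and filtered colimits of idempotent commutative algebras remain idempotent. Hence $\overdisc_{\leq 0}\hookrightarrow \mathbb{A}^1$ is classified by an idempotent $K[T]$-algebra, a property stable under base change. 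Pulling back along $f$ realizes $\Anspec(A)^{f,\dagger}\hookrightarrow\Anspec(A)$ as classified by the idempotent algebra $\cO(\overdisc_{\leq 0})\tensor_{K[T]}^{\sol}\cA$ in $D(\cA)$, placing $\cS(A,f)^{\dagger}\subset \cS(A)$ as a closed sub-locale.

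The only step requiring care is that filtered colimits of idempotent commutative algebras are again idempotent; this follows from the characterization of $E$ being idempotent by $E\tensor E \isomto E$, a condition preserved under filtered colimits since the tensor product commutes with them. Otherwise the argument is a direct application of pullback-stability of open and idempotent-classified closed immersions in the analytic setting, and no serious obstacle is anticipated.
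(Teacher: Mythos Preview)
Your argument for part (2) is correct and matches the paper's: $\overdisc_{\leq 0}\hookrightarrow\mathbb{A}^1$ corresponds to an idempotent algebra, and this is stable under the proper base change along $f$. Your extra detail about why the overconvergent disk algebra is idempotent (as a filtered colimit of idempotent algebras) is fine.

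For part (1), however, there is a genuine gap. You invoke ``open immersions in $\rsh_!(\Aff^b_\cK)$ are stable under base change'' as if it were a known structural fact that directly yields a $\tensor$-localization on $D_{\sol}$. But in this paper the notion of open immersion is only set up for morphisms of \emph{analytic rings} (affines), and $\mathbb{A}^{1,\mathrm{an}}$ is not affine---it is a filtered colimit $\colim_n \mathbb{D}_{\leq n}$. So what must actually be checked is that the stack-theoretic fiber product
\[
\Spec(A)^{\mathrm{f\text{-}an}}\;\simeq\;\colim_n\bigl(\Anspec A\times_{\mathbb{A}^1}\mathbb{D}_{\leq n}\bigr)
\]
has $D_{\sol}$ equal to a Verdier quotient of $D_{\sol}(\Anspec A)$. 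A priori, $D_{\sol}$ of this colimit is the \emph{limit} $\lim_n D_{\sol}(\Anspec A\times_{\mathbb{A}^1}\mathbb{D}_{\leq n})$, and identifying that limit with $D_{\sol}(\Anspec A)\tensor_{D_{\sol}(\mathbb{A}^1)}D_{\sol}(\mathbb{A}^{1,\mathrm{an}})$---and then recognizing the latter as a localization---is the actual content. The paper does exactly this: it applies affine K\"unneth (\Cref{affine kunneth}) termwise, then uses that $\Anspec(A)\to\mathbb{A}^1$ is \emph{proper} so that $D_{\sol}(\Anspec A)\simeq\rmod_A(D_{\sol}(\mathbb{A}^1))$ is \emph{dualizable} over $D_{\sol}(\mathbb{A}^1)$; dualizability lets the tensor product commute with the limit and, via \cite{efimov2025ktheorylocalizinginvariantslarge}*{Theorem 2.2}, preserve the localization sequence coming from \Cref{analytic affine line is open}. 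Your proposal contains none of these ingredients, and without them the inference ``base change of open is open on $D_{\sol}$'' is unjustified for this non-affine open.
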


\begin{proof}
    This follows from \Cref{analytic affine line is open}. Indeed, for (2) this corresponds to the fact that $\overdisc_{\leq 0}$ is closed in $\mathbb{A}^1$ and closed is a property that stable under base change. For (1) notice that $\Spec(A)^{\mathrm{f-an}}\simeq\underset{n}{\colim}(\Anspec A\times_{\mathbb{A}^1} \mathbb{D}_{\leq n})$ (filtered colimit commutes with finite limit), this implies that 
    \begin{align}
    D_{\sol}(\Spec(A)^{\mathrm{f-an}})&\simeq \underset{n}{\lim}D_{\sol}(\Anspec A\times_{\mathbb{A}^1} \mathbb{D}_{\leq n})\\
   &\simeq \underset{n}{\lim}D_{\sol}(\Anspec A)\tensor_{D_{\sol}(\mathbb{A}^1)}D_{\sol}(\mathbb{D}_{\leq n})\\
   &\simeq D_{\sol}(\Anspec A)\tensor_{D_{\sol}(\mathbb{A}^1)}\underset{n}{\lim}D_{\sol}(\mathbb{D}_{\leq n})\\
   &\simeq D_{\sol}(\Anspec A)\tensor_{D_{\sol}(\mathbb{A}^1)}D_{\sol}(\mathbb{A}^{1,\mathrm{an}}).
    \end{align}
  Here, in the second steo we used \Cref{affine kunneth} and in the third step we used that $D_{\sol}(\Anspec A)\simeq \mathrm{Mod}_A(D_{\sol}(\mathbb{A}^1))$ which is a dualizable category over $D_{\sol}(\mathbb{A}^1)$ thus tensor product commutes with limit. Therefore, (1) follows again from the dualizability of $D_{\sol}(\Anspec A)$ since the tensor product will then preserve localization \cite{efimov2025ktheorylocalizinginvariantslarge}*{Theorem 2.2}.
\end{proof}

The above construction allows us to perform the following.
\begin{prop}
    For any ring $A$, we have a \textit{categorical locale} $$(\Spa(A,K^+)', D_{\sol}(A,K^+), f_A\colon \cS(D_{\sol}(A,K^+))\rightarrow \Spa(A,K^+)'^{\mathrm{op}}).$$ The preimage of $\{|g|\neq 0\}$ is $\cS(A[1/g])$, and the pre-image of $\{|f|\ll|g|\}$ is $\cS(A,f/g)^{\dagger}$.
    
    Here $\Spa(A, K^+)'$ is the set of valuations $|.|\colon A\rightarrow \Gamma\bigcup \{0\}$ with values is some totally ordered abelian group $\Gamma$ (satisfying $|fg|=|f||g|$, $|f+g|\leq \max\{|f|, |g|\}$, $|0|=0$, $|K^*|=1$). We denote the inequalities in $\Gamma$ by the symbol $\ll$, and $\Spa(A,K^+)'$ is given by the topology for which the subset $\{|f|\ll|g|\}$, $|g|\neq 0$ forms a generating family of closed subsets.
\end{prop}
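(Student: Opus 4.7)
The strategy is to describe $f_A$ via its corresponding frame homomorphism $f_A^{-1}$, which sends closed subsets of $\Spa(A,K^+)'$ to idempotent algebras in $D_{\sol}(A,K^+)$. Since the topology on $\Spa(A,K^+)'$ is generated by the basic closed sets $\{|f|\ll|g|\}$ with $|g|\neq 0$, I would (i) specify the values of $f_A^{-1}$ on these generators using \Cref{construction}, and (ii) check compatibility with finite meets (intersections of closed sets should go to tensor products of idempotent algebras) and with arbitrary joins (unions of closed sets should go to joins in $\cS$, computed as appropriate filtered colimits).

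For the generators, given $g\in A$, set $f_A^{-1}(\{|g|\neq 0\}) \coloneq \cS(A[1/g])$, viewed as an open sublocale of $\cS(D_{\sol}(A,K^+))$ via the natural inclusion of idempotent algebras. Given a pair $(f,g)$, first restrict to the open locale where $g$ is invertible, so that $f/g$ makes sense as an element of $A[1/g]$, and then apply \Cref{construction}(2) to the pair $(A[1/g], f/g)$ to obtain the closed idempotent algebra $\cS(A[1/g], f/g)^{\dagger}$, which by abuse of notation is written $\cS(A, f/g)^{\dagger}$. Setting $f=0$ recovers $\cS(A[1/g])$ directly, since $\overdisc^{\dagger}$ trivially contains the origin; this matches the first of the two claimed preimage formulas.

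The core technical work is to verify compatibility with finite intersections of basic closed subsets. For the localization generators, $\{|g_1|\neq 0\}\cap\{|g_2|\neq 0\}=\{|g_1g_2|\neq 0\}$ matches $A[1/g_1]\otimes_A A[1/g_2]\simeq A[1/g_1g_2]$, which is immediate. For the disk generators, the intersection $\{|f_1|\ll|g_1|\}\cap\{|f_2|\ll|g_2|\}$ should correspond to the tensor product of the two overconvergent closed disk algebras; this rests on \Cref{list of disk}(3)(4), which says that $\overdisc^{\dagger}$ is stable under products and sums, so the combined bound $|f_1/g_1|\ll 1$ and $|f_2/g_2|\ll 1$ on $A[1/g_1g_2]$ is exactly what the tensor of the two closed disk algebras imposes. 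Arbitrary joins are handled automatically, since both sides of the correspondence commute with filtered colimits—on the topological side by the definition of generating a topology, and on the categorical side by the sheaf property of $U\mapsto \cC(U)$ from \Cref{from locale to sheaf}.

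The main obstacle I anticipate is the well-definedness check: distinct pairs $(f,g)$ and $(f',g')$ may define the same closed subset of $\Spa(A,K^+)'$ (for instance when $f/g=f'/g'$ in a common localization of $A$), and one must verify that the corresponding idempotent algebras agree. This reduces to functoriality of the overconvergent closed disk construction in its underlying element, namely that $\Anspec(B)^{b,\dagger}$ depends only on the image of $b\in B$ and behaves well under change of ring $B\to B'$ induced by further localization. Once well-definedness and the meet/join compatibilities are established, $f_A^{-1}$ extends uniquely to a frame homomorphism out of $\mathcal{O}(\Spa(A,K^+)'^{\mathrm{op}})$, completing the construction of the categorical locale; the two preimage formulas then hold by construction.
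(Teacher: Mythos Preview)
Your proposal has a real gap. You treat the closed sets $\{|f|\ll|g|\}$ as if they freely generate the frame of closed subsets of $\Spa(A,K^+)'$, so that once you specify $f_A^{-1}$ on generators and verify compatibility with binary intersections, you are done. But the locale $\Spa(A,K^+)'$ is not free on these generators: its frame is presented by generators \emph{and relations}, the relations being precisely the valuation axioms. To obtain a well-defined frame map out of it, you must verify that your assignment respects every relation that holds among the generating closed sets in $\Spa(A,K^+)'$.

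Concretely, beyond the multiplicative compatibilities you mention (which correspond to the easy axiom $\{|f|\ll|g|\}\cap\{|h|\neq 0\}=\{|fh|\ll|gh|\}$), the paper's proof checks: that $\{|f|\ll|g|\}\cap\{|g|\ll|f|\}=\emptyset$ on the categorical side (via \Cref{list of disk}\ref{intersect of disc empty}); the transitivity-type inclusion $\{|f|\ll|h|\}\subset\{|f|\ll|g|\}\cup\{|g|\ll|h|\}$; and the ultrametric-type inclusion $\{|f|\ll|g+h|\}\subset\{|f|\ll|g|\}\cup\{|f|\ll|h|\}$. The latter two are the substantive steps: each requires showing that a certain pair of overconvergent-disk idempotents forms a closed cover, and the arguments use the fact that $\mathrm{Nil}^{\dagger}$ is an ideal together with \Cref{list of disk}\ref{addition of disc} and an excision argument (\Cref{closed cover check open closedly}). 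Your ``well-definedness'' paragraph only addresses the situation where two pairs $(f,g)$ and $(f',g')$ define literally the same closed set, but misses these cover/inclusion relations entirely; without them you cannot conclude that $f_A^{-1}$ descends from the free frame to the actual frame of $\Spa(A,K^+)'$.
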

\brem
Note that here the underlying set of $\Spa(A,K^+)'$ is same as the usual adic space $\Spa(A, K^+)$ but the topology is different.
\erem
\brem\label{adic spcae and valuation rings}
 Recall that the underlying set of $\mathrm{Spa}(A,K^+)'$ is bijective to the set $$\{\text{valuation ring V over K}, f\colon A\rightarrow \mathrm{Frac}(V) \}/\sim$$ where the equivalent relation given by faithful flat cover of valuation rings, via the following correspondence 
 \begin{align}
v\colon A\rightarrow \Gamma\cup 0 &\mapsto f_v\colon A\rightarrow \mathrm{Frac}(A/\mathrm{supp}(v))     \\
|\cdot|_V\circ f\colon A\rightarrow \Gamma_V\cup 0 &\mapsfrom f\colon A\rightarrow \mathrm{Frac(V)}, |\cdot|_V\colon \mathrm{Frac}(V)\rightarrow \Gamma_V\cup 0. 
\end{align}
 Where $\Gamma_V$ is a valuation group of $V$ and $|\cdot|_V$ is the valuation. We will later provide a proof  of this correspondence in \Cref{appendic A} since we didn't find a clean reference.
\erem
\begin{proof}
    We only need to check that this assignment satisfies correct intersection relations, that is the axiom of valuations:
    \begin{align*}
        & (1) (\textit{pre-image of})\{|0|\ll|1|\}\  \textit{is everything}\\
        & (2) \{|f\ll|g||\}\cap\{|g|\ll|f|\}\ \textit{is empty}\\
        & (3) \{|f|\ll|h|\}\subset \{|f|\ll|g|\}\bigcup \{|g|\ll|h|\}\\
        & (4) \{|f|\ll|g+h|\}\subset \{|f|\ll|g|\}\bigcup \{|f|\ll|h|\}\\
        & (5) \{|f|\ll|g|\}\cap\{0\ll|h|\}= \{|fh|\ll|gh|\}\\
    \end{align*}
Property (1) follows from definition, since $i:0\hookrightarrow \mathbb{A}^1$ factors through overconvergent disc of radius 0.
(2)(5) follows formally from \Cref{intersect of disc empty}.

For (3)\upshape: One may assume $h$ is invertible. Moreover, after replace $f, g$ by $f/h, g/h$ one may assume that $|h|=1$. 
Now the question reduce to the case of $A=K[X,Y]$ via $\Spec A\underset{(f,g)}{\rightarrow}\mathbb{A}^{2} $.\\
Unwinding definition, we need to consider the following diagram where all the squares are Cartesian:
$$\begin{tikzcd}
D \arrow[r] \arrow[dd, bend right]                                     & \mathbb{D}_{\leq 0}^{\dagger}\times \mathbb{A}^{1} \arrow[r] \arrow[dd, bend right]      & \mathbb{D}_{\leq 0}^{\dagger} \arrow[dd] \\
\mathbb{D}_{\leq 0}^{\dagger} \arrow[r]                                & \mathbb{A}^{1}                                                                           &                                          \\
\mathbb{A}^{1}\times \mathbb{D}_{\leq 0}^{\dagger} \arrow[u] \arrow[r] & \mathbb{A}^{1}\times \mathbb{A}^1 \arrow[r, "\mathrm{Pr}_1"] \arrow[u, "\mathrm{Pr}_2"'] & \mathbb{A}^{1}                           \\
                                                                       & \mathbb{G}_{m}\times \mathbb{A}^{1} \arrow[u] \arrow[r, "{\pi\colon {(x,y)\mapsto y/x}}"]     & \mathbb{A}^1                             \\
C \arrow[uu] \arrow[r]                                                 & \pi^{-1}(\mathbb{D}_{\leq 0}^{\dagger}) \arrow[r] \arrow[u]                               & \mathbb{D}_{\leq 0}^{\dagger} \arrow[u]  
\end{tikzcd}
$$
Then (3) is equivalent to say that $C$ and $D$ forms a closed cover of $\mathbb{A}^{1}\times \mathbb{D}_{\leq 0}^{\dagger}$. By \Cref{closed cover check open closedly}, we can check this respectively on $\mathbb{D}_{\leq 0}^{\dagger}\times \mathbb{D}_{\leq 0}^{\dagger}$ and its complement. 

Notice that $D\simeq \mathbb{D}_{\leq 0}^{\dagger}\times \mathbb{D}_{\leq 0}^{\dagger}$. So over $\mathbb{D}_{\leq 0}^{\dagger}\times \mathbb{D}_{\leq 0}^{\dagger}$, $D$ and $C$ forms a closed cover. Over the complement, $C$ becomes the intersection of $\overdisc_{\geq \infty}$ and $\pi^{-1}(\mathbb{D}_{\leq 0})^{\dagger}$. Notice we have the following Cartesian diagram:
$$\begin{tikzcd}
 (\mathbb{G}_{m}\underset{(T\mapsto T^{-1})}{\times}_{ \mathbb{A}^{1}}\mathbb{A}^{1,\mathrm{an}})\times \mathbb{A}^{1}\arrow[r]  & \mathbb{A}^{1}  \\
  \overdisc_{\geq \infty}\times \mathbb{A}^{1} \arrow[r] \arrow[u]          & \overdisc_{\leq 0} \arrow[u].        
\end{tikzcd}$$
Indeed, this follows from the fact that $\mathrm{Nil}^{\dagger}$ forms an ideal, thus the morhpism $\mathbb{A}^{1,\mathrm{an}}\times \overdisc_{\leq 0}\underset{(x,y)\mapsto xy}{\rightarrow} \mathbb{A}^{1}$ factors through $\overdisc_{\leq 0}$. This shows that $C$ is precisely the complement of $\overdisc_{\leq 0}\times \overdisc_{\leq 0}$ in $\mathbb{A}^{1}\times \overdisc_{\leq 0}$.
\\

For (4)\upshape: One may assume that $g+h$ is invertible, moreover one may assume $g+h=1$. Then by \Cref{addition of disc}, $\{1\ll|g|\}$ and $\{1\ll|h|\}$ cover the whole space. Now notice that $\{|f|\ll1\}\cap \{g\gg1\}\subset \{|f|\ll|g|\}$. Indeed, by the same reason as in (3), one may assume $A=K[X,X^{-1},Y]$ and $g=X, f=Y$. Then this follows from the following diagram chase:
$$\begin{tikzcd}
\overdisc_{\leq 0} \arrow[r]       & \mathbb{A}^1                                                                                           &                              \\
                                   & \mathbb{G}_m \arrow[u, "(T\mapsto T^{-1})"]                                                            & \mathbb{A}^1                 \\
                                   & \mathbb{G}_m\times\mathbb{A}^1 \arrow[u, "\mathrm{Pr}_1"] \arrow[r, "\mathrm{Pr}_2"] \arrow[ru, "\pi"] & \mathbb{A}^1                 \\
C\times \overdisc_{\leq 0} \arrow[r] \arrow[ru,"j"] \arrow[uuu] & \mathbb{G}_m\times\overdisc_{\leq 0} \arrow[r] \arrow[u]                                               & \overdisc_{\leq 0} \arrow[u]
\end{tikzcd}$$
Here all the squares are Cartesian. Unwinding definition, (4) is amount to say that $\pi\circ j$ factors through $\overdisc_{\leq 0}$. But as the same reason in (3), $\mathbb{A}^{1,\mathrm{an}}\times \overdisc_{\leq 0}\underset{(x,y)\mapsto xy}{\rightarrow} \mathbb{A}^{1}$ already factors through $\overdisc_{\leq 0}$, and $\pi\circ j$ is a restrication of that to $C\times \overdisc_{\leq 0}$. This finishes the proof.

\end{proof}
\begin{lemma}\label{closed cover check open closedly}
    Let $\cC\in \mathrm{Pr}^{L,st}$, $C\in \cC$ be an idempotent algebra and $\mathrm{Mod}_{C}\cC\rightarrow \cC\underset{f}{\rightarrow}\cD$ be a localizing sequence. Let $A,B\in \cC$ are idempotent algebras. Then $A, B$ forms a \textit{closed cover} of $\cC$ if and only if $A\tensor C, B\tensor C$ forms a closed cover of $\mathrm{Mod}_{C}\cC$ and $f(A),f(B)$ forms a closed cover of $\cD$.
\end{lemma}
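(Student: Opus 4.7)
The plan is to reduce the lemma to two simple ingredients: the characterization of a closed cover as a Mayer--Vietoris style fiber sequence (as implicitly used in \Cref{lemma 3.3.6}), together with a standard conservativity statement for the localizing sequence $\rmod_C \cC \to \cC \to \cD$.

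First I would unpack the definition: $A, B$ form a closed cover of $\cC$ precisely when the canonical complex
$$1_\cC \to A \oplus B \to A \tensor B$$
is a fiber sequence in $\cC$, the second arrow being the difference of the two unit maps $A \to A \tensor B$ and $B \to A \tensor B$. The direction ``closed cover of $\cC$ implies the two hypothesized covers'' is then immediate: both $(-) \tensor C \colon \cC \to \rmod_C \cC$ and $f \colon \cC \to \cD$ are exact symmetric monoidal left adjoints, sending $1_\cC$ to the respective units, and under idempotency of $C$ one has $(A \tensor C) \tensor_C (B \tensor C) \simeq A \tensor B \tensor C$, so the pushed-forward complexes remain fiber sequences.

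For the converse I would first establish the conservativity statement: a morphism $g \colon X \to Y$ in $\cC$ is an equivalence if and only if both $g \tensor C$ and $f(g)$ are. Indeed, letting $Z \coloneq \mathrm{cofib}(g)$, the hypotheses give $Z \tensor C \simeq 0$ and $f(Z) \simeq 0$. The second identifies $Z$ with an object of $\ker(f)$, which is exactly the fully faithful image of $\rmod_C \cC \hookrightarrow \cC$; for any object of this subcategory the canonical map $Z \to Z \tensor C$ (the unit of the smashing localization) is an equivalence, so combined with $Z \tensor C \simeq 0$ this forces $Z \simeq 0$, hence $g$ is an equivalence.

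Finally, assuming closed covers of $\rmod_C \cC$ and $\cD$, I would set $F \coloneq \mathrm{fib}(A \oplus B \to A \tensor B)$ and consider the canonical map $\eta \colon 1_\cC \to F$. Exactness of $(-) \tensor C$ and of $f$ yields
$$F \tensor C \simeq \mathrm{fib}\bigl((A \tensor C) \oplus (B \tensor C) \to A \tensor B \tensor C\bigr), \qquad f(F) \simeq \mathrm{fib}\bigl(f(A) \oplus f(B) \to f(A) \tensor f(B)\bigr),$$
and the hypothesized closed covers on the two pieces say precisely that $\eta \tensor C$ and $f(\eta)$ are equivalences. By conservativity, $\eta$ itself is an equivalence, which is the desired closed cover of $\cC$. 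The argument is essentially formal; the only point requiring some care is correctly identifying $\ker(f)$ with the fully faithful image of $\rmod_C \cC$ and invoking the ``idempotent = smashing'' principle in the conservativity step, but this is standard for localizing sequences arising from an idempotent algebra.
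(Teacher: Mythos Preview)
Your proposal is correct and follows essentially the same approach as the paper: both reduce the statement to the joint conservativity of the pair $(-\tensor C, f)$ on $\cC$, which you prove via the smashing localization description of $\ker(f)=\rmod_C\cC$ and the paper encodes via the fiber sequence $j\circ f(c)\to c\to c\tensor C$ (with $j$ the left adjoint of $f$). Your version simply spells out in more detail what the paper compresses into a single sentence.
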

\begin{proof}
    Denote $j$ be the left adjoint of $f$, then for any $c\in \cC$ we have fiber sequence 
    \begin{equation}\label{excision}
    j\circ f(c)\rightarrow c\rightarrow c\tensor C.
    \end{equation}
    Since $A,B$ forms a closed cover of $\cC$ is equivalent to $1_{\cC}\rightarrow A\oplus B\rightarrow A\tensor B$ being a fiber sequence. By fiber sequence  \Cref{excision}, one can check this in $\mathrm{Mod}_{C}(\cC)$ and $\cD$.
\end{proof}
Our next goal is to, for any scheme construct a \textit{categorical locale}\upshape: $(X^{ad'/K^+}, D_{\sol}(X, K^+), f_X)$ via Zariski descent.

Given any finite type $K$-algebra $A$,  any Zariski open $U\subset \Spec{A}$, we can write $U=\bigcup_{i\in I}\Spec(A_{f_i})$ for some finite set $I$ of elements $f_i\in A$. Also, by construction, $\Spa(A_{f_i},K^{+})'\hookrightarrow \Spa(A, K^+)' $ is a closed inclusion \up{identified with a closed subset $|g|\neq 0$}. Thus, taking union this defines a closed subset $U^{ad'/K^+}\hookrightarrow \Spa(A, K^+)'$, which is independent of choice of covering. Indeed, one can always refine different covers by an additional affine cover, and the assertion follows from the following simple lemma.
\begin{lemma}
    For any finite type $K$-algebra A, If $f_1+ f_2...+f_n=1$ where $f_i\in A$, then $\bigcup \Spa(A_{f_i},K^+)'=\Spa(A, K^+)'$.
\end{lemma}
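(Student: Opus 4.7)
The plan is to work directly with the valuation-theoretic description of $\Spa(A,K^+)'$ recorded just above the lemma: a point is a valuation $v\colon A \to \Gamma\cup\{0\}$ that is trivial on $K^*$, and under the identification $\Spa(A_{f_i},K^+)' \hookrightarrow \Spa(A,K^+)'$ with the locus $\{|f_i|\neq 0\}$, a valuation $v$ lies in $\Spa(A_{f_i},K^+)'$ if and only if $v(f_i) \neq 0$ (equivalently, $v$ extends to the localization $A_{f_i}$). The inclusion $\bigcup_i \Spa(A_{f_i},K^+)' \subseteq \Spa(A,K^+)'$ is then immediate from this identification.

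For the reverse inclusion, I would take any $v \in \Spa(A,K^+)'$ and argue by contradiction. Suppose that $v(f_i) = 0$ for every $i$, so each $f_i$ lies in $\mathrm{supp}(v) = \{a\in A : v(a) = 0\}$. Since the support of a valuation is a prime ideal of $A$, it is in particular closed under addition, so $1 = \sum_i f_i \in \mathrm{supp}(v)$, contradicting $v(1) = 1$. Hence $v(f_i) \neq 0$ for at least one $i$, which places $v$ in $\Spa(A_{f_i},K^+)'$ for that index.

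I do not anticipate any real obstacle: the whole argument is a one-line application of the fact that $\mathrm{supp}(v)$ is a proper ideal, and the only ingredient that requires care is the set-theoretic identification of $\Spa(A_{f_i},K^+)'$ with $\{|f_i|\neq 0\} \subset \Spa(A,K^+)'$, which is exactly the piece recalled in the construction of $U^{ad'/K^+}$ just before the lemma statement.
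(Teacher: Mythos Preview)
Your proof is correct and follows essentially the same approach as the paper. The paper phrases the contradiction via the ultrametric inequality directly ($1 = |\sum_i f_i| \leq \max_i |f_i| = 0$), while you use the equivalent reformulation that $\mathrm{supp}(v)$ is a proper ideal closed under addition; these are the same argument.
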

\begin{proof}
    Notice that $\Spa(A_{f_i},K^+)'$ is precisely $\{|f_i|\neq 0\}$. So, the lemma is equivalent to saying that for every valuation $|.|\colon A\rightarrow \Gamma\bigcup \{0\}$, there exist $i$ such that $|f_i|\neq 0$. Indeed, if contrast, by triangle inequality this says that $1=|\Sigma_i  f_i|\leq \max\{|f_i|\}=0$, leads to contradiction.
\end{proof}

\begin{defcons}
    For any finite type separated $K$-scheme $X$, there exists a topological space $X^{ad'/K^+}$\up{where when $X=\Spec A$, $X^{ad'/K^+}=\Spa(A,K^+)'$}. Defined as follow
    
    $$X^{ad'/K^+}=\mathop{\colim}_{\Spec A\underset{\mathrm{open}}{\subset}X}\Spa(A,K^+)'.$$ Here, colimit is taking in the category of topological space.

    By the above discussions, this is the gluing that binds local $\Spa(A_i,K^+)'$'s along closed subsets $(\Spec A_i\times_{X} \Spec A_j)^{ad'/K^+}$.
\end{defcons}
\brem
By \Cref{adic spcae and valuation rings}, the set of points on $X^{ad'/K^+}$ are bijective to
$$\{\text{valuation ring V}, \begin{tikzcd}
\mathrm{Spec}(\mathrm{Frac}(V)) \arrow[d] \arrow[r] & X \arrow[d]    \\
\mathrm{Spec}V \arrow[r]                          & \mathrm{Spec}K
\end{tikzcd}\}/\sim$$
Where the equivalence condition is given by faithful flat morphism $f\colon V\rightarrow W$ of valuation rings such that the following diagram commutes
$$\begin{tikzcd}
\mathrm{Spec}(\mathrm{Frac}(W)) \arrow[d] \arrow[r] & \mathrm{Spec}\mathrm{Frac}(V) \arrow[d] \arrow[r] & X \arrow[d]    \\
\mathrm{Spec}W \arrow[r, "f"]                       & \mathrm{Spec}V \arrow[r]                          & \mathrm{Spec}K.
\end{tikzcd}$$
\erem
\begin{propcons}
    For any finite type separated $K$-scheme $X$, there exist a \textit{categorical locale} $(X^{ad'/K^+}, D_{\sol}(X, K^+), f_X\colon \cS(D_{\sol}(X,K^+))\rightarrow X^{ad'/K^+})$, where $f$ defined as follow

    by Zariski descent of quasi-coherent sheaf, we have that $D_{\sol}(X, K^+)=\mathop{\mathrm{lim}}\limits_{\Spec A\subset X} D_{\sol}(A, K+)$. Now we define $f_X$ as the composition
    $$\cS(D_{\sol}(X, K^+))\rightarrow \mathop{\mathrm{lim}}\limits_{\Spec A\subset X}\cS(D_{\sol}(A, K^+))\xrightarrow{\underset{\Spec A\underset{\mathrm{open}}{\subset}X}\colim f_{A}} \mathop{\mathrm{lim}}\limits_{\Spec A\subset X} \Spa(A, K^+)'^{\mathrm{op}}= X^{ad'/K^+,op} $$

    This is a \textit{Categorical locale} by naturality of $\cS(-)$ functor and morphism of $locale $ is stable under colimit.
\end{propcons}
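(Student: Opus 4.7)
My plan is to globalize the affine construction from the preceding proposition by Zariski descent. The three ingredients are: a sheaf property for $D_{\sol}(-, K^+)$ realizing $D_{\sol}(X, K^+)\simeq \lim_{\Spec A\subset X} D_{\sol}(A, K^+)$; the functoriality of $\cS(-)$ sending symmetric monoidal functors (with left adjoints) to morphisms of locales; and the identification $X^{ad'/K^+,\mathrm{op}}\simeq \lim_{\Spec A\subset X}\Spa(A,K^+)'^{\mathrm{op}}$, which is tautological since $X^{ad'/K^+}$ is defined as a colimit of topological spaces along closed immersions, and the open-set frame of a colimit in $\mathrm{Top}$ is the corresponding limit in frames.

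First I would verify the descent property for $D_{\sol}$. Since $D_{\sol}(A, K^+)\simeq \mathrm{Mod}_A(D_{\sol}(K, K^+))$ whenever $A$ is viewed as a discrete condensed $K$-algebra, descent for Zariski covers reduces to classical faithfully flat descent for $A$-modules (using that $A\rightarrow \prod_i A_{f_i}$ is faithfully flat whenever $(f_i)=A$) tensored with $D_{\sol}(K, K^+)$, a rigid and in particular dualizable module category over itself; descent statements are preserved by such tensoring. With descent in hand, $f_X$ is assembled as indicated in the statement: $\cS(-)$ sends the Zariski diagram of symmetric monoidal restriction functors to a compatible diagram of locales, and the naturality of $\cS(-)$ together with the affine morphisms $f_A$ from \Cref{construction} produces the comparison map into $\lim_A\Spa(A,K^+)'^{\mathrm{op}} = X^{ad'/K^+,\mathrm{op}}$.

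The third and most delicate step is verifying that the resulting $f_X$ is itself a morphism of locales, that is, preserves arbitrary joins and finite meets. I would reduce this to the affine case handled in \Cref{construction} by checking that both joins and meets in $\cS(D_{\sol}(X,K^+))$ and in $X^{ad'/K^+,\mathrm{op}}$ are computed affine-locally: the former because Zariski restriction functors are symmetric monoidal and cocontinuous, so they preserve tensor products of idempotent algebras as well as their suprema (which identifies meets and joins in $\cS$ of a limit of symmetric monoidal categories with the componentwise operations); the latter by standard properties of limits in the category of locales. This bookkeeping is where I expect the main obstacle, but it should demand no new technical input beyond careful tracking of the locale operations through the descent equivalence, matching the cryptic indication in the statement that the result follows from naturality of $\cS(-)$ and stability of locale morphisms under colimits.
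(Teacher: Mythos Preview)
Your proposal is correct and follows essentially the same approach as the paper, which does not give a separate proof but only the one-line justification embedded in the statement (``naturality of $\cS(-)$'' and stability of locale morphisms under colimits). You have simply unpacked that justification in detail, supplying the Zariski descent for $D_{\sol}(-,K^+)$, the functoriality of $\cS(-)$, the frame identification $X^{ad'/K^+,\mathrm{op}}\simeq \lim_A \Spa(A,K^+)'^{\mathrm{op}}$, and the affine-local verification that $f_X$ preserves joins and meets.
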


Now notice that there is a natural open subset $(X, X)^{ad'/K^+}$ of $X^{ad'/K^+}$. Indeed, over every affine open, there is $\Spa(A,A)'$ correspond to the subset of $\Spa(A, K^+)'$ where valuations $|.|\colon A\rightarrow \Gamma\bigcup \{0\}$ has property $|A|\leq 1$ and topology being the induced one. This is indeed an open subset by the following lemma:
\begin{lemma}\label{gluing open locale}
    Assume that the finite type $K$-algebra $A$ receives a surjection $\pi\colon K[x_1,x_2...x_n]\surjects A$, then $\Spa(A, A)'=\underset{f\in A}\bigcap \{|f| \gg 1\}^c=\underset{i=1...n}\bigcap {\{|\pi(x_i)|\gg 1\}}^c$. where $(-)^c$ denotes the complement. In particular, this shows $\Spa(A,A)'\hookrightarrow \Spa(A,K^+)'$ is an open inclusion. 
\end{lemma}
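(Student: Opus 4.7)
My approach is to unwind the definition of $\Spa(A,A)'$ as the locus where $|a|\leq 1$ for every $a\in A$, so that the first equality $\Spa(A,A)' = \bigcap_{f\in A}\{|f|\gg 1\}^c$ holds tautologically (using that $\{|f|\gg 1\}^c = \{|f|\leq 1\}$ under the convention $|f|\gg 1 \Leftrightarrow 1\ll|f|$). The nontrivial content is then to reduce this infinite intersection over $A$ to the finite intersection over the chosen generators $\pi(x_1),\dots,\pi(x_n)$. Once this is done, openness follows formally from the very definition of the generating family of closed subsets of $\Spa(A,K^+)'$.

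For the reduction to finitely many generators, one inclusion is trivial since each $\pi(x_i)$ belongs to $A$. For the reverse inclusion, I fix a valuation $v$ with $|\pi(x_i)|\leq 1$ for all $i$, and let $f\in A$. Lifting $f$ through the surjection $\pi$ gives a polynomial $p = \sum_\alpha c_\alpha x^\alpha \in K[x_1,\dots,x_n]$ with $f = \pi(p)$. The non-archimedean triangle inequality together with multiplicativity gives
\[
|f| \;\leq\; \max_\alpha\, |c_\alpha|\cdot \prod_i |\pi(x_i)|^{\alpha_i}.
\]
Since the axiom $|K^*|=1$ forces $|c_\alpha|\in\{0,1\}$, and by hypothesis $|\pi(x_i)|^{\alpha_i}\leq 1$, each term on the right is bounded by $1$, whence $|f|\leq 1$. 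This establishes the second equality.

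Having the equality, the openness claim is then immediate: each set $\{|\pi(x_i)|\gg 1\} = \{|1|\ll|\pi(x_i)|\}$ belongs to the generating family of closed subsets of $\Spa(A,K^+)'$ (with the nonzero condition on the second argument automatic on this set), hence its complement is open, and a finite intersection of $n$ opens remains open. I do not foresee any real obstacle; the only content-bearing step is the non-archimedean estimate above, which is a one-line consequence of the triangle inequality combined with $|K^*|=1$. The role of the hypothesis that $A$ is finite type over $K$ enters precisely here, as it ensures finitely many generators suffice, so that the intersection lands in the topology (which is only closed under finite, not arbitrary, intersections of opens).
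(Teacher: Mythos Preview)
Your proof is correct and follows essentially the same approach as the paper: both note the first equality is definitional, then establish the second by lifting an arbitrary $f$ through the surjection $\pi$ to a polynomial in $x_1,\dots,x_n$ and applying the non-archimedean triangle inequality together with the axiom $|K^*|=1$ to bound $|f|$ by $1$. The paper phrases the key step as the contrapositive (if $|f|>1$ then some $|\pi(x_i)|>1$), but the computation is identical to yours.
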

\begin{proof}
    The first equality is by definition. To prove the second equality, sufficient to prove the statement for its complement, that is, if there exist $f\in A$ such that in some valuation $|.|:A\rightarrow \Gamma\bigcup 0 $, $|f|>1$, then there exist $i$ such that $|x_i|>1$. Indeed, if $|x_i|\leq1$ for all $i$, since $\pi$ is surjective, one can always write $f=\Sigma a_{m_1m_2...m_n}x_{1}^{m_1}x_{2}^{m_2}...x_{n}^{m_n}$. Thus by triangular inequality $|f|\leq \max\{|a_{m_1m_2...m_n}x_{1}^{m_1}x_{2}^{m_2}...x_{n}^{m_n}|\}\leq \max{|a_{m_1m_2...m_n}|x_{1}^{m_1}||x_{2}^{m_2}|...|x_{n}^{m_n}|}\leq 1$
\end{proof}
Thus we can globalize this construction as follow:
\begin{propcons}\label{discrete adic space}
    For any finite type separated $K$-scheme $X$. there exist a topological space $(X,X)^{ad'/K^+}$ \up{where when $X=\Spec A$, $(X,X)^{ad'/K^+}=\Spa(A,A)'$} defined as follow\upshape:
    $$(X,X)^{ad'/K^+}=\mathop{\colim}_{\Spec A\underset{\mathrm{open}}{\subset}X}\Spa(A,A)'$$ Here the colimit is taken in the category of topological space. (By \Cref{gluing open locale} above, this is gluing local $\Spa(A_i,A_i)'$'s along the open subsets $(\Spec A_i\times_{X} \Spec A_j,\Spec A_i\times_{X} \Spec A_j)^{ad'/K^+}$.)

    Moreover the natural map $(X,X)^{ad'/K^+}\rightarrow X^{ad'/K^+}$ is an open immersion.
\end{propcons}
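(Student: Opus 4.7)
The plan is to establish the two claims in sequence: first, well-definedness of the colimit defining $(X,X)^{ad'/K^+}$; second, openness of the induced map to $X^{ad'/K^+}$. A preliminary observation is that since $X$ is separated, for any two affine opens $\Spec A_i, \Spec A_j \subset X$ the intersection $\Spec A_i \cap \Spec A_j$ is itself affine: it is the pullback of the diagonal $X \hookrightarrow X \times X$ along $\Spec A_i \times_K \Spec A_j \to X \times X$, hence a closed subscheme of $\Spec A_i \times_K \Spec A_j$. Write $\Spec A_i \cap \Spec A_j = \Spec B_{ij}$.

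The first main step is to show that for any open immersion $\Spec B \hookrightarrow \Spec A$ of affine opens of $X$, the induced map $\Spa(B, B)' \hookrightarrow \Spa(A, A)'$ is an open immersion. Reducing to the principal case $B = A_f$, I would observe that the image is $\{v \in \Spa(A,A)' : |f|_v = 1\}$: the condition $|f|_v \leq 1$ is automatic in $\Spa(A,A)'$, and the requirement $|1/f|_v \leq 1$ forces $|f|_v \geq 1$. This is open because its complement $\Spa(A,A)' \cap \{|f| \ll 1\}$ is closed by the very definition of the topology on $\Spa(A, K^+)'$. For a general $\Spec B$, cover it by finitely many principal opens $\Spec A_{f_k}$; via the valuation-ring interpretation, $v$ lies in $\Spa(B, B)'$ iff the map $\Spec V_v \to \Spec A$ factors through $\Spec B$, iff the closed point of $\Spec V_v$ lies in some $\Spec A_{f_k}$, iff $|f_k|_v = 1$ for some $k$. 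Hence $\Spa(B, B)' = \bigcup_k \Spa(A_{f_k}, A_{f_k})'$ is open in $\Spa(A, A)'$. The cocycle condition then follows from functoriality of $\Spa(-,-)'$ under localization, so the colimit exists as a topological space.

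The natural map $(X,X)^{ad'/K^+} \to X^{ad'/K^+}$ is induced by the open inclusions $\Spa(A_i, A_i)' \hookrightarrow \Spa(A_i, K^+)'$ of the preceding lemma. The main obstacle is verifying this map is an open immersion. Since $X^{ad'/K^+}$ carries the final topology with respect to the $\Spa(A_i, K^+)'$, it suffices to check that for each affine open $\Spec A_j \subset X$, the preimage of $(X,X)^{ad'/K^+}$ in $\Spa(A_j, K^+)'$ is open. I would decompose this preimage as $\bigcup_i (\Spa(A_i, A_i)' \cap \Spa(A_j, K^+)')$ and describe each intersection explicitly: cover $\Spec A_i \cap \Spec A_j$ by principal opens $\Spec A_{j, g_k}$ and express each generator $a_\ell$ of $A_i$ as $q_{\ell,k}/g_k^{e_{\ell,k}}$ in $A_{j, g_k}$. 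Unpacking the condition $|a_\ell|_v \leq 1$ yields $|q_{\ell,k}|_v \leq |g_k|_v^{e_{\ell,k}}$ for all $\ell$, and each such condition is the complement of the generating closed subset $\{|g_k|^{e_{\ell,k}} \ll |q_{\ell,k}|\}$, hence open in $\Spa(A_j, K^+)'$. Taking finite intersections over $\ell$ and a finite union over $k$ preserves openness. The delicate point is that in the non-standard topology on $\Spa(-, K^+)'$, where the generating closed subsets are of the form $\{|f| \ll |g|\}$ with $|g| \neq 0$, one must carefully translate each algebraic condition into the correct Boolean combination of generating opens and closeds; the openness is not automatic and must be checked directly against this presentation of the topology.
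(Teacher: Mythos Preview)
Your proposal is correct and follows essentially the same strategy as the paper's proof. Both arguments reduce the open immersion claim to checking, for each affine chart $\Spec A_j$, that the preimage of $(X,X)^{ad'/K^+}$ in $\Spa(A_j,K^+)'$ is open; both cover the intersection $\Spec A_i\cap\Spec A_j$ by principal opens $\Spec A_{j,g_k}$, invoke separatedness to write the generators of $A_i$ as fractions $q_{\ell,k}/g_k^{e_{\ell,k}}$, and conclude by observing that $\{|q_{\ell,k}|\le|g_k|^{e_{\ell,k}}\}$ is the complement of a generating closed set. The paper packages the last step as a standalone lemma (for a single element $x=b/g^n\in B[1/f]$, the map $\Spa(B[x],K[x])'\to\Spa(B,K^+)'$ has open image $\{|b|\le|g|^n\}$), and reduces to the two-chart case $X=U\cup V$ first, but the content is the same. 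Your treatment of the well-definedness of the colimit (showing $\Spa(B,B)'\hookrightarrow\Spa(A,A)'$ is open for $B=A_f$) is more explicit than the paper, which simply attributes this to the preceding lemma.
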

\begin{rem}\label{full valuation}
   Underthe correspondence in \Cref{adic spcae and valuation rings}, the set of points of $\mathrm{Spa}(A,A)'$ are bijective to 
   $$\{\text{Valuation ring V over $K$}, f\colon \mathrm{Spec V\rightarrow \mathrm{Spec}A}\}/(\sim\colon \mathrm{\pi\colon SpecW\rightarrow \mathrm{Spec}V} \text{ where $\pi$ is faithful flat } ).$$
   Hence, since morphism from specrtrum of valuation ring to a scheme always factors through an open affine subscheme, for finte type separated $K$-scheme $X$ the set of point of $(X,X)^{ad'/K^+}$ is bijective to 
   $$\{\text{Valuation ring V over $K$}, g\colon \mathrm{Spec} V\rightarrow X\}/(\sim\colon \mathrm{\pi\colon SpecW\rightarrow \mathrm{Spec}V} \text{ where $\pi$ is faithful flat } ).$$
   In particular, when $X$ is proper $K$-scheme, by valuative criterion of proper morphism, we have the bijection $|(X,X)^{ad'/K^+}|=|X^{ad'/K^+}|$. Indeed, since every map $$\begin{tikzcd}
\mathrm{Spec}(\mathrm{Frac}(V)) \arrow[d] \arrow[r] & X \arrow[d]    \\
\mathrm{Spec}V \arrow[r]                          & \mathrm{Spec}K
\end{tikzcd}$$
will then uniquely lift to a map $\mathrm{Spec} V\rightarrow X$.
\end{rem}
\begin{rem}\label{preimage of affine}
Since by construction, the pre-image of ${\{|\pi(f)|\gg1\}}^c$ under $f_A$ is precisely $\cS(A,f)$. We notice that the pre-image of $\Spa(A,A)'$ under $f_A$ is precisely $\cS(A,A)\coloneq \cS(D_{\sol}(\Spec(A)^{\mathrm{an}}))$. Thus by \Cref{descenting solid sheaf of an}, the pre-image of $(X,X)^{ad'/K^+}$ is $\cS(D_{\sol}(X^{\mathrm{an}}))$ by construction.

\end{rem}
\begin{proof}[Proof of \Cref{discrete adic space}]
    Only need to prove the statement about open immersion. By taking a finite open cover of affine schemes of $X$, one reduced to the following situation\textup{:} $X\simeq U\cup V$ where $U\simeq \mathrm{Spec}A, V\simeq \mathrm{Spec}B$ and $U\times_X V\simeq \mathrm{Spec} C$ for $K$-finite type algebras $A,B,C$. It suffices to show that $\mathrm{Spa}(A,A)^{'}\times_{X^{\mathrm{ad}/K^+}} \mathrm{Spa}(B,K^+)'\simeq \mathrm{Spa}(C,A)'\rightarrow \mathrm{Spa}(B,K^{+})'$ is open immersion.
    Furthermore, by cover both $\mathrm{Spec} C$ and $\mathrm{Spec}B$ by (finitely many) distinguished opens, one may assume that $C\simeq B[1/f]$ for some $f\in B$. 
    
    By seperatedness of $X$, we see that the natural map $A\tensor_{K} B\surjects C$ surjectively. In particular, we have that the sub-algebra $\bar{A\cup B}$ in $C$ generated by $A$ and $B$ is isomorphic to $C$. Therefore we reduce to show that the map $\mathrm{Spa}(\bar{A\cup B},A)'\rightarrow \mathrm{Spa}(B,K^+)'$ is an open immersion. The rest follows from the following lemma.
    \begin{lemma}
        Let $x\in B[1/f]$, then $\mathrm{Spa}(B[x],K[x])'\rightarrow \mathrm{Spa}(B,K^+)'$ is an open immersion.
    \end{lemma}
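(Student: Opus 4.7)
The plan is to identify the image of $\phi \colon \mathrm{Spa}(B[x], K[x])' \to \mathrm{Spa}(B, K^+)'$ as an explicit open subset and to check that $\phi$ is a homeomorphism onto it. Since $x \in B[1/f]$, I write $x = b/f^n$ for some $b \in B$ and $n \geq 0$. The relation $f^n x = b$ inside $B[x]$ forces any extension $\tilde v$ of a valuation $v$ on $B$ with $\tilde v(x) \leq 1$ to satisfy $v(b) \leq v(f)^n$; conversely, on the locus $v(f) \neq 0$ the extension is unique with $\tilde v(x) = v(b)/v(f)^n$. Thus the image should be $U := \{v \in \mathrm{Spa}(B, K^+)' : v(b) \leq v(f)^n\}$.

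To see that $U$ is open, I note that its complement is $\{v(f)^n \ll v(b)\} = \{|f^n| \ll |b|\}$, which automatically forces $|b| \neq 0$ and is therefore one of the subbasic closed generators of the topology on $\mathrm{Spa}(B, K^+)'$.

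For the homeomorphism, bijectivity on the locus $v(f) \neq 0$ follows from uniqueness of the extension. To match topologies, any subbasic closed subset $\{|g| \ll |h|\}$ of $\mathrm{Spa}(B[x], K[x])'$ with $g, h \in B[x]$ can be rewritten as $\{|f^N g| \ll |f^N h|\}$ for $N$ large enough that $f^N g, f^N h \in B$, identifying it with a subbasic closed subset of $\mathrm{Spa}(B, K^+)'$ restricted to $U$. Functoriality of the categorical locale construction \up{\Cref{construction}} then upgrades this topological open immersion to one of categorical locales.

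The main obstacle I anticipate is the degenerate locus $v(f) = v(b) = 0$, where the extension of $v$ to $B[x]$ is not literally unique, so set-theoretic injectivity of $\phi$ can fail there. I expect this is handled either by reducing to the integral case (where any non-trivial valuation on a domain is nonzero on the nonzero element $f$, so this locus is absent), or by appealing to the locale-theoretic framework, which is insensitive to such non-generic pointwise degeneracies on the boundary.
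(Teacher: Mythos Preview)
Your approach is exactly the paper's: write $x = b/f^n$ and identify the image with the open set $\{|b| \leq |f|^n\}$; the paper's entire proof is that one sentence (modulo a typo). However, the degenerate-locus concern you raise is genuine, and neither of your proposed resolutions works. The integral-case suggestion fails because valuations here are in the Huber sense and can have nontrivial support: on the domain $B = K[f,g]$ with $x = g/f$ (so $B[x] \cong K[f,x]$ via $g \mapsto fx$), the point with support $(f,g)$ lies in $\{|g| \leq |f|\}$, but its fiber consists of all valuations on $K[x] = B[x]/(f)$ with $|x| \leq 1$, and there are many inequivalent such valuations. Your parenthetical ``any non-trivial valuation on a domain is nonzero on the nonzero element $f$'' is simply wrong in this setting. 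The locale-framework suggestion is too vague to constitute a proof.

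So the lemma, read literally as a statement about points of these topological spaces, appears to be false; the paper's one-line proof has the same gap you found. What the enclosing argument actually requires is the open immersion $\mathrm{Spa}(C,A)' \to \mathrm{Spa}(B,K^+)'$ for $C = B[1/f]$, and there the degenerate locus is absent because $f$ is a unit in $C$, so $|f| \neq 0$ for every valuation on $C$ and restriction to $B$ is genuinely injective. The inductive reduction to a single generator $x$ is what introduces the problem; arguing directly with $C = B[1/f]$ avoids it.
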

\begin{proof}
    Indeed, write $x=\frac{b}{g^n}$ for some $b\in B$, then $\mathrm{Spa}(B[x],K[x])'\simeq \{|f|\leq |g|^n\}\subset \mathrm{Spa}(B,K^+)'$ is an open subset.
\end{proof}
    
\end{proof}
Now we finish the proof of enhanced GAGA theorem.

\begin{thm}\label{categorical GAGA}
    If $X$ is a proper $K$-scheme, then $D_{\sol}(X^{\mathrm{an}})\simeq D_{\sol}(X^{\mathrm{alg}})$.
\end{thm}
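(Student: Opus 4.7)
The strategy is to exploit the categorical locale $(X^{ad'/K^+}, D_{\sol}(X,K^+), f_X)$ built up in the excerpt together with the sheaf-of-categories result \Cref{from locale to sheaf}. By that proposition, the assignment $U \mapsto D_{\sol}(X,K^+)(f_X^{-1}(U))$ defines a sheaf of $\infty$-categories on the topological space $X^{ad'/K^+}$. The plan is to identify both $D_{\sol}(X^{\mathrm{alg}})$ and $D_{\sol}(X^{\mathrm{an}})$ as sections of this sheaf over two canonical opens, and then use the valuative criterion of properness to see that those two opens in fact coincide.

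First I would observe that $D_{\sol}(X^{\mathrm{alg}}) = D_{\sol}(X,K^+)$: this follows from \Cref{alg preserve colimit}, which tells us that algebraic realization sends the Zariski cover $\{\Spec A_i \hookrightarrow X\}$ to a $!$-cover of $X^{\mathrm{alg}}$, so by $*$-descent we obtain $D_{\sol}(X^{\mathrm{alg}}) \simeq \lim_i D_{\sol}(\Spec(A_i)^{\mathrm{alg}}) = \lim_i D_{\sol}(A_i, K^+)$, and this is by definition the global sections of our sheaf on all of $X^{ad'/K^+}$. Next, \Cref{descenting solid sheaf of an} combined with \Cref{preimage of affine} gives $D_{\sol}(X^{\mathrm{an}}) = D_{\sol}(X,K^+)(f_X^{-1}((X,X)^{ad'/K^+}))$, i.e.\ the sections of the same sheaf over the open $(X,X)^{ad'/K^+} \hookrightarrow X^{ad'/K^+}$ from \Cref{discrete adic space}. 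So the entire theorem reduces to showing $(X,X)^{ad'/K^+} = X^{ad'/K^+}$ as open subspaces when $X$ is proper.

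The main step is the geometric input: I would invoke the valuative criterion of properness exactly as indicated in \Cref{full valuation}. A point of $X^{ad'/K^+}$ corresponds to an equivalence class of pairs $(V, \mathrm{Spec}(\mathrm{Frac}\,V) \to X)$ with $V$ a valuation ring over $K$, and a point of $(X,X)^{ad'/K^+}$ is a pair $(V, \mathrm{Spec}\,V \to X)$. Properness of $X/K$ guarantees that every diagram of the form $\mathrm{Spec}(\mathrm{Frac}\,V) \to X$, $\mathrm{Spec}\,V \to \mathrm{Spec}\,K$ admits a unique lift $\mathrm{Spec}\,V \to X$, giving the bijection $|(X,X)^{ad'/K^+}| \to |X^{ad'/K^+}|$.

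The subtle point, which I expect to be the main obstacle, is upgrading this set-theoretic bijection on points to an equality of open sublocales. The cleanest way is to check this locally: by working in an affine chart $\Spec A \subset X$, one reduces to showing that in $\Spa(A,K^+)'$ the open $\Spa(A,A)'$ contains every valuation arising from a map $\Spec(\mathrm{Frac}\,V) \to X$ whose lift $\Spec V \to X$ factors through $\Spec A$, and that this exhausts all valuations up to the equivalence relation of faithfully flat extension. Using \Cref{gluing open locale}, which presents $\Spa(A,A)'$ as the intersection of complements of the closed subsets $\{|f| \gg 1\}$ for $f \in A$, the containment $\Spa(A,A)' \supset \Spa(A, K^+)'$ at the level of points translates into saying: for any valuation $|\cdot|$ in the image of a valuative lift, every element of $A$ has $|f| \leq 1$. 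Since the ``topology'' on $X^{ad'/K^+}$ is generated by the closed subsets $\{|f| \ll |g|\}$, which are themselves pulled back from $\Spa(A_i, K^+)'$, a bijective open immersion between these spaces is automatically an equality of open subspaces. Combined with the sheaf-of-categories structure this yields the desired equivalence $D_{\sol}(X^{\mathrm{an}}) \simeq D_{\sol}(X^{\mathrm{alg}})$, completing the argument.
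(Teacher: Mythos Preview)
Your proposal is correct and follows essentially the same strategy as the paper: identify both $D_{\sol}(X^{\mathrm{alg}})$ and $D_{\sol}(X^{\mathrm{an}})$ as sections of the sheaf of categories on $X^{ad'/K^+}$ over the full space and over the open $(X,X)^{ad'/K^+}$ respectively, then use the valuative criterion to show these opens coincide. Your ``subtle point'' is simpler than you fear---since $(X,X)^{ad'/K^+}\hookrightarrow X^{ad'/K^+}$ is already an open immersion of honest topological spaces (by \Cref{discrete adic space}), the bijection on points from \Cref{full valuation} immediately makes it a homeomorphism, so no local checking is needed.
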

\begin{proof}
For general finite type $K$-scheme $X$ without properness condition, we claim that the \textit{categorical locale} defines a sheaf $\cD$ of $\infty$-category on $X^{ad/'K^+}$, where $\cD((X,X)^{ad'/K^+})\simeq D_{\sol}(X^{\mathrm{an}})$.  

Indeed, the first part follows from \Cref{from locale to sheaf}. While the second is true because 1) locally it is true since $\cD(\Spa(A,A)')\simeq D_{\sol}((\Anspec A)^{\mathrm{an}})$ by \Cref{preimage of affine} and 2) notice by sheaf property of $\cD$, $\cD((X,X)^{ad'/K^+})\simeq \cD(\underset{\Spec{A}\underset{\mathrm{open}}{\subset} X}{\mathrm{colim}}(\Spa(A,A)'))\simeq \underset{\Spec{A}\underset{\mathrm{open}}{\subset}X}{\mathrm{lim}}\cD((\Spec A)^{\mathrm{an}})$, combine with descent $D_{\sol}(X^{\mathrm{an}})\simeq \underset{\Spec{A}\underset{\mathrm{open}}{\subset} X}{\mathrm{lim}}D_{\sol}((\Spec A)^{\mathrm{an}})$ thus the claim follows.\\ Now when $X$ is proper, by valuative criterion \Cref{full valuation}, the open inclusion $(X,X)^{ad'/K^+}\hookrightarrow X^{ad'/K^+}$ is actually a homeomorphism. Thus $D_{\sol}(X^{\mathrm{an}})\simeq \cD((X,X)^{ad'/K^+})\simeq \cD(X^{ad'/K+})\simeq D_{\sol}(X^{\mathrm{alg}}) $.
\end{proof}
As a immediate consequence, we show that analytification functor applying to proper schemes preserves certain ``cohomological properness'' and ``cohomological smoothness''.

\begin{prop}\label{proper is prim}
 If $X/K$ is proper scheme then $X^{\mathrm{an}}/\Spa(K, K^+)$ has the property that $1_{X^{\mathrm{an}}}$ is $prim$, If $X/K$ is of finite Tor-dimension then $1_{X^{\mathrm{an}}}$ is $suave$ object.
\end{prop}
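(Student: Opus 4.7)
The plan is to transfer primness and suaveness from the algebraic realization $X^{\mathrm{alg}}$ to the analytic realization $X^{\mathrm{an}}$ via the enhanced GAGA theorem \Cref{categorical GAGA} together with the categorical K\"unneth formula \Cref{kunnethnuc}. Recall that $1_{Y}$ being $f$-prim for $f\colon Y\to Z$ encodes the projection formula and proper base change for $f_{*}$ along arbitrary test morphisms $Z' \to Z$, while $f$-suaveness is a dual condition involving $f^{!}$ and a relative dualizing sheaf. Both notions are formulated entirely inside the 6-functor package and are preserved by $D_{\sol}(K,K^{+})$-linear symmetric monoidal equivalences which intertwine structure maps.

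First I would verify the analogues on the algebraic side, i.e.\ that $1_{X^{\mathrm{alg}}}$ is prim (resp.\ suave) over $\Spa(K,K^{+})$ for $X/K$ proper (resp.\ of finite Tor-dimension). Both are standard for $X\to\Spec K$ in the algebraic 6-functor formalism on $\rqcoh$. To pass from $\Spec K$ to $\Spa(K,K^{+})$, one uses that for every bounded affinoid $\cA$ over $\cK$,
\[
D_{\sol}(X^{\mathrm{alg}} \times_{\Spa(K,K^{+})} \Spa \cA) \simeq \rqcoh(X) \tensor_{\rqcoh(\Spec K)} D_{\sol}(\cA);
\]
since tensoring with the dualizable factor $D_{\sol}(\cA)$ over $D_{\sol}(K,K^{+})$ preserves the relevant (co)limit diagrams, the projection formula and base-change identities for $1_{X}$ on the algebraic side tensor up to the analogous identities for $1_{X^{\mathrm{alg}}}$.

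Next, the enhanced GAGA theorem \Cref{categorical GAGA} yields a $D_{\sol}(K,K^{+})$-linear symmetric monoidal equivalence $D_{\sol}(X^{\mathrm{an}}) \simeq D_{\sol}(X^{\mathrm{alg}})$ matching $1_{X^{\mathrm{an}}}$ with $1_{X^{\mathrm{alg}}}$. Tensoring this equivalence with $D_{\sol}(\cA)$ over $D_{\sol}(K,K^{+})$ and invoking the K\"unneth isomorphism
\[
D_{\sol}(X^{\mathrm{an}}) \tensor_{D_{\sol}(K,K^{+})} D_{\sol}(\cA) \isomto D_{\sol}(X^{\mathrm{an}} \times_{\Spa(K,K^{+})} \Spa \cA),
\]
one sees that the equivalence is compatible with arbitrary analytic base change. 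Combined with the previous paragraph, the prim (resp.\ suave) conditions for $1_{X^{\mathrm{alg}}}$ translate directly to the corresponding conditions for $1_{X^{\mathrm{an}}}$.

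The main obstacle is the compatibility of the GAGA equivalence with the \emph{full} 6-functor package, in particular that it intertwines $f^{!}$ (not only $f^{*}$ and $f_{*}$), since suaveness is phrased in terms of $f^{!}$ and a relative dualizing object. This should follow formally from uniqueness of adjoints together with the $D_{\sol}(K,K^{+})$-linearity of the equivalence, but some care is needed because $f^{!}$ is not a priori symmetric monoidal; one must check that the relative dualizing object produced on the algebraic side is carried, under GAGA and its base-changed versions, to an object representing $f^{!}$ on the analytic side.
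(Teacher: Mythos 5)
For \emph{primness}, your approach coincides in substance with the paper's: one observes that $X^{\alg}$ is weakly cohomologically proper (each affine chart $\Anspec(A_j,K^+)$ is proper over $\Spa\cK$ since the analytic structure is induced, and the gluing is along closed immersions), and then transfers this across the GAGA equivalence $\eta^*\colon D_{\sol}(X^{\alg})\isomto D_{\sol}(X^{\mathrm{an}})$ of \Cref{categorical GAGA}. Your worry about $f^!$ has a one-line resolution you should have noticed: $\eta\colon X^{\mathrm{an}}\to X^{\alg}$ is an \emph{open} immersion, so $\eta^!\simeq\eta^*$, and since $g^{\mathrm{an}}=g^{\alg}\circ\eta$, the GAGA equivalence automatically intertwines the $(-)^!$-functors for the structure maps by uniqueness of adjoints.

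For \emph{suaveness}, however, your approach has two genuine gaps and diverges substantially from what the paper does. First, GAGA (\Cref{categorical GAGA}) requires $X$ proper, but the suaveness clause of the proposition only assumes $X$ is of finite Tor-dimension; the paper's proof handles any finite type affine $\Spec A$ of finite Tor-dimension, not just proper $X$. Second, and more seriously, your reduction ``first verify $1_{X^{\alg}}$ is suave over $\Spa(K,K^+)$'' cannot be carried out by the usual local argument: suaveness descends along \emph{open} covers of the source, but $X^{\alg}$ is built by gluing the $\Anspec(A_j,K^+)$ along \emph{closed} immersions, so you have no analogous local-to-global mechanism on the algebraic side. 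This is precisely why the paper checks suaveness locally on $X^{\mathrm{an}}$ (whose affine charts $\Spec(A)^{\mathrm{an}}$ \emph{are} open, by \Cref{descenting solid sheaf of an}), factors each chart as $(\Spec A)^{\mathrm{an}}\to\mathbb{A}^{n,\mathrm{an}}_{\cK}\to\Spa\cK$, and then uses the Cartesian diagram of \Cref{an for affine} to transport suaveness of the pullback $i\colon(\Spec A)^{\mathrm{an}}\hookrightarrow\mathbb{A}^{n,\mathrm{an}}_\cK$ from the algebraic closed immersion $i^{\alg}\colon\Anspec A\to\mathbb{A}^n_K$. On that last piece the argument is concrete and not a formal transfer: $i^{\alg}$ is proper, so $i^{\alg,!}$ is computed by $\iHom_{K[x_1,\dots,x_n]}(A,-)$, and the finite Tor-dimension hypothesis supplies a finite resolution of $A$ by finite projective $K[x_1,\dots,x_n]$-modules, giving dualizability of $i^{\alg,!}(1)$ and the projection-formula-type identity required for suaveness. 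Without an independent proof of suaveness for $X^{\alg}$, your transfer argument has nothing to transfer.
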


For the definition of $suave$ and $prim$ object we refer to \cite{heyer20246functorformalismssmoothrepresentations}*{Definition 4.4.1}.
\begin{proof}
For $primness$, notice $X^{\mathrm{alg}}$ is weakly proper, since on every affine chart it is (the analytic structure is induced), and $X^{\alg}$ is glued via closed immersions. On the other hand we already have that the pullback functor via $\eta\colon X^{\mathrm{an}}\rightarrow X^{\mathrm{alg}}$ gives an equivalence of derived categories. This implies $X^{\mathrm{an}}$ is also weakly cohomological proper.\\
For $suaveness$, since it can be checked locally under open covers, one may assume $X/K$ is a finite type (over $K$) affine scheme $\Spec A$ with finite Tor-dimension. In this case, taking a surjection $K[x_1,x_2...x_n]\surjects A $ we may reduce to check individually $\pi\colon \mathbb{A}^{n,\mathrm{an}}_{\cK}\rightarrow \Spa \cK$ satisfies $1_{\mathbb{A}_{\cK}^{n,\mathrm{an}}}$ is $\pi$-\textit{suave} and $i\colon (\Spec A)^{\mathrm{an}}\rightarrow \mathbb{A}_{\cK}^{n,an}$ has the suaveness property. The first one is straightforward since $\mathbb{A}^{n,an}$ is cohomologically smooth. For the second one, by \cref{an for affine}, we have the following Cartesian diagram:
$$\begin{tikzcd}
\Spec(A)^{\mathrm{an}} \arrow[d] \arrow[r,"i"] & \mathbb{A_{\cK}}^{\mathrm{an},n} \arrow[d] \\
\Anspec (A) \arrow[r,"i^{\mathrm{alg}}"]           & \mathbb{A}_{K}^n.        
\end{tikzcd}$$
Thus sufficient to prove \textit{suaveness} for $i^{\mathrm{alg}}$. But since $i^{\mathrm{alg}}$ is proper, $i^{\alg,!}(M)\simeq \underline{\mathrm{Hom}}_{K[x_1,x_2...x_n]}(A, M).$ for any $M\in D_{\sol}(\mathbb{A}_K^n)$ By finite Tor-dimension property, one can find a finite resolution of $A$ by finite projective $K[x_1,x_2...x_n]$-modules. This shows that $i^{alg,!}(1_{\mathbb{A}_{K}^{n}})$ is dualizable object and $i^{\alg,!}(M)\simeq i*{M}\tensor i^{alg, !}(1_{\mathbb{A}^{n,an}})$, thus \textit{suaveness}.                                                                                                                                                         
\end{proof}

\csub{Categorical Künneth formula}

In algebraic geometry, given a qcqs scheme $Z$ and $X, Y$ are qcqs schemes over $Z$, then we have the categorical Künneth formula of $\infty$-categories of quasi-coherent sheaves on $X$ and $Y$, that is:
$$\mathrm{Qcoh}(X\times_{Z}Y)\simeq \mathrm{Qcoh}(X)\tensor_{\mathrm{Qcoh}(Z)}\mathrm{Qcoh}(Y)$$
Whose proof essentially reduce to \Cref{mod}. Under certain condition, the similar categorical Künneth formular also holds in analytic geometry, which for our application, will provide a more explicit categorical description of solid quasi-coherent sheaves on analytification of proper schemes (\Cref{catkunne}).

\begin{lemma}[\cite{kesting2025categoricalkunnethformulasanalytic}]\label{affine kunneth}
    For any $!$-able morphism of analytic rings $\cA\rightarrow \cB$, $\cA\rightarrow \cC$, we have that $D_{\sol}(\cB)\tensor_{D_{\sol}(\cA)}D_{\sol}(\cC)\simeq D_{\sol}(\cB\tensor_{\cA}\cC)$ \up{i.e., Künneth formula}. In particular this shows that $D_{\sol}(-)$ is a symmetric monoidal functor from $\mathrm{Corr}(\Affine_{K}^{b}, E)$ to $\mathrm{Pr}_{\mathrm{st}}^{L}$. Moreover, $D_{\sol}(\cB)$ \up{resp. $D_{\sol}(\cC)$} is \textit{dualizable} over $D_{\sol}(\cA).$
\end{lemma}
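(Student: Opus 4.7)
The plan is to decompose the given $!$-able map $\cA\to\cB$ as $\cA\xrightarrow{p}\cB'\xrightarrow{j}\cB$ with $p$ proper and $j$ an open immersion, and to establish both the Künneth formula and the dualizability assertion separately for each of the two cases. The reduction is valid because the Lurie tensor product is symmetric monoidal (so $\tensor_{D_\sol(\cA)}D_\sol(\cC)$ preserves compositions of $D_\sol(\cA)$-linear functors), and because both properness and the open-immersion property are stable under base change along $\cA\to\cC$: the former by the inducedness of the analytic structure, the latter by the stability of Bousfield localizations. Dualizability is then preserved under composition of base changes, and the symmetric monoidal functoriality on $\mathrm{Corr}(\Affine_K^b, E)$ follows formally once both sides are established for all $!$-able legs.

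For the proper case, where by definition the analytic structure on $\cB$ is induced from $\cA$, one has a tautological equivalence
\[
D_\sol(\cB)\simeq \mathrm{Mod}_{\underline{\cB}}(D_\sol(\cA))
\]
of $D_\sol(\cA)$-linear symmetric monoidal categories. Proposition \ref{mod}(b) then yields
\[
D_\sol(\cB)\tensor_{D_\sol(\cA)}D_\sol(\cC)\simeq \mathrm{Mod}_{\underline{\cB}}(D_\sol(\cC))\simeq D_\sol(\cB\tensor_{\cA}\cC),
\]
the last equivalence because $\cB\tensor_\cA\cC$ also carries an analytic structure induced from $\cC$ (by stability of properness under base change). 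The dualizability of $D_\sol(\cB)$ over $D_\sol(\cA)$ reduces to the standard fact that the category of modules over a commutative algebra object in a presentable stable symmetric monoidal category is self-dual as a module category, with counit provided by multiplication.

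For the open-immersion case, $j^*\colon D_\sol(\cA)\to D_\sol(\cB)$ is by definition a localization admitting a fully faithful right adjoint, hence a smashing localization determined by an idempotent algebra $E\in\mathrm{CAlg}(D_\sol(\cA))$. Smashing localizations are dualizable (indeed self-dual) as $D_\sol(\cA)$-module categories, and tensoring with them against any $D_\sol(\cA)$-linear $\cM$ again produces the smashing localization of $\cM$ at the image of $E$. Applying this to $\cM=D_\sol(\cC)$ identifies
\[
D_\sol(\cB)\tensor_{D_\sol(\cA)}D_\sol(\cC)\simeq D_\sol(\cC)[E_{\cC}^{-1}],
\]
where $E_\cC$ is the idempotent algebra cutting out the open immersion $\cC\to\cB\tensor_\cA\cC$, yielding the right-hand side $D_\sol(\cB\tensor_\cA\cC)$.

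The main obstacle is the bookkeeping of analytic structures across these categorical manipulations: one must verify at each step that the abstract construction (modules or smashing localization) applied to $D_\sol(\cA)$ or $D_\sol(\cC)$ in $\mathrm{Pr}^L_{\mathrm{st}}$ agrees with $D_\sol$ applied to the analytic ring $\cB\tensor_\cA\cC$ defined intrinsically in $\AnRing$. For proper maps this reduces to functoriality of the $\mathrm{Mod}_{(-)}$ construction, while for open immersions it amounts to checking that base change of the defining idempotent algebra corresponds to the base-changed open immersion of analytic rings. Modulo this verification, the full statement, including the symmetric monoidal promotion to $\mathrm{Corr}(\Affine_K^b,E)$, follows by combining the two cases and standard formal nonsense on correspondence categories.
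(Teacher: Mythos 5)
Your proof takes essentially the same route as the paper: decompose the $!$-able map $\cA\to\cB$ into a proper morphism followed by an open immersion, handle the proper case via the equivalence $D_\sol(\cB)\simeq\mathrm{Mod}_{\underline{\cB}}(D_\sol(\cA))$ and \Cref{mod}, and handle the open case via the corresponding idempotent algebra (smashing localization). The dualizability of module categories and of smashing localizations is exactly what the paper either invokes implicitly or defers to the cited reference, so your account of that point is correct.

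One respect in which your write-up is arguably more careful than the paper's: for the open-immersion case, the paper's computation only establishes $D_\sol(\cB)\tensor_{D_\sol(\cA)}D_\sol(\cB)\simeq D_\sol(\cB)$, i.e.\ the case $\cC=\cB$, and then appeals to ``six functor formalism'' to cover general $\cC$. You instead directly identify $D_\sol(\cB)\tensor_{D_\sol(\cA)}D_\sol(\cC)$ as the smashing localization of $D_\sol(\cC)$ along the base-changed idempotent $E_\cC$, and then match this with $D_\sol(\cB\tensor_\cA\cC)$. That is the content one actually needs, and making it explicit is an improvement. The one point to be more careful about in your phrasing is the slide from ``localization admitting a fully faithful right adjoint'' to ``smashing localization determined by an idempotent algebra $E$'': that implication is not automatic for arbitrary Bousfield localizations, and one needs the extra monoidal/module structure (or the explicit construction of $C=\mathrm{cofib}(f_!\underline{\cB}\to\underline{\cA})$ used in the paper) to produce the idempotent algebra. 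Similarly, for the proper dualizability claim, the counit is the relative tensor product $-\tensor_{\underline{\cB}}-$ viewed in $D_\sol(\cA)$ rather than ``multiplication'' per se; the statement is correct (Lurie, HA \S4.8.4) but the wording is slightly off.
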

\begin{proof}
    In fact, via six functor formalism, it is sufficient to prove that $f\colon \cA\rightarrow \cB$ satisfies Künneth formula. By assumption, we may assume that $f$ is either\upshape:\\
    (1) proper, i.e., $D_{\sol}(\cB)\simeq \mathrm{Mod}_{\underline{\cB}}(D_{\sol}(\cA))$;\\
    (2) open, or equivalently, there exist an idempotent algbebra $C\in D_{\sol}(\cA)$ ($C=\mathrm{cofib}(f_{!}\underline{\cB}\rightarrow \underline{\cA})$) such that we have the following localizing sequence:
    $$\mathrm{Mod}_{C}(D_{\sol}(\cA))\rightarrow D_{\sol}(\cA)\rightarrow D_{\sol}(\cB).$$\\
    In the case of $(1)$, this is simply \Cref{mod}. In the case of $(2)$, since $D_{\sol}(\cB)$ is a retract of $D_{\sol}(\cA)$, $D_{\sol}(\cB)$ is dualizable over $D_{\sol}(\cA)$. Thus we have that
    $$\mathrm{Mod}_{C}(D_{\sol}(\cB))=D_{\sol}(\cB)\tensor_{D_{\sol}(\cA)} \mathrm{Mod}_{C}(D_{\sol}(\cA))\rightarrow D_{\sol}(\cB)\rightarrow D_{\sol}(\cB)\tensor_{D_{\sol}(\cA)}D_{\sol}(\cB)$$
    is a localizing sequence. But $C\simeq 0\in D_{\sol}(\cB)$ by construction. Thus $D_{\sol}(\cB)\simeq D_{\sol}(\cB)\tensor_{D_{\sol}(\cA)}D_{\sol}(\cB)$.\\
    But $D_{\sol}(\cB\tensor_{\cA}\cB)\simeq D_{\sol}(\cB)$ since $\cB\tensor_{\cA}\cB\simeq \cB$ by idempoteness.\\
    For dualizability, this is \cite{kesting2025categoricalkunnethformulasanalytic}*{Corollary 3.15.1}.
    
\end{proof}
\bthm[Categorical Künneth for solid module]\label{catkunne}

For any proper scheme $X$ over $\Spec{K}$ and $\cA$ a bounded affinoid analytic ring over $\cK$\upshape:
$$ D_{\sol}(X^{\mathrm{an}}\times_{\Anspec K} \Spa{\cA})\simeq \rqcoh(X)\tensor_{\rqcoh(K)}D_{\sol}(\cA)$$
\ethm
\begin{proof}
  We can cover $X^{\mathrm{an}}$ via open affinoid charts:
  
  $X^{\mathrm{an}}\times_{\Anspec{(\cK)}} \Anspec\cA=\bigcup_{i\in I} \Anspec (B_i, B_{i}^+)\times_{\Anspec\cK}\Anspec \cA.$ This forms a open $!$-cover thus we have that $ D_{\sol}(X^{\mathrm{an}}\times_{\Anspec K^{\mathrm{disc}}} \Spa{\cA})\simeq \lim_{i\in I} D_{\sol}(\Anspec ((B_{i}, B_{i}^{+})\tensor_{\cK} \cA)$. By \Cref{affine kunneth}, we have that\upshape: $D_{\sol}(\Anspec ((B_{i}, B_{i}^{+})\tensor_{\cK} \cA)\simeq D_{\sol}(B_{i},B_{i}^{+})\tensor_{D_{\sol}(\cK)}D_{\sol}(\cA)$. Now $D_{\sol}(\cA)$ is dualizable over $D_{\sol}(\cK)$ (follows from \cite{kesting2025categoricalkunnethformulasanalytic}*{Corollary 3.15.1}) and we have that
 \begin{align*} 
 D_{\sol}(X^{\mathrm{an}}\times_{\Anspec(\cK)}\Anspec(\cA))&\simeq\lim_{i\in I}(D_{\sol}(B_{i},B_{i}^{+})\tensor_{D_{\sol}(\cK)}D_{\sol}(\cA))\\&\simeq (\lim_{i\in I}D_{\sol}(B_{i},B_{i}^{+}))\tensor_{D_{\sol}(\cK)}D_{\sol}(\cA)\\&\simeq D_{\sol}(X^{\mathrm{an}})\tensor_{D_{\sol}\cK)}D_{\sol}(\cA).
\end{align*}
    On the other hand we know by \Cref{categorical GAGA} $D_{\sol}(X^{\mathrm{an}})\simeq D_{\sol}(X^{\mathrm{alg}})$. By construction of \textit{algebraic realization} we have $D_{\sol}(X^{\alg})\simeq \lim_{j\in J}D_{\sol}(A_j,K^+)$ where $X= \bigcup_{j\in J}\Spec A_j$. Since the analytic structure of $(A_j, K^+)$ is induced from $\cK$, we know that $D_{\sol}(A_j, K^+)\simeq \mathrm{Mod}_{A_j}(D_{\sol}(\cK))\simeq \rqcoh(A_j)\tensor_{\rqcoh(K)}D_{\sol}(\cK)$ (by \Cref{mod}). So we get that 
 \begin{align}\label{dualizabel and tensor commute}   D_{\sol}(X^{\mathrm{alg}})\simeq \lim_{j\in J}D_{\sol}(A_j,K^+)&\simeq \lim_{j\in J}  \rqcoh(A_j)\tensor_{\rqcoh(K)}D_{\sol}(\cK)\nonumber\\ &\simeq (\lim_{j\in J}\rqcoh(A_j))\tensor_{\rqcoh(K)}D_{\sol}(\cK)\\ &\simeq \rqcoh(X)\tensor_{\rqcoh(K)} D_{\sol}(\cK)\nonumber
 \end{align}
    Here in step \Cref{dualizabel and tensor commute} we are using that $D_{\sol}(\cK)$ is dualizable. Combining both computations together, we see that 
    \begin{align*}
        D_{\sol}(X^{\mathrm{an}}\times_{\Anspec(\cK)}\Anspec(\cA))&\simeq \rqcoh(X)\tensor_{\rqcoh(K)} D_{\sol}(\cK)\tensor_{D_{\sol}(\cK)}D_{\sol}(\cA)\\&\simeq  \rqcoh(X)\tensor_{\rqcoh(K)}D_{\sol}(\cA).
    \end{align*}
\end{proof}


\csub{Relative GAGA theorem for perfect complexes}
In this section we finish the proof of \Cref{thm1}, that is
\bthm\label{main}
Let $\cK$ be a bounded Fredholm analytic ring, $K=\underline{\cK}(*)$. $X$ is a proper scheme over $K$ with finite Tor-dimension and $\mathcal{A}$ is a Fredholm bounded affinoid algebra over $\cK$. Then we have the following equivalence of categories induced by the box tensor product\upshape:
$$\bperf(X^{\mathrm{an}})\tensor_{\bperf(K)}\bperf(\cA)\isomto \bperf(X^{\mathrm{an}}\times_{\Spa(\cK)}\Spa(\cA))$$

\ethm
\begin{proof}

    We first prove fully faithfulness\upshape: 
Consider the following diagram 
$$\begin{tikzcd}
                                   & {X_{\cA}^{\mathrm{an}}\coloneq X^{\mathrm{an}}\times_{\Spa(\cK)}\Spa(\cA)} \arrow[ld, "\pi_1"'] \arrow[rd, "\pi_2"] &                             \\
X^{\mathrm{an}} \arrow[rd, "f_2"'] &                                                                                        & \Spa(\cA) \arrow[ld, "f_1"] \\
                                   & \Spa(\cK)                                                                              &                            
\end{tikzcd}$$

    By \Cref{ffaith}, one only need to check the following\upshape: for any $M,M'\in \bperf(X^{\mathrm{an}}), N,N'\in \bperf(\cA)$, we have that
    $$\mathrm{Hom}_{D_{\sol}(X^{\mathrm{an}}\times_{\Spa \cK}\Spa \cA)}(M\boxtimes N, M'\boxtimes N')\isomto \mathrm{Hom}_{D_{\sol}(X^{\mathrm{an}})}(M,M')\tensor_{K} \mathrm{Hom}_{D_{\sol}(\cA)}(N,N')$$ in $\mathrm{Qcoh}(K).$

    But this follows straightforward from the following computation:
    
\begin{align}\label{computation kunneth}
 \operatorname{\mathrm{Hom}}_{D_{\sol}(X_{\cA}^{\mathrm{an}})}\left(\pi_1^* M \otimes \pi_2^* N, \pi_1^* M^{\prime} \otimes \pi_2^* N^{\prime}\right)
 & =\operatorname{Hom}_{D_{\sol}(X_{\cA}^{\mathrm{an}})}\left(1, \pi_1^* M^\vee \otimes \pi_2^* N^\vee \otimes \pi_1^* M^{\prime} \otimes \pi_2^* N^{\prime}\right) \nonumber \\
& =\operatorname{Hom}_{D_{\sol}(X_{\cA}^{\mathrm{an}})}\left(1, \left(\pi_1^* M^\vee \otimes \pi_1^* M^{\prime}\right) \otimes\left(\pi_2^* N^\vee \otimes \pi_2^* N^{\prime}\right)\right) \nonumber \\
& =\operatorname{Hom}_{D_{\sol}(X_{\cA}^{\mathrm{an}})}\left(1, \pi_1^*\left(M^\vee \otimes M^{\prime}\right) \otimes \pi_2^*\left(N^\vee \otimes N^{\prime}\right)\right) \nonumber \\
& =\operatorname{Hom}_{D_{\sol}(\cA)}\left(1,\pi_{2, *}  \pi_{1}^* \operatorname{\iHom}\left(M, M^{\prime}\right) \otimes \operatorname{\iHom}\left(N, N^{\prime}\right)\right) \\
& =\operatorname{Hom}_{D_{\sol}(\cA)}\left(1,f_{1}^*  f_{2, *} \operatorname{\iHom}\left(M, M^{\prime}\right) \otimes \operatorname{\iHom}\left(N, N^{\prime}\right)\right)\\ 
& = \operatorname{Hom}_{D_{\sol}(\cK)}\left(1, f_{2,*}\underline{\operatorname{Hom}}\left(M, M^{\prime}\right) \otimes f_{1,*}\underline{\operatorname{Hom}}\left(N, N^{\prime}\right)\right) \nonumber\\
& =\operatorname{Hom}\left(M, M^{\prime}\right) \tensor_{K} \operatorname{Hom}\left(N, N^{\prime}\right). \nonumber
\end{align}

Here, (\ref{computation kunneth}) used the projection formula.

Now we prove essential surjectivity:

By  \Cref{esurj} we only need to check the following:
\blem\label{conservative} If $E\in \bperf(X^{\mathrm{an}}\times_{\Spa(K, K^+)}\Spa\mathcal{A})$, such that for any $F\in \bperf(X^{\mathrm{an}}), G\in \bperf(\cA)$, we have $\mathrm{Hom}(F\boxtimes G, E)\simeq 0$, then $E\simeq 0$.
\elem
Equivalently, this amounts to show that the functor
$$\begin{aligned}
    \pmb{\mathrm{FM}}^{\mathrm{perf}}\colon \pmb{\mathrm{Perf}}(X^{\mathrm{an}}\times_{\Spa \cK} \Spa \cA)&\rightarrow \mathrm{Fun}(\pmb{\mathrm{Perf}}(X^{\mathrm{an}}), \pmb{\mathrm{Perf}}(\cA))\\
     E &\mapsto \mathrm{FM}_E\coloneq(M\mapsto \pi_{2 *}(\pi_{1}^{*}M\tensor E))
\end{aligned}$$
is conservative. Here, we used that by \Cref{sm+proper preserve dualizable}, $\mathrm{FM}_{E}$ defines a functor into $\pmb{\mathrm{Perf}}(\cA)$. Indeed, given the condition of \Cref{conservative}, we have that the Fourier transform functor induced by $E$ from $\bperf(X^{\mathrm{an}})$ to $\bperf(\cA)$ is equivalent to zero functor: 
For any $G\in \pmb{\mathrm{Perf}(\cA)}$, $\mathrm{Hom}_{\bperf(\cA)}(G, \pmb{\mathrm{FM}}_E(M))\simeq \mathrm{Hom}(\pi_{2}^*G, \pi_{1}^{*}M\tensor E)\simeq \mathrm{Hom}(M^{\vee}\boxtimes G, E)\simeq 0$ by our condition of the lemma upon taking $F=M^{\vee}$. This implies that $\pmb{\mathrm{FM}}_E(M)\simeq 0$ for any $M\in \bperf(X^{\mathrm{an}})$.

\begin{proof}[Proof of \Cref{conservative}]
 Consider the functor 
$$\begin{aligned}
    \pmb{\mathrm{FM}}\colon D_{\sol}(X^{\mathrm{an}}\times_{\Spa \cK} \Spa \cA)&\rightarrow \mathrm{Fun}_{D_{\sol}(\cK)}^{\mathrm{L}}(D_{\sol}(X^{\mathrm{an}}), D_{\sol}(\cA))\\
     F &\mapsto \mathrm{FM}_F\coloneq(M\mapsto \pi_{2 *}(\pi_{1}^{*}M\tensor E)).
\end{aligned}$$
Here, $\mathrm{FM}_F$ is indeed a colimit preserving functor by primness of $\pi_2$ (\cref{proper is prim}). By \Cref{catkunne}, we have that $D_{\sol}(X^{\mathrm{an}})\simeq \mathrm{Qcoh}(X)\tensor_{\mathrm{Qcoh(K)}}D_{\sol}(\cK).$ Thus we have that 
\begin{align*}
    \mathrm{Fun}_{D_{\sol}(\cK)}^{\mathrm{L}}(D_{\sol}(X^{\mathrm{an}}), D_{\sol}(\cA))&\simeq \mathrm{Fun}_{D_{\sol}(\cK)}^{\mathrm{L}}(\mathrm{Qcoh(X)}\tensor_{\mathrm{Qcoh}(K)}D_{\sol}(\cK),D_{\sol}(\cA)) \\ &\simeq\mathrm{Fun}_{\mathrm{Qcoh(K)}}^{\mathrm{L}}(\mathrm{Qcoh(X)},D_{\sol}(\cA))\\
    & \simeq \mathrm{Fun}_{\mathrm{Ind}(\mathrm{Perf}(K))}^{\mathrm{L}}(\mathrm{Ind}(\mathrm{Perf}(X)),D_{\sol}(\cA))\\
    &\simeq \mathrm{Fun}_{\mathrm{Perf}(K)}^{\mathrm{ex}}(\mathrm{Perf}(X),D_{\sol}(\cA)).
\end{align*}

In summary we have the the following commutative diagram: 
$$
    \begin{tikzcd}
\pmb{\mathrm{Perf}}(X^{\mathrm{an}}\times_{\Spa \cK}\Spa \cA) \arrow[d] \arrow[r,"\pmb{\mathrm{FM}}^{\mathrm{perf}}"]  & {\mathrm{Fun}_{\mathrm{Perf}(K)}^{\mathrm{ex}}(\mathrm{Perf}(X),\mathrm{Perf}(\underline{\cA})(*))} \arrow[d] \\
\mathrm{D}_{\sol}(X^{\mathrm{an}}\times_{\Spa \cK}\Spa \cA) \arrow[r,"\pmb{\mathrm{FM}}"]           & {\mathrm{Fun}_{\mathrm{Perf}(K)}^{\mathrm{ex}}(\mathrm{Perf}(X),\mathrm{D}_{\sol}(\cA))}                     
\end{tikzcd}
$$

By \Cref{fm is conservative}, $\pmb{\mathrm{FM}}$ is conservative, thus $\pmb{\mathrm{FM}}^{\mathrm{perf}}$ is also conservative, this finishes the proof.

\end{proof}

\end{proof}
\begin{lemma}\label{sm+proper preserve dualizable}
    Let $p\colon Y\rightarrow S$ be a morphism of \textit{Tate Stack} which $1_{Y}$ is $p$-$suave$ and $p$-$prim$. Then $p_*$ preserves dualizable object.
\end{lemma}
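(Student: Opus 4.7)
The plan is to invoke a six-functor duality: when $1_Y$ is both $p$-suave and $p$-prim, I expect Grothendieck--Serre duality to give a natural description of the dual of $p_*M$ for dualizable $M$. First I would unpack the two hypotheses. From $p$-primness of $1_Y$, in the Heyer--Mann formalism, the comparison $p_! \to p_*$ should be an equivalence (this is weak cohomological properness of $p$ witnessed by the unit), and $p_*$ satisfies the projection formula
\[
p_*(M \tensor p^*N) \simeq p_*(M) \tensor N.
\]
From $p$-suaveness of $1_Y$, I get the canonical equivalence
\[
p^!(N) \simeq \omega_p \tensor p^*(N), \qquad \omega_p \coloneq p^!(1_S),
\]
expressing $p^!$ as a twist of $p^*$ by the relative dualizing object.

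Next I would run the main computation. For any dualizable $M \in D_{\sol}(Y)$ with dual $M^\vee$, applying the $(p_! \dashv p^!)$-adjunction inside internal Hom gives, for every $N \in D_{\sol}(S)$,
\begin{align*}
\iHom_S(p_*M, N) \;\simeq\; \iHom_S(p_!M, N)
&\simeq p_*\,\iHom_Y(M, p^!N) \\
&\simeq p_*(M^\vee \tensor p^!N) \\
&\simeq p_*(M^\vee \tensor \omega_p \tensor p^*N) \\
&\simeq p_*(M^\vee \tensor \omega_p) \tensor N.
\end{align*}
The second line uses dualizability of $M$, the third uses suaveness, and the last uses the projection formula from primness. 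This exhibits $p_*M$ as dualizable with dual $p_*(M^\vee \tensor \omega_p)$.

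The hard part will be pinning down rigorously the identification $p_!M \simeq p_*M$ purely from $p$-primness of $1_Y$ in the analytic-stack setting; I expect this to follow directly from the Heyer--Mann characterization of weak cohomological properness, but one should verify that the axioms available in \cite{camargo2024analyticrhamstackrigid} suffice. All remaining steps are routine manipulations of the $(p_!\dashv p^!)$-adjunction, the projection formula, and the identification $\iHom(M,-) \simeq M^\vee \tensor (-)$ for dualizable $M$, so once the $p_! \simeq p_*$ comparison is in hand, the dualizability of $p_*M$ follows formally with an explicit dual.
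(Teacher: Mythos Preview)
Your proof is correct. Both your argument and the paper's rely on the same two ingredients—$p^!\simeq\omega_p\otimes p^*$ from suaveness of $1_Y$, and $p_!\simeq p_*$ (with projection formula) from primness—but they are organized differently. The paper first shows that any dualizable $P$ is itself $p$-suave, by reducing the suaveness map for $P$ to the one for $1_Y$ tensored with $\pi_1^*P^\vee\otimes\pi_2^*P$, and then invokes \cite{heyer20246functorformalismssmoothrepresentations}*{Lemma 4.4.9(ii)} (with $g=p$, $f=\mathrm{id}_S$) to conclude that $p_!P\simeq p_*P$ is $\mathrm{id}_S$-suave, i.e.\ dualizable. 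Your approach bypasses the intermediate ``$P$ is $p$-suave'' step and instead computes $\iHom_S(p_*M,-)$ directly, obtaining the explicit dual $p_*(M^\vee\otimes\omega_p)$. Your route is more self-contained and yields the dual formula for free; the paper's route is more modular, packaging the computation into a citation. The concern you flag about $p_!\simeq p_*$ is shared by the paper, which likewise relies on identifying primness of $1_Y$ with weak cohomological properness (cf.\ the proof of \Cref{proper is prim}).
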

\begin{proof}
    Since $1_{Y}$ is $f$-suave with smooth dual $D_{f}(1_{Y})=f^{!} 1_{S}$. For any dualizable object $P\in D_{\sol}(Y)$, we have $P$ is also $f$-suave since pulling back preserve dualizable object (together with its dual) and sufficient to check the natural map $\pi_{1}^{*}D_{f}(P)\tensor \pi_{2}^{*}(P)\rightarrow \underline{\mathrm{Hom}}(\pi_{1}^{*}(P), \pi_{2}^{!}(P)) $ gives equivalence. But by dualizability the corresponding map is equivalent to the corresponding map for $1_Y$ then tensor with $\pi_{1}^{*}P^{\vee}\tensor\pi_{2}^* P$, which is an equivalence by definition of $1_{Y}$ being $f$-suave. Thus by \cite{heyer20246functorformalismssmoothrepresentations}*{4.4.9. Lemma (ii)} (apply to $g=p$ and $f=\mathrm{id}_S$), $p_*{P}\simeq p_{!} P$ is dualizable (equivalent to being $\mathrm{id}_S$-suave).
\end{proof}
As a final remark, Assuming $\underline{\cK}$ is a Gelfand ring, then \Cref{main} formally implies that the analogous statement for derived Berkovich spaces. 
\begin{cor}
 Let $X$ be a proper $K$-scheme, and $A$ a Gelfand ring over $\cK$, then we have that 
 $$\bperf(X^{\mathrm{Gel\text{-}an}}\times_{\mathrm{GSpec}\cK} \mathrm{GSpec} A)\simeq \mathrm{Perf}(X\times_{\Spec K} \Spec A(*)).$$
\end{cor}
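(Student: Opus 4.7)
The plan is to deduce this corollary directly from \Cref{main} by reducing everything from the Gelfand-stack setting to the Tate-stack setting, and then applying the Fredholm property of Gelfand rings together with the Künneth formula for perfect complexes on schemes.

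First, I would verify the compatibility
\[
D_{\sol}\bigl(X^{\mathrm{Gel}\text{-}\mathrm{an}}\times_{\mathrm{GSpec}\cK}\mathrm{GSpec}A\bigr)\simeq D_{\sol}\bigl(X^{\mathrm{an}}\times_{\Spa\cK}\Spa A\bigr).
\]
Both sides are computed as the limit of $D_{\sol}$ over an affinoid cover of the fibre product; since the forgetful inclusion $j\colon\mathrm{GelfRing}_{\cK}\hookrightarrow\AffRing^b_{\cK}$ identifies $\mathrm{GSpec}(-)$ with $\Spa(-)$ and the definition of $X^{\mathrm{Gel}\text{-}\mathrm{an}}$ (resp.~$X^{\mathrm{an}}$) as a left Kan extension intertwines with $j_*$, the two sides assemble from the same affine pieces. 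Passing to dualizable objects yields $\bperf$ of both sides.

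Next, I would apply \Cref{main}. Since $\cK$ and $A$ are Gelfand rings, they are bounded and Fredholm by the paper's discussion and \cite{anschütz2025analyticrhamstacksfarguesfontaine}*{Proposition 3.3.5}, so the hypotheses are met and
\[
\bperf(X^{\mathrm{an}}\times_{\Spa\cK}\Spa A)\simeq \bperf(X^{\mathrm{an}})\tensor_{\bperf(K)}\bperf(A).
\]
The Fredholm property of $A$ gives $\bperf(A)\simeq \rperf(\underline{A}(*))$. To identify $\bperf(X^{\mathrm{an}})$ with $\rperf(X)$, I would combine the enhanced GAGA \Cref{categorical GAGA} with the Fredholm property of $\cK$: dualizable objects in $\rqcoh(X)\tensor_{\rqcoh(K)}D_{\sol}(\cK)$ are computed using rigidity of $\rqcoh(X)\simeq \rind(\rperf(X))$, which reduces them to $\rperf(X)\tensor_{\rperf(K)}\bperf(\cK)\simeq \rperf(X)$.

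Finally, the classical Künneth formula for perfect complexes on qcqs schemes \cite{benzvi2010integraltransformsdrinfeldcenters}*{Proposition 4.6} identifies
\[
\rperf(X)\tensor_{\rperf(K)}\rperf(\underline{A}(*))\simeq \rperf\bigl(X\times_{\Spec K}\Spec\underline{A}(*)\bigr),
\]
which is the desired statement. The only non-trivial step is the identification $\bperf(X^{\mathrm{an}})\simeq \rperf(X)$ used above; all other steps are essentially transport of structure along $j_*$ and invocation of the already-established main theorem together with the standard Künneth for schemes.
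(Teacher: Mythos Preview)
Your proposal is correct and follows essentially the same approach as the paper's proof: identify $D_{\sol}(X^{\mathrm{Gel\text{-}an}}\times_{\mathrm{GSpec}\cK}\mathrm{GSpec}A)\simeq D_{\sol}(X^{\mathrm{an}}\times_{\Spa\cK}\Spa A)$, pass to dualizable objects, and invoke \Cref{main}. The paper obtains the first identification via the categorical K\"unneth formula (writing both sides as $D_{\sol}(X^{?})\tensor_{D_{\sol}(\cK)}D_{\sol}(A)$ and noting $D_{\sol}(X^{\mathrm{Gel\text{-}an}})\simeq D_{\sol}(X^{\mathrm{an}})$), whereas you argue directly via affinoid covers; both are valid. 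Your explicit unwinding of $\bperf(X^{\mathrm{an}})\tensor_{\bperf(K)}\bperf(A)$ down to $\rperf(X\times_{\Spec K}\Spec A(*))$ is what the paper leaves implicit in \Cref{remark1}.
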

\begin{proof}
    Note that \begin{align*}
        D_{\sol}((X^{\mathrm{Gel\text{-}an}}\times_{\mathrm{GSpec}\cK}\mathrm{GSpec} A)\simeq & D_{\sol}(X^{\mathrm{Gel\text{-}an}})\tensor_{D_{\sol}(\cK)}D_{\sol}(A)\\ \simeq & D_{\sol}(X^{an})\tensor_{D_{\sol}(\cK)}D_{\sol}(A)\\\simeq &D_{\sol}(X^{\mathrm{an}}\times_{\Spa \cK}\Spa A)
    \end{align*}
    Thus $\bperf(X^{\mathrm{Gel\text{-}an}}\times_{\mathrm{GSpec}}\cK \mathrm{GSpec} A)\simeq \bperf (X^{\mathrm{an}}\times_{\Spa \cK}\Spa A)$. Then it follows from \Cref{main}.
\end{proof}
\csub{Relative GAGA theorem for pseudo-coherent complexes}
In this section we deduce (a generalization of) the relative GAGA theorem for coherent sheaves in the spirit of \Cref{gaga of coh}, which is also a generalization of \Cref{main}, with the caveat of working within less general setting. Namely, let $\cK$ be a bounded affinoid algebra whose underlying condensed ring $\underline{\cK}$ is nuclear over $R_{\sol}$ and $\cK^{\dagger\text{-}\mathrm{red}}\simeq (A,A^+)_{\sol}$ for some complete analytic Huber pair $(A,A^+)$. For any $X/K$ proper scheme we will relate ``coherent sheaves'' on $X^{an,\cK}$ and coherent sheaves on $X$.

Before we proceed, let us explain the condition we put on $\cK$. The condition on $\underline{\cK}$ is to guarantee that, there is a well defined notion of \textit{nuclear module} over $\cK$ (see \Cref{cons of nuc sheaf}), the condition about $\dagger$-reduction of $\cK$ being complete analytic Huber pair is to guanrantee that $\cK$ together with bounded affinoid algebras shows up as (analytic open inside) analytification of finite type scheme over $K$ are \textit{Fredholm} (see \Cref{they are fredholm} below). The condition will be less artificial if one knows that the tensor product of Fredholm analytic rings are Fredholm. This section is again essential a copy of \cite{complex}*{Lecture XIII}, we claim no originality.

\begin{prop}\label{they are fredholm}
    let $\cK$ be a bounded affinoid algebra whose underlying condensed ring with $\cK^{\dagger\text{-}\mathrm{red}}\simeq (A,A^+)_{\sol}$ for some complete analytic Huber pair $(A,A^+)$. Then $\cK$ is Fredholm.
\end{prop}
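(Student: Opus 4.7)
The plan is a two-step reduction, first passing from $\cK$ to its $\dagger$-reduction, and then invoking the known Fredholm property for complete analytic Huber pairs.

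\textbf{Step 1 ($\dagger$-reduction).} I would first show that the canonical morphism $\cK\to \cK^{\dagger\text{-}\mathrm{red}}$ induces an equivalence $\bperf(\cK)\simeq \bperf(\cK^{\dagger\text{-}\mathrm{red}})$ and an equivalence of discrete perfect complexes $\mathrm{Perf}(\underline{\cK}(*))\simeq \mathrm{Perf}(A)$. For the first equivalence, the key input is that the $\dagger$-nilradical $\mathrm{Nil}^\dagger(\cK)\subset \underline{\cK}$ is a condensed ideal whose sections factor through the free overconvergent algebras $R_{\sol}\{\mathbb{N}[T]\}^{\dagger}$, which are idempotent over $R_{\sol}[\mathbb{N}[T]]$ in the sense of \Cref{overconvergent disc} and \Cref{disk equal intersect of disk}. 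This idempotent structure should force the action of $\mathrm{Nil}^\dagger(\cK)$ on any dualizable solid $\cK$-module to vanish, so dualizable objects descend along the $\dagger$-reduction; using that the $\dagger$-reduction is proper and hence identifies $D_{\sol}(\cK^{\dagger\text{-}\mathrm{red}})$ with $\mathrm{Mod}_{\underline{\cK^{\dagger\text{-}\mathrm{red}}}}(D_{\sol}(\cK))$ via \Cref{mod}, the equivalence on dualizables follows. The discrete version follows by taking global sections, where $\mathrm{Nil}^\dagger(\cK)(*)$ becomes an ordinary nilpotent ideal in $\underline{\cK}(*)$ with quotient $A$, and perfect complexes are invariant under nilpotent thickenings.

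\textbf{Step 2 (Fredholm for complete Huber pairs).} Having reduced to $\cK^{\dagger\text{-}\mathrm{red}}=(A,A^+)_{\sol}$, the Fredholm claim asserts $\bperf((A,A^+)_{\sol})\simeq \mathrm{Perf}(A)$. This is essentially \cite{andreychev2021pseudocoherentperfectcomplexesvector}*{Theorem 1.4} restricted to the dualizable subcategory: perfect complexes over a complete analytic Huber pair in the condensed sense coincide with perfect complexes of modules over the underlying discrete ring, because the condensed structure on a finitely presented module is uniquely determined by the Huber topology together with the algebraic structure.

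\textbf{Main obstacle.} The delicate point is the vanishing of the $\dagger$-nilradical action on dualizables in Step 1. A concrete approach is to realize $\mathrm{Nil}^\dagger(\cK)$ via a filtered colimit of free overconvergent ideals and use that tensoring a dualizable module with an idempotent algebra is entirely controlled by whether the idempotent is trivial, mirroring the idempotent-collapse computations carried out in the proof of \Cref{disk equal intersect of disk}. Alternatively, one could argue that the localizing sequence induced by the $\dagger$-reduction becomes trivial after restricting to dualizables, by exploiting the nuclearity of $\underline{\cK}$ over $R_{\sol}$ (one of our standing hypotheses on $\cK$), which should guarantee that overconvergent nilpotents produce no non-trivial dualizable modules.
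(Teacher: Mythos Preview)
Your two-step outline diverges from the paper's argument, and Step 1 contains real gaps.

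The paper does not attempt to prove $\bperf(\cK)\simeq \bperf(\cK^{\dagger\text{-}\mathrm{red}})$. Instead it invokes a Nakayama-type criterion (\cite{reallanglands}*{Proposition VI.1.7}): once one knows $\cK^{\dagger\text{-}\mathrm{red}}$ is Fredholm (Andreychev, Theorem 5.50), the Fredholm property for $\cK$ follows provided every static, finitely presented $\pi_0(\cK)$-module $M$ with $M\otimes_{\cK}\cK^{\dagger\text{-}\mathrm{red}}\simeq 0$ is itself zero. That vanishing statement is then imported from \cite{anschütz2025analyticrhamstacksfarguesfontaine}*{Lemma 3.3.2}. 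This is a much more modest claim than an equivalence of dualizable categories, and it is the correct shape of argument here.

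Your Step 1 has two concrete problems. First, the mechanism you suggest --- ``the idempotent structure of $R_{\sol}\{\mathbb{N}[T]\}^{\dagger}$ over $R_{\sol}[\mathbb{N}[T]]$ forces $\mathrm{Nil}^\dagger(\cK)$ to act trivially on dualizables'' --- does not go through as stated. The idempotency is a property of those algebras over the free polynomial ring, not a statement about the ideal $\mathrm{Nil}^\dagger(\cK)$ inside $\underline{\cK}$; there is no general principle sending idempotency of a classifying algebra to vanishing of an ideal action on perfect complexes. Second, your claim that $\mathrm{Nil}^\dagger(\cK)(*)\subset \underline{\cK}(*)$ is an \emph{ordinary} nilpotent ideal (so that $\mathrm{Perf}(\underline{\cK}(*))\simeq \mathrm{Perf}(A)$ by nil-invariance) is false in general: elements of the $\dagger$-nilradical are overconvergently topologically nilpotent, not algebraically nilpotent --- think of a pseudo-uniformizer in a Tate algebra. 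Finally, you invoke nuclearity of $\underline{\cK}$ over $R_{\sol}$ as a standing hypothesis, but this proposition does not assume it; nuclearity only enters later, in the pseudo-coherent GAGA section.
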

\begin{proof}
    By \cite{andreychev2021pseudocoherentperfectcomplexesvector}*{Theorem 5.50} $\cK^{\dagger\text{-}\mathrm{red}}$ is Fredholm, thus by \cite{reallanglands}*{Proposition VI.1.7} it sufficient to show that for any $M\in D_{\sol}(\cK)$ that is static and finitely presented over $\pi_0(\cK)$, if $M\tensor_{\cK}\cK^{\mathrm{red}}\simeq 0$, then $M\simeq 0.$ This is done by \cite{anschütz2025analyticrhamstacksfarguesfontaine}*{Lemma 3.3.2} (notice that although in \cite{anschütz2025analyticrhamstacksfarguesfontaine}*{Lemma 3.3.2} they only dealt with ${\mathbb{Q}_p}_{\sol}$ bounded affinoid algebra, the exact same proof works if one replace $\mathbb{Q}_p$ by $\mathbb{Z}(\!(T)\!)$).
\end{proof}

\begin{definition}\label{pseudo-coherent}
    Let $\cC$ be a stable homotopy $\infty$-category with \textit{t-structure} $(\cC^{\leq0}, \cC^{\geq 0})$.
    We say an object $M\in \cC$ is \textit{pseudo-coherent} if 
    \benuma
    \item $M$ is bounded above
    \item For any $n\in \mathbb{Z}$ and any small diagram $p\colon I\underset{i\mapsto c_i}{\rightarrow} \cC^{\geq n}$, we have that $$\mathrm{Hom}(M, \underset{I}{\colim}c_i)\simeq \underset{I}{\colim}\mathrm{Hom}(M, c_i).$$
    \eenum
    We denote the subcategory of \textit{pseudo-coherent} objects as $\mathrm{Pcoh}(\cC)$.
\end{definition}

\begin{defprop}\label{def of pseudo}
    Let $\cX/R_{\sol}$ be a derived adic space such that for any affinoid subspace \up{under analytic topology} $U=\Spa \cO_U$, one have that $\underline{\cA}$ is nuclear over $R_{\sol}$ and $\cA$ is \textit{Fredholm}. Then let $U=\Spa (\cO_U)$ be arbitrary affinoid subspace of $
    \cX$, the association $\mathcal{P}coh_\cX:U\mapsto \mathrm{Pcoh}(\underline{\cO_U}(*))$ defines a sheaf of $\infty$-categories on $\cX$ under analytic topology. We will denote by $\mathrm{Pcoh}(\cX)$ the global section of $\mathcal{P}coh_{\cX}$ and call it the $\infty$-cateogry of \textit{pseudo-coherent complexes on $\cX$}.
\end{defprop}

The main goal of this session is to prove \Cref{def of pseudo} then deduce the following:
\begin{theorem}\label{GAGA of pseudo}
    If $X/K$ is proper scheme, then $\mathrm{Pcoh}(X^{an/\cK})\simeq \mathrm{Pcoh(X)}=\mathrm{Pcoh}(\mathrm{Qcoh}(X))$.\\

\end{theorem}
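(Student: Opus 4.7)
The plan is to upgrade the categorical GAGA equivalence $D_{\sol}(X^{\mathrm{an},\cK}) \simeq D_{\sol}(X^{\mathrm{alg}})$ of \Cref{categorical GAGA} to an equivalence on the level of pseudo-coherent objects, using the relative GAGA for perfect complexes (\Cref{main}) as the building block and exploiting the approximation property built into \Cref{pseudo-coherent}. First, I would verify the sheaf property implicit in \Cref{def of pseudo}: for an inclusion $V \hookrightarrow U$ of affinoid opens in $\cX$, the restriction functor preserves pseudo-coherence because the underlying map $\cO_U \to \cO_V$ is flat in the relevant sense, and descent along an analytic cover of an affinoid piece follows from the Čech presentation together with the Fredholm/nuclearity hypotheses, which ensure that the $t$-structures on each local $D_{\sol}(\cO_U)$ restrict compatibly.

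Next, I would construct the pullback functor $j_X^* \colon \mathrm{Pcoh}(X) \to \mathrm{Pcoh}(X^{\mathrm{an},\cK})$ as the restriction of the composite $\mathrm{Qcoh}(X) \xrightarrow{\sim} D_{\sol}(X^{\mathrm{alg}}) \xrightarrow{\sim} D_{\sol}(X^{\mathrm{an},\cK})$ from \Cref{categorical GAGA}, and show it lands inside $\mathrm{Pcoh}(X^{\mathrm{an},\cK})$. After choosing a finite affine cover of $X$ and refining to an affinoid cover of $X^{\mathrm{an},\cK}$, this reduces to the local statement that pullback of a pseudo-coherent $B$-complex along a flat map $B \to \cO_U(U)$ preserves pseudo-coherence, which follows from the fact that perfect complexes pull back to perfect complexes and that pseudo-coherence is detected by perfect-complex approximations. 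Fully faithfulness of $j_X^*$ on $\mathrm{Pcoh}$ then follows: for $M, N \in \mathrm{Pcoh}(X)$, the Hom computation
\[
\mathrm{Map}_{X^{\mathrm{an},\cK}}(j_X^* M, j_X^* N) \simeq \mathrm{Map}_{X^{\mathrm{alg}}}(M, N) \simeq \mathrm{Map}_X(M, N)
\]
uses \Cref{categorical GAGA} and the fact that on $\mathrm{Qcoh}(X)$ (where the analytic structure is induced) the two mapping spaces agree.

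The essential surjectivity is the main obstacle. Given $F \in \mathrm{Pcoh}(X^{\mathrm{an},\cK})$, the defining approximation property yields, for each $n$, a dualizable (hence by \Cref{main} with $\cA = \cK$, algebraic) object $P_n \in \bperf(X^{\mathrm{an},\cK}) \simeq \mathrm{Perf}(X)$ together with a map $P_n \to F$ whose fiber lives in degrees $<-n$. Write $\tilde P_n \in \mathrm{Perf}(X)$ for the corresponding algebraic perfect complex. The compatibilities between the $P_n$'s, obtained by standard Postnikov-tower manipulations on the analytic side, lift uniquely to morphisms $\tilde P_n \to \tilde P_{n+1}$ in $\mathrm{Perf}(X)$ by the fully faithfulness just established. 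The tower $\{\tilde P_n\}$ then assembles (since $X$ is proper and $\mathrm{Qcoh}(X)$ admits the requisite inverse limits in $\mathrm{Pcoh}$, essentially by Noetherian approximation on each affine piece of $X$) into a pseudo-coherent complex $\tilde F \in \mathrm{Pcoh}(X)$.

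The hard technical point will be checking that $j_X^* \tilde F \simeq F$, i.e., that the pullback functor commutes with the Postnikov-style assembly of the tower. This is where the nuclearity hypothesis on $\underline{\cK}$ and the Fredholmness of the affinoid pieces enter: they guarantee that the base-change of the tower of perfect complexes converges to the correct object in $D_{\sol}(X^{\mathrm{an},\cK})$, rather than merely to a pro-object. Once this convergence is verified — essentially by checking truncations degree-by-degree, where $j_X^*$ is well-behaved on coherent cohomology sheaves via the classical \Cref{gaga of coh} — the resulting functor is essentially surjective, completing the proof along the lines of \cite{complex}*{Lecture XIII}.
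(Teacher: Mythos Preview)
Your proposal has two genuine gaps that block the argument.

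First, you write ``the composite $\mathrm{Qcoh}(X) \xrightarrow{\sim} D_{\sol}(X^{\mathrm{alg}}) \xrightarrow{\sim} D_{\sol}(X^{\mathrm{an},\cK})$'', but the first arrow is not an equivalence. \Cref{categorical GAGA} identifies $D_{\sol}(X^{\mathrm{an}})$ with $D_{\sol}(X^{\mathrm{alg}})$, and the latter is $\mathrm{Qcoh}(X)\tensor_{\mathrm{Qcoh}(K)} D_{\sol}(\cK)$ (see the proof of \Cref{catkunne}), which is strictly larger than $\mathrm{Qcoh}(X)$ whenever $\cK$ has nontrivial analytic structure. So your full faithfulness step ``$\mathrm{Map}_{X^{\mathrm{alg}}}(M,N)\simeq\mathrm{Map}_X(M,N)$'' does not follow from categorical GAGA alone; the functor $j_X^*$ you are studying is a nontrivial base change, not a restriction of an equivalence.

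Second, and more seriously, your essential surjectivity step assumes something the paper has not established: that any $F\in\mathrm{Pcoh}(X^{\mathrm{an},\cK})$ admits a \emph{global} approximation by dualizable objects $P_n\in\bperf(X^{\mathrm{an},\cK})$. \Cref{pseudo-coherent} in this paper is the compactness-type definition (commutation with uniformly coconnective filtered colimits), not the perfect-approximation definition, and \Cref{def of pseudo} defines $\mathrm{Pcoh}(X^{\mathrm{an},\cK})$ as global sections of the sheaf $U\mapsto\mathrm{Pcoh}(\underline{\cO_U}(*))$. Locally you do have perfect approximations over the discrete rings $\underline{\cO_U}(*)$, but gluing these to a global perfect $P_n$ on $X^{\mathrm{an},\cK}$ is precisely the kind of statement the theorem is meant to deliver. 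Invoking it here is circular.

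The paper's route avoids both issues entirely. It characterizes $\mathrm{Pcoh}(\underline{\cO_U}(*))$ intrinsically inside $D_{\sol}(\cO_U)$ as $\mathrm{Nuc}(\cO_U)\cap\mathrm{Pcoh}(D_{\sol}(\cO_U))$ (\Cref{discrete pseudo}), and proves separately that both pieces sheafify. The key analytic input is then \Cref{coherent pushforward}: the structure map $f\colon X^{\mathrm{an},\cK}\to\Spa\cK$ sends pseudo-coherent objects to pseudo-coherent (hence discrete) $\cK$-modules, using a homological bound for $f^!$ on affinoid pieces. With this in hand, to show a given $M\in\mathrm{Pcoh}(X^{\mathrm{an},\cK})$ is algebraic, one restricts to an affine open $j\colon\Spec A\hookrightarrow X$, blows up so that the complement is a Cartier divisor $D$, writes $j_*j^*M\simeq\colim_n M\tensor\cO(-nD)$, and observes that each term has pseudo-coherent (hence discrete) global sections; discreteness of $j^*M$ follows. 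No global perfect approximation on the analytic side is ever invoked.
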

To do this, we will first give a alternative characterization of pseudo-coherent complexes of derived adic spaces in \Cref{def of pseudo}, which is more intrinsic to the analytic nature: pseudo-coherent complexes=\textit{nuclear modules} $\cap$ \textit{pseudo coherent objects in complete modules}. Intuitively one could think of nuclear module of a analytic ring $\cA$ is the ``discrete'' objects in $D_{\sol}(\cA)$, and pseudo coherent objects are roughly compact objects in $D_{\sol}(\cA)$; then this characterization is amounts to say that if a topological space is discrete and proper then it is a finite set. 

\begin{definition}
    For a \up{$D(\mathbb{Z})$-linear} compactly generated category $\cC$, an object $P\in \cC$ is called $nuclear$ if for any compact object $c\in \cC$, we have that the natrual map:
$$\mathrm{Hom}(\mathbf{1}, c^{\vee}\tensor P)\rightarrow \mathrm{Hom}(c, P)$$
is an equivalence in $D(\mathbb{Z})$.

\end{definition}
\begin{definition}
    For an analytic ring $\cA$, we call $M$ an $\cA$ nuclear module if $M\in D(\cA)$ is an nuclear object.
    Equivalently, for any totally disconnected sets $S$, we have that\upshape:
$$\cA_{\sol}[S]^{\vee}\tensor M(*)\rightarrow M(S)$$
is an equivalence. We denote category of $\cA$ nuclear module by $\mathrm{Nuc}(\cA)$. In fact, by \cite{andreychev2021pseudocoherentperfectcomplexesvector}*{Prop 5.35} there is an equivalent internal description of nuclear module\upshape:
For every profinite set $S$, one have\upshape:
$$\cA_{\sol}[S]^{\vee}\tensor M\simeq \underline{\mathrm{RHom}}(\cA_{\sol}[S], M) $$
\end{definition}
\begin{lemma}\label{nuc is mod cat}
    Let $\cA$ be an analytic rings such that for any profinite set $S$, we have that $\underline{C}(S,\underline{\cA})$ is a \textit{nuclear} $\cA$ module. Let $f:\cA\rightarrow \cB$ be a morphism of analytic rings such that $\underline{\cB}\in \mathrm{Nuc}(\cA)$, then $\mathrm{Nuc}(\cB)\simeq \mathrm{Mod}_{\underline{\cB}}(\mathrm{Nuc}(\cA)).$
\end{lemma}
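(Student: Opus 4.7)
The plan is to show that the forgetful functor $F\colon \mathrm{Nuc}(\cB) \to \mathrm{Mod}_{\underline{\cB}}(\mathrm{Nuc}(\cA))$ is an equivalence of $\infty$-categories. In the forward direction, one must check that a nuclear $\cB$-module, viewed through the map $f$ as a condensed abelian group with $\underline{\cA}$-action, remains nuclear over $\cA$; this is a direct formal consequence of the nuclearity of $\underline{\cB}$ over $\cA$. In the reverse direction, one must show that any $\underline{\cB}$-module structure on an object of $\mathrm{Nuc}(\cA)$ automatically promotes it to a $\cB$-complete module in the analytic sense, and that it remains nuclear after this upgrade; this is where the hypothesis that $\underline{C}(S,\underline{\cA})$ is nuclear over $\cA$ is crucial.

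For the forward direction, fix $M \in \mathrm{Nuc}(\cB)$ and a profinite set $S$. Using the identification $\cB[S]\simeq \cA[S]\otimes^{\sol}_{\cA}\cB$ together with the Hom--tensor adjunction for $f\colon\cA\to\cB$, the nuclearity of $M$ over $\cB$, and finally the nuclearity of $\underline{\cB}$ over $\cA$ applied to $\cA[S]$, I obtain
\[
\underline{\mathrm{RHom}}_{\cA}(\cA[S], M)\simeq \underline{\mathrm{RHom}}_{\cB}(\cB[S], M)\simeq \cB[S]^\vee \otimes^{\sol}_{\cB} M \simeq \cA[S]^\vee \otimes^{\sol}_{\cA} M,
\]
which is precisely the nuclearity criterion for $M$ as an $\cA$-module.

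For the reverse direction, let $M \in \mathrm{Mod}_{\underline{\cB}}(\mathrm{Nuc}(\cA))$. The first step is to establish that $M$ is automatically $\cB$-complete, so that $M\in D(\cB)$. The $\cB$-complete modules inside $\mathrm{Mod}_{\underline{\cB}}(D(\underline{\cA}))$ are cut out by completion relations encoded in test modules of the form $\underline{\cB}[S]$ and $\underline{C}(S,\underline{\cB})$ for profinite $S$; the hypothesis that $\underline{C}(S,\underline{\cA})\in\mathrm{Nuc}(\cA)$, combined with nuclearity of $\underline{\cB}$ over $\cA$, identifies $\underline{C}(S,\underline{\cB})$ with $\underline{\cB}\otimes^{\sol}_{\cA}\underline{C}(S,\underline{\cA})$, so that the required completion identities for $M$ reduce to the $\cA$-nuclearity already assumed. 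Once $M \in D(\cB)$ is established, running the chain of isomorphisms above backwards gives $M \in \mathrm{Nuc}(\cB)$, and a formal verification shows the forgetful functor and the inclusion are mutually inverse.

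The main obstacle will be the $\cB$-completeness step in the reverse direction: while the nuclearity computation is formal, passing from a mere $\underline{\cB}$-action on a nuclear $\cA$-module to a genuine $\cB$-module structure relies on the precise interplay between the analytic structures of $\cA$ and $\cB$ encoded by the hypothesis, and will require a careful reduction to the generating system of test objects that characterizes $D(\cB) \subset D(\underline{\cB})$.
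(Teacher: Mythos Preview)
Your forward direction is correct and is exactly the computation the paper carries out: once you know $\underline{\cB}\in\mathrm{Nuc}(\cA)$, you get $\underline{C}(S,\underline{\cB})\simeq \underline{C}(S,\underline{\cA})\otimes_{\cA}\underline{\cB}$, and the nuclearity of $f_*M$ over $\cA$ follows from the projection formula.

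The paper, however, does not attempt your reverse direction at all. It invokes the Barr--Beck--Lurie theorem instead: once $f_*$ preserves nuclear modules (and $f^*$ does too, which is verified in the proof of \Cref{cons of nuc sheaf}), one has an adjunction $f^*\dashv f_*$ between $\mathrm{Nuc}(\cA)$ and $\mathrm{Nuc}(\cB)$ with $f_*$ conservative and colimit-preserving, hence monadic. The monad is $\underline{\cB}\otimes_{\cA}-$ because for nuclear $N$ the object $N\otimes_{\cA}\underline{\cB}$ is already $\cB$-complete (write $N$ as a colimit of $\underline{C}(S,\underline{\cA})$'s, each of which tensors to $\underline{C}(S,\underline{\cB})\in D_{\sol}(\cB)$). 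This yields the equivalence with no further work.

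Your reverse direction, as written, has a genuine gap. You assert that $\cB$-completeness inside $\mathrm{Mod}_{\underline{\cB}}(D(\underline{\cA}))$ is ``cut out by completion relations encoded in test modules of the form $\underline{\cB}[S]$ and $\underline{C}(S,\underline{\cB})$'', but this is not how analytic ring structures work: the analytic structure on $\cB$ is additional data not determined by $\underline{\cB}$, and the relevant free modules are $\cB[S]$, not $\underline{\cB}[S]$. Your reduction of the ``completion identities'' to $\cA$-nuclearity is therefore not justified. The statement you need --- that any $\underline{\cB}$-module object in $\mathrm{Nuc}(\cA)$ automatically lies in $D_{\sol}(\cB)$ --- is true, but the clean argument is either the bar resolution $M\simeq |\underline{\cB}^{\otimes\bullet+1}\otimes_{\cA}M|$ (each term is of the form $(\text{nuclear }\cA\text{-module})\otimes_{\cA}\underline{\cB}$ and hence $\cB$-complete by the observation above), or simply Barr--Beck as in the paper.
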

\begin{proof}
    By Barr-Beck-Lurie theorem, one sufficient to prove that the forgetful functor $f_*\colon D_{\sol}(\cB)\rightarrow D_{\sol}(\cA)$ preserves nuclear objects. That is, if $M\in D_{\sol}(\cB)$ such that for any profinite set $S$, $\underline{C}(S,\underline{\cB})\tensor_\cB M\simeq \underline{C}(S, \underline{M})$, then we have that $\underline{C}(S,\underline{\cA})\tensor_\cA f_*M\simeq \underline{C}(S, \underline{M})$. Since $\underline{\cB}\in \mathrm{Nuc}(\cA)$, we see that $\underline{C}(S,\underline{\cB})\simeq \underline{C}(S,\underline{\cA})\tensor_{\cA}\underline{\cB}$ for any profinite set $S$. Thus by projection formula we have that
    $$\underline{C}(S,\underline{M})\simeq f_*(\underline{C}(S,\underline{\cB})\tensor_\cB M)\simeq f_*((\underline{C}(S,\underline{\cA})\tensor_\cA \cB)\tensor_{\cB}M)\simeq \underline{C}(S,\underline{\cA})\tensor_{\cA}M.$$
   Here we are using that by \cite{andreychev2023ktheorieadischerraume}*{Satz 3.16}, nuclear module can be realize as colimits of continuous function spaces $\underline{C}(T,A), T\in \mathrm{Profin}$. Which finishes the proof.
\end{proof}
\begin{prop}\label{discrete pseudo}
    Let $\cA$ be a \textit{Fredholm} analytic ring, then we have that $$\mathrm{Pcoh}(\underline{\cA}(*))\simeq \mathrm{Pcoh}(D_{\sol}(\cA))\cap \mathrm{Nuc}(\cA).$$
\end{prop}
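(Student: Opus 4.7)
The plan is to prove the two inclusions in $D_{\sol}(\cA)$ separately, using the Fredholm identification $\bperf(\cA) \simeq \mathrm{Perf}(\underline{\cA}(*))$ as the bridge between the classical and analytic settings.

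For the inclusion $\mathrm{Pcoh}(\underline{\cA}(*)) \subseteq \mathrm{Pcoh}(D_{\sol}(\cA)) \cap \mathrm{Nuc}(\cA)$, I would start with $M \in \mathrm{Pcoh}(\underline{\cA}(*))$ and invoke the standard approximation of almost perfect complexes by perfect ones: produce a compatible tower $P_0 \to P_1 \to \cdots \to M$ of perfect complexes over $\underline{\cA}(*)$ whose truncations $\tau^{\geq -n}P_n \to \tau^{\geq -n}M$ are equivalences, so that $M \simeq \lim_n \tau^{\geq -n}M$ in a left-complete sense. By the Fredholm hypothesis each $P_n$ is dualizable in $D_{\sol}(\cA)$, hence automatically both nuclear and pseudo-coherent there. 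Pseudo-coherence is closed under Postnikov limits because the defining condition only involves $\cC^{\geq n}$ for a fixed $n$, where only finitely many stages of the tower matter; nuclearity is closed under all limits inside the nuclear subcategory. This gives both properties for $M$.

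For the reverse inclusion, given $M \in \mathrm{Pcoh}(D_{\sol}(\cA)) \cap \mathrm{Nuc}(\cA)$, I would construct pseudo-coherent approximants intrinsically in $D_{\sol}(\cA)$: pseudo-coherence produces maps $Q_n \to M$ from dualizable objects that are equivalences on $\tau^{\geq -n}$. By Fredholm each $Q_n \in \mathrm{Perf}(\underline{\cA}(*))$ is discrete (in the sense of lying in the essential image of the condensification $D(\underline{\cA}(*)) \hookrightarrow D_{\sol}(\cA)$). Nuclearity of $M$ lets us compute the mapping spaces $\mathrm{Hom}(Q_n, M)$ as classical mapping spaces of $\underline{\cA}(*)$-complexes, so the tower $\{Q_n\}$ assembles into a tower in $D(\underline{\cA}(*))$ whose condensification is $M$. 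Taking the limit in $D(\underline{\cA}(*))$ gives the desired pseudo-coherent $\underline{\cA}(*)$-complex.

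The main obstacle will be the second direction, specifically verifying that nuclearity is strong enough to upgrade an abstract analytic pseudo-coherent approximation to a classical one. Concretely, one must show that approximants $Q_n \to M$ can be produced intrinsically from the definition of $\mathrm{Pcoh}(D_{\sol}(\cA))$ (and not only over the underlying discrete ring), and that the transition maps obtained this way agree with the classical derived category of $\underline{\cA}(*)$-modules once nuclearity is imposed. This step is where the Fredholm assumption is indispensable: it is exactly what prevents the approximants from acquiring any non-discrete dualizable content and thus forces the resulting limit to be the condensification of a classically pseudo-coherent complex.
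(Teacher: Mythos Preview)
The paper does not prove this; it cites \cite{andreychev2021pseudocoherentperfectcomplexesvector}*{Theorem 5.50} and moves on. So your proposal has to be judged on its own.

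Your forward inclusion is essentially right, with one slip: nuclear modules are closed under colimits, not limits, so your closure argument for nuclearity does not work as stated. The correct observation is simply that any object in the essential image of the condensification $D(\underline{\cA}(*)) \hookrightarrow D_{\sol}(\cA)$ is a colimit of finite free modules and hence nuclear; no Postnikov-limit argument is needed. Your treatment of pseudo-coherence via the perfect approximants $P_n$ is fine.

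For the reverse inclusion you have correctly located the obstacle but not removed it. The sentence ``pseudo-coherence produces maps $Q_n \to M$ from dualizable objects'' already assumes what has to be shown. Pseudo-coherence in $D_{\sol}(\cA)$, as defined in the paper, only says that $\mathrm{Hom}(M,-)$ commutes with uniformly bounded-below colimits; the approximants this yields are \emph{compact} objects of $D_{\sol}(\cA)$, i.e.\ finite complexes built from $\cA[S]$ for $S$ profinite, and these are in general not dualizable. The substance of Andreychev's argument is precisely to use nuclearity at this point: for nuclear $M$ one shows that a surjection $\cA[S] \twoheadrightarrow \pi_k M$ factors through a finite free module, so that each cohomology group of $M$ is a finitely presented discrete $\underline{\cA}(*)$-module, and then one climbs the Postnikov tower. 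Your Fredholm hypothesis enters exactly where you say---pinning the dualizable approximants down to $\rperf(\underline{\cA}(*))$---but only \emph{after} such approximants have been produced, which your outline does not do.
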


\begin{proof}
    This is \cite{andreychev2021pseudocoherentperfectcomplexesvector}*{Theorem 5.50}.
\end{proof}

\begin{proof}[Proof of \Cref{def of pseudo}]
    We will proceed the proof by showing both the associations $U\mapsto \mathrm{Nuc}(\cO_U)$ and $U\mapsto \mathrm{Pcoh}(D_{\sol}(\cO_U))$ forms a sheaf of $\infty$-categories on $X$ under analytic topology.\\
    \begin{lemma}\label{cons of nuc sheaf}
       The association $\mathrm{\mathrm{Nuc_X}}
    \colon U\mapsto \mathrm{Nuc}(\cO_U)$ forms a sheaf of $\infty$-categories on $X$ for the analytic topology. 
    \end{lemma}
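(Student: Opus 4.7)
The plan is to leverage analytic descent for $D_{\sol}(-)$ together with stability of nuclear modules under pullback to rational opens. Concretely, I will show: (i) for each rational open immersion $j\colon V\hookrightarrow U$ between affinoid opens of $X$, the pullback $j^{*}$ preserves nuclear modules; and (ii) a solid module $M\in D_{\sol}(\cO_U)$ is nuclear as soon as its restrictions along an analytic cover are nuclear. Combined with the descent of $D_{\sol}$ (used as a full ambient sheaf of categories), these two statements imply that $\mathrm{Nuc}_X$ is a full subsheaf of the sheaf $U\mapsto D_{\sol}(\cO_U)$.

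For (i), the key input is that $\underline{\cO_V}$ is nuclear as an $\underline{\cO_U}$-module. Rational opens in the analytic topology are built by adjoining Tate-type variables, so $\underline{\cO_V}$ is obtained from $\underline{\cO_U}$ by finitely many extensions of the form $\underline{\cO_U}\langle T/g\rangle$ and $\underline{\cO_U}\langle f/g\rangle$; each such extension is realized as a filtered colimit of null-sequence constructions $\underline{C}(\mathbb{N}\cup\{\infty\},\underline{\cO_U})$-type modules. Under the standing hypothesis that $\underline{\cK}$ is nuclear over $R_{\sol}$, it then follows inductively that $\underline{\cO_V}\in\mathrm{Nuc}(\cO_U)$. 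Invoking \Cref{nuc is mod cat}, one obtains $\mathrm{Nuc}(\cO_V)\simeq\mathrm{Mod}_{\underline{\cO_V}}(\mathrm{Nuc}(\cO_U))$; in particular $j^{*}$ lands in the nuclear subcategory.

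For (ii), let $\{U_i\hookrightarrow U\}_{i\in I}$ be an analytic cover with \v{C}ech nerve $U_{\bullet}$. By analytic descent for $D_{\sol}$, any $M\in D_{\sol}(\cO_U)$ satisfies $M\simeq\lim_{[n]\in\Delta} (M|_{U_n})$. Nuclearity of $M$ amounts to checking that, for every profinite set $S$, the canonical map
\[
\underline{\cO_U}[S]^{\vee}\otimes_{\cO_U} M\longrightarrow \underline{\mathrm{RHom}}_{\cO_U}(\underline{\cO_U}[S],M)
\]
is an equivalence. The left-hand side commutes with each $j_i^{*}$ by the projection formula, while the right-hand side commutes with $j_i^{*}$ owing to the nuclearity of $\underline{\cO_{U_i}}$ over $\underline{\cO_U}$ established in (i), which gives a clean base-change statement for $\underline{\mathrm{RHom}}$ applied to the profinite-indexed modules $\underline{\cO_U}[S]$ (via the internal characterization of nuclear modules from \cite{andreychev2021pseudocoherentperfectcomplexesvector}*{Proposition 5.35}). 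Each pulled-back map is an equivalence by hypothesis, and re-assembling via descent yields the equivalence on $U$.

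The main obstacle I anticipate is precisely the compatibility of the internal $\underline{\mathrm{RHom}}_{\cO_U}(\underline{\cO_U}[S],-)$ with restriction to rational opens, since $\underline{\cO_U}[S]$ is neither compact nor dualizable. The escape route is that both $\underline{\cO_U}[S]$ and its dual sit in the subcategory of \emph{nuclear} $\cO_U$-modules, and for morphisms $\cO_U\to\cO_V$ with $\underline{\cO_V}$ nuclear over $\underline{\cO_U}$, the formation of internal Hom against nuclear modules commutes with base change (this is the nuclear analogue of the dualizable projection formula, and is where the standing nuclearity hypothesis on $\underline{\cK}$ and consequently on all $\underline{\cO_U}$ appearing in $X$ becomes essential).
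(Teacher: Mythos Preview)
Your part (i) is essentially the same as the paper's presheaf step, and the invocation of \Cref{nuc is mod cat} is fine. The gap is in part (ii).

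You want to verify that the map
\[
\underline{\cO_U}[S]^{\vee}\otimes_{\cO_U} M\longrightarrow \underline{\mathrm{RHom}}_{\cO_U}(\underline{\cO_U}[S],M)
\]
is an equivalence by checking it after pullback along each $j_i^{*}$. For the left-hand side this is unproblematic. For the right-hand side, however, you need that $j_i^{*}\underline{\mathrm{RHom}}_{\cO_U}(\cO_U[S],M)\simeq \underline{\mathrm{RHom}}_{\cO_{U_i}}(\cO_{U_i}[S],j_i^{*}M)$. This is a base-change formula for internal Hom out of a non-dualizable (though compact) object, and it does \emph{not} follow formally from nuclearity of $\underline{\cO_{U_i}}$ over $\cO_U$. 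Your appeal to a ``nuclear analogue of the dualizable projection formula'' is not a standard result, and in fact the internal characterization from \cite{andreychev2021pseudocoherentperfectcomplexesvector}*{Proposition 5.35} only gives $\cO_U[S]^{\vee}\tensor N\simeq \underline{\mathrm{RHom}}(\cO_U[S],N)$ for $N$ \emph{already nuclear}---precisely what you are trying to establish for $M$. So the argument is circular as written.

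The paper sidesteps this entirely by running the descent in the \emph{pushforward} direction. Since analytic covers are finite, one reduces to a two-term cover and writes $M$ as the finite colimit (shifted cofiber) $M\simeq\mathrm{cofib}\bigl(j_{1*}M|_{U_1}\oplus j_{2*}M|_{U_2}\to j_{12*}M|_{U_{12}}\bigr)[-1]$ in $D_{\sol}(\cO_U)$. Each $M|_{U_i}$ is nuclear over $\cO_{U_i}$ by hypothesis, and \Cref{nuc is mod cat} says the \emph{forgetful} functor $j_{i*}$ preserves nuclearity (since $\underline{\cO_{U_i}}\in\mathrm{Nuc}(\cO_U)$). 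Hence each term is nuclear over $\cO_U$, and nuclear modules are closed under colimits, so $M$ is nuclear. No internal-Hom base change is needed.
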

    \begin{proof}
        First we show that it forms a presheaf. For any morphism of analytic ring over $R_{\sol}$,  $f:\cA\rightarrow \cB$, if $\underline{\cB},\underline{\cA}\in \mathrm{Nuc}(R_{\sol})$, then we claim $(-)\tensor_\cA\cB$ preserve nuclear module. Indeed, any $M\in \mathrm{Nuc}(\cA)$ can be write as a filtered colimit of objects of form $C(S,\underline{\cA})$ for some profinite set  $S$(\cite{andreychev2023ktheorieadischerraume}*{Satz 3.16}). Thus sufficient to prove the claim in the case of $M=C(S,\underline{\cA})$. But $C(S,\underline{\cA})\tensor_{\cA} \underline{\cB}\simeq C(S,\underline{\cB})$ (\cite{andreychev2021pseudocoherentperfectcomplexesvector}*{Proposition 5.35}) is already complete with respect to $\cB$, and is a nuclear $\cB$-module, thus we proved the claim. Now presheaf property follows from the claim applying to $\cA=\cO_U$ and $\cB=\cO_V$ for some immersion of affinoid subspaces $V\subset U\subset X$.
        
        The fully faithfulness in the sheaf condition is automatic due to descent property of $D_{\sol}(\cO_U)$. We focus on showing the essential surjectiveness. That is to say, for any (finite) covering $\{U_i\}_{i\in I}$ of $U$ of affinoid subspaces and $M\in D_{\sol}(\cO_U)$, $M\in \mathrm{Nuc}(\cO_U)$ if $M_{U_i}\in\mathrm{Nuc}(\cO_{U_i})$ for every $i\in I$. Indeed, by induction one may assume $|I|=2$, thus $M\simeq \mathrm{cofib}(M_{U_1}\oplus M_{U_2}\rightarrow M_{U_1\cap U_2})[-1]$. Now that $M_{U_i}\in\mathrm{Nuc}(\cO_{U_i})$ and $\cO_{U_i}\in \mathrm{Nuc(\cO_U)}$, this implies that $M_{U_i}\in \mathrm{Nuc}(\cO_U)$ (by \Cref{nuc is mod cat}). Since nuclear modules are stable under colimit, we showed that $M_{U}\in \mathrm{Nuc}(\cO_U)$.
        
    \end{proof}
    \begin{definition}
        We define $\mathrm{Nuc}(\cX)$ the category of nuclear modules on $\cX$ to be the global section of the sheaf $\mathrm{Nuc}_X$ in \Cref{cons of nuc sheaf}.
    \end{definition}
    \begin{lemma}\label{pcoh is a sheaf}
        The association $U\mapsto \mathrm{Pcoh}(D_{\sol}(\cO_U))$ forms a sheaf of $\infty$-categories on $X$ under analytic topology.
    \end{lemma}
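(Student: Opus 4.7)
The plan is to verify the two sheaf conditions for $U\mapsto \mathrm{Pcoh}(D_{\sol}(\cO_U))$: first that open restriction preserves pseudo-coherence (presheaf property), and second that pseudo-coherence can be detected on a finite analytic open cover. Finiteness suffices thanks to quasi-compactness of affinoid subspaces of $\cX$.

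For the presheaf part, let $j\colon V\hookrightarrow U$ be an inclusion of affinoid subspaces, so that $j^*=(-)\tensor_{\cO_U}^{\sol}\cO_V$ is a symmetric monoidal localization. Since $j^*$ is cocontinuous and admits a cocontinuous, fully faithful right adjoint $j_*$ in our setup (open immersions are given by tensoring with an idempotent algebra), both $j^*$ and $j_*$ are $t$-exact for the natural $t$-structures. If $M\in\mathrm{Pcoh}(D_{\sol}(\cO_U))$, bounded-aboveness of $j^*M$ is immediate. For the filtered colimit condition in $D_{\sol}(\cO_V)^{\geq n}$, adjunction plus cocontinuity of $j_*$ give
\[
\mathrm{Hom}_V(j^*M,\colim_I c_i)\simeq \mathrm{Hom}_U(M,\colim_I j_*c_i)\simeq \colim_I \mathrm{Hom}_V(j^*M,c_i),
\]
where the last step uses pseudo-coherence of $M$ together with $j_*c_i\in D_{\sol}(\cO_U)^{\geq n}$.

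For the descent part, suppose $\{U_i\}_{i\in I}$ is a finite affinoid open cover of $U$ and $M\in D_{\sol}(\cO_U)$ has $M|_{U_i}$ pseudo-coherent for each $i$. By descent for $D_{\sol}(\cO_U)$ along open covers, $M$ can be recovered as a limit $M\simeq \lim_{\emptyset\neq J\subset I}(j_J)_*M|_{U_J}$ where $U_J=\bigcap_{i\in J}U_i$; finiteness of $I$ makes this a finite limit. Boundedness above of $M$ is then inherited from boundedness above of each $(j_J)_*M|_{U_J}$. For the filtered colimit condition, given $c_\bullet\colon I\to D_{\sol}(\cO_U)^{\geq n}$, combining $\mathrm{Hom}$-descent and finiteness of the \v{C}ech limit yields
\[
\mathrm{Hom}_{\cO_U}(M,\colim_I c_i)\simeq \lim_J \mathrm{Hom}_{\cO_{U_J}}(M|_{U_J},(\colim_I c_i)|_{U_J})\simeq \lim_J\colim_I \mathrm{Hom}_{\cO_{U_J}}(M|_{U_J},c_i|_{U_J}),
\]
using pseudo-coherence of $M|_{U_J}$ (which follows by iterating the presheaf part over $J$) and $t$-exactness of restriction. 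Since filtered colimits commute with finite limits in stable $\infty$-categories, this equals $\colim_I \mathrm{Hom}_{\cO_U}(M,c_i)$, completing the descent check.

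The main technical obstacle is verifying $t$-exactness and cocontinuity of $j^*$ and $j_*$ for open immersions of our affinoid subspaces within the Fredholm analytic ring framework, together with the precise claim that \v{C}ech totalizations along finite covers reduce to finite limits. Both hold by the general idempotent-algebra description of open immersions à la Clausen-Scholze, but care is required to keep the $t$-structure bounds under control when passing between $U$ and the various $U_J$.
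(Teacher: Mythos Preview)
Your two-step outline (presheaf-ness, then finite \v{C}ech descent with filtered colimits commuting past the finite limit) is the right shape and is essentially what the paper does; the paper packages your descent step into a standalone lemma about finite diagrams of $t$-categories and applies it with $\cD=D_{\sol}(\cO_U)$, $\cC_i=D_{\sol}(\cO_{U_i})$.

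The substantive error is the claim that $j^*$ is $t$-exact. For rational opens this is false: $j^*=(-)\tensor_{\cO_U}^{\sol}\cO_V$ only has \emph{bounded Tor-amplitude}, i.e.\ there is an $n$ (generally nonzero) with $j^*(D_{\sol}^{\geq\bullet}(U))\subset D_{\sol}^{\geq\bullet-n}(V)$. The paper isolates exactly this as the key technical input, reducing by pullback to the model case $(\mathbb{Z}[T],\mathbb{Z})_{\sol}\to(\mathbb{Z}[T],\mathbb{Z}[T])_{\sol}$ and citing Andreychev for the bound. Fortunately your descent argument only needs the bounded version, since pseudo-coherence quantifies over all cutoffs $n$; so the fix is just to state the correct hypothesis and drop the flatness claim.

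Your presheaf step via the adjunction $(j^*,j_*)$ is a nice touch the paper does not spell out, but note that it rests on $j_*$ being cocontinuous. That holds here because $\cO_V$ is an idempotent $\cO_U$-algebra and $D_{\sol}(\cO_V)\simeq\mathrm{Mod}_{\cO_V}(D_{\sol}(\cO_U))$ (so the localization is smashing and the inclusion has a further right adjoint $\underline{\mathrm{Hom}}(\cO_V,-)$); but this identification of analytic structures is a nontrivial fact about rational opens that you should cite rather than fold into the phrase ``open immersions are given by tensoring with an idempotent algebra.''
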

    \begin{proof}
        By \cite{camargo2024analyticrhamstackrigid}*{Proposition 2.7.8}, one may assume $U_i$'s are rational opens (more precisely,of the forms of pullback of rational localization of the adic spaces $\Spa(R\langle T_J\rangle,R^+)$ for some finite set $J\subset \underline{\cO_U}(*)$). We claim that the functor $(-)\tensor_{\cO_U}\cO_{U_i}$ have the property of there exist $n\in \mathbb{Z}$, such that $((-)\tensor_{\cO_U}\cO_{U_i})\colon D_{\sol}^{\geq \bullet}(U)\rightarrow D_{\sol}^{\geq \bullet+n}(U_i)$. Indeed, this boils down to the analogous statement for base change along $(\mathbb{Z}[T],Z)_{\sol}\rightarrow (\mathbb{Z}[T],\mathbb{Z}[T])_{\sol}$, which follows from \cite{andreychev2021pseudocoherentperfectcomplexesvector}*{Proposition 3.13}. Then the lemma follows from applying \Cref{descent pseudo} to $\cC=D_{\sol}(U)$ and $\cC_i=D_{\sol}(U_i)$.
    \end{proof}
  Now by \Cref{discrete pseudo}, we see that $U\mapsto \mathrm{Pcoh}(\underline{\cO_U}(*))=\mathrm{Nuc}(\cO_U)\cap \mathrm{Pcoh}(D_{\sol}(U))$ also forms a sheaf.  
\end{proof}

 \begin{lemma}\label{descent pseudo}
    Let $p\colon I\rightarrow \mathrm{Pr}^{L}_{\mathrm{st}}$ be a finite diagram of presentable $\infty$-categories $\{\cC_{i}\}_{i\in I}$ with \textit{t-structure} where the arrows between them preserve connective objects. Let $\cC=\lim_{i\in I}\cC_i$, assume $\cD\subset \cC$ is a \up{presentable stable} full subcategory with \textit{t-structure}, such that the functors $p_{i}\colon \cD\rightarrow \cC_i $ defined via projection is compatible with \textit{t-structure} and have the property that there exist $n\in \mathbb{Z}$ such that $p_i(\cD^{\geq \bullet})\subset \cC_{i}^{\geq n+\bullet}$ for every $i\in I$. Then an object $d$ in $\cD$ is pseudo-coherent if $p_{i}(d)$ is pseudo-coherent object in $\cC_i$ for all $i\in I$.
\end{lemma}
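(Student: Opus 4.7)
The plan is to verify the two defining properties of \Cref{pseudo-coherent} for $d \in \cD$, namely boundedness above and commutation of the mapping anima $\mathrm{Map}(d,-)$ with filtered colimits of uniformly bounded-below objects. The central observation I will exploit is that since $\cC = \lim_{i \in I} \cC_i$ in $\mathrm{Pr}^L_{\mathrm{st}}$ and $\cD \hookrightarrow \cC$ is a fully faithful stable subcategory, the family $\{p_i \colon \cD \to \cC_i\}_{i \in I}$ is jointly conservative, and mapping anima in $\cD$ can be expressed as a finite limit of mapping anima in the $\cC_i$.

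For boundedness above, since each $p_i(d)$ is pseudo-coherent, pick $k_i \in \mathbb{Z}$ with $p_i(d) \in \cC_i^{\leq k_i}$; because $I$ is finite, $k := \max_i k_i$ is well-defined. The compatibility of each $p_i$ with t-structures makes $p_i$ t-exact, so it commutes with truncations, and therefore $p_i(\tau^{>k} d) \simeq \tau^{>k}(p_i d) \simeq 0$ for every $i$. By joint conservativity this forces $\tau^{>k}d \simeq 0$ in $\cC$, hence in $\cD$, so $d \in \cD^{\leq k}$.

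For the second property, fix $m \in \mathbb{Z}$ and a small filtered diagram $j \mapsto d_j$ in $\cD^{\geq m}$. The uniform shift hypothesis yields $p_i(d_j) \in \cC_i^{\geq m+n}$ for all $i, j$. Using the finite-limit description of $\mathrm{Map}_\cD$ together with preservation of colimits by $p_i$, and applying pseudo-coherence of each $p_i(d)$ to the uniformly bounded-below system $\{p_i(d_j)\}_j \subset \cC_i^{\geq m+n}$, one obtains
$$\mathrm{Map}_\cD(d, \colim_j d_j) \simeq \lim_{I} \mathrm{Map}_{\cC_i}(p_i d, \colim_j p_i d_j) \simeq \lim_{I} \colim_j \mathrm{Map}_{\cC_i}(p_i d, p_i d_j).$$
Filtered colimits commute with finite limits in anima, so I may swap to reach $\colim_j \lim_{I} \mathrm{Map}_{\cC_i}(p_i d, p_i d_j) \simeq \colim_j \mathrm{Map}_\cD(d, d_j)$, which is exactly the almost-compactness property.

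The main subtlety I expect is pinning down the finite limit presentation of $\mathrm{Map}_\cD(x,y)$ as a limit over (some finite indexing category attached to) $I$ of the individual $\mathrm{Map}_{\cC_i}(p_i x, p_i y)$, in a form natural enough to apply pseudo-coherence of $p_i(d)$ termwise; once this is set up the finite/filtered swap is routine, but it is crucial that $I$ is finite both for this description and for the commutation argument.
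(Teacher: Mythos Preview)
Your approach is essentially the same as the paper's: compute $\mathrm{Hom}_{\cD}$ as a finite limit of $\mathrm{Hom}_{\cC_i}$ via full faithfulness of $\cD\subset\cC=\lim_I\cC_i$, invoke pseudo-coherence of each $p_i(d)$ on the uniformly shifted system, then exchange the finite limit with the filtered colimit. The paper carries this out with direct sums $\bigoplus_j N_j$ rather than general filtered diagrams, which is a cosmetic difference, and it does not separately verify condition~(1) (bounded above) at all. The limit description of mapping spaces you flag as the ``main subtlety'' is immediate from the description of $\lim_I\cC_i$ in $\mathrm{Pr}^L$ and the fullness of $\cD$; the paper simply uses it without comment.

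One small caution on your boundedness argument: you assert that ``compatibility with t-structure'' makes each $p_i$ t-exact and hence that $p_i(\tau^{>k}d)\simeq\tau^{>k}(p_i d)$, but the stated hypotheses only give a one-sided bound $p_i(\cD^{\geq\bullet})\subset\cC_i^{\geq n+\bullet}$, which is left t-exactness up to shift, not full t-exactness. In the paper's applications the transition functors do have bounded amplitude in both directions, so bounded-aboveness is unproblematic there, but as stated neither your argument nor the paper's cleanly extracts condition~(1) from the hypotheses alone.
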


\begin{proof}
    We need to show that, for every $m\in \mathbb{Z}$ and every collection of objects $\{N_{j}\}_{j\in J}\in \cD^{\geq m}$, we have $$\mathrm{Hom}_{\cD}(d, \bigoplus_{j\in J} N_j)\simeq \bigoplus_{j\in J} \mathrm{Hom}_{\cD}(d, N_j)$$ if $p_i(d)$ is \textit{pseudo-coherent}. By our condition $p_i(\cD^{\geq \bullet})\subset \cC_{i}^{\geq n+\bullet}$ for every $i\in I$, pseudo-coherence of $p_i(d)$ implies that $\mathrm{Hom}_{\cC_i}(p_i(d), \bigoplus_{j\in J} p_i(N_j))\simeq \bigoplus_{j\in J}\mathrm{Hom}_{\cC_i}(p_i(d), p_i(N_j))$.
    By fully faithfulness of $\cD\subset \cC$, we have the following 
    \begin{align*}
        \mathrm{Hom}_{\cD}(d, \bigoplus_{j\in J} N_j)&\simeq \lim_{i\in I} \mathrm{Hom}_{\cC_i}(p_i(d), p_i(\bigoplus_{j\in J} N_j))\\
                                            &\simeq \lim_{i\in I} \mathrm{Hom}_{\cC_i}(p_i(d), \bigoplus_{j\in J} p_i(N_j))\\
                                            &\simeq \lim_{i\in I} \bigoplus_{j\in J}\mathrm{Hom}_{\cC_i}(p_i(d), p_i(N_j))\\
                                            &\simeq \bigoplus_{j\in J}\lim_{i\in I} \mathrm{Hom}_{\cC_i}(p_i(d), p_i(N_j))\\
                                            &\simeq \bigoplus_{j\in J}\mathrm{Hom}_{\cD}(d, N_j)
    \end{align*}
    The second to last identification we used that filter colimits commutes with finite limits.
\end{proof}
In order to apply \Cref{def of pseudo} to our setting, we need the following lemma.
\begin{lemma}\label{locally nuclear}
    Let $\cB$ be a analytic ring over $R_{\sol}$ such that $\underline{\cB}\in \mathrm{Nuc}(R_{\sol})$. Then any affinoid subspace $U\subset \Spa \cB$, we have that $\underline{\cO_U}\in \mathrm{Nuc}(R_{\sol})$ \up{In particular this implies that $\underline{\cO_U}\in \mathrm{Nuc}(\cB)$}. In particular, this shows for $\cK$ as in {\textup{\Cref{def of pseudo}}}, $X/K$ a finite type $K$-scheme, also affinoid subspace \up{under analytic topology} $U\in X^{\mathrm{an},\cK}$ has the property that $\underline{\cO_U}$ is \textit{nuclear} over $R_{\sol}.$
\end{lemma}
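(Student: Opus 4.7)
The plan is to reduce the first assertion to the nuclearity of Tate algebras over $R_{\sol}$ together with the stability of nuclearity under the tensor products arising from rational localizations, both of which are essentially contained in the proof of \Cref{cons of nuc sheaf}. The second assertion then follows by unwinding the local description of $X^{\mathrm{an},\cK}$ via \Cref{an for affine}.

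For the first part, by \cite{camargo2024analyticrhamstackrigid}*{Proposition 2.7.8} every affinoid open $U \subset \Spa\cB$ is, up to refinement, a rational localization; on the level of analytic rings this is a solid tensor product
\[
\underline{\cO_U} \;\simeq\; \underline{\cB}\,\otimes^{\sol}_{R_{\sol}[T_1,\ldots,T_n]}\,R\langle T_1,\ldots,T_n\rangle
\]
for a suitable classifying map $R_{\sol}[T_1,\ldots,T_n] \to \underline{\cB}$. Thus it suffices to verify (i) $R\langle T_1,\ldots,T_n\rangle \in \mathrm{Nuc}(R_{\sol})$ and (ii) such tensor products preserve nuclearity over $R_{\sol}$. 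For (i), one inducts on $n$ and identifies $R\langle T\rangle = (\lim_m (R^+/\pi^m)[T])[1/\pi]$ with a filtered colimit of continuous-function modules $\underline{C}(\mathbb{N}\cup\{\infty\}, R)$, which are nuclear by \cite{andreychev2021pseudocoherentperfectcomplexesvector}*{Proposition 5.35}, and nuclear modules are stable under filtered colimits. For (ii), the argument already recorded in the proof of \Cref{cons of nuc sheaf}—writing nuclear modules as colimits of continuous-function modules $\underline{C}(S, -)$ and noting that their solid tensor products remain of this form—applies verbatim. The parenthetical remark $\underline{\cO_U} \in \mathrm{Nuc}(\cB)$ then follows from \Cref{nuc is mod cat} applied to $R_{\sol}\to \cB$, since $\underline{\cO_U}$ is a $\underline{\cB}$-module object lying in $\mathrm{Nuc}(R_{\sol})$.

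For the second statement, any affinoid $U \subset X^{\mathrm{an},\cK}$ may, after shrinking, be assumed to lie inside $(\Spec A)^{\mathrm{an},\cK}$ for some affine open $\Spec A \subset X$. By \Cref{an for affine}, $(\Spec A)^{\mathrm{an},\cK}$ is covered by affinoids of the form $\Spa(\cK\langle T_1,\ldots,T_n\rangle/I)$, whose underlying condensed rings are nuclear over $R_{\sol}$ by the hypothesis on $\underline{\cK}$ combined with (i) and (ii) above. Applying the first part of the lemma to such an affinoid then gives the conclusion. The main obstacle will be making precise the displayed identification of $\underline{\cO_U}$ as a solid tensor product: the formalism of \cite{camargo2024analyticrhamstackrigid} presents rational localizations via several equivalent mechanisms (idempotent algebras, pullbacks from closed-disk analytic rings, generalized Tate algebras), and matching the chosen presentation to the sheaf-theoretic definition of $\cO_U$ on the derived adic space $\Spa\cB$ requires some care—but once this identification is in place, the rest is bookkeeping with tools already established earlier in the paper.
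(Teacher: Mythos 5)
Your approach matches the paper's in outline: reduce to rational opens via \cite{camargo2024analyticrhamstackrigid}*{Proposition 2.7.8}, show these are nuclear by combining nuclearity of Tate algebras with stability under the relevant tensor products, and invoke \Cref{nuc is mod cat} for the parenthetical. However, there are two gaps relative to the paper's argument.

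First, the displayed identification
\[
\underline{\cO_U} \simeq \underline{\cB}\otimes^{\sol}_{R_{\sol}[T_1,\ldots,T_n]}R\langle T_1,\ldots,T_n\rangle
\]
only captures Weierstrass-type localizations, i.e.\ conditions $|f_i|\leq 1$, and does not produce general rational subsets $\{|f_1|,\ldots,|f_n|\leq |g|\}$, which also require inverting $g$. The paper instead takes the rational opens $U_i$ to be pullbacks $\Spa\cB\tensor_{\Spa(R\langle\underline{T_J}\rangle,R^+)}V$ along $\cB\to R\langle\underline{T_J}\rangle$ of genuine rational subsets $V=\{f_i\leq g\}\subset\Spa(R\langle\underline{T_J}\rangle,R^+)$. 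The crucial input (which you do not mention) is that such rational localizations of complete analytic Huber pairs are nuclear and therefore \emph{steady} (\cite{analytic}*{Proposition 13.14}); steadiness is what guarantees that the underlying condensed ring of the analytic tensor product agrees with the condensed tensor product of underlying rings, so that nuclearity can actually be propagated. Your (ii) gestures at this by analogy with the proof of \Cref{cons of nuc sheaf}, but the specific presentation you've written down doesn't reduce to a case where that argument applies.

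Second, Proposition 2.7.8 tells you that $U$ is \emph{covered} by rational opens $\{U_i\}$, not that $U$ is itself a rational open up to refinement. After establishing that each $\underline{\cO_{U_i}}$ is nuclear, the paper still needs the observation that, by descent along a finite cover, $\underline{\cO_U}$ is a finite limit (equivalently a finite colimit up to shift) of the $\underline{\cO_{U_i}}$'s, and that $\mathrm{Nuc}(R_{\sol})$ is stable under such colimits. Your write-up stops short of this assembly step, so it only proves nuclearity for rational opens rather than for arbitrary affinoid subspaces $U$.
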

\begin{proof}
    By \cite{camargo2024analyticrhamstackrigid}*{Proposition 2.7.8}, one may assume we can cover $U$ by rational opens $\{U_i\}_{i\in I}$ of $\Spa \cB$, which by definition are of the form $\Spa \cB\tensor_{\Spa (R\langle\underline{T_{J}}\rangle,R^+)}V$ where $V=\{f_i\leq g\ | i=1,2...n\}\subset \Spa (R\langle\underline{T_{J}}\rangle,R^+)$. Rational localizations (of complete analytic Huber pairs) are nuclear (over $R_{\sol}$), hence steady \cite{analytic}*{Proposition 13.14}, which shows that $\underline{\Spa \cB\tensor_{\Spa (R\langle\underline{T_{J}}\rangle,R^+)}V}\simeq \underline{\cB}\tensor_{\Spa(D\langle \underline{T_J}\rangle,R^+)}\underline{V}$ is nuclear. Thus we show each $\underline{\cO_{U_i}}$ is nuclear, note that by descent $\underline{\cO_U}$ can be write as finite limit (also colimit up to shift) of such, which is still a nuclear module. This finishes the proof.
\end{proof}
\begin{prop}
    For any $X/K$ finite type $K$-scheme, $X^{\mathrm{an},\cK}$ is a \textit{derived Tate adic sapce} satisfying conditions in \upshape\Cref{def of pseudo}.
\end{prop}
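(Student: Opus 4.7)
The plan is to exhibit $X^{\mathrm{an},\cK}$ as a derived adic space over $R_{\sol}$ with an explicit affinoid cover coming from closed polydisks, and then to apply \Cref{locally nuclear} and \Cref{they are fredholm} to verify the two pointwise conditions of \Cref{def of pseudo}.

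First I would reduce to the case $X = \Spec A$ affine, since the general case follows by gluing along a Zariski affine cover of $X$; this is compatible with the adic space structure because analytification respects Zariski opens (\Cref{descenting solid sheaf of an}). In the affine case, pick a surjection $K[x_1, \ldots, x_n] \twoheadrightarrow A$ and use \Cref{an for affine} to realize $\Spec(A)^{\mathrm{an},\cK}$ as the pullback of $\Anspec A$ along the open immersion $\mathbb{A}^{n,\mathrm{an}}_{\cK} \hookrightarrow \mathbb{A}^n_{\cK}$. The analytic affine space $\mathbb{A}^{n,\mathrm{an}}_{\cK}$ is the filtered union of the closed polydisks $\bar{\mathbb{D}}^{n}_{\cK, \leq |\pi|^{-r}}$, each of which is the affinoid space $\Spa \cK \langle \pi^{r}T_1, \ldots, \pi^{r}T_n\rangle$ associated to the complete analytic Huber pair obtained by base-change of the Tate algebra along $R \to \cK$. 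Pulling back along the closed immersion $\Anspec A \hookrightarrow \mathbb{A}^n$ yields an affinoid cover of $\Spec(A)^{\mathrm{an},\cK}$ by $\Spa \cB_r$, where $\cB_r$ is the analytic ring of a complete Huber pair obtained as a quotient of a Tate algebra over $\cK$ by a finitely generated ideal. This witnesses that $X^{\mathrm{an},\cK}$ is a derived Tate adic space.

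Next, for any affinoid subspace $U = \Spa \cO_U$ of $X^{\mathrm{an},\cK}$, nuclearity of $\underline{\cO_U}$ over $R_{\sol}$ is exactly the content of \Cref{locally nuclear}. For the Fredholm property I would invoke \Cref{they are fredholm}: it remains to check that $\cO_U^{\dagger\text{-}\mathrm{red}}$ is the analytic ring of a complete analytic Huber pair. On each chart $\cB_r$ this holds by construction, since complete Huber pairs are stable under taking Tate algebras over $\cK$ and under quotienting by finitely generated ideals (at least after $\dagger$-reduction); and any further affinoid subspace $U \subset \Spa \cB_r$ reduces to a rational localization, which again preserves the property of corresponding to a complete Huber pair after $\dagger$-reduction.

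The main obstacle is the careful bookkeeping around the $\dagger$-reduction: one must verify that $(-)^{\dagger\text{-}\mathrm{red}}$ commutes appropriately with closed immersions of the form $\Anspec A \hookrightarrow \mathbb{A}^n$ and with rational localizations, so that $\cO_U^{\dagger\text{-}\mathrm{red}}$ really does come from a complete Huber pair in the sense required by \Cref{they are fredholm}. Once this bookkeeping is in place, both the nuclearity and Fredholm conditions follow directly from the two referenced lemmas, and the proposition is established.
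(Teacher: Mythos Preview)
Your nuclearity argument via \Cref{locally nuclear} is exactly what the paper does. For the Fredholm condition, however, the paper does not route through \Cref{they are fredholm}: it simply cites \cite{reallanglands}*{Proposition VI.1.8}, so the entire proof collapses to a one-line appeal to those two references.

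The difference matters because the obstacle you flag --- verifying that $(-)^{\dagger\text{-}\mathrm{red}}$ commutes with closed immersions into $\mathbb{A}^n$, with rational localizations, and with base change along $R_{\sol} \to \cK$, so that each $\cO_U^{\dagger\text{-}\mathrm{red}}$ is again the solid ring of a complete analytic Huber pair --- is never addressed in the paper and is genuinely not straightforward. Recall that $\cK$ itself is only assumed to have Huber-pair $\dagger$-reduction, not to be a Huber pair, so the operations building $\cO_U$ from $\cK$ do not obviously stay inside the world of Huber pairs; one would need to commute $\dagger$-reduction past each step, and the paper provides no tools for this. By contrast, the external reference the paper invokes supplies the Fredholm property directly, without any hypothesis on the $\dagger$-reduction of the individual affinoids. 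Your proposal therefore has a real gap at precisely the point you identify, and the paper's route is to avoid that gap rather than fill it.
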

\begin{proof}
This is a direct consequence of \Cref{locally nuclear} and \cite{reallanglands}*{Proposition VI.1.8}.   
\end{proof}
Finally we need to identify subcategory of \textit{pseudo-coherent} and \textit{nuclear} objects in $D_{\sol}(Y^{an/\cK})$ with $\mathrm{Pcoh}(Y)$. For this we need to show the following
\begin{prop}\label{coherent pushforward}
    Let $Y$ be a proper $K$-scheme, consider the map $f\colon Y^{\mathrm{an},\cK}\rightarrow \Spa \cK$. We have that $f_*$ send $\mathrm{Pcoh}(Y)$ to $\mathrm{Pcoh}(\cK)\simeq \mathrm{Pcoh}(K).$
\end{prop}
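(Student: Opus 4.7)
The plan is to reduce the analytic pushforward along $f$ to the algebraic pushforward along $g\colon Y\to \Spec K$ via the categorical Künneth formula, and then invoke the classical fact that proper pushforward preserves pseudo-coherence. Since $Y$ is proper, Theorem~\ref{categorical GAGA} gives $D_{\sol}(Y^{\mathrm{an},\cK})\simeq D_{\sol}(Y^{\mathrm{alg}})$, and Theorem~\ref{catkunne} further identifies both with $\rqcoh(Y)\tensor_{\rqcoh(K)}D_{\sol}(\cK)$. Under this equivalence the canonical functor $j_Y$ corresponds to $M\mapsto M\boxtimes \underline{\cK}$; primness of $f$ (Proposition~\ref{proper is prim}) together with proper base change then yields $f_*(j_Y(M))\simeq (g_*M)\tensor_K \cK$ for every $M\in \rqcoh(Y)$.

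For $M\in \mathrm{Pcoh}(Y)$, the classical theorem on proper pushforward of pseudo-coherent complexes gives $g_*M\in \mathrm{Pcoh}(K)$, so $g_*M$ is quasi-isomorphic to a bounded-above complex $P^\bullet$ of finite free $K$-modules. Applying $(-)\tensor_K \cK$ produces a bounded-above complex of finite free $\cK$-modules representing $(g_*M)\tensor_K \cK$, which is pseudo-coherent in $D_{\sol}(\cK)$ in the sense of Definition~\ref{pseudo-coherent}, because finite free $\cK$-modules satisfy the requisite commutation with coproducts of bounded-below objects, and pseudo-coherence is closed under shifts and extensions. Moreover, each finite free $\cK$-module is trivially nuclear, and $\mathrm{Nuc}(\cK)$ is closed under colimits in $D_{\sol}(\cK)$, so $(g_*M)\tensor_K \cK$ is simultaneously pseudo-coherent and nuclear. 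Since $\cK$ is Fredholm by Proposition~\ref{they are fredholm}, Proposition~\ref{discrete pseudo} identifies this intersection with $\mathrm{Pcoh}(\underline{\cK}(*))=\mathrm{Pcoh}(\cK)$, completing the argument.

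The main technical obstacle is establishing the base-change identification $f_*(j_Y(M))\simeq (g_*M)\tensor_K \cK$. This rests on primness of $f$ together with the projection-formula/base-change compatibility in the six-functor formalism on Tate stacks, and one must check that the Künneth equivalence of Theorem~\ref{catkunne} intertwines the analytic pushforward $f_*$ with $g_*\tensor_{\rqcoh(K)}\mathrm{id}_{D_{\sol}(\cK)}$ on the algebraic side. A secondary care-point is that the ``finite resolution'' criterion for pseudo-coherence passes from $\mathrm{Pcoh}(K)$ to $\mathrm{Pcoh}(D_{\sol}(\cK))$ under $(-)\tensor_K \cK$; this should be routine because $\underline{\cK}$ lies in degree zero and the base-change functor preserves connectivity.
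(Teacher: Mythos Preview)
Your reading of $\mathrm{Pcoh}(Y)$ as the \emph{algebraic} category (transported into $D_{\sol}(Y^{\mathrm{an},\cK})$ via $j_Y$) is the literal one, and for that interpretation your outline is essentially correct. However, the paper intends $\mathrm{Pcoh}(Y)$ to mean $\mathrm{Pcoh}(Y^{\mathrm{an},\cK})$, the intrinsic analytic category of Definition/Proposition~\ref{def of pseudo}: its own proof opens with ``for any $\cF\in \mathrm{Pcoh}(X^{\mathrm{an},\cK})$''. This stronger version is what is actually used downstream: in the proof of Theorem~\ref{GAGA of pseudo} the proposition is applied to objects $M\otimes\cO(-nD)$ where $M$ is only known to lie in the analytic category. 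Your reduction to the algebraic pushforward $g_*$ via the K\"unneth equivalence covers only objects of the form $j_Y(N)$; to close the gap you would need every object of $\mathrm{Pcoh}(Y^{\mathrm{an},\cK})$ to arise this way, which is precisely Theorem~\ref{GAGA of pseudo}. So as a proof of what the paper needs, the argument is circular.

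The paper avoids this by working entirely on the analytic side, using the characterization $\mathrm{Pcoh}(\underline{\cO_U}(*))=\mathrm{Nuc}(\cO_U)\cap\mathrm{Pcoh}(D_{\sol}(\cO_U))$ of Proposition~\ref{discrete pseudo} and treating the two ingredients separately. Preservation of nuclearity under $f_*$ is handled by a finite \v{C}ech resolution over an affinoid cover $\{U_J\}$, using that each $\underline{\cO_{U_J}}$ is nuclear over $\cK$. Preservation of pseudo-coherence in $D_{\sol}$ uses properness to rewrite $\mathrm{Hom}(f_*\cF,\bigoplus_i M_i)\simeq\mathrm{Hom}(\cF,f^!\bigoplus_i M_i)$, together with a separate lemma showing that for affinoid opens of analytifications of finite-type affine $K$-schemes the functor $p^!$ has uniformly bounded cohomological amplitude and commutes with direct sums of uniformly homologically bounded objects. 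Combining this with local pseudo-coherence of $\cF$ and a finite affinoid cover gives the required commutation of $\mathrm{Hom}(f_*\cF,-)$ with such direct sums.
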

\begin{proof}
    We first show that $f_{*}$ preserve \textit{nuclear} objects. Indeed, for any $M\in \mathrm{Nuc}(Y^{\mathrm{an},\cK})$, choosing finite many open affinoid covers $\{U_i\}_{i\in I}$ of $Y^{\mathrm{an},\cK}$, one have the limit (also colimit since index is finite) diagram
    $$M\rightarrow \bigoplus_{i}j_{i,*}M_{U_i}\rightarrow\bigoplus_{ij}j_{ij,*}M_{U_ij}\rightarrow....\rightarrow j_{12...n,*}M_{U_{i_{12..n}}}.$$
    Here, each term $M_{U_{J}}$ for $J\subset I$ is a \textit{nuclear} $\cO_{U_J}$-module. Thus $f_* M$ is a colimit of \textit{nuclear} $\cK$-module (since $\underline{\cO_{U_J}}$ are \textit{nuclear} over $\cK$).  
    
    Next we show that $f_{*}$ preserve \textit{pseudo-coherent} objects. For this we need the following lemma
    \begin{lemma}\label{homological bound}
        Let $A/K$ be a finite type algebra, then for any affinoid \up{open} subspace $U\subset (\Spec A)^{\mathrm{an}}$ and the projection $p\colon U\rightarrow \Spa \cK$, we have that there exist $n\in \mathbb{Z}$ such that $p^{!}\colon D_{\sol}^{\leq \bullet}(\cK) \rightarrow D_{\sol}^{\leq \bullet+n}(U)$. Moreover, the natural map $p^!(\oplus_i M_i)\simeq \oplus_i(p^! M_i)$ is an isomorphism for $\{M_i\}_{i\in I}$ a collection of objects of $D_{\sol}(\cK)$ which is uniformly homologically bounded. 
    \end{lemma}
    \begin{proof}
       By  \Cref{an for affine}, we may assume $p$ can factorize as $j\colon U\hookrightarrow \mathbb{D}_{\cK}^{n}$ and the projection $\mathbb{D}_{\cK}^n\rightarrow \Spa \cK$, where $j$ is the pullback of a Zariski closed embedding $i:Z\rightarrow \mathbb{A}_
       {K}^{n}$:
      $$ \begin{tikzcd}
U \arrow[d, hook] \arrow[r]    & Z \arrow[d, hook]    \\
\mathbb{D}_{\cK}^{n} \arrow[r] & \mathbb{A}_{K}^{n}.
\end{tikzcd}$$
We claim that $j^!$ send $D_{\sol}^{\geq\bullet}(\mathbb{D}_{\cK}^n)$ to $D_{\sol}^{\geq \bullet}(U)$. Indeed, since $j$ is proper morphism, we have that $j_*j^!M\simeq j_*\underline{\mathrm{Hom}}(1_{U}, j^{!}M)\simeq \underline{\mathrm{Hom}}(j_*1_U, M)$ for any $M\in D_{\sol}(\mathbb{D}_{\cK}^n)$. But $j_*1$ is connective as its the base change of $i_*1_Z$ which is connective. Thus  $j^!M\in D_{\sol}^{\geq \bullet}(U)$ if $M\in D_{\sol}^{\geq \bullet}(\mathbb{D}_{\cK}^n).$  Moreover if $\{M_i\}_{i\in I}$ is a collection of objects in $D_{\sol}^{\geq \bullet}(\mathbb{D}_{\cK}^n)$, then then natural map $j^{!}(\oplus_i M_i)\rightarrow \oplus_i (p^!M_i)$ is isomorphic since left hand side can be compute as $\underline{\mathrm{Hom}}(j_*1_U, \oplus_i M_i)\simeq \oplus_i\underline{\mathrm{Hom}}(j_*1_U, M_i)\simeq \oplus_i j^!M_i$. Here, the first equality comes from $j_*1_U$ being pseudo-coherent since $i_*1_Z$ is.

The analogous property for projection map $\mathbb{D}_{\cK}^n\rightarrow \Spa \cK$ boils down to the projection map $\Spa (\mathbb{Z}[T],\mathbb{Z}[T])_{\sol}\rightarrow \Spa \mathbb{Z}_{\sol}$. Which follows from \cite{andreychev2021pseudocoherentperfectcomplexesvector}*{Proposition 3.13}.
 
       \end{proof}
    Now for any $\cF\in \mathrm{Pcoh(X^{\mathrm{an},\cK})}$, and any collection of uniformly bounded objects $\{M_i\}_{i\in I}\in D_{\sol}^{\geq \bullet}(\cK)$, We need to check that the natural map $\mathrm{Hom}(f_*\cF,\oplus_iM_i)\rightarrow \oplus_i \mathrm{Hom}(f_*\cF, M_i).$ is an isomorphism. Since $f$ is proper, $f_*\simeq f_!$, thus $\oplus_i\mathrm{Hom}(f_*\cF, M_i)\simeq \oplus_i\mathrm{Hom}(f_!\cF,M_i)\simeq \oplus_i\mathrm{Hom}(\cF, f^{!}M_i).$
    
    For suitable affinoid open (those of the form affinoid opens of analytification of affine charts on $X$) $j:\Spa \cB\subset X^{\mathrm{an},\cK}$, applying \Cref{homological bound} we have that $\oplus_i\mathrm{Hom}(j^*\cF, j^*p^!M_i)\simeq \mathrm{Hom}(\cF,\oplus_i j^*p^! M_i).$ Cover $X$ by finite many affinoid opens, by noticing finite limit commutes with arbitrary filtered colimits, we deduce that $\mathrm{Hom}(f_*\cF,\oplus_iM_i)\simeq \oplus_i \mathrm{Hom}(f_*\cF, M_i).$
\end{proof}
\begin{proof}[Proof of \Cref{GAGA of pseudo}]

In order to prove $\mathrm{Pcoh}(X^{\mathrm{an},\cK})\simeq \mathrm{Pcoh}(X)$, it is sufficient to show that any $M\in \mathrm{Pcoh}(D_{\sol} (X^{\mathrm{an},\cK}))$, and any affine open $j:\Spec A\hookrightarrow X$, the pullback $j^{*}M\in D_{\sol}(B\tensor_{K}\cK)$ is \textit{discrete}. Since $A$ is discrete, sufficient to see the cohomology (as $\cK$-module) is discrete. That is $R\Gamma(\Spa B\tensor_K \cK, j^*M)\simeq R\Gamma(X^{\mathrm{an}},j_*j^*M)$ is discrete. Up to taking blowup, one may assume $\Spec A$ is the complement of a Cartier divisor $D=X-\Spec B$. Thus we have $j_*j^*M\simeq \colim_n M\tensor \cO(-nD)$. Each term $M\tensor \cO(-nD)$ lies in $\mathrm{Pcoh}(X^{\mathrm{an},\cK})$ thus by \Cref{coherent pushforward} its cohomology lies in $\mathrm{Pcoh}(\cK)$ which is discrete. This finishes the proof.
\end{proof}

\section{Analytification of mapping stack}
Given two stacks $X$, $Y$ over $K$, one forms \textit{mapping stack} $\underline{\rmap}(X,Y)$, now when we try to consider the analytic counter part of this construction we faces two option\upshape: $\underline{\rmap}(X^{\mathrm{an}},Y^{\mathrm{an}})$, and $\underline{\rmap}(X,Y)^{\mathrm{an}}$. The first one certainly is more natural, since this really has a moduli interpretation; but in practice the later is, sometimes, easier to work with, since analytification can preserve many nice geometric property such as being ``Artin stack'' in a suitable sense. Although it's easy to see these two construction doesn't coincide in general. indeed, one can consider when $X=Y=\mathbb{A}^1$. In this chapter we will restrict to the case where $K$ being a field.

As an application of the \textit{relative GAGA} theorem, we deduce that under nice condition, \textit{analytification} of \textit{mapping stack} is isomorphic to the \textit{mapping stack} of their \textit{analytification}. More precisely, we prove the following:

\begin{thm}\label{mapp ana}
    If $X$ proper $K$-scheme, $Y$ a perfectly Tannakian \textit{Artin stack} \up{\upshape\Cref{perfectly tannakian}}, and assume that $\underline{\rmap}(X,Y)$ is an \textit{Artin stack}. Then
    $$\underline{\rmap}(X^{\mathrm{an}},Y^{\mathrm{an}})\simeq \underline{\rmap}(X,Y)^{\mathrm{an}}\in \mathrm{GelfStk}_{\cK}.$$
\end{thm}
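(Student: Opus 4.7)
My plan is to verify the equivalence by evaluating both sides on each Gelfand test ring $\cA$ over $\cK$, producing a natural equivalence of anima
$$\mathrm{Map}(X^{\mathrm{an}} \times_{\mathrm{GSpec}\cK}\mathrm{GSpec}\cA,\, Y^{\mathrm{an}}) \simeq \underline{\rmap}(X,Y)^{\mathrm{an}}(\cA),$$
functorial in $\cA$. The rest is a matter of identifying both sides via Tannaka duality and the relative GAGA theorem.

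First I would unpack the right-hand side. By \Cref{an preserve colimit}, analytification commutes with colimits, so an Artin presentation of $\underline{\rmap}(X,Y)$ descends to a presentation of its analytification as the geometric realization of the smooth \v{C}ech nerve of affine analytifications. Evaluating on $\cA$, I invoke \Cref{an for affine}: a map $\mathrm{GSpec}\cA \to (\Spec A)^{\mathrm{an}}$ for a finite-type $K$-algebra $A$ is the same thing as a $K$-algebra morphism $A \to \underline{\cA}(*)$, since every element of $\underline{\cA}(*)$ is bounded when $\cA$ is Gelfand. Assembling along the smooth atlas, one obtains
$$\underline{\rmap}(X,Y)^{\mathrm{an}}(\cA) \simeq \underline{\rmap}(X,Y)(\underline{\cA}(*)) \simeq \mathrm{Map}\bigl(X \times_{\Spec K} \Spec \underline{\cA}(*),\, Y\bigr),$$
reducing the task to a natural equivalence
$$\mathrm{Map}(X^{\mathrm{an}} \times_{\mathrm{GSpec}\cK}\mathrm{GSpec}\cA,\, Y^{\mathrm{an}}) \simeq \mathrm{Map}\bigl(X \times_{\Spec K}\Spec \underline{\cA}(*),\, Y\bigr).$$

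Next I would invoke the perfectly Tannakian hypothesis on $Y$. Pullback of perfect complexes along maps $T \to Y$ (resp. $T' \to Y^{\mathrm{an}}$) furnishes fully faithful embeddings of both mapping anima into categories of symmetric monoidal exact functors out of $\rperf(Y)$, with essential images cut out by the ``geometric nature'' conditions, namely $t$-exactness with respect to appropriate $t$-structures and preservation of connective (or vector-bundle-like) perfect subcategories. The key categorical input identifying the two target functor categories is \Cref{main}: it supplies an equivalence
$$\bperf(X^{\mathrm{an}} \times_{\Spa\cK}\Spa\cA) \simeq \rperf\bigl(X \times_{\Spec K}\Spec \underline{\cA}(*)\bigr)$$
of small symmetric monoidal stable $\infty$-categories, using Fredholmness of the Gelfand ring $\cA$ to translate $\bperf$ on the analytic side into the algebraic $\rperf$. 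Via this identification, the two Tannakian embeddings land in the same target.

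The main obstacle I expect is precisely the verification that the essential-image conditions transport across the GAGA equivalence of \Cref{main}. Concretely, I must check that the algebraic and analytic $t$-exactness conditions match, and similarly for the subcategories detecting pseudo-coherence or vector bundles. For the $t$-structure matching one can use that both $t$-structures descend from the standard ones on affine charts, which agree on the nose under GAGA; for the pseudo-coherent subcategories one can leverage \Cref{GAGA of pseudo} (when $\underline{\cA}$ is additionally nuclear) or argue directly via the categorical K\"unneth formula \Cref{catkunne}. Once the essential images agree, the two fully faithful Tannakian embeddings induce the desired equivalence of mapping anima; functoriality in $\cA$ then assembles this into the claimed equivalence of Gelfand stacks $\underline{\rmap}(X^{\mathrm{an}}, Y^{\mathrm{an}}) \simeq \underline{\rmap}(X,Y)^{\mathrm{an}}$.
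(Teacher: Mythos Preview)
Your overall strategy---Tannaka duality combined with relative GAGA---is the same as the paper's, but there is a real gap in your first step.

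You claim that by evaluating the geometric realization of the \v{C}ech nerve at $\cA$ and using the pointwise formula for analytifications of affines, one obtains $\underline{\rmap}(X,Y)^{\mathrm{an}}(\cA) \simeq \underline{\rmap}(X,Y)(\underline{\cA}(*))$. This step implicitly commutes evaluation at a test ring with a sheaf colimit, which is not valid: colimits in a sheaf category are not computed pointwise. The paper explicitly flags this as the central subtlety. Its remedy is to introduce the \emph{simple analytification} $Z^{\mathrm{S\text{-}an}}$, defined as the sheafification of the presheaf $R \mapsto Z(R(*))$, and to prove (\Cref{S and an}) that $Z^{\mathrm{S\text{-}an}} \simeq Z^{\mathrm{an}}$ whenever $Z$ is an Artin stack; this is a genuine argument about cocontinuity of the functor $R \mapsto R(*)$ between sites (\Cref{coconti nilperf}). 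Only \emph{a posteriori}---after the full equivalence with $\underline{\rmap}(X^{\mathrm{an}},Y^{\mathrm{an}})$ is established---does one learn that the presheaf $R \mapsto \underline{\rmap}(X,Y)(R(*))$ was already a sheaf, so your pointwise formula cannot be the starting point.

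For the second half, your treatment of the essential-image matching is too vague. In the paper this is \Cref{local pseudo}, which genuinely requires the additional hypothesis that $Y$ be of \emph{geometric nature} (\Cref{geometric nature}): one reduces \'etale-locally to quotient stacks $[X/G]$, where preservation of connective pseudo-coherent objects under pullback along a map $\mathrm{GSpec}\,\cA \to Y^{\mathrm{an}}$ can be checked by hand via \Cref{quotient stack tannakian} together with a careful argument about Pcoh-descent along \'etale covers. Your appeal to $t$-exactness and to \Cref{GAGA of pseudo} does not obviously supply this, since the map in question goes from an affinoid into $Y^{\mathrm{an}}$, not between proper objects, so the pseudo-coherent GAGA theorem does not directly apply.
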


The proof roughly, goes as follow\upshape: one would hope that the analytification functor has a rather explicit description, namely
\begin{align}\label{naive}
\underline{\rmap}(X,Y)^{\mathrm{an}}(A)\simeq \rmap(X\times \Spec{A(*)}, Y).
\end{align}

While in the mean time left hand side is just $\underline{\rmap}(X^{\mathrm{an}},Y^{\mathrm{an}})(A)\simeq \underline{\rmap}(X^{\mathrm{an}}\times \Anspec A, Y^{\mathrm{an}}).$
Then one could combine relative GAGA theorem (i.e., $\bperf(X^{\mathrm{an}}\times\Anspec A)\simeq \bperf (X\times \Spec(A(*)))$) with Lurie's Tannaka duality theorem to identify two mapping spaces with certain subcategory of 
$\rfun^{L}(\bperf(Y),\bperf(X\times \Spec(A(*))))$.

But in fact (\ref{naive}) is not necessarily true in general. Indeed. $\underline{\rmap}(X,Y)^{\mathrm{an}}$ is defined abstractly as a left Kan extension, there is a priori no control of functor of point. However, there is a variant of analytification functor, namely consider the presheaf:
$$\underline{\rmap}(X,Y)^{\mathrm{P\text{-}an}}\colon R\mapsto \rmap(X\times \Anspec (R(*)), Y)$$
Then take sheafication 
$$\underline{\rmap}(X,Y)^{\mathrm{S\text{-}an}}\coloneq(\underline{\rmap}(X,Y)^{\mathrm{P\text{-}an}})^{sh}.$$
Turns out in the case of $\underline{\rmap}(X,Y)$ is Artin stack, $\underline{\rmap}(X,Y)^{\mathrm{S\text{-}an}}\simeq \underline{\rmap}(X,Y)^{\mathrm{an}}$. Which explains the assumption in \Cref{mapp ana}.

Although this is still not enough for controling functor of point, since this still involves sheafication, in the case we are dealing with $\underline{\rmap}(X,Y)^{\mathrm{P\text{-}an}}$ is already a sheaf. Which again follows from combining relative GAGA theorem and Lurie's Tannaka duality.

\csub{Variant of analytification functor}

In general it is not clear how to explicitly describe the analytification functor in the sense that, given a stack $Y$ and analytic ring $\cR$, there is no explicit formula to compute $Y^{\mathrm{an}}(\cR)$. In this section we show that, when $Y$ is an Artin stack, then $Y^{\mathrm{an}}$, up to sheafication, admits a straightforward functor of point description.
\begin{definition}
    For any stack $Y\in \mathrm{Sh}_{\mathrm{fppf}}(\Affine_{K}^{\mathrm{op}})$, we define the \textit{simple analytification} $Y^{\mathrm{S\text{-}an}}\in \mathrm{GelfStk}_{\cK}$ as follow:
First we consider the presehaf:
$$Y^{\mathrm{P\text{-}an}}\colon R\mapsto Y(R(*)).$$
Then take sheafication:
 $$Y^{\mathrm{S\text{-}an}}\coloneq (Y^{\mathrm{P\text{-}an}})^{sh}.$$
One can do the same for stack $X\in \mathrm{Sh}_{\et}(\Affine_{K}^{\mathrm{op}})$, we denote by $X^{\mathrm{S\text{-}an}}_{\et}$. 
We will call $Y$ a \textit{$\cK$-Fredholm stack} if $Y^{P\text{-}an}$ is already a sheaf.
\end{definition}
\begin{remw}
    It is not clear how relevant of $\cK$ is in the definition of a stack being \textit{$\cK$-Fredholm stack}.
\end{remw}
\begin{prop}\label{stupid ana preserve colimit}
    The functor $(-)^{\mathrm{S\text{-}an}}_{\et}\colon \mathrm{Sh}_{\et}(\Affine_{K}^{\mathrm{op}})\rightarrow \mathrm{GelfStk}_{\cK}$ preserves colimits.
\end{prop}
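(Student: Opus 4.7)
The plan is to imitate the proofs of \Cref{alg preserve colimit} and \Cref{an preserve colimit} by invoking \Cref{continuous}. Consider the functor
\[
F\colon \Affine_K \to \mathrm{GelfStk}_\cK, \qquad \Spec A \mapsto (\Spec A)^{\mathrm{S\text{-}an}},
\]
regarded as valued in $\rsh_!(\mathrm{GelfRing}_\cK^{\op}) = \mathrm{GelfStk}_\cK$. First, I would identify the functor $F_s$ produced by \Cref{continuous}(2) with $(-)^{\mathrm{S\text{-}an}}_\et$ on $\rsh_\et(\Affine_K^{\op})$. The restriction $Y \mapsto Y^{\mathrm{P\text{-}an}}$ preserves presheaf colimits (being restriction along $\mathrm{GSpec} R \mapsto \Spec R(*)$); hence for $Y = \colim_{\Spec A \to Y} h_{\Spec A} \in \rpsh(\Affine_K^{\op})$ one has $Y^{\mathrm{P\text{-}an}} = \colim (\Spec A)^{\mathrm{P\text{-}an}}$, while $F_p(Y) = \colim (\Spec A)^{\mathrm{S\text{-}an}}$ at the presheaf level. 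After $!$-sheafification both give $\colim_{\rsh_!} (\Spec A)^{\mathrm{S\text{-}an}}$; in particular $Y^{\mathrm{S\text{-}an}} = (Y^{\mathrm{P\text{-}an}})^{sh_!} = F_p(Y)^{sh_!}$, which by \Cref{continuous}(2) factors through the étale sheafification to give $F_s \simeq (-)^{\mathrm{S\text{-}an}}_\et$. Since $F_s$ is a left adjoint, the desired colimit preservation follows.

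To apply \Cref{continuous}, I must verify two hypotheses on $F$: it commutes with fiber products, and sends étale covers to epimorphisms (i.e., $!$-surjections) in $\mathrm{GelfStk}_\cK$. The first is immediate from the presheaf identity $\Hom_K(B \otimes_A C, R(*)) = \Hom_K(B, R(*)) \times_{\Hom_K(A, R(*))} \Hom_K(C, R(*))$ combined with the left-exactness of $!$-sheafification.

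The second---the main technical content---would follow the strategy of \Cref{et to nilperf}. Any étale cover in $\Affine_K$ is refined by a Zariski open cover (handled by a direct analog of \Cref{descenting solid sheaf of an}) composed with a finite free cover of the form $\Spec B[x]/(f) \to \Spec B$ with $f$ monic. For the finite free case, $B \to B[x]/(f)$ is split (hence descendable) as a map of $B$-modules; lifting the finite projective $R(*)$-algebra $R(*)[x]/(f_R)$ to a finite projective Gelfand $R$-algebra $R'$ should then produce a $!$-cover $\mathrm{GSpec} R' \to \mathrm{GSpec} R$. The main obstacle is making precise this descendability transfer: one must verify that the Gelfand ring $R'$ with $R'(*) = R(*)[x]/(f_R)$ exists and inherits a splitting over $R$ from the algebraic split map. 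This should follow from an analog in the Gelfand setting of the fiber-product description of \cite{complex}*{Lemma 6.11}, namely $(\Spec B[x]/(f))^{\mathrm{S\text{-}an}} \simeq (\Spec B)^{\mathrm{S\text{-}an}} \times_{(\Spec B)^{\mathrm{alg}}} (\Spec B[x]/(f))^{\mathrm{alg}}$, combined with the already established fact that $(-)^{\mathrm{S\text{-}an}}$ preserves fiber products.
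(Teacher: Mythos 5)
Your proposal is correct and reaches the same conclusion, but via a genuinely different route from the paper. The paper introduces the notion of a \emph{cocontinuous} functor between sites and the general fact (\Cref{cocnoti}) that for a cocontinuous $F$ the composite ``restrict along $F$, then sheafify'' is a left adjoint; it then checks directly (\Cref{coconti nilperf}) that $R\mapsto R(*)$ is cocontinuous. This is tailored to $(-)^{\mathrm{S\text{-}an}}_{\et}$ because, by construction, $(-)^{\mathrm{S\text{-}an}}_{\et}$ \emph{is} restriction followed by sheafification, so no further identification is required. You instead reuse \Cref{continuous} (the continuous version, which the paper applies to $(-)^{\mathrm{alg}}$ and $(-)^{\mathrm{an}}$), starting from $F\colon \Spec A\mapsto (\Spec A)^{\mathrm{S\text{-}an}}$ valued in $\mathrm{GelfStk}_\cK$, and then identify the resulting left Kan extension $F_s$ with $(-)^{\mathrm{S\text{-}an}}_{\et}$ by observing that restriction along $\mathrm{GSpec}\,R\mapsto \Spec R(*)$ preserves presheaf colimits. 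That identification is correct, and the upshot is a proof more uniform with the adjacent results at the cost of an extra matching step.

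Where you are doing unnecessary and slightly problematic work is in the verification that $F$ sends \'etale covers to $!$-surjections. You propose to re-derive it from scratch via Zariski reduction plus a ``Gelfand analogue'' of \cite{complex}*{Lemma 6.11}. But this is already available: by the lemma immediately preceding \Cref{coconti nilperf}, $(\Spec A)^{\mathrm{P\text{-}an}}(R)=\Hom_K(A,R(*))$ agrees with $(\Spec A)^{\mathrm{Gel\text{-}an}}(R)$ for every Gelfand ring $R$; since the latter is the restriction of the $!$-sheaf $(\Spec A)^{\mathrm{an}}$, the presheaf $(\Spec A)^{\mathrm{P\text{-}an}}$ is already a sheaf, so $(\Spec A)^{\mathrm{S\text{-}an}}=(\Spec A)^{\mathrm{Gel\text{-}an}}$ on affines, and \Cref{et to nilperf} applies verbatim. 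Besides being redundant, your proposed re-derivation mixes categories: the fiber product you write, $(\Spec B[x]/(f))^{\mathrm{S\text{-}an}}\simeq (\Spec B)^{\mathrm{S\text{-}an}}\times_{(\Spec B)^{\mathrm{alg}}}(\Spec B[x]/(f))^{\mathrm{alg}}$, has the left-hand factors in $\mathrm{GelfStk}_\cK$ but $(\Spec B)^{\mathrm{alg}}$ in $\rsh_!(\Aff_\cK)$, so it does not literally parse without first pushing forward or restricting to a common ambient category. Replacing that portion of your argument with a direct citation of \Cref{et to nilperf} (after the affine identification above) closes the only soft spot.
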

This actually follows from more general facts about $\infty$-topos theory:
\begin{definition}[\cite{stacks-project}*{\href{https://stacks.math.columbia.edu/tag/00XI}{Tag 00XI}}]    
Let $\cC$ and $\cD$ be sites and $u:\cC\rightarrow \cD$ be a functor between them. Then $u$ is called \textit{co-continuous} if for every $U\in Ob(\cC)$ and every covering $\{V_{j}\rightarrow u(U)\}_{j\in J}$ of $\cD$ there exists a covering $\{U_{i}\rightarrow U\}_{i\in I}$ of $\cC$ such that the family of maps $\{u(U_i)\rightarrow u(U)\}_{i\in I}$ refines the covering $\{V_{j}\rightarrow u(U)\}_{j\in J}$.
\end{definition}
\begin{lemma}\label{cocnoti}
    Let $F\colon \cC\rightarrow \cD$ be \textit{cocontinous} functor between sites. Then the right adjoint $F_{*,p}=PSh(\cC)\rightarrow \mathrm{Psh}(\cD)$ of $F^{*,p}\colon \mathrm{Psh}(\cD)\rightarrow \mathrm{Psh}(\cC)$ \up{defined via $\cF\mapsto \cF\circ u$} preserve sheaves.
\end{lemma}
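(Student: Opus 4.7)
The plan is to verify the sheaf axiom for $F_{*,p}\cF$ directly, reducing it via Yoneda and sheafification to a local lifting property on $\cC$ that is essentially the content of cocontinuity. Recall that, by Yoneda and the presheaf adjunction, one has $F_{*,p}\cF(V)=\mathrm{Hom}_{\mathrm{PSh}(\cC)}(F^{*,p}h_V,\cF)$ where $h_V$ denotes the Yoneda presheaf of $V\in\cD$; since $\cF$ is assumed to be a sheaf, this further identifies with $\mathrm{Hom}_{\mathrm{Sh}(\cC)}((F^{*,p}h_V)^{\mathrm{sh}},\cF)$.

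Fix a covering $\{V_j\to V\}_{j\in J}$ in $\cD$. Using the identification above, the sheaf condition for $F_{*,p}\cF$ along this cover is equivalent, after letting $\cF$ range over all sheaves on $\cC$, to the statement that
$$\coprod_{j\in J}(F^{*,p}h_{V_j})^{\mathrm{sh}}\longrightarrow(F^{*,p}h_V)^{\mathrm{sh}}$$
is an effective epimorphism in $\mathrm{Sh}(\cC)$. Unwinding the definition of effective epimorphism in a sheaf topos, this reduces in turn to the following local surjectivity assertion at the level of presheaves: for each $U\in\cC$ and each section $s\in(F^{*,p}h_V)(U)=\mathrm{Hom}_{\cD}(F(U),V)$, there should exist a covering $\{U_i\to U\}_{i\in I}$ in $\cC$ such that for every $i$ one can find an index $j(i)\in J$ and a morphism $F(U_i)\to V_{j(i)}$ in $\cD$ whose composition with $V_{j(i)}\to V$ coincides with $s\circ F(U_i\to U)$.

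The final step is precisely where cocontinuity enters. Pulling back the covering $\{V_j\to V\}$ along $s\colon F(U)\to V$ produces a covering $\{V_j\times_V F(U)\to F(U)\}_{j\in J}$ of $F(U)$ in $\cD$; the cocontinuity hypothesis then furnishes a covering $\{U_i\to U\}_{i\in I}$ in $\cC$ such that each $F(U_i)\to F(U)$ factors through some $V_{j(i)}\times_V F(U)\to F(U)$, which yields the desired morphism $F(U_i)\to V_{j(i)}$ over $s$. I anticipate no substantive obstacle here: the only mildly delicate point is the Yoneda-plus-sheafification translation of the sheaf axiom into an effective epimorphism condition, and the cocontinuity hypothesis is tailored precisely so that the resulting local-lifting statement holds.
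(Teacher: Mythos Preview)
Your proposal is correct and follows essentially the same strategy as the paper's proof: both identify $F_{*,p}\cG(V)$ with $\mathrm{Hom}_{\mathrm{Sh}(\cC)}((F^{*,p}h_V)^{\mathrm{sh}},\cG)$ via Yoneda and the adjunction, reduce the sheaf condition to the assertion that $(F^{*,p}h_{V'})^{\mathrm{sh}}\to(F^{*,p}h_V)^{\mathrm{sh}}$ is an effective epimorphism, and verify this local surjectivity by pulling back the cover along $s\colon F(U)\to V$ and invoking cocontinuity. The only cosmetic difference is that the paper cites \cite{lurie2008highertopostheory}*{Lemma~6.2.3.19} to pass from the effective epimorphism to the hypercover statement, whereas you absorb this into the phrase ``the Yoneda-plus-sheafification translation''; both are handling the same standard $\infty$-topos fact.
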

\begin{proof}
    This is essentially in [\href{https://stacks.math.columbia.edu/tag/00XI}{Tag 00XI}], although there it only deals with classical sheaf. For completeness we reproduce the proof here for the $\infty$-categorical context.
    
    Given $V'\surjects V$ a epimorphism in $\cD$, $\cG\in \mathrm{Sh}(\cC)$, we needs to show that $F_{*,p}(\cG)(V)\simeq \underset{[n]\in\Delta^{\mathrm{op}}}{\lim}F_{\*,p}(\cG)(V'^{\times n/V})$.\\
    Since $F_{*,p}$ is the right adjoint to $F^{*,p}$, we have that 
    $$F_{*,p}(\cG)(V)=\mathrm{Hom}_{\mathrm{Psh(\cD)}}(h_{V}, F_{*,p}(\cG))\simeq \mathrm{Hom}_{\mathrm{Psh}(\cC)}(F^{*,p}(h_V),\cG)\simeq \mathrm{Hom}_{\mathrm{Sh}(\cC)}(F^{*}h_V,\cG).$$
    Here, $F^*(-)$ denote the functor compose $F^{*,p}$ with sheafification. Hence, it suffices to show that $\{F^{*}(h_{V'^{\times n/V}})\}$ forms a hypercover of $F^{*}(h_V)$.
    
    We first show that the morphism $F^{*}(h_{V'})\rightarrow F^{*}(h_V)$ is an epimorphism. That is, for any $U\in \cC$, $s\in F^*{h_V}(U)$, there exist a epimorphism $U'\rightarrow U$ together with the following commutative diagram:
    $$\begin{tikzcd}
U' \arrow[r, "s'"] \arrow[d] & F^{*}(h_{V'}) \arrow[d] \\
U \arrow[r, "s"]             & F^{*}(h_V).             
\end{tikzcd}$$
Note that by definition, $s$ corresponds to a morphism $s\colon F(U)\rightarrow V$, thus by hypothesis, one can always find $U'\surjects U$ such that we have the refinement:
  $$\begin{tikzcd}
F(U') \arrow[r, "s'"] \arrow[d] & V' \arrow[d] \\
F(U) \arrow[r, "s"]             & V.             
\end{tikzcd}$$
This shows that $F^{*}(h_{V'})\rightarrow F^{*}(h_{V})$ is epimorphism. Now apply \cite{lurie2008highertopostheory}*{Lemma 6.2.3.19} (to $f_{*}=F_{*,p}$ and $f^*=F^{*,p}$) finishes the proof.
\end{proof}
\begin{lemma}
   For every bounded affinoid $R\in \AffRing^{b}_{K}$, and any $A\in \mathrm{CAlg}_{K}$, we have $$\mathrm{Map}(\Anspec R, (\Spec A)^{\mathrm{an}})=\mathrm{Map}(A, R(*)).$$
\end{lemma}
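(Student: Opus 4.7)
The plan is to reduce to the case of $A$ finite type over $K$ and then apply \Cref{an for affine} directly.

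First I would treat the case of $A$ finite type. Choose a surjection $K[x_1,\ldots,x_n] \twoheadrightarrow A$; by \Cref{an for affine}, the analytification $k_*(\Spec A)^{\mathrm{an}}$ fits into a Cartesian square
\[
\begin{tikzcd}
k_*(\Spec A)^{\mathrm{an}} \arrow[d] \arrow[r] & \mathbb{A}^{n,\mathrm{an}} \arrow[d] \\
\Anspec(A) \arrow[r] & \mathbb{A}^n
\end{tikzcd}
\]
in $\mathrm{Sh}_!(\Aff_{\cK})$. Since $\Anspec R \in \Aff^b_{\cK}$ and $k_*$ is left Kan extension along the inclusion of bounded affinoids, mapping out of $\Anspec R$ into $k_*(\Spec A)^{\mathrm{an}}$ agrees with mapping into $(\Spec A)^{\mathrm{an}}$. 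I then compute each corner separately: $\mathrm{Map}(\Anspec R, \Anspec A) = \mathrm{Hom}_{\mathrm{CondRing}}(A, \underline{R}) = \mathrm{Hom}_K(A, R(*))$ using that $A$ is a discrete $K$-algebra; $\mathrm{Map}(\Anspec R, \mathbb{A}^n) = \underline{R}(*)^n$; and crucially $\mathrm{Map}(\Anspec R, \mathbb{A}^{n,\mathrm{an}}) = (\underline{R}^b)(*)^n = \underline{R}(*)^n$, because $R$ is bounded and hence $\underline{R}^b = \underline{R}$. The two maps to $\mathbb{A}^n$ become identical after this identification, so the fibered product collapses to $\mathrm{Hom}_K(A, R(*))$.

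For the general case, write $A \simeq \colim_i A_i$ as the filtered colimit of its finite-type $K$-subalgebras. The right-hand side obviously satisfies $\mathrm{Map}(A, R(*)) \simeq \lim_i \mathrm{Map}(A_i, R(*))$. For the left-hand side, I would unwind the definition of the analytification functor on affine schemes: the algebraic realization sends $\Spec A$ to $\Anspec(A,\mathbb{Z})_\sol \times_{\Anspec K^{\mathrm{disc}}} \Spa \cK$, and the functor $A \mapsto \Anspec(A,\mathbb{Z})_\sol$ turns the colimit of rings into a cofiltered limit of analytic spectra. The subsequent base change along $\Spa\cK \to \Anspec K^{\mathrm{disc}}$ and restriction $k^*$ commute with limits, and for a bounded test object $\Anspec R$ the sheafification step does not alter the value, so we obtain $(\Spec A)^{\mathrm{an}}(\Anspec R) \simeq \lim_i (\Spec A_i)^{\mathrm{an}}(\Anspec R)$. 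Combining with the finite-type case gives the required equivalence.

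The main obstacle is the reduction from general $A$ to finite type $A$. While the right-hand side transparently converts colimits in $A$ to limits of mapping spaces, the left-hand side involves sheafification, which does not in general commute with limits. The point is that for a bounded test object $\Anspec R$, the condition ``factors through $\mathbb{A}^{n,\mathrm{an}} \subset \mathbb{A}^n$'' is automatic (since every element of $\underline{R}$ is already bounded), so no non-trivial sheafification occurs when evaluated on $\Anspec R$. Making this precise, by checking directly that the presheaf already takes cofiltered limits of affine schemes to limits on bounded test objects, is the technical core of the argument.
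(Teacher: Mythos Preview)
Your finite-type argument via \Cref{an for affine} is correct, but the entire detour is unnecessary, and the ``technical core'' you flag in the last paragraph admits a one-line resolution that you have overlooked.

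The paper's proof is simply this: by construction $(\Spec A)^{\mathrm{alg}} = \Anspec(A,K^+)$ as a solid affinoid space, so the presheaf $k^*(\Spec A)^{\mathrm{alg}}$ on $\Aff^b_{\cK}$ sends $R \mapsto \mathrm{Map}(\Anspec R, \Anspec(A,K^+))$. This presheaf is \emph{representable} in the ambient site $\Aff_{\cK}$, hence already a sheaf for the (subcanonical) $!$-topology; its restriction to $\Aff^b_{\cK}$ therefore needs no sheafification. Finally, a map of analytic rings $(A,K^+) \to R$ is just a map of underlying condensed rings (the analytic structure on the source is the induced one), and since $A$ is discrete this is $\mathrm{Map}(A,R(*))$. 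No finite-type hypothesis enters.

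Your argument, by contrast, proves the finite-type case by a fiber-product computation, and then tries to pass to general $A$ by writing it as a filtered colimit of finite-type subalgebras. You correctly identify the obstacle: sheafification does not commute with limits. But your proposed fix---``no non-trivial sheafification occurs''---is exactly the statement that the presheaf is already a sheaf, and your justification (the $\mathbb{A}^{n,\mathrm{an}}$ condition is automatic for bounded $R$) only makes sense when an $n$ exists, i.e., in the finite-type case you are trying to escape. The clean way to see the presheaf is a sheaf is representability, as above; but once you use that, the finite-type reduction and \Cref{an for affine} become redundant.
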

\begin{proof}
    This is essentially follows from definition. Indeed, by construction $$(\Spec A)^{\mathrm{an}}=((\Spec A)^{\mathrm{alg}})^{\mathrm{an}}=(\Anspec(A,K^+))^{\mathrm{an}}.$$
    So by definition of $()^{\mathrm{an}}$, $(\Anspec(A,K^+))^{\mathrm{an}}$ is the sheafication of the presheaf functor sending any bounded affinoid $R$ to $\rmap(\Anspec R, \Anspec(A,K^+))=\rmap(A, R(*))$, but this is already a sheaf since $\Anspec(A,K^+)$ is representable.
\end{proof}
\begin{lemma}\label{coconti nilperf}
    The following functor between site is cocontinous
  \begin{align}\label{algebrazation}
       ()_{alg}\colon & (\mathrm{GelfRing}_{\cK}, !)\rightarrow (\Affine_{/K}, \et)\\
                 & \ \ \ \ \ \ \ \ \ \ R\longmapsto R(*) 
  \end{align} 
    Here, $\mathrm{GelfRing}_{\cK}$ denote the site of \textit{Gelfand rings} over $\cK$ with  $!$-topology.
\end{lemma}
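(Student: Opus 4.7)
The plan is to mirror the strategy of \Cref{et to nilperf} at the level of Gelfand rings. Given a Gelfand ring $R \in \mathrm{GelfRing}_{\cK}$ and an étale cover $\{V_j = \Spec B_j \to \Spec R(*)\}_{j \in J}$ in $\Affine_{/K}$, I need to exhibit a $!$-cover $\{R \to R_i\}_{i \in I}$ in $\mathrm{GelfRing}_{\cK}$ such that the induced family $\{\Spec R_i(*) \to \Spec R(*)\}$ refines the given étale cover.

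First, repeating the argument of \Cref{et to nilperf} via \cite{bhatt2011annihilatingcohomologygroupschemes}*{Lemma 2.2}, any étale cover is refined by a Zariski cover followed by finite flat standard étale extensions $B = R(*)[x]/(f)[1/g]$, with $f \in R(*)[x]$ monic and $f'$ invertible modulo $g$. The Zariski refinement $\{\Spec R(*)[1/h_k] \to \Spec R(*)\}_k$ is matched on the Gelfand side by the rational localizations $R \to R\langle 1/h_k \rangle$, which should form an open $!$-cover of $\mathrm{GSpec}(R)$ by the Gelfand analogue of \Cref{descenting solid sheaf of an} (the two rational opens around $0$ and $\infty$ produce an open cover of $\mathbb{A}^{1,\mathrm{an}}$, and this transfers verbatim to the Gelfand setting). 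This reduces the problem to the standard étale case.

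For the standard étale step, I would construct the Gelfand ring $R' \coloneq R[x]/(f)[1/g]$, endowed with the analytic structure induced from $R$. The map $R \to R'$ factors as the finite free (hence proper) map $R \to R[x]/(f)$ followed by the open immersion obtained by inverting $g$, so it is $!$-able. Because $R \to R[x]/(f)$ is split as a morphism of $R$-modules (finite free of rank $\deg f$), it is descendable in the sense of \cite{MATHEW2016403}, so $\mathrm{GSpec}(R') \to \mathrm{GSpec}(R)$ is a $!$-surjection. By construction $R'(*) = R(*)[x]/(f)[1/g] = B$, providing the desired refinement at the level of underlying rings.

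The main point to verify carefully is that $R'$ is genuinely a Gelfand ring, i.e.\ that its uniform completion $(R')^{u}$ remains a separated Banach algebra over $\cK^{u}$. This should follow from the compatibility of uniform completion with finite free extensions and with localization at a single element: $(R')^{u}$ is precisely a standard étale extension of $R^{u}$ in the separated Banach algebra sense, which is a well-behaved operation. Granting this technical point, the composite of the Zariski rational refinement with the standard étale step furnishes the required $!$-cover of $R$ refining $\{V_j \to \Spec R(*)\}$, establishing cocontinuity.
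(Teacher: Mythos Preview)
Your approach is essentially the same as the paper's: reduce to an étale cover pulled back from a finite-type étale map $R'\to T'$, then invoke the decomposition into Zariski and finite-free pieces already used in \Cref{et to nilperf}. (The paper's cross-reference to \Cref{continuous} appears to be a typo for \Cref{et to nilperf}.)

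One imprecision worth fixing: your single ring $R'=R[x]/(f)[1/g]$ does not give a $!$-cover of $\mathrm{GSpec}(R)$, since the open immersion inverting $g$ is not surjective. The descendability of $R\to R[x]/(f)$ makes the finite-free step a $!$-surjection, but afterwards you must run the Zariski argument a second time to produce a \emph{family} $\{R[x]/(f)\langle 1/g_l\rangle\}_l$ that jointly covers; only the full family $\{R\langle 1/h_k\rangle[x]/(f_k)\langle 1/g_{k,l}\rangle\}_{k,l}$ is the required $!$-cover. For the refinement condition one only needs that each $g_{k,l}$ becomes invertible in the underlying ring, which forces the factorization through the standard-étale member of the original cover; you do not need $(R\langle 1/h\rangle)(*)=R(*)[1/h]$ on the nose.

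Your worry about Gelfand-ness is legitimate but not serious: a finite-free extension of a Gelfand ring has uniform completion finite free over a separated Banach algebra, hence separated Banach, and rational localizations stay in $\mathrm{GelfRing}_{\cK}$. Alternatively, one can bypass this entirely by pulling back the $!$-surjection $(\Spec T')^{\mathrm{an}}\to(\Spec R')^{\mathrm{an}}$ along $\mathrm{GSpec}(R)\to(\Spec R')^{\mathrm{an}}$ inside $\mathrm{GelfStk}_{\cK}$; the affinoid pieces of the pullback are then automatically Gelfand, and this is closer to what the paper's terse proof seems to intend.
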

\begin{proof}
     It is enough to show that every étale cover $R(*)\rightarrow\bigcup_{i} R_i$ of $R(*)$, there exist a !-cover $R\rightarrow T$ of $R$.
     We may assume $T$ is deduced by base change from some étale cover of finite type algberas $R'\rightarrow T'$. Then this is precisely \Cref{continuous}.
\end{proof}

\begin{proof}[Proof of the \Cref{stupid ana preserve colimit}]
    Apply \Cref{cocnoti} and \Cref{coconti nilperf} to the functor between site as in (\ref{algebrazation}).

\end{proof}

Now we are ready to identify two definition of analytification functor under nice conditions.
\begin{thm}\label{S and an}
    Let $Y\in \mathrm{Sh}_{\mathrm{fppf}}(\Affine_{K}^{\mathrm{op}})$ be an Artin stack. Then we have an  isomorphism $Y^{\mathrm{S\text{-}an}}\simeq Y^{\mathrm{an}}$.
\end{thm}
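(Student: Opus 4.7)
The plan is to reduce the statement to the case of an affine scheme, where the two constructions are identifiable by explicit functor-of-points considerations, and then to bootstrap using that both analytification functors preserve colimits of sheaves.

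First, I would check the affine case $Y=\Spec A$. The lemma immediately preceding the theorem gives $\mathrm{Map}(\Anspec R, (\Spec A)^{\mathrm{an}})\simeq \mathrm{Hom}(A,R(*))$ for every bounded affinoid ring $R$. Restricting to Gelfand rings, this says that the presheaf
\[
(\Spec A)^{\mathrm{P\text{-}an}}\colon R\mapsto Y(R(*))=\mathrm{Hom}(A,R(*))
\]
coincides with the restriction of the representable $!$-sheaf $(\Spec A)^{\mathrm{an}}$ to $\mathrm{GelfRing}_{\cK}$. In particular $(\Spec A)^{\mathrm{P\text{-}an}}$ is already a $!$-sheaf, so that $(\Spec A)^{\mathrm{S\text{-}an}}\simeq (\Spec A)^{\mathrm{P\text{-}an}}\simeq (\Spec A)^{\mathrm{an}}$.

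Second, since $Y$ is Artin I would present it as a colimit of affine schemes. Choose a smooth surjection $U_{0}=\Spec B\surjects Y$ from an affine scheme; its \v{C}ech nerve $U_{\bullet}$ is a simplicial diagram of algebraic spaces (or of Artin stacks of lower level, inductively), each of which in turn admits an étale affine cover. Iterating and taking a diagonal, one obtains a simplicial affine scheme $V_{\bullet}$ with $Y\simeq |V_{\bullet}|$ in $\rsh_{\et}(\Affine_{K}^{\mathrm{op}})$ (equivalently in $\rsh_{\mathrm{fppf}}$, since smooth descent implies étale descent for Artin stacks).

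Third, both functors preserve colimits: $(-)^{\mathrm{an}}$ by \Cref{an preserve colimit}, and $(-)^{\mathrm{S\text{-}an}}$ by \Cref{stupid ana preserve colimit}. Combining with the affine case applied level-wise,
\[
Y^{\mathrm{an}}\simeq |V_{\bullet}^{\mathrm{an}}|\simeq |V_{\bullet}^{\mathrm{S\text{-}an}}|\simeq Y^{\mathrm{S\text{-}an}}.
\]
The main obstacle will be step two: for a higher Artin stack the natural hypercover is only bi-simplicial, and one must work carefully with topologies to ensure the resulting colimit in $\mathrm{GelfStk}_{\cK}$ genuinely matches the left-Kan-extension definition of $Y^{\mathrm{an}}$. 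The Artin hypothesis is essential precisely because it is what furnishes a smooth affine presentation of $Y$; for an arbitrary fppf sheaf there is no reason to expect $Y^{\mathrm{P\text{-}an}}$ to become a $!$-sheaf after sheafification in a manner compatible with $Y^{\mathrm{an}}$.
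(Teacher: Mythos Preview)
Your proposal is correct and follows essentially the same approach as the paper: verify the affine case via the functor-of-points computation, then use that both $(-)^{\mathrm{an}}$ and $(-)^{\mathrm{S\text{-}an}}$ preserve colimits out of $\rsh_{\et}$ to bootstrap from a smooth \v{C}ech presentation (the paper invokes To\"en's result to pass from the fppf to the \'etale topology, which is the point you make parenthetically). The only cosmetic difference is that the paper takes the \v{C}ech nerve once and leaves the identification $(Y'^{\times n/Y})^{\mathrm{S\text{-}an}}\simeq (Y'^{\times n/Y})^{\mathrm{an}}$ for the non-affine terms implicit (as an induction on geometric level), whereas you spell out the diagonal-of-bisimplicial step to get a simplicial affine resolution directly.
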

\begin{proof}
    When $Y$ is affine we have that:
    $$Y^{\mathrm{S\text{-}an}}(R)=Y(R(*))=\rmap(\mathrm{GSpec} R, Y^{\mathrm{an}})=Y^{\mathrm{an}}(R).$$
    This implies $Y^{\mathrm{S\text{-}an}}\simeq Y^{\mathrm{an}}$.\\
    In general, since $Y$ is Artin stack, one can find smooth atlas $h\colon Y' \rightarrow Y$ where $Y_i$ are affine schemes. Then we have that $\underset{[n]\in\Delta}{\colim}Y^{'\times n/Y}\simeq Y \in \rsh_{\mathrm{fppf}}(\Affine_{K}^{\mathrm{op}})$. But notice that by \cite{toen2009flat}*{Théorème 0.1}, this is also true as étale stack, that is, $\underset{[n]\in\Delta}{\colim}Y^{'\times n/Y}\simeq Y \in \rsh_{\et}(\Affine_{K}^{\mathrm{op}})$. Now by \Cref{an preserve colimit} and \Cref{stupid ana preserve colimit}, both functors $()^{S-\mathrm{an}}$ and $()^{\mathrm{an}}$ from $\rsh_{\et}(\Affine_{K}^{\mathrm{op}})$ to $\mathrm{GelfStk}_{\cK}$ preserve colimit. In particular we have that 
    $$(Y)^{S-\mathrm{an}}\simeq (\underset{[n]\in\Delta}{\colim}Y^{'\times n/Y})^{S-\mathrm{an}}\simeq \underset{[n]\in\Delta}{\colim}(Y^{'\times n/Y})^{S-\mathrm{an}}\simeq \underset{[n]\in\Delta}{\colim}(Y^{'\times n/Y})^{\mathrm{an}}\simeq Y^{\mathrm{an}}.$$
\end{proof}

\csub{Tannaka duality and mapping stacks}

By the disscusion of last section, assuming $\underline{\mathrm{Map}}(X,Y)$ is an Artin stack, in order to compare $\underline{\mathrm{Map}}(X,Y)^{\mathrm{an}}$ and $\underline{\mathrm{Map}}(X^{\mathrm{an}},Y^{\mathrm{an}})$ is sufficient to compare $\underline{\mathrm{Map}}(X,Y)^{\mathrm{S\text{-}an}}$ with $\underline{\mathrm{Map}}(X^{\mathrm{an}},Y^{\mathrm{an}}).$ On the right hand side the analytic stack has rather straightforward moduli interpretation, while the left hand side involve taking sheafication, which makes the comparision difficult. In this section, using Tannaka duality, we show that under reasonable condition (\textit{perfectly Tannakian} and of \textit{geometric nature}) on $Y$, the sheafication procedure of simple analytification is redundant; furthermore, again using Tannaka duality, we deduce \Cref{mapp ana} via identifying there moduli interpretations.
\begin{definition}\label{perfectly tannakian}
    We say a stack $Y\in \rsh_{\mathrm{fppf}}(\Affine_{K}^{\mathrm{op}})$ is \textit{perfectly Tannakian} if it is a \textit{perfect stack}\footnote{Recall that a stack $Y$ is called a \textit{perfect stack} if $\rqcoh(Y)\simeq \rind(\bperf(Y))$.} and satisfies that For any derived stack $X\in \rsh_{\mathrm{fppf}}(\Affine_{K}^{\mathrm{op}})$, the functor:
    $$P\colon \rmap_{\rsh_{\mathrm{fppf}}(\AffRing_K)}(X,Y)\rightarrow \mathrm{Fun}^{\tensor}(\rqcoh(Y), \rqcoh(X))$$
    is fully faithful. And the essential image consist of exact symmetric monoidal functors $\rqcoh(Y)\rightarrow \rqcoh(X)$ that commutes with colimits and preserve connective \textit{pseudo-coherent} objects.\\

\end{definition}
 In fact the condition of \textit{perfectly Tannakian} is not too strong by the following example.
\begin{eg}\label{tannakian}
    If $Y/K$ is a Noetherian algebraic stack with quasi-affine diagonal such that $D(Y)$ is compactly generated. Then $Y$ is \textit{perfectly Tannakian} by \cite{bhatt2015tannakadualityrevisited}*{Theorem 1.3+Lemma 3.13}.
\end{eg}

\begin{definition}\label{geometric nature}
    We say a \textit{perfectly Tannakian} stack $Y$ is of \textit{geometric nature} if there exist a quasi-separated algebraic space $|Y|$ and morphism $p\colon Y\rightarrow |Y|$, such that étale locally on $|Y|$, $Y\simeq X/G$ for $X$ some qcqs scheme and $G$ a smooth affine group locally of finite type.
\end{definition}
The main goal of this section is to prove the following
\begin{thm}\label{analytification}
    If $X$ is a proper $K$-scheme, Y is an Artin \textit{perfectly Tannakian Stack} of \textit{geometric nature}. Then we have that 
    $$\underline{\rmap}(X,Y)^{S\text{-}an}\simeq \underline{\rmap}(X^{\mathrm{an}},Y^{\mathrm{an}}).$$
\end{thm}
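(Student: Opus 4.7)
The plan is to show that for every Gelfand ring $\mathcal{A}$ over $\mathcal{K}$, the natural analytification map
\[
\Phi_{\mathcal{A}}\colon \rmap\bigl(X\times_K\Spec\underline{\mathcal{A}}(*),\,Y\bigr)\longrightarrow \rmap\bigl(X^{\mathrm{an}}\times_{\mathcal{K}}\mathrm{GSpec}\mathcal{A},\,Y^{\mathrm{an}}\bigr)
\]
is an equivalence of anima. Once this is established, the right-hand side is tautologically a $!$-sheaf in $\mathcal{A}$, so the presheaf $\underline{\rmap}(X,Y)^{\mathrm{P}\text{-}\mathrm{an}}$ is already a sheaf and therefore coincides with its sheafification $\underline{\rmap}(X,Y)^{\mathrm{S}\text{-}\mathrm{an}}$, giving the theorem.

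Write $Z=X\times\Spec\underline{\mathcal{A}}(*)$ and $\mathcal{Z}=X^{\mathrm{an}}\times\mathrm{GSpec}\mathcal{A}$. To prove $\Phi_{\mathcal{A}}$ is an equivalence I will use Tannakian reconstruction on both sides, landing in a common target. Algebraically, since $Y$ is perfectly Tannakian (in particular a perfect stack, so $\rqcoh(Y)\simeq \rind(\rperf(Y))$), Definition~\ref{perfectly tannakian} supplies a fully faithful embedding $T_{\mathrm{alg}}\colon \rmap(Z,Y)\hookrightarrow \mathrm{Fun}^{\otimes,L}(\rind(\rperf(Y)),\rqcoh(Z))$ whose essential image is cut out by preservation of colimits, exactness, symmetric monoidality, and preservation of connective pseudo-coherent objects. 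Analytically, pre-composition with $j_Y^{*}\colon \rqcoh(Y)\to D_{\sol}(Y^{\mathrm{an}})$ together with pullback along the structural map $\mathcal{Z}\to Y^{\mathrm{an}}$ yields $T_{\mathrm{an}}\colon \rmap(\mathcal{Z},Y^{\mathrm{an}})\to \mathrm{Fun}^{\otimes,L}(\rind(\rperf(Y)),D_{\sol}(\mathcal{Z}))$, and post-composing $T_{\mathrm{alg}}$ with $j_Z^{*}\colon \rqcoh(Z)\to D_{\sol}(\mathcal{Z})$ places both functors in the same Fun-category.

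The comparison between the two target categories is provided by the main relative GAGA Theorem~\ref{main}: combined with $\rqcoh(Z)\simeq \rind(\rperf(Z))$, it yields $\rind(\rperf(Z))\simeq \rind(\bperf(\mathcal{Z}))\hookrightarrow D_{\sol}(\mathcal{Z})$, which is fully faithful. I will then verify three points. First, $T_{\mathrm{alg}}$ is fully faithful by hypothesis. Second, $T_{\mathrm{an}}$ is fully faithful: here the geometric-nature hypothesis enters, as étale-locally on $|Y|$ we have $Y\simeq [V/G]$, so maps into $[V^{\mathrm{an}}/G^{\mathrm{an}}]$ are $G^{\mathrm{an}}$-torsors on $\mathcal{Z}$ equipped with equivariant maps to $V^{\mathrm{an}}$, data that can be read off the associated tensor functor by Tannakian reconstruction of representations of $G^{\mathrm{an}}$ and then glued along the étale atlas of $|Y|$. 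Third, the essential images of $T_{\mathrm{alg}}$ and $T_{\mathrm{an}}$ coincide under the GAGA identification, since the connective pseudo-coherent objects on $\rqcoh(Z)$ and on $D_{\sol}(\mathcal{Z})$ agree on the common subcategory $\rind(\bperf(\mathcal{Z}))$ by the pseudo-coherent GAGA Theorem~\ref{GAGA of pseudo}.

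The main obstacle I anticipate is the second point: no general analytic Tannakian duality is supplied by the framework of \cite{camargo2024analyticrhamstackrigid} and \cite{anschütz2025analyticrhamstacksfarguesfontaine}, so full faithfulness of $T_{\mathrm{an}}$ must be assembled from the explicit quotient presentation afforded by the geometric-nature assumption, combined with descent for $G^{\mathrm{an}}$-torsors on Gelfand stacks and the categorical Künneth formula of Theorem~\ref{catkunne}. A secondary subtlety is that the scheme $V$ in the quotient presentation need not be proper, so equivariant maps to $V^{\mathrm{an}}$ cannot be handled by relative GAGA directly; they are nonetheless recovered from the tensor-functor data on $\rperf(BG)$ after the torsor layer is trivialised, so the argument closes via Tannakian reconstruction for $BG$ over the proper base $\mathcal{Z}$.
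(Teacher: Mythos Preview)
Your overall architecture---Tannakian embeddings on both sides, bridged by the relative GAGA identification $\bperf(\mathcal{Z})\simeq\rperf(Z)$---is correct and matches the paper. However, you have swapped the roles of the two hypotheses, and this produces a genuine gap in your step~(3).

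Full faithfulness of $T_{\mathrm{an}}$ does \emph{not} require the geometric-nature hypothesis and is much easier than you suggest. For an affinoid $\mathrm{GSpec}\,\mathcal{A}$, Gelfand rings are Fredholm, so $\bperf(\mathcal{A})\simeq\rperf(\underline{\mathcal{A}}(*))$ and the map $Y^{\mathrm{P}\text{-}\mathrm{an}}(\mathcal{A})=Y(\underline{\mathcal{A}}(*))\hookrightarrow\mathrm{Fun}^{\otimes}(\rperf(Y),\bperf(\mathcal{A}))$ is fully faithful by the \emph{algebraic} Tannakian property alone. Sheafification is left exact, so the monomorphism persists for $Y^{\mathrm{an}}$; descent of $\bperf$ (Lemma~\ref{descent of perf}) then handles non-affinoid sources. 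This is the paper's Lemma~\ref{Tannkian for analytic}, and together with $T_{\mathrm{alg}}$ it immediately gives that $\Phi_{\mathcal{A}}$ is fully faithful. Your proposed reconstruction via $G^{\mathrm{an}}$-torsors and categorical K\"unneth is unnecessary here.

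The real content---and the place where geometric nature enters---is essential surjectivity of $\Phi_{\mathcal{A}}$: given an analytic $g\colon\mathcal{Z}\to Y^{\mathrm{an}}$, one must show that the induced tensor functor $\rqcoh(Y)\to\rind(\bperf(\mathcal{Z}))\simeq\rqcoh(Z)$ preserves \emph{connective pseudo-coherent} objects, so that it lies in the essential image of $T_{\mathrm{alg}}$. Your claim that this follows from Theorem~\ref{GAGA of pseudo} is the gap: that theorem matches two notions of pseudo-coherence on a fixed proper space, but says nothing about why the pullback $g^*$ along an arbitrary analytic map to $Y^{\mathrm{an}}$ should preserve either notion (and it also carries a nuclearity hypothesis that a general Gelfand ring $\mathcal{A}$ need not satisfy). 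The paper handles this in Lemma~\ref{local pseudo}: restricting to affinoid pieces of $\mathcal{Z}$, one passes via \emph{Pcoh-detectable} \'etale covers (Definition~\ref{pcohdes}, Lemma~\ref{pcohdete}) to the local presentation $Y\simeq[V/G]$, trivializes the $G$-torsor using the Fredholm property and Lemma~\ref{quotient stack tannakian}, and then invokes Bhatt's algebraization for the scheme $V$. The descent of the pseudo-coherence condition back down these covers is non-formal and is the technical heart of the argument; it is not supplied by Theorem~\ref{GAGA of pseudo}.
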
\label{good moduli}
\begin{eg}
  If $Y/K$ admits a good moduli space in the sense of \cite{Alper_2023}, then it is of geoemtric nature by \cite{alper2025etalelocalstructurealgebraic}*{Theorem 3.14}.   
\end{eg}
\brem\label{canonical map} Note that there is a  map $\delta\colon \underline{\rmap}(X,Y)^{S\text{-}an}\rightarrow\underline{\rmap}(X^{\mathrm{an}},Y^{\mathrm{an}})$. Indeed, Given any $R\in \AffRing^{b}_{K}$, we have the following sequence of maps

 $ \underline{\rmap}(X,Y)^{P\text{-}an}(R)=\rmap(X\times \Spec R(*),Y)\rightarrow \rmap(X^{\mathrm{an}}\times (\Anspec R(*))^{\mathrm{an}})\rightarrow \rmap(X^{\mathrm{an}}\times \Anspec R, Y^{\mathrm{an}})  =\underline{\rmap}(X^{\mathrm{an}},Y^{\mathrm{an}})(R).
$

Then since $\underline{\rmap}(X^{\mathrm{an}},Y^{\mathrm{an}})$ is naturally a sheaf, this map factors through $\underline{\rmap}(X,Y)^{S\text{-}an}$.
In fact in the proof we shall see that $\delta$ is already an isomorphism.
\erem

\brem At this point we can already prove some special case such that $X$ is proper locally complete intersection scheme and $Y=B\mathrm{GL}_n$. Indeed, $\underline{\rmap}(X,Y)^{P\text{-}an}(R)\simeq \rmap(X\times \Spec R(*),Y)\simeq\mathrm{Vect_n(X\times \Spec R(*))}\simeq \mathrm{Vect_n}(X^{\mathrm{an}} \times \mathrm{GSpec} R) $ by \Cref{main}(combine with \cite{reallanglands}*{Corollary VI.1.4}). But this also naturally identified with $\underline{\rmap}(X^{\mathrm{an}},Y^{\mathrm{an}})(R)$ by definition. Thus $\underline{\rmap}(X,Y)^{P\text{-}an}$ is already a sheaf and is isomorphic to $\underline{\rmap}(X^{\mathrm{an}},Y^{\mathrm{an}})$.
\erem

In particular we see by \Cref{tannakian} that Artin stacks with affine diagonal are Tannakian.
\begin{rem}
    In practice, the condition of \textit{geometric nature} will satisfy whenever the \textit{perfectly Tannakian} stack $Y$ admits a \textit{good moduli} in the sense of \cite{Alper_2023} by \cite{Alper_2023}*{proposition 4.2}. 
\end{rem}
We now apply the \textit{Tannakian} property to our question.
\begin{lemma}\label{descent of perf}
    For any $X\in \mathrm{GelfStk}_{\cK}$, the pullback functor $d\colon \bperf(X)\rightarrow \underset{\mathrm{GSpec} A\rightarrow X}{\bperf(A)}$ is fully faithful.
\end{lemma}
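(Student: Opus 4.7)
The plan is essentially definitional bookkeeping. I read the target as the limit $\underset{\mathrm{GSpec}\,A\to X}{\lim}\bperf(A)$ (the display appears to be missing a $\lim$). First I would verify that $d$ is well-defined. For any test map $f\colon \mathrm{GSpec}\,A\to X$ the pullback $f^{*}\colon D_{\sol}(X)\to D_{\sol}(A)$ is symmetric monoidal and colimit-preserving, hence it carries dualizable objects to dualizable objects. Therefore for $F\in\bperf(X)$ the compatible family $\{f^{*}F\}$ lies in $\underset{\mathrm{GSpec}\,A\to X}{\lim}\bperf(A)$, and the naturality of these pullbacks assembles into the functor $d$.

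Next I would establish fully faithfulness by chasing mapping spaces. Take $F,G\in\bperf(X)$. By definition $\bperf(X)$ is a full subcategory of $D_{\sol}(X)$, so $\mathrm{Hom}_{\bperf(X)}(F,G)\simeq\mathrm{Hom}_{D_{\sol}(X)}(F,G)$. Invoking the defining formula $D_{\sol}(X)\coloneq\underset{\mathrm{GSpec}\,A\to X}{\lim}D_{\sol}(A)$ from \Cref{Gelfand stack}(2), mapping spaces in the limit are computed componentwise, giving
\[
\mathrm{Hom}_{D_{\sol}(X)}(F,G)\simeq\underset{\mathrm{GSpec}\,A\to X}{\lim}\mathrm{Hom}_{D_{\sol}(A)}(f^{*}F,f^{*}G).
\]
Applying fullness of $\bperf(A)\hookrightarrow D_{\sol}(A)$ termwise identifies the right-hand side with $\underset{\mathrm{GSpec}\,A\to X}{\lim}\mathrm{Hom}_{\bperf(A)}(f^{*}F,f^{*}G)$, which is precisely $\mathrm{Hom}_{\lim\bperf(A)}(d(F),d(G))$. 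Naturality of each identification shows these comparisons are induced by $d$, so $d$ is fully faithful.

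There is no substantive obstacle: the argument is a direct consequence of the limit definition of $D_{\sol}(-)$ together with the tautological fact that $\bperf$ is cut out as a full subcategory. I would note, however, that the companion question of when a compatible family of dualizable objects actually glues to a dualizable object on $X$—i.e.\ essential surjectivity of $d$—is substantially more delicate and is not addressed by this lemma; it is precisely what one would need to control in order to descend perfect complexes through smooth charts of an analytic Artin stack.
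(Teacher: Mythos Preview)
Your proof is correct and follows essentially the same approach as the paper: both arguments use that $\bperf(-)\hookrightarrow D_{\sol}(-)$ is a full subcategory, that $D_{\sol}(X)$ is by definition the limit of the $D_{\sol}(A)$, and that mapping spaces in a limit of $\infty$-categories are computed as the limit of mapping spaces. The paper packages this as a commutative square with three fully faithful arrows forcing the fourth to be, while you compute the $\mathrm{Hom}$-spaces directly; the content is identical.
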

\begin{proof}
    Indeed, first we know that by definition, $D_{\sol}(X)\simeq \underset{\mathrm{GSpec} A\rightarrow X}{\lim}D_{\sol}(A)$. Recall the subcategory of \textit{dualizable object} of any symmetric monoidal stable $\infty$-category is always a fully faithful subcategory. Also the limit of fully faithful embedding between $\infty$-categories is still fully faithful since $\mathrm{Hom}$ in the limit of $\infty$-categories is computed as limit of $\mathrm{Hom}$ space.  Thus the pullback functor fit into the following diagram 
    $$\begin{tikzcd}
D_{\sol}(X) \arrow[r, "(1)", hook]         & \underset{\mathrm{GSpec}(A)\rightarrow X}{\lim}D_{\sol}(A)               \\
\bperf(X) \arrow[u, "(2)", hook] \arrow[r,"d"] & \underset{\mathrm{GSpec}(A)\rightarrow X}{\lim}\bperf(A). \arrow[u, "(3)", hook]
\end{tikzcd}$$
Both (1) (2) (3) are fully faithful functors. Thus $d$ is also fully faithful.
\end{proof}
\begin{lemma}\label{Tannkian for analytic}
    Assume Artin stack $Y/K$ is \textit{perfectly Tannakian}. Then for any $X\in \mathrm{GelfStk}_{\cK}$ We have\upshape:
    $$\rmap_{\cK}(X, Y^{\mathrm{an}})\hookrightarrow \mathrm{Fun}_{\mathrm{Perf}(K)}^{\tensor}(\rperf(Y), \bperf(X)).$$
    Is a fully faithful embedding of $\infty$-categories.
\end{lemma}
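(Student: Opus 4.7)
The plan is to reduce to affinoid test objects $X = \mathrm{GSpec}(R)$ for $R$ a Gelfand ring, then apply the algebraic perfectly Tannakian hypothesis on $Y$ via descent through the sheafification defining $Y^{\mathrm{an}}$. First I would use the presentation $X \simeq \colim_{\mathrm{GSpec} R \to X} \mathrm{GSpec} R$ in $\mathrm{GelfStk}_{\cK}$. By Yoneda, $\rmap_{\cK}(X, Y^{\mathrm{an}}) \simeq \lim Y^{\mathrm{an}}(R)$, while \Cref{descent of perf} gives a fully faithful embedding $\bperf(X) \hookrightarrow \lim \bperf(\mathrm{GSpec} R)$, which post-composed into $\mathrm{Fun}^{\tensor}_{\mathrm{Perf}(K)}(\mathrm{Perf}(Y), -)$ yields a fully faithful embedding of the right hand side into $\lim \mathrm{Fun}^{\tensor}_{\mathrm{Perf}(K)}(\mathrm{Perf}(Y), \bperf(\mathrm{GSpec} R))$. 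A two-out-of-three argument then reduces fully faithfulness of the desired functor to fully faithfulness of each component at $\mathrm{GSpec}(R)$.

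For $X = \mathrm{GSpec}(R)$ with $R$ Gelfand: $R$ is Fredholm, so $\bperf(\mathrm{GSpec} R) \simeq \mathrm{Perf}(R(*))$, and since $Y$ is Artin, \Cref{S and an} gives $Y^{\mathrm{an}}(R) \simeq Y^{\mathrm{S\text{-}an}}(R)$, the value at $R$ of the $!$-sheafification of the presheaf $R' \mapsto Y(R'(*))$. Any $f \in Y^{\mathrm{an}}(R)$ is therefore locally represented, after passing to some $!$-cover $R \to R'$, by an honest algebraic map $\tilde f \colon \Spec R'(*) \to Y$, equipped with descent data along the Čech nerve of $R \to R'$. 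Given two points $f, g \in Y^{\mathrm{an}}(R)$, I would pass to a common $!$-cover trivialising both, and use that the restriction of $F(f)$ to $\bperf(\mathrm{GSpec} R') \simeq \mathrm{Perf}(R'(*))$ agrees with the algebraic pullback $\tilde f^{*} \colon \mathrm{Perf}(Y) \to \mathrm{Perf}(R'(*))$ by construction of the analytification map $j_Y$, and similarly for $g$. The algebraic perfectly Tannakian hypothesis then identifies $\mathrm{Map}_{Y(R'(*))}(\tilde f, \tilde g)$ with $\mathrm{Map}(F(f)|_{R'}, F(g)|_{R'})$, and coherent sheaf descent on both sides recovers the equivalence $\mathrm{Map}(f, g) \isomto \mathrm{Map}(F(f), F(g))$.

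The main obstacle will be the coherent verification that the pointwise equivalence $F(f)|_{R'} \simeq \tilde f^{*}$ assembles into a morphism of simplicial objects along the full Čech nerve of $R \to R'$, so that descent can be applied uniformly to both mapping spaces. Once this naturality is in place, the remainder is a straightforward combination of Fredholm-ness of Gelfand rings, the affine perfectly Tannakian hypothesis applied to local algebraic lifts, and descent of $\bperf$.
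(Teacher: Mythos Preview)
Your approach is sound and would work, but the paper takes a shorter route that sidesteps the obstacle you identify. Rather than unpacking the sheafification by choosing local algebraic lifts along a cover and then verifying that the pointwise Tannakian identifications assemble coherently along the \v{C}ech nerve, the paper argues entirely at the level of the presheaf $Y^{\mathrm{P\text{-}an}}$. After the same reduction to affinoid $X = \mathrm{GSpec}(A)$ via \Cref{descent of perf}, one has directly
\[
\rmap(\mathrm{GSpec}(A), Y^{\mathrm{P\text{-}an}}) \simeq Y(A(*)) \hookrightarrow \mathrm{Fun}_{\mathrm{Perf}(K)}^{\tensor}(\rperf(Y), \rperf(A(*))) \simeq \mathrm{Fun}_{\mathrm{Perf}(K)}^{\tensor}(\rperf(Y), \bperf(A))
\]
from the algebraic perfectly Tannakian hypothesis together with Fredholm-ness of $A$. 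This is a monomorphism of \emph{presheaves} $\rmap(-, Y^{\mathrm{P\text{-}an}}) \hookrightarrow \mathrm{Fun}_{\mathrm{Perf}(K)}^{\tensor}(\rperf(Y), \bperf(-))$. Since sheafification is left exact it preserves monomorphisms, and since $(\rmap(-, Y^{\mathrm{P\text{-}an}}))^{\mathrm{sh}} \simeq \rmap(-, Y^{\mathrm{S\text{-}an}}) \simeq \rmap(-, Y^{\mathrm{an}})$ while the target is already separated (again by \Cref{descent of perf}), the conclusion follows without any \v{C}ech-level bookkeeping.

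Your hands-on approach has the merit of making explicit what the functor does to local algebraic lifts, which may be useful elsewhere; but the simplicial coherence you flag as the main obstacle is a genuine cost of that route and is entirely avoidable here by invoking left exactness of sheafification once.
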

\begin{proof}
    By \Cref{descent of perf}, we may assume $X\simeq\mathrm{GSpec} A$ is a Gelfand affinoid space. Thus we have 
    \begin{align*}
        \rmap_{\mathrm{GelfStk}_{\cK}}(X, Y^{\mathrm{P\text{-}an}})\simeq \rmap_{\mathrm{Sh}_{\mathrm{fppf}}(\AffRing_K)}(\Spec A(*), Y)&\underset{(*)}{\hookrightarrow} \mathrm{Fun}_{\mathrm{Perf}(K)}^{\tensor}(\rperf(Y), \bperf A(*))\\ &\simeq \mathrm{Fun}_{\mathrm{Perf}(K)}^{\tensor}(\rperf(Y), \bperf(X))
    \end{align*}
  
    Here, the arrow $(*)$ is fully faithful. This shows that, the morphism of pre-sheaves:
    $$\mathrm{Map}_{\mathrm{GelfStk}}(-,Y^{\mathrm{P\text{-}an}})\rightarrow \mathrm{Fun}_{\mathrm{Perf}(K)}^{\tensor}(\mathrm{Perf(Y)},\bperf(-))$$
    is fully faithful. But by construction, $\mathrm{Map}_{\mathrm{Gelf}}(-,Y^{\mathrm{P\text{-}an}})^{\mathrm{sh}}\simeq \mathrm{Map}(-, Y^{\mathrm{S\text{-}an}}).$ Since sheafification functor is left exact, this shows that 
    $$\rmap(X, Y^{\mathrm{an}})\hookrightarrow \mathrm{Fun}_{\mathrm{Perf}(K)}^{\tensor}(\rperf(Y), \bperf(X))$$
   is fully faithful. 
    Which finishes the proof.
\end{proof}
Before we go into the general case of  \Cref{analytification}, let us deal with a basic special case of it where $Y=BG$ the classifying stack of smooth affine group scheme over $K$.
\begin{lemma}\label{quotient stack tannakian}
    Let $\cA$ be a Gelfand ring, and a map $f\colon \mathrm{GSpec} \cA\rightarrow BG^{\mathrm{an}}$. Then the induced functor 
    $$f^{alg,*}\colon \mathrm{Qcoh(BG)}\simeq\mathrm{Ind}(\rperf(BG))\rightarrow \mathrm{Ind}(\bperf(BG^{\mathrm{an}}))\underset{f^{*}}{\rightarrow} \mathrm{Ind}(\bperf(\cA))\simeq \mathrm{Qcoh}(\underline{\cA}(*)).$$
    preserve connective objects and flat objects. In particular, by \cite{lurie2005tannakadualitygeometricstacks}*{Theorem 5.11}, this shows $\mathrm{Map}(\mathrm{GSpec} \cA, BG^{\mathrm{an}})\simeq \mathrm{Map}(\Spec \underline{\cA}(*),BG).$
\end{lemma}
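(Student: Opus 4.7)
The plan is to verify that $f^{alg,*}$ preserves connective and flat objects by reducing to an étale-local model in which the torsor classified by $f$ trivialises, and then to invoke Lurie's Tannakian recognition theorem to obtain the equivalence of mapping spaces.

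First I would construct an étale cover $p\colon \mathrm{GSpec}\cA'\to \mathrm{GSpec}\cA$ inside $\mathrm{GelfStk}_{\cK}$ on which the $G^{\mathrm{an}}$-torsor classified by $f$ becomes trivial; this should follow from the smoothness of $G^{\mathrm{an}}$. After such a trivialisation, $f\circ p$ factors as $\mathrm{GSpec}\cA'\xrightarrow{q} \mathrm{GSpec}\cK \xrightarrow{\pi} BG^{\mathrm{an}}$, where $\pi$ is the analytification of the distinguished point $\Spec K\to BG$. Because $\cK$ and $\cA'$ are both Gelfand, hence Fredholm, I identify $\bperf(\cK)\simeq\mathrm{Perf}(K)$ and $\bperf(\cA')\simeq \mathrm{Perf}(\underline{\cA}'(*))$; tracing through the factorisation yields, for every $V\in\mathrm{Perf}(BG)$,
\[ p^*\,f^*\,V^{\mathrm{an}}\;\simeq\; V_0\otimes_K \underline{\cA}'(*), \]
where $V_0$ denotes the underlying $K$-complex of $V$ obtained by restriction along $\Spec K\to BG$. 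Passing to ind-categories, the same formula $p^*f^{alg,*}W\simeq W_0\otimes_K \underline{\cA}'(*)$ then holds for every $W\in\mathrm{Qcoh}(BG)$.

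The forgetful functor $\mathrm{Qcoh}(BG)\to\mathrm{Mod}_K$ associated to the faithfully flat map $\Spec K\to BG$ is $t$-exact and sends flat objects to flat objects, and base change along $K\to\underline{\cA}'(*)$ preserves both properties as well. Hence $p^*f^{alg,*}W$ is connective (resp.\ flat) whenever $W$ is. Since connectivity and flatness of a quasi-coherent sheaf may be checked after pullback along any faithfully flat cover, and the étale map $p$ is in particular faithfully flat, $f^{alg,*}W$ inherits these properties. Consequently $f^{alg,*}$ is a colimit-preserving symmetric monoidal functor $\mathrm{Qcoh}(BG)\to \mathrm{Qcoh}(\underline{\cA}(*))$ preserving connective and flat objects.

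With the preservation in hand, Lurie's Tannakian recognition \cite{lurie2005tannakadualitygeometricstacks}*{Theorem 5.11} identifies the space of such functors with $\mathrm{Map}(\Spec\underline{\cA}(*), BG)$, so $f\mapsto f^{alg,*}$ defines a natural map $\mathrm{Map}(\mathrm{GSpec}\cA, BG^{\mathrm{an}})\to \mathrm{Map}(\Spec\underline{\cA}(*), BG)$. Full faithfulness of this map follows from the analytic Tannakian embedding of Lemma~\ref{Tannkian for analytic} (after $\mathrm{Ind}(\bperf(\cA))\simeq \mathrm{Qcoh}(\underline{\cA}(*))$), and essential surjectivity is supplied by sending $\phi\colon \Spec\underline{\cA}(*)\to BG$ to the composite $\mathrm{GSpec}\cA \to (\Spec\underline{\cA}(*))^{\mathrm{an}} \xrightarrow{\phi^{\mathrm{an}}} BG^{\mathrm{an}}$, where the first arrow is the canonical morphism in $\mathrm{GelfStk}_{\cK}$ induced by the unit $\cA\to\underline{\cA}(*)$ at global sections. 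The main technical obstacle is the initial étale trivialisation of the $G^{\mathrm{an}}$-torsor in the Gelfand framework; the rest of the argument is routine Tannakian bookkeeping.
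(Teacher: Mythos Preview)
Your overall strategy---trivialise the torsor after passing to a cover, check connectivity and flatness there, and descend---matches the paper's, but there are two gaps in the execution.

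The first is the one you flag yourself: the existence of an \emph{\'etale} cover of $\mathrm{GSpec}\cA$ trivialising the $G^{\mathrm{an}}$-torsor is not established in the Gelfand framework. The paper bypasses this entirely by taking a $!$-cover $\pi\colon\mathrm{GSpec}\cB\to\mathrm{GSpec}\cA$ instead, which exists automatically because $\mathrm{GSpec}\cK\to BG^{\mathrm{an}}$ is a $!$-surjection by construction of the analytification.

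The second gap is more serious. Your descent step invokes faithfully flat descent of connectivity and flatness in $\mathrm{Qcoh}(\underline{\cA}(*))$, but an analytic cover $\mathrm{GSpec}\cA'\to\mathrm{GSpec}\cA$ need not induce a faithfully flat map $\underline{\cA}(*)\to\underline{\cA}'(*)$ of underlying discrete rings, so there is nothing to descend along. The paper handles this by working with Tor-amplitude of \emph{dualizable} objects rather than connectivity of arbitrary quasi-coherent sheaves: using smoothness of $BG$, every connective object of $\mathrm{Qcoh}(BG)$ is written as a filtered colimit of perfect complexes $E_i$ with Tor-amplitude in $[-i,0]$, and the key input is that Tor-amplitude of perfect complexes in $D_{\sol}$ can be checked $!$-locally (\cite{reallanglands}*{Proposition VI.1.3}). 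This lets one descend the bound on $f^{alg,*}E_i$ from $\cB$ to $\cA$ inside $\bperf(\cA)$; via the Fredholm identification $\bperf(\cA)\simeq\mathrm{Perf}(\underline{\cA}(*))$ this yields connectivity of each $f^{alg,*}E_i$, and the filtered colimit is then connective. The flat case is handled the same way, writing flat objects of $\mathrm{Qcoh}(BG)$ as filtered colimits of vector bundles and using that $f^{alg,*}$ carries objects of $\rperf^{[0,0]}(BG)$ to vector bundles on $\underline{\cA}(*)$.
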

\begin{proof}
    Since $BG$ is smooth, the natural $t$-structure on $\mathrm{Qcoh}([*/G])$ carries to $\rperf(BG)$,every connective object of $\mathrm{Qcoh}(BG)$ can be written as a filtered colimit of perfect complex with Tor-amplitude $\leq 0$.
    
   \  We first show that $f^{\alg, *}$ preserves connective objects.
    By construction, $p:\mathrm{GSpec} \cK\rightarrow BG^{\mathrm{an}}$ is a $!$ surjection, thus one can find a $!$-cover $\pi\colon \mathrm{GSpec} \cB\rightarrow \mathrm{GSpec} \cA$ such that there is a lifting of $f\circ\pi$ to the map $q:\mathrm{GSpec} \cB\rightarrow \mathrm{GSpec} \cK$. Now given any connective object $E\in \mathrm{Qcoh}^{\leq 0}(BG)$, write it as a filtered colimit of $E_i\in \rperf^{[-i,0]}(BG)$, we see that $\pi^*\circ f^{alg,*}E_i\in \bperf^{[-i,0]}(\cB) $ since $p^*E_i$ is a finite complex of vector space concentrate in degree $[-i,0]$ and $\pi^*\circ f^{alg,*}E_i\simeq q^*(p^*E_i)$.  By \cite{reallanglands}*{Proposition VI.1.3}, Tor-amplitude of perfect complex can be checked $!$-locally, this shows $f^{alg,*}E_i\in \bperf^{[-i,0]}(\cA) $, and $f^{\alg, *}E$ is a filtered colimit of such objects thus connective, i.e., the functor $f^{alg,*}$ restrict to a functor $g\colon \mathrm{Qcoh}^{\leq 0}(BG)\rightarrow \mathrm{Qcoh}^{\leq 0}(\underline{\cA}(*))$.\\
    
    Now any flat object $M\in \mathrm{Qcoh}(BG)$ corresponds to a representation of $G$ in $K$-vector spaces, which can always be write as a filtered colimit of finite dimensional representations, thus $M\simeq \mathrm{colim}_i M_i$, where $M_i\in \mathrm{Vect}(BG)$ are vector bundles on $BG$. Similar to the above proof, we know that $f^{alg,*}$ carries vector bundles (i.e., objects in $\rperf^{[0,0]}(BG)$) to vector bundles. This shows $f^{alg,*}(M)$ is a filtered colimit of vector bundles thus flat.
\end{proof}
\begin{proof}[Proof of \Cref{analytification}]
Since $X$ is proper and $A$ is nil-perfectoid (in particular, is \textit{Fredholm}), by \Cref{main} we have that $\bperf(X^{\mathrm{an}}\times \mathrm{GSpec}(A))))\simeq \rperf(X\times \Spec(A(*)))$.
 So We have the following diagram.
    $$
    \begin{footnotesize}
\begin{tikzcd}
{\underline{\rmap}(X, Y)^{\mathrm{an}}(A)\simeq \rmap(X\times \Spec(A(*)),Y)} \arrow[rd, "\mathrm{Tann}^{\mathrm{alg}}", hook] \arrow[dd, "\delta"] &                                                                 \\
                                                                                                                                   & {\mathrm{Fun}^{\tensor}(\rperf(Y),\rperf(X\times \Spec(A(*))))}. \\
{\underline{\rmap}(X^{\mathrm{an}},Y^{\mathrm{an}})(A)\simeq \rmap(X^{\mathrm{an}}\times \mathrm{GSpec} A, Y^{\mathrm{an}})} \arrow[ru, "\mathrm{Tann}^{\mathrm{an}}",hook]                 &                                                                
\end{tikzcd}
\end{footnotesize}
$$
Here, $\delta$ is the functor defined in \Cref{canonical map}. $\mathrm{Tann}^{\mathrm{an}}$ and $\mathrm{Tann}^{\mathrm{alg}}$ are fully faithful by \Cref{Tannkian for analytic} and the assumption of $Y$ being \textit{Tannakian}.  Thus we see $\delta$ is also fully faithful. 
It remains to prove that $\delta$ is essential surjective. Which is equivalent to say, given a map $g\colon X^{\mathrm{an}}\times\mathrm{GSpec} A \rightarrow Y$, we need to check the pullback functor
$$g^{*}\colon \rqcoh(Y)\rightarrow \rind(\bperf)(Y^{\mathrm{an}})\rightarrow\rind(\bperf)(X^{\mathrm{an}}\times \mathrm{GSpec} A)(\simeq \rind(\rperf(X\times \Spec{A(*)})))$$  
preserves connective \textit{pseudo-coherent} objects. For this we need the following two lemmas.
\begin{lemma}\label{local pseudo}
    Given any map $f\colon \mathrm{GSpec} A \rightarrow Y^{\mathrm{an}}$ from some \textit{Fredholm bounded affinoid algebra } $A$. The induced composition of functors 
    $$f^{*}\colon \mathrm{\rqcoh}(Y)\rightarrow\mathrm{Ind}(\bperf) (Y^{\mathrm{an}})\rightarrow \mathrm{Ind}\bperf (A)$$
    preserve connective \textit{pseudo-coherent} objects. 
\end{lemma}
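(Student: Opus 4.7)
The plan is to use the \textit{geometric nature} of $Y$ to reduce to the case of a quotient stack, then to combine \Cref{quotient stack tannakian} with the observation that qcqs schemes are analytically covered by their affine opens in order to reduce to a purely algebraic pullback, where the statement is standard.

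First I would use that both connectivity and pseudo-coherence can be tested $!$-locally on the target (which follows from \Cref{pcoh is a sheaf} and descent of the $t$-structure on $D_{\sol}$). Since $Y$ is of geometric nature, there is an étale cover $\{U_j \to |Y|\}$ with each $Y \times_{|Y|} U_j \simeq [X_j/G_j]$ for some qcqs scheme $X_j$ and smooth affine group $G_j$. Analytifying and using \Cref{et to nilperf} gives a $!$-cover of $\mathrm{GSpec}\,A$ along which $f$ factors through $[X_j/G_j]^{\mathrm{an}}$, so we may assume $Y = [X/G]$ of this form.

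For $Y = [X/G]$, composing $f$ with $[X/G]^{\mathrm{an}} \to BG^{\mathrm{an}}$ yields a map $\mathrm{GSpec}\,A \to BG^{\mathrm{an}}$, which by \Cref{quotient stack tannakian} is induced by an algebraic classifying map $\mathrm{Spec}\,\underline{A}(*) \to BG$. The corresponding $G$-torsor is étale-locally trivial on $\mathrm{Spec}\,\underline{A}(*)$ since $G$ is smooth affine; pulling back that étale trivialization (again via \Cref{et to nilperf}) gives a $!$-cover of $\mathrm{GSpec}\,A$ over which $f$ lifts to a map $h\colon\mathrm{GSpec}\,A' \to X^{\mathrm{an}}$ compatible with the atlas $X \to [X/G]$. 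Next, cover $X$ by finitely many affine opens $\{V_k = \mathrm{Spec}\,B_k\}$; by \Cref{descenting solid sheaf of an} their analytifications form a $!$-cover of $X^{\mathrm{an}}$, and after a further refinement the map $h$ factors through some $(\mathrm{Spec}\,B_k)^{\mathrm{an}}$. Since maps $\mathrm{GSpec}\,A' \to (\mathrm{Spec}\,B_k)^{\mathrm{an}}$ are classified by ring homomorphisms $B_k \to \underline{A'}(*)$, the pullback factors as the composition
\[
\mathrm{Qcoh}([X/G]) \longrightarrow \mathrm{Qcoh}(V_k) \longrightarrow \mathrm{Qcoh}(\underline{A'}(*)) \simeq \mathrm{Ind}(\bperf)(A'),
\]
where the first arrow is restriction to an open substack and the second is algebraic pullback along a genuine map of affine schemes.

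Both of these preserve connective pseudo-coherent objects: the first obviously, and the second because derived pullback preserves the subcategory of connective almost perfect modules (\cite{HA}*{Section 7.2.4}). The main obstacle is the descent bookkeeping in the middle step, i.e., ensuring that the trivialization of the $G$-torsor produced on $\mathrm{Spec}\,\underline{A}(*)$ genuinely pulls back to a $!$-cover of $\mathrm{GSpec}\,A$ compatible with $f$, and that this further étale refinement does not disrupt the test for connectivity and pseudo-coherence. This relies crucially on the Fredholm hypothesis on $A$ (so that $\mathrm{Ind}(\bperf)(A) \simeq \mathrm{Qcoh}(\underline{A}(*))$ has a clean algebraic incarnation) together with \Cref{et to nilperf} to match the algebraic and analytic étale topologies.
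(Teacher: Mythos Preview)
Your overall strategy matches the paper's: reduce via the geometric nature to a quotient stack $[X/G]$, use the Fredholm property to algebraize the map to $BG$, trivialize the $G$-torsor étale-locally, lift to $X$, and conclude by algebraic pullback. The paper follows exactly this outline.

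The gap is precisely the one you flag at the end but do not resolve: your opening claim that ``connectivity and pseudo-coherence can be tested $!$-locally on the target (which follows from \Cref{pcoh is a sheaf}\ldots)'' is not justified. \Cref{pcoh is a sheaf} only treats the \emph{analytic} topology (rational open covers), not arbitrary $!$-covers, and indeed pseudo-coherence does \emph{not} descend along general $!$-covers. The paper handles this by introducing the ad hoc notion of a \emph{Pcoh-detectable} $!$-cover (\Cref{pcohdes}) and proving in \Cref{pcohdete} that analytifications of étale covers of affine $K$-schemes are Pcoh-detectable. The argument reduces, via \cite{bhatt2011annihilatingcohomologygroupschemes}*{Lemma 2.2}, to the case of a finite \emph{free} cover, where descent of pseudo-coherence is checked by hand (the pullback is a retract of tensoring with a finite free module). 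This is the one genuinely new ingredient you are missing; once you have it, the two étale refinements in your argument (coming from the cover of $|Y|$ and from the trivialization of the $G$-torsor) are both Pcoh-detectable and your argument goes through.

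A minor difference at the end: rather than covering $X$ by affine opens and refining analytically, the paper invokes Tannaka duality for schemes (\cite{bhatt2014algebraizationtannakaduality}*{Theorem 1.5}) directly to factor the map $\mathrm{GSpec}\,\mathcal{T}' \to X^{\mathrm{an}}$ through $\mathrm{Spec}\,\mathcal{T}'(*) \to X$. Either route works.
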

Before we enter the proof, let us introduce the following definition.
\begin{definition}\label{pcohdes}
    We call a $!$-cover of derived Berkovich space $f\colon X\rightarrow Y$ is \textit{Pcoh-detectable} if there exist a $!$-cover $f'\colon X'\rightarrow Y $ that factors through $f$, such that for any affinoid Gelfand stack $S$ and $g\colon S\rightarrow Y$, $\mathrm{Pcoh}(S)\simeq \underset{[n]\in \Delta^{op}}{\lim} \mathrm{Pcoh}({X'}_S^{\times n/S})$, where the limit taking along $f_S^*$ the base change of $f$ along $g$.
\end{definition}
\begin{lemma}\label{pcohdete}
\benuma
   \item\label{opendescoh} Rational open cover of derived Berkovich space is Pcoh-detectable. 
   \item\label{etaledescoh} Analytification of étale covers between $K$-affine schemes is Pcoh-detectable.
    \eenum
\end{lemma}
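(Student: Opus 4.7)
The plan is to treat the two parts separately, in each case exhibiting an explicit refinement $f'$ and then reducing the descent claim to sheaf properties already proved earlier.

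For \Cref{opendescoh} I would take $f' = f$. A rational open cover of a derived Berkovich space is by construction a cover by open immersions in the analytic topology, and open immersions are stable under arbitrary base change. The required descent for $\mathrm{Pcoh}$ then follows directly from the identification $\mathrm{Pcoh}(\underline{\cO}(*)) \simeq \mathrm{Pcoh}(D_{\sol}(\cO)) \cap \mathrm{Nuc}(\cO)$ of \Cref{discrete pseudo}, combined with the sheaf properties of $U \mapsto \mathrm{Nuc}(\cO_U)$ from \Cref{cons of nuc sheaf} and of $U \mapsto \mathrm{Pcoh}(D_{\sol}(\cO_U))$ from \Cref{pcoh is a sheaf}. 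Both hold in the analytic topology, and their intersection still satisfies descent.

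For \Cref{etaledescoh} I would follow the decomposition used in the proof of \Cref{et to nilperf}: any étale cover $\Spec R' \to \Spec R$ is refined by a finite flat cover $\Spec R_1 \to \Spec R$ with $R_1 = R[x]/(g)$ finite free over $R$, followed by a Zariski cover of $\Spec R_1$. Taking the analytification of this refinement yields my $f'$, which factors through $f$ via the fiber-square description $(\Spec R_1)^{\mathrm{an}} \simeq (\Spec R)^{\mathrm{an}} \times_{(\Spec R)^{\mathrm{alg}}} (\Spec R_1)^{\mathrm{alg}}$ from \cite{complex}*{Lemma 6.11}. Since part \Cref{opendescoh} handles the Zariski piece, I reduce to the proper $!$-cover $\pi \colon (\Spec R_1)^{\mathrm{an}} \to (\Spec R)^{\mathrm{an}}$ coming from a finite free algebra extension. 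After base changing by any $g \colon S = \mathrm{GSpec}\,\cA \to (\Spec R)^{\mathrm{an}}$ with $S$ an affinoid Gelfand stack, the morphism $\pi_S \colon S' \to S$ remains proper, and descendable since $R \to R_1$ splits as an $R$-module map (descendability passes to analytification and to arbitrary base change). Thus $D_{\sol}(S) \simeq \varprojlim_{[n] \in \Delta^{\mathrm{op}}} D_{\sol}(S'^{\times n/S})$.

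To restrict this descent to $\mathrm{Pcoh}$, I check the nuclear and solid-pseudo-coherent conditions separately. Pullback along $\pi_S$ preserves nuclearity by \Cref{nuc is mod cat} applied to the finite free extension $\underline{\cO_S} \to \underline{\cO_{S'}}$, and $\pi_{S,*}$ preserves nuclearity by the finite-map case of the argument in \Cref{coherent pushforward}. Pseudo-coherence at the solid level descends via \Cref{descent pseudo}, where the uniform $t$-amplitude bound needed is supplied by the flatness of $\pi_S$ (a finite free pullback is $t$-exact). Combining these, the intersection $\mathrm{Pcoh}(D_{\sol}) \cap \mathrm{Nuc}$ satisfies descent along the Čech nerve, and \Cref{discrete pseudo} delivers the required equivalence $\mathrm{Pcoh}(S) \simeq \varprojlim_{[n]} \mathrm{Pcoh}(S'^{\times n/S})$.

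The main obstacle I anticipate is ensuring that every term $S'^{\times n/S}$ in the Čech nerve remains in the setting where \Cref{discrete pseudo} applies, that is, that the relevant Gelfand algebras remain Fredholm with nuclear underlying condensed rings, so that the identification of pseudo-coherents with $\mathrm{Nuc} \cap \mathrm{Pcoh}(D_{\sol})$ is available uniformly. This should follow because finite free base change preserves the hypotheses of \Cref{they are fredholm} and \Cref{locally nuclear}, but the verification requires carefully tracking these properties along the Čech tower.
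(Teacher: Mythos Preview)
Your strategy for both parts is the same as the paper's: part \ref{opendescoh} follows from the sheaf property already established in \Cref{def of pseudo} (which the paper phrases as ``this is precisely \Cref{descent pseudo}''), and for part \ref{etaledescoh} you correctly refine the \'etale cover, via Bhatt's lemma, by a finite free cover followed by a Zariski cover, reducing to the finite free case. The difference lies in how the finite free descent is handled. You route through the identification $\mathrm{Pcoh}(\underline{\cA}(*)) \simeq \mathrm{Pcoh}(D_{\sol}(\cA)) \cap \mathrm{Nuc}(\cA)$ and then separately descend the nuclear and solid-pseudo-coherent conditions, which forces you to verify that every term in the \v{C}ech nerve remains Fredholm. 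The paper avoids this entirely with a direct retract argument: since $C$ is finite free over $A$, the map
\[
\underset{J}{\colim}\,\mathrm{Hom}_D(M, M_j)\ \longrightarrow\ \mathrm{Hom}_D\bigl(M, \underset{J}{\colim}\,M_j\bigr)
\]
is a retract of the corresponding map with $M_j$ replaced by $M_j \otimes_A C$, and the latter is an equivalence by pseudo-coherence of $M \otimes_D D'$ over $D'$ (adjunction reduces to $\mathrm{Hom}_{D'}$). This bypasses any need to know that $D' = D \otimes_A C$ or its higher tensor powers are Fredholm. Your obstacle is not fatal --- finite free extensions of Fredholm rings are Fredholm, since a dualizable $D'$-module is dualizable over $D$ and hence discrete --- but the paper's argument is both shorter and more robust, as it works uniformly without tracking these hypotheses along the tower.
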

\begin{proof}
    \ref{opendescoh} is precisely \Cref{descent pseudo}. For \ref{etaledescoh}, by passing to Zariski open cover, we may assume that we are considering $f: \Spec B\rightarrow \Spec A$, where $B= A[x]/(f(x))[1/f'(x)]$ for $f(x)=x^n+a_{n-1}x^{n-1}+...a_0$ is the standard étale morphism. By \cite{bhatt2011annihilatingcohomologygroupschemes}*{Lemma 2.2}, one can see that, there is a Zariski open cover of the finite free morphism $g: \Spec C\rightarrow \Spec A$, where $C=A[x_1,x_2,..x_n]/(\sigma_i(x_1,x_2,...x_n)-(-1)^ia_i)$ (Here $\sigma_i$ is the $i$-th symmetric polynomial) that refines $f.$ Thus by \ref{opendescoh} we remains to show that $g^{\mathrm{an}}\colon (\Spec C)^{an}\rightarrow (\Spec A)^{\mathrm{an}}$ the analytification of $g$ is Pcoh-detectable. We first claim that $(\Spec C)^{\mathrm{an}}\simeq (\Spec A)^{\mathrm{an}}\times_{(\Spec A)^{\mathrm{alg}}}(\Spec C)^{\mathrm{alg}}.$ Then for any $g\colon S=\mathrm{GSpec}(D)\rightarrow (\Spec A)^{\mathrm{an}}$, the pullback of $(\Spec C)^{an}$ is $\mathrm{GSpec}(D')\coloneq\mathrm{GSpec}(D\tensor_{A} C)$, and for $f_{S}\colon \mathrm{GSpec}(D')\rightarrow \mathrm{GSpec}(D)$, we have that $f_S^{*}(-)\simeq (-)\tensor_A C$. Since $C$ is a finite free $A$ module, we see that
    $f_S^*$ preserve pseudo-coherent objects. Conversely, if $M\in D_{\sol}(D)$ and $f_S^*(M)\in \mathrm{Pcoh}(D')$, we need to show that $M\in \mathrm{Pcoh}(D)$. 

    Let $i\in\mathbb{ Z}$, and $J\rightarrow D_{\sol}^{\geq i}(D)\colon j\mapsto M_j$ be any small filtered diagram. We will show that the map 
\begin{equation}\label{pcoh}
\underset{J}{\colim} (\mathrm{Hom}_{D}(M, M_j))\rightarrow \mathrm{Hom}_{D}(M,\underset{J}{\colim}M_j)
\end{equation}
is an equivalence. Since $C$ is finte free over $A$, \Cref{pcoh} is a retract of 
$$\underset{J}{\colim}\mathrm{Hom}_{\cA}(M, M_j\tensor_{A} C)\rightarrow \mathrm{Hom}_{\cA}(M, \underset{J}{\colim}M_j\tensor_{A}C).$$
So it is enough to show the the above map is an equivalence. Which follows from the following computation:
\begin{align*}
    \underset{J}{\colim}(\mathrm{Hom}_{\cA}(M, M_j\tensor_A C)
    \simeq &\underset{J}{\colim}(\mathrm{Hom}_{D'}(M\tensor_{D}D', M_{j}\tensor_{D}D')\\
    \simeq &\mathrm{Hom}_{D'}(M\tensor_{D}D', \underset{J}{\colim}M_{j}\tensor_{D}D')\\
    \simeq & \mathrm{Hom}_{D}(M, \underset{J}{\colim}(M_j\tensor_{D}D'))\\
    \simeq & {\mathrm{Hom}_{D}(M, (\colim M_j)\tensor_{D}D')}\\
    \simeq & \mathrm{Hom}_{D}(M, (\underset{J}{\colim}M_j)\tensor_{A}C)
\end{align*}
\end{proof}
\begin{proof}[Proof of \Cref{local pseudo}]
   Notice by \cite{bhatt2014algebraizationtannakaduality}*{Theorem 1.5}, the composition $\pi\circ f$ factors through $f'\colon \Spec A(*)\rightarrow Y'$. Replace $Y'$ by étale covers we may assume $g'\colon U'\rightarrow Y'$ is an étale cover, such that $U=U'\times_{Y'}Y\simeq [X/G]$. The pullback of $g'$ gives an étale cover $g\colon \Spec A'\rightarrow \Spec A(*)$, one may assume that this étale cover is base changed from some étale cover between finite type algebras $R\rightarrow R'$. Upon analytification this provides a $!$-cover $a\colon T\rightarrow \mathrm{GSpec } A$ of $\mathrm{GSpec} A$ by \Cref{continuous}. By construction and \Cref{pcohdete} \ref{etaledescoh} $a$ is Pcoh-detectable. Now we have the following diagram:
$$\begin{tikzcd}
\mathrm{GSpec} \cT' \arrow[d, "b"] \arrow[rr, "\Tilde{f'}", dashed] &  & X \arrow[d,"q"] \arrow[r]                 & * \arrow[d] \\
\mathrm{GSpec} T \arrow[d, "a"] \arrow[rr, "\Tilde{f}"]           &  & {[X/G]} \arrow[d, "g"] \arrow[r, "p"] & {[*/G]}     \\
\mathrm{GSpec} A \arrow[d] \arrow[rr, "f"]                        &  & Y \arrow[d, "\pi"]                    &             \\
\mathrm{GSpec} A(*) \arrow[rr, "f'"]                                &  & Y'.                                    &            
\end{tikzcd}
$$
The composition $\Tilde{f}\circ p$ gives a $G$-torsor on $\mathrm{GSpec} T$, which by $Fredholm$ property, factors through $g:\Spec T(*)\rightarrow [*/G]$. Since $G$-torsor on schemes are étale locally trivial, one can find a étale cover $\pi:\Spec T'\rightarrow \Spec T(*)$ of $\Spec T(*)$ such that the $G$-torsor trivialize after pullback to $\Spec T'$, and further more we may assume $\pi$ is base changed from a etale cover of finite type algebras $\pi^{'}:\Spec W'\rightarrow \Spec W$, i.e., we have the following commutative diagram where the one on the right is a Cartesian diagram.
$$\begin{tikzcd}
\Spec T^{'} \arrow[r] \arrow[d, "\pi"] & * \arrow[d] & \Spec T^{'} \arrow[d] \arrow[r] & \Spec W' \arrow[d,"\pi^{''}"] \\
\Spec T(*) \arrow[r, "g"]               & {[*/G]}           & \Spec T(*) \arrow[r,"r"]             & \Spec W           
\end{tikzcd}$$
Refining $\pi'^{,an}$ give rise to a $!$-cover $b:\mathrm{GSpec}\cT'\rightarrow \mathrm{GSpec} T$ such that the $G$-torsor correspond to $\tilde{f}\circ p$ split after pullback to $\mathrm{GSpec} \cT'$.
Moreover, by \Cref{pcohdete} \ref{etaledescoh}, $b$ is Pcoh-detectable.

 The splitting of $G$-torsor on $\mathrm{GSpec} \cT'$ induces a $\Tilde{f'}:\mathrm{GSpec} T'\rightarrow X$ lifting $\Tilde{f}$. Since $X$ is a scheme, again by \cite{bhatt2014algebraizationtannakaduality}*{Theorem 1.5} $\Tilde{f'}$ factors through $\Spec T'\rightarrow X$, In particular $\Tilde{f'}^*$ preserves \textit{pseudo-coherent} objects. Notice that $(g\circ q)^*$ also preserves pseudo coherent complexes by Tannakian property (\Cref{tannakian}), both $a$ and $b$ are Pcoh-detectable, thus we see that $f^*$ preserves \textit{pseudo-coherent} objects. Then apply \cite{reallanglands}*{Proposition VI.1.2} we see that connectivity of \textit{pseudo-coherent} sheaf can be checked $!$-locally, thus $f^{*}$ preserve connective \textit{pseudo-coherent} objects. 

\end{proof}

Since $X$ is proper, $X^{\mathrm{an}}$ is represented by a quasi-compact rigid analytic space, so we can find a finite open affinoid cover $X^{\mathrm{an}}\simeq \bigcup_{i} \mathrm{GSpec} (U_{i})$ where $(U_i,U_{i}^{+})$ are complete analytic Huber pairs and we have $X^{\mathrm{an}}\times \mathrm{GSpec} A\simeq \bigcup_i \mathrm{GSpec}(A\tensor_{\cK}U_i)$. By \Cref{local pseudo}, we know that restrict   to each affine $\mathrm{GSpec}(A\tensor_{\cK}U_i))$, pullback functor preserve connective and pseudo-coherent object. By \cite{andreychev2021pseudocoherentperfectcomplexesvector}*{proof of Proposition 5.39} the cover $\bigcup \Anspec(A\tensor^{\sol}(U_{i},U_{i}^{+})_{\sol})\rightarrow j_*(X^{\mathrm{an}}\times \mathrm{GSpec} A)$ has finite cohomological dimension, thus by \Cref{descent pseudo}, $g^{*}$ preserves connective and pseudo-coherent objects.
\end{proof}

For practical applications, let us record a corollary of \Cref{analytification} where the conditions in the statement are easier to check.
\begin{cor}
    Let $Y/K$ be a noetherian algebraic stack with quasi-affine diagonal and reductive stablizers that admits an adequate moduli space \up{see definition in \cite{Alper_2023}}. Let $X$ be a proper scheme over $K$. Then we have that 
$$\underline{\mathrm{Map}}(X^{an},Y^{an})\simeq (\underline{\mathrm{Map}}(X,Y))^{\mathrm{an}}$$

\end{cor}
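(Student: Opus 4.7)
The plan is to deduce the corollary from \Cref{analytification} by verifying its three hypotheses for the given $X$ and $Y$: namely that $Y$ is perfectly Tannakian, that $Y$ is of geometric nature, and that $\underline{\mathrm{Map}}(X,Y)$ is an Artin stack.

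For the perfectly Tannakian property, by \Cref{tannakian} it suffices to show that $D(Y)$ is compactly generated. Since $Y$ is noetherian with quasi-affine diagonal and has reductive stabilizers, this follows from the compact-generation results of Hall--Rydh, where the reductive stabilizer hypothesis delivers the resolution property étale-locally. For geometric nature, the combination of reductive stabilizers and the existence of an adequate moduli space puts $Y$ within the scope of the étale-local structure theorem of Alper--Hall--Rydh (in its adequate moduli space version), producing étale-local presentations $Y|_{V} \simeq [U/G]$ with $U$ an affine scheme and $G$ a smooth affine reductive group --- exactly the condition in \Cref{geometric nature}. When $\mathrm{char}\,K = 0$, adequate and good moduli spaces coincide for reductive stabilizers, so one may equivalently invoke \Cref{good moduli} directly.

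For algebraicity of the mapping stack, the hypotheses that $X$ is proper over $K$ and that $Y$ is a noetherian algebraic stack with quasi-affine diagonal suffice to conclude that $\underline{\mathrm{Map}}(X,Y)$ is an Artin stack locally of finite presentation, by the coherent Tannaka-duality style theorem of Hall--Rydh (generalizing earlier work of Aoki and Olsson). With all three hypotheses verified, \Cref{analytification} applies verbatim and yields the desired equivalence $\underline{\mathrm{Map}}(X^{\mathrm{an}}, Y^{\mathrm{an}}) \simeq \underline{\mathrm{Map}}(X,Y)^{\mathrm{an}}$.

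The main technical obstacle is the second step: the example given in the excerpt (\Cref{good moduli}) only covers the good moduli space case, so handling adequate moduli spaces genuinely requires the more refined version of the Alper--Hall--Rydh local structure theorem (or, in characteristic zero, a reduction to the good moduli space case via Alper's comparison). The remaining verifications are by now standard, so once the structure theorem is cited correctly, the corollary is a direct application of \Cref{analytification}.
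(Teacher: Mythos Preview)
Your proposal is correct and follows essentially the same approach as the paper: verify the three hypotheses of \Cref{analytification} (perfectly Tannakian, geometric nature, algebraicity of the mapping stack) via external references, then apply the theorem. The specific citations differ slightly—the paper uses Halpern-Leistner--Preygel for algebraicity of $\underline{\mathrm{Map}}(X,Y)$, Alper's Proposition 4.2 for geometric nature, and Bhatt--Halpern-Leistner together with Hall's compact generation theorem for perfectly Tannakian—but the logical structure is identical to yours.
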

\begin{proof}
    By \cite{halpernleistner2019mappingstackscategoricalnotions}*{Theorem 5.1.1},  $\underline{\mathrm{Map}}(X,Y)$ is an algebraic stack. By \cite{Alper_2023}*{Proposition 4.2} $Y$ is of geometric nature. By \cite{bhatt2015tannakadualityrevisited}*{Theorem 1.3+Lemma 3.13} and \cite{Hall_2017}*{Theorem B} $Y$ is Perfectly Tannakian. Thus \Cref{analytification} applies.
\end{proof}

As a side product, we proved the following:

\begin{prop}\label{map from fredholm}
    Let $Y$ be a \textit{perfectly Tannakian} stack of \textit{geometric nature}. Then for any Fredholm analytic ring $\cA$, $Y^{\mathrm{an}}(\cA)\simeq Y(\underline{\cA}(*))$.
\end{prop}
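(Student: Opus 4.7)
The plan is to combine Tannaka duality on both sides with the Fredholm hypothesis. By \Cref{canonical map} (applied with $X = \Spec K$), there is a canonical comparison map $\delta \colon Y(\underline{\cA}(*)) = Y^{\mathrm{P\text{-}an}}(\cA) \to Y^{\mathrm{an}}(\cA)$, and I will show it is an equivalence by exhibiting both sides as full subcategories of the same functor $\infty$-category with identical essential image.

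First I would set up the two Tannakian embeddings. The perfectly Tannakian property of $Y$ provides a fully faithful embedding
\[
T^{\mathrm{alg}}\colon Y(\underline{\cA}(*)) \hookrightarrow \mathrm{Fun}^{\otimes}_{\mathrm{Perf}(K)}(\mathrm{Perf}(Y), \mathrm{Perf}(\underline{\cA}(*))),
\]
whose essential image is characterized as those symmetric monoidal functors that extend to colimit-preserving functors $\mathrm{Qcoh}(Y) \to \mathrm{Qcoh}(\underline{\cA}(*))$ preserving connective pseudo-coherent objects. On the analytic side, \Cref{Tannkian for analytic} gives a fully faithful embedding
\[
T^{\mathrm{an}}\colon Y^{\mathrm{an}}(\cA) \hookrightarrow \mathrm{Fun}^{\otimes}_{\mathrm{Perf}(K)}(\mathrm{Perf}(Y), \bperf(\cA)).
\]
Since $\cA$ is Fredholm, the natural functor $\mathrm{Perf}(\underline{\cA}(*)) \isomto \bperf(\cA)$ is an equivalence, so both embeddings land in the same target. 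Functoriality of the Tannakian procedure gives $T^{\mathrm{an}} \circ \delta \simeq T^{\mathrm{alg}}$, hence $\delta$ is fully faithful.

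For essential surjectivity, given $g \in Y^{\mathrm{an}}(\cA)$, I must show that the associated functor $g^{*} \colon \mathrm{Perf}(Y) \to \mathrm{Perf}(\underline{\cA}(*))$ lies in the essential image of $T^{\mathrm{alg}}$, i.e. its ind-extension preserves connective pseudo-coherent objects. This is precisely the content of \Cref{local pseudo}: using the geometric nature of $Y$ one writes it étale locally as $[X/G]$ for a qcqs scheme $X$ and a smooth affine group $G$, algebraicizes the resulting $G$-torsor on the analytic side via the Fredholm property of $\cA$, and then lifts the map to $X$ using scheme-Tannaka duality (\cite{bhatt2014algebraizationtannakaduality}). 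The lift produced by this procedure gives an object of $Y(\underline{\cA}(*))$ whose image under $T^{\mathrm{alg}}$ agrees with $T^{\mathrm{an}}(g)$, so by full faithfulness of $\delta$ it is a preimage of $g$.

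The main obstacle is to verify that the proof of \Cref{local pseudo}, stated there for Fredholm bounded affinoid algebras, transports verbatim to an arbitrary Fredholm analytic ring. The two ingredients that need checking are (i) the Pcoh-detectability of the étale covers appearing in the reduction, and (ii) the algebraicization of $G$-torsors, which is the sole place Fredholmness is genuinely used. Both are formulated in purely categorical terms on $D_{\sol}(\cA)$ and do not invoke boundedness, so the argument goes through; still, this transfer is the only non-formal step of the proof.
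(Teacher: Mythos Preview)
Your proposal is correct and follows exactly the approach indicated in the paper: the paper's proof is literally the one-line observation ``this follows from the proof of \Cref{analytification} where one takes $X = \Spec K$,'' and what you have written is precisely that specialization unpacked (the relative GAGA step becomes trivially the Fredholm hypothesis $\bperf(\cA)\simeq \rperf(\underline{\cA}(*))$, and the rest is the Tannakian diagram plus \Cref{local pseudo}). Your caveat in the final paragraph about the transfer of \Cref{local pseudo} to arbitrary Fredholm analytic rings is a fair point, but the paper does not address it either; in practice the proposition is only invoked for Gelfand rings, where \Cref{local pseudo} applies verbatim.
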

\begin{proof}
    This follows from the proof of \Cref{analytification} where one take $X$ as $\Spec K$. 
\end{proof}
\begin{section}{Analytification of cotangent complex}
    As explained in the introduction, the presence of this paper is motivated by trying to control (or even to prove existence of) cotangent complex of certain nice behaved analytic stack (e.g., those analytic stack comming from analytification of Artin stacks). The main result of this section is the following:
    \begin{thm}\label{analytification of cotangent complex}
        Let $X$ be an Artin stack over $\Spec K$. Then if $X^{\mathrm{an}}$ admits a \up{relative} cotangent complex $\mathbb{L}_{X^{\mathrm{an}}/\Anspec K}$, it is the analytification of algebraic cotangent complex $\mathbb{L}_{X/K}$. More precisely, we have the following 
        $$\mathbb{L}_{X^{\mathrm{an}}/\Anspec K}\simeq j^{an, *}\mathbb{L}_{X/K}.$$
        Here, $j^{\mathrm{an}}$ is the natural map $j^{\mathrm{an}}\colon X^{\mathrm{an}}\rightarrow X^{\mathrm{alg}}$ via counit of analytification functor.
    \end{thm}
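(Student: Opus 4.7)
The plan is to leverage the universal property of the cotangent complex together with smooth descent, reducing the global comparison to an essentially tautological calculation on an affine smooth chart.

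First, I would verify the affine case. Let $U = \Spec A$ be an affine $K$-scheme. Its analytification $U^{\mathrm{an}}$ represents, on bounded affinoid $\cK$-algebras $\cB$, the functor $\cB \mapsto \Hom_{K}(A, \underline{\cB}(*))$, which coincides with $U(\underline{\cB}(*))$ since every element of $A$ is automatically bounded in $\underline{\cB}$. Consequently, for any connective $\cB$-module $M$ the analytic derivation space $\mathrm{fib}(U^{\mathrm{an}}(\cB[M]) \to U^{\mathrm{an}}(\cB))$ at a point $g$ agrees with the algebraic derivation space $\mathrm{fib}(U(\underline{\cB[M]}(*)) \to U(\underline{\cB}(*)))$, and both are represented by the pullback of the classical cotangent complex $\mathbb{L}_{A/K}$. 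This exhibits the natural comparison map $j_{U}^{\mathrm{an},*} \mathbb{L}_{U/K} \to \mathbb{L}_{U^{\mathrm{an}}/\Anspec K}$ as an equivalence.

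Next, I would propagate to $X$ via smooth descent. Choose a smooth affine atlas $\pi\colon U \to X$. Its analytification $\pi^{\mathrm{an}}\colon U^{\mathrm{an}} \to X^{\mathrm{an}}$ is a solid-smooth $!$-cover in the analytic topology of \cite{camargo2024analyticrhamstackrigid}. The relative cotangent complex $\mathbb{L}_{U^{\mathrm{an}}/X^{\mathrm{an}}}$ is then identified with $j_{U}^{\mathrm{an},*}\mathbb{L}_{U/X}$ by the same argument applied to the base change $U \times_X U \to U$. Since, by hypothesis, both sides of the desired equivalence fit into compatible fiber sequences
\begin{equation*}
\pi^{\mathrm{an},*} \mathbb{L}_{X^{\mathrm{an}}/\Anspec K} \longrightarrow \mathbb{L}_{U^{\mathrm{an}}/\Anspec K} \longrightarrow \mathbb{L}_{U^{\mathrm{an}}/X^{\mathrm{an}}},
\end{equation*}
the equivalence established on $U^{\mathrm{an}}$ descends along the \v{C}ech nerve of $\pi^{\mathrm{an}}$, yielding the desired isomorphism on $X^{\mathrm{an}}$.

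The main obstacle will be ensuring that the fiber sequence controlling $\mathbb{L}_{X^{\mathrm{an}}/\Anspec K}$ along a smooth cover is rigorously established in the analytic setting and is compatible with its algebraic counterpart under analytification. While such smoothness-descent properties for cotangent complexes are classical in algebraic geometry, in the analytic stack framework of \cite{camargo2024analyticrhamstackrigid} one must check that $\pi^{\mathrm{an}}$ is solid-smooth (cf.~\cite{camargo2024analyticrhamstackrigid}*{Theorem 3.5.6}) and that smoothness descent for the assumed analytic cotangent complex behaves as in the algebraic case. Once these compatibilities are in place, the universal property of the cotangent complex reduces everything to the affine calculation.
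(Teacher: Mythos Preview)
Your proposal is correct and follows essentially the same strategy as the paper: reduce to an affine chart via the fiber sequence for the cotangent complex along a smooth cover, identify the absolute and relative pieces there, and conclude by conservativity of pullback along the $!$-cover $\pi^{\mathrm{an}}$.

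The only notable difference is in how the affine identification $\mathbb{L}_{U^{\mathrm{an}}/\Anspec K}\simeq j_U^{\mathrm{an},*}\mathbb{L}_{U/K}$ is obtained. You argue directly from the functor-of-points description $U^{\mathrm{an}}(\cB)=\Hom_K(A,\underline{\cB}(*))$ and the adjunction between discretization and $(-)(*)$, which is the same mechanism the paper uses elsewhere (Proposition~6.1.8). The paper instead observes that $j_U\colon U^{\mathrm{an}}\to U^{\mathrm{alg}}$ is an idempotent morphism (since analytification is a localization, $U^{\mathrm{an}}\times_{U^{\mathrm{alg}}}U^{\mathrm{an}}\simeq U^{\mathrm{an}}$), and then invokes the general fact that $\mathbb{L}_{X/Y}\simeq 0$ for idempotent $X\to Y$ (Proposition~6.1.5). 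This is slightly slicker because it avoids unwinding the universal property, and it applies uniformly to both the absolute case $U^{\mathrm{an}}\to U^{\mathrm{alg}}$ and the relative case $(U\times_X U)^{\mathrm{an}}\to (U\times_X U)^{\mathrm{alg}}$ over $U^{\mathrm{an}}\to U^{\mathrm{alg}}$. In practice the paper carries out the comparison after pulling back all the way to $(U\times_X U)^{\mathrm{an}}$, where the relative term $\mathbb{L}_{(U\times_X U)^{\mathrm{an}}/U^{\mathrm{an}}}$ is directly the analytification of a morphism of affine schemes; your version implicitly does the same when you invoke ``the same argument applied to $U\times_X U\to U$''. The obstacle you flag---that the analytic fiber sequence $\pi^{\mathrm{an},*}\mathbb{L}_{X^{\mathrm{an}}}\to\mathbb{L}_{U^{\mathrm{an}}}\to\mathbb{L}_{U^{\mathrm{an}}/X^{\mathrm{an}}}$ must be established---is exactly the content of Proposition~6.1.3 in the paper, which holds once $\mathbb{L}_{X^{\mathrm{an}}}$ is assumed to exist.
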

\end{section}
We will first recall the definition of cotangent complex in the context of analytic geometry.
\csub{Analytic cotangent complex}
We will essentially follow \cite{camargo2024analyticrhamstackrigid}*{Section 3.4}.
Let $\cA\in \mathrm{GelfRing}_{\cK}$ be a $\cK$-Gelfand ring. We let $D_{\sol}(\_):\mathrm{GelfStk}_{\cK}^{\mathrm{op}}\rightarrow \mathrm{CAlg}(\mathrm{Cat^{colim,ex}_{\infty}})$ denote the right Kan extension of the functor of complete modules of analytic rings.
\begin{definition}
    Let $M\in D_{\sol}(\cA) $, we say $M$ is \textit{almost connective} if $M[n]$ is connective for some $n\geq 0$. Let $\cF\colon \mathrm{GelfRing}_{\cK}^{\mathrm{op}}\rightarrow \Ani$ be a functor and $M\in D_{\sol}(\cF)$, we say that $M$ is \textit{locally almost connective} if for all analytic ring $\cA$ and all $x\in \cF(\cA)$, $x^{*}M\in D_{\sol}(A)$ is almost connective.
\end{definition}
\begin{Construction}[Square zero extension]
    Let $\cA$ be an analytic ring and $M$ a connective $\underline{\cA}$-module. We have the adjunction $\textit{Forget}\colon \AniAlg_{\underline{\cA}}\rightarrow \mathrm{Mod}_{\geq 0}(\underline{\cA})\colon \mathrm{Sym}_{\underline{\cA}}$ between animated algebras over $\underline{\cA}$ and connective $\underline{\cA}$-modules (in $D_{\cond}(\Sp)$). Given $M\in \mathrm{Mod}_{\geq 0}(\underline{\cA}) $ we can define the $\underline{\cA}$-algebra $\underline{\cA}\oplus M$ as a condensed animated ring, by forgetting degree $\geq 2$ part in $\mathrm{Sym}_{\underline{\cA}}^{\bullet}M$. This defines a analytic ring by completing the pre-analytic ring $(\cA\oplus M)^{pre}=(\underline{\cA}\oplus M, \mathrm{Mod}_{\underline{\cA}\oplus M}(D_{\sol}(\cA)))$.\\
    Given $M\in \mathrm{Mod}_{\geq}(\cA)$ we shall denote by $\cA\oplus M$ the \emph{trivial square-zero extension} of $\underline{\cA}$ by $M$ as the analytic ring defined above.\\
    \begin{definition}\label{def of cotangent}
             Let $\cF,\cG\colon \mathrm{GelfRing}_{\cK}^{\mathrm{op}}\rightarrow \Ani$ be a presheaevs, let $f\colon \cF\rightarrow \cG$ be a natural transformation between them. We say that $\cF/\cG$ admits an \emph{absolute cotangent complex} if there exists a locally almost connective complex $\cF$ such that the functor mapping a triple $(\cA, \eta, M)$ consisting of $\cA\in \AnRing$, $\eta\in \cF(\cA)$ and $M\in \mathrm{Mod_{\geq0}(\cA)}$ to the fiber product of 
    $$\begin{tikzcd}
            & \eta \arrow[d] \\
\cF(\cA\oplus M) \arrow[r] & \cF(\cA)\times_{\cG(\cA)}\cG(\cA\oplus M)          
\end{tikzcd}$$
    is correspresented by $\mathbb{L}_{\cF (\eta)}$
    \end{definition}

\end{Construction}
\begin{prop}[\cite{camargo2024analyticrhamstackrigid}*{Proposition 3.4.5}]
    Let $f:\cA\rightarrow \cB$ be a morhpism in $\AnRing$ and let $f^{'}\colon \Anspec \cB\rightarrow \Anspec \cA$ be the associated natural transformation at the level of presheaves. Then there exists a \textit{cotangent complex} $\mathbb{L_{\Anspec \cB/\Anspec \cA}}$ that is assocaited to the $\cB$-module $\mathbb{L}_{\cB/\cA}$. Further more, let $\mathbb{L}_{\underline{\cB}/\underline{\cA}}$ be (if exist) the \textit{cotangent complex} of the map of underlying condensed rings, then 
    $$\mathbb{L}_{\cB/\cA}\simeq \cB\tensor_{\underline{\cB}}\mathbb{L}_{\underline{\cB}/\underline{\cA}}.$$
    
    \end{prop}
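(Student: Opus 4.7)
The plan is to reduce the analytic cotangent complex to the condensed one by tracking the data of a square-zero deformation. The argument has two parts, matching the two assertions of the proposition.

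First, I would construct $\mathbb{L}_{\cB/\cA}\in D_{\sol}(\cB)_{\geq 0}$ and check that it co-represents derivations. The forgetful functor $\AniAlg_{\cA}\to D_{\sol}(\cA)_{\geq 0}$ admits the left adjoint $\mathrm{Sym}_{\cA}^{\bullet}$ (used already in \Cref{example of analytic ring}) and the adjunction is monadic, so a simplicial resolution of $\cB$ by free $\cA$-algebras together with the formula $\mathbb{L}_{\mathrm{Sym}_{\cA}^{\bullet}(M)/\cA}\simeq \mathrm{Sym}_{\cA}^{\bullet}(M)\tensor_{\cA}M$ on free objects produces a well-defined $\cB$-module $\mathbb{L}_{\cB/\cA}$ with the universal property
$$\mathrm{Map}_{\cB}(\mathbb{L}_{\cB/\cA},N)\simeq \mathrm{fib}_{\mathrm{id}_{\cB}}\bigl(\mathrm{Map}_{\AnRing_{\cA}}(\cB,\cB\oplus N)\to \mathrm{Map}_{\AnRing_{\cA}}(\cB,\cB)\bigr)$$
for every $N\in D_{\sol}(\cB)_{\geq 0}$. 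Given a triple $(\cA',\eta,M)$ as in \Cref{def of cotangent}, base change along $\eta\colon\cB\to\cA'$ identifies the fiber appearing in the definition of $\mathbb{L}_{\Anspec\cB/\Anspec\cA}$ at $\eta$ with $\mathrm{Map}_{\cA'}(\eta^{*}\mathbb{L}_{\cB/\cA},M)$; this establishes existence of $\mathbb{L}_{\Anspec\cB/\Anspec\cA}$ and identifies it with the module associated to $\mathbb{L}_{\cB/\cA}$.

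For the comparison formula, the crucial point is that for $N\in D_{\sol}(\cB)_{\geq 0}$ the trivial square-zero extension $\cB\oplus N$ in $\AnRing_{\cA}$ has underlying condensed ring $\underline{\cB}\oplus N$ and inherits its analytic structure from $\cB$, because the pre-analytic structure $\mathrm{Mod}_{\underline{\cB}\oplus N}(D_{\sol}(\cA))$ is already complete (completeness of a $(\underline{\cB}\oplus N)$-module only sees the underlying $\underline{\cB}$-action since the $N$-part acts nilpotently). Consequently, a morphism $\cB\to\cB\oplus N$ in $\AnRing_{\cA}$ lifting $\mathrm{id}_{\cB}$ is the same datum as a morphism $\underline{\cB}\to\underline{\cB}\oplus N$ in $\AniAlg_{\underline{\cA}}$ lifting $\mathrm{id}_{\underline{\cB}}$: the analytic-structure compatibility becomes vacuous because the target has the induced structure. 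Hence
$$\mathrm{Map}_{\cB}(\mathbb{L}_{\cB/\cA},N)\simeq \mathrm{Map}_{\underline{\cB}}(\mathbb{L}_{\underline{\cB}/\underline{\cA}},N).$$
Since $N$ is $\cB$-complete and the solidification $P\mapsto \cB\tensor_{\underline{\cB}}P$ is left adjoint to the inclusion of $D_{\sol}(\cB)$ into $\mathrm{Mod}(\underline{\cB})$, the right-hand side equals $\mathrm{Map}_{\cB}(\cB\tensor_{\underline{\cB}}\mathbb{L}_{\underline{\cB}/\underline{\cA}},N)$. Yoneda then yields the desired equivalence $\mathbb{L}_{\cB/\cA}\simeq \cB\tensor_{\underline{\cB}}\mathbb{L}_{\underline{\cB}/\underline{\cA}}$.

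The main obstacle I anticipate is the identification of maps of analytic rings into $\cB\oplus N$ with their underlying condensed data. One must carefully unwind the construction of the pre-analytic ring on $\underline{\cB}\oplus N$ to see that the completion step from pre-analytic to analytic structure is trivial, i.e.\ that the category of complete $(\underline{\cB}\oplus N)$-modules inside $D_{\sol}(\cA)$ agrees with that of complete $\underline{\cB}$-modules. Once this is pinned down, both assertions are formal consequences of the monadic free--forgetful adjunction and the universal property of the cotangent complex.
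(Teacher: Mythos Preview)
The paper does not supply its own proof of this proposition; it is quoted verbatim from \cite{camargo2024analyticrhamstackrigid}*{Proposition 3.4.5} and stated without argument. Your proposal reproduces the standard route taken in that reference: build $\mathbb{L}_{\cB/\cA}$ via the monadic free--forgetful adjunction, then observe that the analytic structure on $\cB\oplus N$ is induced from $\cB$ so that lifts $\cB\to\cB\oplus N$ in $\AnRing_{\cA}$ coincide with lifts $\underline{\cB}\to\underline{\cB}\oplus N$ in $\AniAlg_{\underline{\cA}}$, and conclude by the solidification adjunction. This is correct and there is nothing to compare against in the present paper. The point you flag as the main obstacle---that the pre-analytic structure $(\underline{\cB}\oplus N,\mathrm{Mod}_{\underline{\cB}\oplus N}(D_{\sol}(\cA)))$ is already an analytic ring---is exactly the substantive step in the cited reference, and your justification (completeness of a $(\underline{\cB}\oplus N)$-module only depends on its underlying $\underline{\cB}$-module structure because $N$ acts nilpotently) is the right one.
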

\begin{prop}\label{fiber seq of cotan}
    Let $\cF\rightarrow\cF'\rightarrow\cF''$ be a sequence of natural transformation of functors from $\mathrm{GelfRing}_{\cK}^{\mathrm{op}}$ to $\Ani$. suppose $\mathbb{L}_{\cF'/\cF''}$ exits. Then there is an fiber sequence. Then there exist a fiber sequence 
    $$\mathbb{L}_{\cF'/\cF''}|_{\cF}\rightarrow \mathbb{L}_{\cF/\cF''}\rightarrow \mathbb{L}_{\cF/\cF'}$$
    in the sense that if either the second or the third exists, then so does the other on, and they forms the fiber sequence above.
\end{prop}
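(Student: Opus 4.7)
The strategy is to unwind the defining universal property of the analytic cotangent complex pointwise and deduce the fiber sequence from a standard diagram chase with iterated pullbacks, entirely parallel to the classical algebraic argument. Fix a test triple $(\cA,\eta,M)$ with $\cA\in\mathrm{GelfRing}_{\cK}$, $\eta\in\cF(\cA)$ (with images $\eta'\in\cF'(\cA)$ and $\eta''\in\cF''(\cA)$), and $M\in\mathrm{Mod}_{\geq 0}(\cA)$. For any transformation $\cH\to\cG$ of functors $\mathrm{GelfRing}_{\cK}^{\mathrm{op}}\to\Ani$ and a point $\xi\in\cH(\cA)$, write
\[
\mathrm{Der}_{\cG}(\cH;\xi,M)\;\coloneq\;\mathrm{fib}_{\xi}\!\Bigl(\cH(\cA\oplus M)\longrightarrow\cH(\cA)\times_{\cG(\cA)}\cG(\cA\oplus M)\Bigr).
\]
By \Cref{def of cotangent}, whenever the relevant cotangent complex exists we have $\mathrm{Der}_{\cG}(\cH;\xi,M)\simeq\mathrm{Map}_{\cA}(\xi^{*}\mathbb{L}_{\cH/\cG},M)$, functorially in $M$.

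First I would factor the natural map defining $\mathrm{Der}_{\cF''}(\cF;\eta,M)$ through an intermediate pullback, namely
\[
\cF(\cA\oplus M)\;\xrightarrow{\;u\;}\;\cF(\cA)\times_{\cF'(\cA)}\cF'(\cA\oplus M)\;\xrightarrow{\;v\;}\;\cF(\cA)\times_{\cF''(\cA)}\cF''(\cA\oplus M),
\]
where $v$ is induced by $\cF'\to\cF''$ applied separately on $\cA$ and $\cA\oplus M$. By construction, $\mathrm{fib}_{\eta}(u)=\mathrm{Der}_{\cF'}(\cF;\eta,M)$ and $\mathrm{fib}_{\eta}(v\circ u)=\mathrm{Der}_{\cF''}(\cF;\eta,M)$. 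A direct inspection of the pullback defining the target of $v$ identifies
\[
\mathrm{fib}_{(\eta,\eta'\mapsto\text{triv.})}(v)\;\simeq\;\mathrm{fib}_{\eta'}\!\Bigl(\cF'(\cA\oplus M)\to\cF'(\cA)\times_{\cF''(\cA)}\cF''(\cA\oplus M)\Bigr)\;=\;\mathrm{Der}_{\cF''}(\cF';\eta',M),
\]
since requiring the first component to be $\eta$ and projecting to the trivial extension in $\cF''(\cA\oplus M)$ precisely cuts out the right-hand derivation space. The long fiber sequence of $u\;,\;v\circ u\;,\;v$ then yields a fiber sequence of anima
\[
\mathrm{Der}_{\cF'}(\cF;\eta,M)\;\longrightarrow\;\mathrm{Der}_{\cF''}(\cF;\eta,M)\;\longrightarrow\;\mathrm{Der}_{\cF''}(\cF';\eta',M),
\]
natural in both $\eta$ and $M$.

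Assuming $\mathbb{L}_{\cF'/\cF''}$ exists, the third term is corepresented by $\eta'^{*}\mathbb{L}_{\cF'/\cF''}=\mathbb{L}_{\cF'/\cF''}|_{\cF}(\eta)$. If additionally $\mathbb{L}_{\cF/\cF''}$ exists, then the second term is corepresented by $\eta^{*}\mathbb{L}_{\cF/\cF''}$, and the fiber sequence above exhibits the first term as corepresented by the cofiber in $D_{\sol}(\cA)$ of the natural map $\mathbb{L}_{\cF'/\cF''}|_{\cF}\to\mathbb{L}_{\cF/\cF''}$ evaluated at $\eta$; by Yoneda this cofiber is locally almost connective and defines $\mathbb{L}_{\cF/\cF'}$ with the required universal property. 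Symmetrically, if instead $\mathbb{L}_{\cF/\cF'}$ is known to exist, one defines a candidate $\mathbb{L}_{\cF/\cF''}$ as the fiber in $D_{\sol}$ of $\mathbb{L}_{\cF/\cF'}\to\mathbb{L}_{\cF'/\cF''}|_{\cF}[1]$ and verifies co-representability using the same fiber sequence of derivation anima. Functoriality in $(\cA,\eta)$ and the fact that all constructions land in locally almost connective complexes ensure that the resulting objects are genuine cotangent complexes in the sense of \Cref{def of cotangent}, not just pointwise corepresentors.

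The main technical point to handle carefully will be the base-point identification in the middle step: one must check that as $\eta\in\cF(\cA)$ and $M$ vary, the fiber of $v$ is taken at the compatible trivial square-zero extension of $\eta'$, so that the identification with $\mathrm{Der}_{\cF''}(\cF';\eta',M)$ is natural. Once this naturality is verified by a short $\infty$-categorical bookkeeping (using that the split inclusion $\cA\hookrightarrow\cA\oplus M$ is functorial and that the analytic ring structure on $\cA\oplus M$ is the one given in the Construction preceding \Cref{def of cotangent}), everything else is a formal consequence of the fiber sequence and Yoneda. No input from analytic geometry beyond the existence of trivial square-zero extensions and the definition of the cotangent complex as a corepresenting object is required.
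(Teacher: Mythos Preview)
The paper does not supply a proof of this proposition; it is stated without argument as one of several facts recalled from \cite{camargo2024analyticrhamstackrigid}*{Section 3.4} (see the sentence ``We will essentially follow \cite{camargo2024analyticrhamstackrigid}*{Section 3.4}'' preceding the block of definitions and propositions). Your argument is the standard one and is correct: factor the map defining $\mathrm{Der}_{\cF''}(\cF;\eta,M)$ through the intermediate pullback over $\cF'$, identify the three fibers, and invoke corepresentability together with Yoneda. One small point worth tightening in the ``symmetrically'' direction: to conclude that $\mathrm{Der}_{\cF''}(\cF;\eta,-)$ is corepresentable from knowing the outer two terms are, you are implicitly using that the connecting map in the fiber sequence of derivation anima is induced by an actual morphism of corepresenting objects (equivalently, that these functors are $\Omega^{\infty}$ of spectrum-valued functors, so the fiber sequence lifts to spectra and Yoneda applies there). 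This is true here because $\mathrm{Der}_{\cG}(\cH;\xi,M[1])\simeq B\,\mathrm{Der}_{\cG}(\cH;\xi,M)$ via the standard identification of $\cA\oplus M[1]$, but it is worth saying explicitly rather than leaving it inside ``a short $\infty$-categorical bookkeeping.''
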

\begin{prop}\label{base change of cotan}
    Let $\cF\rightarrow \cG$ be a natural transformation of functor from $\mathrm{GelfRing}_{\cK}^{\mathrm{op}}$ to $\Ani$. Suppose that $\mathbb{L}_{\cF/\cG}$ exists, let $\cG'\rightarrow\cG$ be a natural transformation and $\cF'\simeq \cF\times_{\cG}\cG'$. Then $\mathbb{L}_{\cF/\cG}|_{\cF'}\simeq \mathbb{L}_{\cF'/\cG'}.$
\end{prop}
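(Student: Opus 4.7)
The plan is to verify the base-change property directly at the level of the derivation fibers corepresented by the cotangent complexes. By the definition of $\mathbb{L}_{(-)/(-)}$ recalled in \Cref{def of cotangent}, it suffices to show that for every analytic ring $\cA$, every $\eta' \in \cF'(\cA) = \cF(\cA) \times_{\cG(\cA)} \cG'(\cA)$ with image $\eta \in \cF(\cA)$, and every $M \in \mathrm{Mod}_{\geq 0}(\cA)$, there is a natural equivalence
\[
T_{\cF'/\cG'}(\eta', M) \simeq T_{\cF/\cG}(\eta, M),
\]
where, for a map $\cH_1 \to \cH_2$ and $\xi \in \cH_1(\cA)$, the space $T_{\cH_1/\cH_2}(\xi, M)$ denotes the fiber, at the point $(\xi, \pi(\xi)_{\mathrm{triv}})$ given by $\xi$ together with the trivial lift of its image in $\cH_2(\cA \oplus M)$, of the map $\cH_1(\cA \oplus M) \to \cH_1(\cA) \times_{\cH_2(\cA)} \cH_2(\cA \oplus M)$.

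First I would unfold $\cF' = \cF \times_\cG \cG'$ evaluated on $\cA \oplus M$ and on $\cA$ to obtain
\[
\cF'(\cA \oplus M) \simeq \cF(\cA \oplus M) \times_{\cG(\cA \oplus M)} \cG'(\cA \oplus M),
\]
and, by cancellation of $\cG'(\cA)$ in an iterated fiber product,
\[
\cF'(\cA) \times_{\cG'(\cA)} \cG'(\cA \oplus M) \simeq \cF(\cA) \times_{\cG(\cA)} \cG'(\cA \oplus M).
\]

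Second, under these identifications, the map whose fiber computes $T_{\cF'/\cG'}(\eta', M)$ is precisely the pullback of the map $\cF(\cA \oplus M) \to \cF(\cA) \times_{\cG(\cA)} \cG(\cA \oplus M)$ along the natural map $\cF(\cA) \times_{\cG(\cA)} \cG'(\cA \oplus M) \to \cF(\cA) \times_{\cG(\cA)} \cG(\cA \oplus M)$ induced by $\cG' \to \cG$. Since fibers of maps between $\infty$-groupoids are stable under arbitrary base change, $T_{\cF'/\cG'}(\eta', M)$ is identified with the fiber of the original map at the image of $(\eta', \eta_{\cG', \mathrm{triv}})$ in $\cF(\cA) \times_{\cG(\cA)} \cG(\cA \oplus M)$.

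Finally, I would verify that this image equals $(\eta, \pi_\cF(\eta)_{\mathrm{triv}})$, the canonical basepoint used to define $T_{\cF/\cG}(\eta, M)$. This is a formal consequence of naturality of the trivial-lift section along $\cG' \to \cG$ together with the compatibility datum packaged into $\eta' = (\eta, \eta_{\cG'}, \alpha)$. There is no substantive obstacle: the statement expresses that the derivation functor is built from a pullback square in which $\cG$ enters only through its values, so replacing $\cG$ by the further pullback against $\cG' \to \cG$ commutes with taking fibers. The only real task is coherent bookkeeping of trivial lifts and compatibility isomorphisms, which is routine once the fiber-product identifications above are set up.
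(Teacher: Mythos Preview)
Your argument is correct and is exactly the natural verification one would write: unwind the definition of the derivation fiber for $\cF'/\cG'$, identify it via the iterated fiber-product manipulations with the derivation fiber for $\cF/\cG$ at the image point, and conclude that $\mathbb{L}_{\cF/\cG}|_{\cF'}$ corepresents the right functor. The paper itself states this proposition without proof, treating it as a formal consequence of the definition (it is the analytic analogue of a standard fact in derived algebraic geometry), so there is nothing to compare against; your proof supplies the missing details correctly.
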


\begin{prop}\label{idempotent cotangent}
    Suppose $X,Y\in \mathrm{Psh}(\mathrm{GelfRing}_{\cK}^{\mathrm{op}})$ admits cotangent complexes. If $j\colon X\rightarrow Y$ is an idempotent morphism of pre-sheaves \up{i.e., $X\times_Y X\simeq X$}, then $\mathbb{L}_X\simeq j^{*}\mathbb{L}_{Y}$.
\end{prop}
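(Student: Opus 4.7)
The plan is to reformulate the target equivalence $\mathbb{L}_X\simeq j^*\mathbb{L}_Y$ as the vanishing of the relative cotangent complex $\mathbb{L}_{X/Y}$, and then exploit idempotency via base change. By \Cref{fiber seq of cotan} applied to the composition $X\xrightarrow{j} Y\to \mathrm{GSpec}\,\cK$ (the structure map to the base), together with the hypothesis that both $\mathbb{L}_X$ and $\mathbb{L}_Y$ exist, the third term $\mathbb{L}_{X/Y}$ automatically exists and fits in a fiber sequence
\[
j^*\mathbb{L}_Y \longrightarrow \mathbb{L}_X \longrightarrow \mathbb{L}_{X/Y}.
\]
Hence it is enough to prove $\mathbb{L}_{X/Y}\simeq 0$.

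To establish this vanishing, I would consider the tautological pullback square
\[
\begin{tikzcd}
X\times_Y X \arrow[r, "p_2"] \arrow[d, "p_1"'] & X \arrow[d, "j"] \\
X \arrow[r, "j"']                               & Y
\end{tikzcd}
\]
The assumption that $j$ is idempotent means that the diagonal $\Delta\colon X\to X\times_Y X$ is an equivalence; since $p_i\circ\Delta\simeq \mathrm{id}_X$, both projections $p_1$ and $p_2$ are also equivalences (they are inverse to $\Delta$). Applying \Cref{base change of cotan} to this square yields
\[
p_2^*\mathbb{L}_{X/Y}\simeq \mathbb{L}_{(X\times_Y X)/X},
\]
where the right-hand side is the relative cotangent complex of $p_1$. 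Since $p_1$ is an equivalence, the right-hand side vanishes (any equivalence is formally étale: this is immediate from the defining universal property of the cotangent complex, as the square giving its corepresentability becomes trivial). Pullback along the equivalence $p_2$ is conservative, so $\mathbb{L}_{X/Y}\simeq 0$, finishing the argument when combined with the fiber sequence above.

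The main obstacle is essentially bookkeeping rather than mathematical: one must verify that \Cref{fiber seq of cotan} and \Cref{base change of cotan}, which are formulated for presheaves over $\mathrm{GelfRing}_{\cK}$, apply to the idempotent morphism $j$ in the pure presheaf setting (with no sheafification getting in the way). This is automatic since both cited propositions are stated at the level of presheaves. The only other sanity check is that the cotangent complex of an equivalence of presheaves vanishes, which follows directly from the representability criterion in \Cref{def of cotangent}: if $f\colon\cF\xrightarrow{\sim}\cG$ is an equivalence, the fiber square corepresenting $\mathbb{L}_{\cF/\cG}$ collapses to a point for every pair $(\cA, M)$, so the zero module corepresents it.
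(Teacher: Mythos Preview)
Your proof is correct and follows the same approach as the paper: reduce to $\mathbb{L}_{X/Y}\simeq 0$ via the fiber sequence, then use base change along the idempotent square to identify $\mathbb{L}_{X/Y}$ with $\mathbb{L}_{X/X}\simeq 0$. The paper compresses this into a single line, but the argument is identical.
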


\begin{proof}
    We have that $j^{*}\mathbb{L}_Y\rightarrow \mathbb{L}_X\rightarrow \mathbb{L}_{X/Y}$.
    We claim that $\mathbb{L}_{X/Y}\simeq 0$. Indeed, by idempoteness and base change property of relative cotangent complex \Cref{base change of cotan} we have that $0\simeq \mathbb{L}_{X/X}\simeq \mathbb{L}_{X/Y}$.
\end{proof}

Comparing to algebraic geometry, the criterion for Gelfand stack admiting cotangent complex is less clear. In the following we will collect some practically useful situations where the Gelfand stacks admits cotangent complexes.

\begin{prop}\label{berk has cot}
    If $X$ is a derived Berkovich space, then it admits cotangent complex.
\end{prop}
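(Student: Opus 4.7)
My plan is to reduce the general case to the affinoid case via the given open cover, using two results that are already on the table: the existence of the cotangent complex for $\Anspec\cB/\Anspec\cA$, and the idempotent cotangent principle (\Cref{idempotent cotangent}), which handles open immersions for free since any open immersion $j\colon U\hookrightarrow Y$ of Gelfand stacks is a monomorphism (its underlying $|\cdot|^{\mathrm{Betti}}$-picture is an open embedding, hence a monomorphism of topological spaces), so $U\times_Y U\simeq U$.

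First I would produce cotangent complexes on the charts. For each affinoid $V_i=\mathrm{GSpec}\,\cB_i$ appearing in the hypothesis, $\mathbb{L}_{V_i/\mathrm{GSpec}\,\cR}$ exists by the proposition identifying it with $\mathbb{L}_{\cB_i/\cR}$. For each open immersion $U_i\hookrightarrow V_i$, idempoteness together with \Cref{idempotent cotangent} give $\mathbb{L}_{U_i/\mathrm{GSpec}\,\cR}\simeq j_i^{*}\mathbb{L}_{V_i/\mathrm{GSpec}\,\cR}$. The same reasoning shows that if $\mathbb{L}_{X/\mathrm{GSpec}\,\cR}$ exists at all, it must restrict to $\mathbb{L}_{U_i}$ along $U_i\hookrightarrow X$, which pins down what the candidate has to be.

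Next I would glue. On a double overlap $U_{ij}\coloneqq U_i\times_X U_j$, which is open in both $U_i$ and $U_j$, two a priori constructions of $\mathbb{L}_{U_{ij}}$ exist (pull back from either factor); \Cref{idempotent cotangent} applied to $U_{ij}\hookrightarrow U_i$ and $U_{ij}\hookrightarrow U_j$ identifies both with the pullback of $\mathbb{L}_{V_i}$ (resp.\ $\mathbb{L}_{V_j}$), and these agree canonically. Coherence on triple overlaps is automatic because everything in sight is pulled back from an affinoid. This produces a descent datum in $D_{\sol}(X)\simeq \lim_{\mathrm{GSpec}\,\cA\to X}D_{\sol}(\cA)$ and hence a globally defined locally almost connective module $\mathbb{L}_{X/\mathrm{GSpec}\,\cR}$.

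It remains to verify that this module corepresents the square-zero deformation functor. Given a test datum $(\cA,\eta,M)$, pulling back the cover $\{U_i\to X\}$ along $\eta$ yields a $!$-cover $\{\mathrm{GSpec}\,\cA_{ij}\to\mathrm{GSpec}\,\cA\}$ by open subspaces that factor through some $U_i$. Because $X$ is a sheaf in the $!$-topology, the anima of lifts of $\eta$ to $\mathrm{GSpec}(\cA\oplus M)$ can be computed as the limit of the analogous lift functors on the cover; by construction each local lift functor is corepresented by $\eta_{ij}^{*}\mathbb{L}_{U_i}=\eta_{ij}^{*}\mathbb{L}_X$. Assembling via \Cref{base change of cotan} gives the desired corepresentation. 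The main obstacle I anticipate is the compatibility of the square-zero extension operation with open (localization) base change, i.e.\ that pulling back $\cA\oplus M$ along an open immersion $\cA\to\cA_{ij}$ yields $\cA_{ij}\oplus(M\otimes_{\cA}\cA_{ij})$ in a way that commutes with the functor of points of $X$; this should follow from the fact that open immersions of analytic rings are idempotent and the modules in question are complete, but making the descent spectral sequence for lifts truly airtight is the step that requires care.
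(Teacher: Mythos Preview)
Your approach is essentially the paper's: construct the candidate $\mathbb{L}_X$ by gluing the affinoid cotangent complexes over the open cover (the paper does this in one stroke via $D_{\sol}(X)\simeq\lim_i D_{\sol}(A_i)$, after refining to affinoid charts), then verify corepresentability by pulling back the cover along a test point $\eta\colon\mathrm{GSpec}\,\cA\to X$ and computing the deformation anima by descent over the resulting $!$-cover of $\mathrm{GSpec}\,\cA$.

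One small imprecision worth flagging: you invoke \Cref{idempotent cotangent} to conclude that $U_i$ has a cotangent complex equal to $j_i^{*}\mathbb{L}_{V_i}$, but as stated that proposition assumes both source and target already admit cotangent complexes, so it does not by itself give existence for $U_i$. The input you actually need (and what the paper uses, phrased as ``$j$ is formally \'etale'') is that an open immersion remains a monomorphism after any square-zero thickening of the test ring, so the relative deformation functor is trivial; then \Cref{fiber seq of cotan} produces $\mathbb{L}_{U_i}$ from $\mathbb{L}_{V_i}$. This is precisely the compatibility you flag at the end as the ``main obstacle'': once you recognize it as formal \'etaleness of open immersions, it is immediate, and the rest of your argument goes through without change.
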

\begin{proof}
    Choose a affinoid cover $\{j_i\colon \mathrm{GSpec}(A_i)\rightarrow X\}_{i\in I}$ of $X$ ($\colim_{i\in I} \mathrm{GSpec}(A_i)\simeq X$). The collection $\{\mathbb{L}_{A_i}\}_{i\in I}$ defines a object $\mathbb{L}_X\in D_{\sol}(X)\simeq \lim_{i\in I}D_{\sol}(A_i)$ that statisfies $j_i^{*}\mathbb{L}_X\simeq \mathbb{L}_{A_i}.$ Sufficient to show that $\mathbb{L}_X$ is the cotangent complex of $X$, that is, for any Gelfand ring $A$, $\eta \colon \mathrm{GSpec}A\rightarrow X$ and $M\in D_{\sol}(A)$ we have that 
    \begin{equation}\label{berk has cot proof}
        X(A\oplus M)\times_{X(A)} \eta\simeq \mathrm{Map}(\eta^{*}\mathbb{L}_X, M).
    \end{equation}
     Since $j=\cup_{i\in I}j_i\colon \cup_{i\in I}\mathrm{GSpec}A_i\rightarrow X $ is a surjection, up to a $!$-cover $g\colon \mathrm{GSpec}B\rightarrow \mathrm{GSpec}A$, $\eta$ factor thought $\cup_{i\in I}\mathrm{GSpec}A_i$.
     $$\begin{tikzcd}
\mathrm{GSpec}B \arrow[d, "g"'] \arrow[r, "\eta'"] & \cup_{i\in I}\mathrm{GSpec}A_i \arrow[d, "j", two heads] \\
\mathrm{GSpec}A \arrow[r, "f"]                  & X                                                       
\end{tikzcd}$$
Then notice that $X(A\oplus M)\simeq \lim_{[n]\in \Delta}X(B^{\tensor n/A}\oplus g_n^*M)$ where $g_n$ is the projection $g_n\colon \mathrm{GSpec}B^{\times n/ \mathrm{GSpec A}}\rightarrow \mathrm{GSpec}A$ and $X(A)\simeq \lim_{[n]\in \Delta}X(B^{\tensor n/A})$, we see that the left hand side of (\ref{berk has cot proof}) is equivalent to $\lim_{[n]\in \Delta}X(B^{\tensor n/A}\oplus g_n^*M)\times_{X(B^{\tensor n/A})}\eta\circ g_n$. Clearly $\cup_{i\in I} \mathrm{GSpec} A_i$ admits a cotangent complex and $j$ is formally étale. This shows that for each $n$,
$$(\cup_{i\in I}\mathrm{GSpec}A_i) (B^{\tensor n/A}\oplus g_n^{*}M)\simeq X(B^{\tensor n/A}\oplus g_n^*M)\times_{X(B^{\tensor n/A})}(\cup_{i\in I}\mathrm{GSpec}A_i) (B^{\tensor n/A}).$$
Thus we have that \begin{align*}
    \lim_{[n]\in \Delta}X(B^{\tensor n/A}\oplus g_n^*M)\times_{X(B^{\tensor n/A})}f\circ g_n &\simeq  \lim_{[n]\in \Delta}X(B^{\tensor n/A}\oplus g_n^*M)\times_{X(B^{\tensor n/A})}j\circ \eta_n'\\
    &\simeq \lim_{[n]\in \Delta}(\cup_{i\in I}\mathrm{GSpec}A_i) (B^{\tensor n/A}\oplus g_n^{*}M)\times_{(\cup_{i\in I}\mathrm{GSpec}A_i) (B^{\tensor n/A})} \eta_n'\\
    &\simeq \lim_{[n]\in \Delta}\mathrm{Map}((j\circ\eta_n')^{*}\mathbb{L}_X, g_n^*M)\\
    &\simeq \lim_{[n]\in \Delta}\mathrm{Map}((g_n\circ\eta)'^{*}\mathbb{L}_X, g_n^*M) \\
    &\simeq \mathrm{Map}(\eta^* \mathbb{L}_X,M).
\end{align*}
\end{proof}
\begin{theorem}[Cotangent complex of mapping stack]\label{cot of mapp}
    Let $\cS$ be a \textit{derived Tate space}, and $f:\cX\rightarrow \cS$ be a \textit{Tate stack} such that $1_{\cX}$ is $f$-\textit{prim} and $f$-\textit{suave}. Let $\cY/\cS$ be a \textit{Tate stack} admiting \up{relative} cotangent complex $\mathbb{L}_{\cY/\cS}$. Then $\underline{\mathrm{Map}}_{\cS}(\cX,\cY)/\cS$ admit a \up{relative} cotangent complex. More precisely, consider the following diagram
$$    \begin{tikzcd}
                                    & {\underline{\mathrm{Map}}_{\cS}(\cX,\cY)\times_{\cS}\cX} \arrow[ld,"\pi"] \arrow[rd,"ev"] &   \\
{\underline{\mathrm{Map}}_{\cS}(\cX,\cY)} &                                                                      & \cY
\end{tikzcd}$$
Here, $\pi$ is the projection map and $ev$ is the tautological evaluation map. Then $\mathbb{L}_{\underline{\mathrm{Map}}_{\cS}(\cX,\cY)/\cS}\simeq \pi_{\#}ev^{*}\mathbb{L}_{\cY}.$ Here $\pi_{\#}$ is the left adjoint of $\pi^*$ (which exist since $1_{\cX}$ is $f$-\textit{suave}). 
\end{theorem}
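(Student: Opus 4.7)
The plan is to verify the defining universal property of the cotangent complex of \Cref{def of cotangent} directly, by unpacking both sides and matching them via the adjunction $\pi_{\#}\dashv \pi^{*}$. Fix a Gelfand ring $A$ over $\cS$, a map $\eta\colon \mathrm{GSpec}(A)\to \underline{\mathrm{Map}}_{\cS}(\cX,\cY)$ and an almost connective module $N\in D_{\sol}(A)$. Writing $\cX_A\coloneq \cX\times_{\cS}\mathrm{GSpec}(A)$, with projection $\pi_A\colon \cX_A\to \mathrm{GSpec}(A)$ and classifying map $g\colon \cX_A\to \cY$ obtained from $\eta$, I want to produce a natural equivalence
$$\mathrm{fib}_{\eta}\bigl(\underline{\mathrm{Map}}_{\cS}(\cX,\cY)(A\oplus N)\to \underline{\mathrm{Map}}_{\cS}(\cX,\cY)(A)\bigr)\simeq \mathrm{Map}_{A}\bigl(\eta^{*}\pi_{\#}ev^{*}\mathbb{L}_{\cY/\cS},\,N\bigr).$$

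First I would unwind the definition of the mapping stack, so that the left-hand fiber becomes the fiber (over $g$) of the restriction map
$\mathrm{Map}_{\cS}(\cX\times_{\cS}\mathrm{GSpec}(A\oplus N),\cY)\to \mathrm{Map}_{\cS}(\cX_A,\cY)$.
Next I would identify $\cX\times_{\cS}\mathrm{GSpec}(A\oplus N)$ with the trivial square-zero extension of $\cX_A$ by $\pi_A^{*}N$: indeed, square-zero extensions are formed by completing $\mathrm{Sym}$-constructions on condensed modules, and the base change $\cX\times_{\cS}(-)$ is symmetric monoidal, so it commutes with such $\mathrm{Sym}$ constructions on connective modules. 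Then by the universal property of $\mathbb{L}_{\cY/\cS}$ applied at the point $g\colon \cX_A\to \cY$, the fiber becomes $\mathrm{Map}_{\cX_A}(g^{*}\mathbb{L}_{\cY/\cS},\,\pi_A^{*}N)$.

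The remaining step is the adjunction–plus–base-change manipulation. Because $1_{\cX}$ is $f$-suave and $f$-prim, these properties are stable under base change, so $1_{\cX_A}$ is $\pi_A$-suave and $\pi_A$-prim; in particular $\pi_A^{*}$ admits a left adjoint $\pi_{A,\#}$, and one obtains
$$\mathrm{Map}_{\cX_A}\bigl(g^{*}\mathbb{L}_{\cY/\cS},\,\pi_A^{*}N\bigr)\simeq \mathrm{Map}_{A}\bigl(\pi_{A,\#}g^{*}\mathbb{L}_{\cY/\cS},\,N\bigr).$$
It remains to verify the base change identity $\pi_{A,\#}g^{*}\mathbb{L}_{\cY/\cS}\simeq \eta^{*}\pi_{\#}ev^{*}\mathbb{L}_{\cY/\cS}$. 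Using $f$-suaveness one may rewrite $\pi_{\#}\simeq \pi_{!}(-\otimes \omega^{-1})$ for the relative dualizing object $\omega\coloneq \pi^{!}1$, which is invertible by suaveness and compatible with base change; base change for $\pi_{!}$ (which holds by the six-functor formalism of \cite{camargo2024analyticrhamstackrigid}) and for $\omega$ then yields the identity.

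The main technical obstacle will be step two and the final base-change identity: ensuring that the formation of square-zero extensions commutes with base change along $\cX\to \cS$ in the full generality of analytic rings (as opposed to purely algebraic settings, where it is formal), and then verifying the Beck--Chevalley property for $\pi_{\#}$ rather than $\pi_{!}$. Both reduce to compatibilities between the various functors in the six-functor formalism of \cite{camargo2024analyticrhamstackrigid}, together with the fact that the invertible $\pi^{!}1$ behaves well under pullback; once this is in place, the argument is otherwise purely formal.
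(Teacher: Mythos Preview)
Your proposal is correct and follows essentially the same approach as the paper: unwind the mapping stack to reduce to the fiber over $g$, identify it via the universal property of $\mathbb{L}_{\cY/\cS}$ with $\mathrm{Map}_{\cX_A}(g^{*}\mathbb{L}_{\cY/\cS},\pi_A^{*}N)$, and then apply the $\pi_{A,\#}\dashv\pi_A^{*}$ adjunction together with base change for $\pi_{\#}$ along the Cartesian square with $\eta\times\mathrm{id}$. The paper's proof is terser on precisely the two points you flag (that $\cX_{A\oplus N}$ is the square-zero extension of $\cX_A$ by $\pi_A^{*}N$, and the Beck--Chevalley identity $\pi_{A,\#}g^{*}\simeq \eta^{*}\pi_{\#}ev^{*}$), so your added detail there is welcome rather than a divergence.
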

 \begin{proof}
     Take $\cA$ (over $\cS$) a bounded affinoid ring, and $M\in D_{\sol}(\cA)$ a connective module. Denote $H\coloneq\underline{\mathrm{Map}}_{S}(X,Y) $, take $\eta\in H(\cA)$. Then the fiber product 
      $$\begin{tikzcd}
            & \eta \arrow[d] \\
H(\cA\oplus M) \arrow[r] & H(\cA)\times_{\cS(\cA)}\cS(\cA\oplus M).          
\end{tikzcd}$$
is equivalent to the fiber product
$$\begin{tikzcd}
            & f\arrow[d] \\
\mathrm{Map}_{\cS}(\cX_{\cA\oplus M},\cY) \arrow[r] & \mathrm{Map}_{\cS}(\cX_{\cA},\cY).          
\end{tikzcd}$$
Since $\cY/\cS$ admits a relative cotangent complex $\mathbb{L}_{\cY/\cS}$, this is equivalent to $\mathrm{Hom}_{D_{\sol}(\cX_{\cA})}(f^{*}\mathbb{L}_{\cY/\cS},p_{\cA}^{*}M)$, where $p_{\cA}\colon \cX_{\cA}\rightarrow \Anspec(\cA)$ is the projection.
Notice that we have the following commutative diagram\upshape: 
$$\begin{tikzcd}
\cX_{\cA} \arrow[r, "\eta\times id"] \arrow[d, "p_{\cA}"] \arrow[rr, "f", bend left] & {\underline{\mathrm{Map}}_{S}(X,Y)\times_{S}\cX} \arrow[d, "\pi"] \arrow[r, "ev"] & \cY \\
\Anspec(\cA) \arrow[r, "\eta"]                                                       & {\underline{\mathrm{Map}}_{S}(X,Y)} .                                              &    
\end{tikzcd}$$
On the left hand side it forms a Cartesian diagram. Thus we have that 
$$\mathrm{Hom}_{D_{\sol}(\cX_{\cA})}(f^{*}\mathbb{L}_{\cY/\cS},p_{\cR}^{*}M)\simeq \mathrm{Hom}_{D_{\sol}(\cA)}(p_{\cA,\#}f^{*}\mathbb{L}_{\cY/\cS},M)\simeq \mathrm{Hom}_{D_{\sol}(\cA)}(\eta^{*}(\pi_{\#}ev^{*}\mathbb{L}_{\cY/\cS}),M) $$
This shows that $\pi_{\#}ev^{*}\mathbb{L}_{\cY/\cS}$ represents the (relative) cotangent complex of $\underline{\mathrm{Map}}_{\cS}(\cX,\cY)$.
 \end{proof}
\begin{prop}
    If $Y$ is a \textit{Fredholm} stack admiting a cotangent complex, then $Y^{\mathrm{an}}\in \mathrm{GelfStk}_{\cK}$ admits a cotangent complex. In particular if $Y$ is a \textit{perfectly Tannakian stack of geometric nature}, then $Y^{\mathrm{an}}$ admits a cotangent complex.
    
\end{prop}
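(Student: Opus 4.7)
The plan is to reduce everything to the algebraic cotangent complex of $Y$ by exploiting the Fredholm property. Fix a Gelfand ring $\cA$ over $\cK$ and a point $\eta \in Y^{\mathrm{an}}(\cA)$. Since $Y$ is Fredholm, $Y^{P\text{-}\mathrm{an}}$ is already a sheaf, so the definition of $Y^{\mathrm{an}}$ collapses to the formula $Y^{\mathrm{an}}(\cA) \simeq Y(\underline{\cA}(*))$, and $\eta$ corresponds to an algebraic point $\eta_{\mathrm{alg}}\colon \Spec \underline{\cA}(*)\to Y$. For a connective $M\in D_{\sol}(\cA)_{\geq 0}$, the underlying condensed ring of the square-zero extension $\cA\oplus M$ is $\underline{\cA}\oplus M$, whose global sections are $\underline{\cA}(*)\oplus M(*)$. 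One should first verify that $\cA\oplus M$ remains Gelfand (and in particular Fredholm), which follows because its uniform completion coincides with that of $\cA$ up to a nilpotent ideal generated by $M$; once this is established one has
\[
Y^{\mathrm{an}}(\cA\oplus M) \;\simeq\; Y\bigl(\underline{\cA}(*)\oplus M(*)\bigr).
\]

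With this identification in hand, the fiber product defining the cotangent complex of $Y^{\mathrm{an}}$ at $\eta$ becomes
\[
Y\bigl(\underline{\cA}(*)\oplus M(*)\bigr) \times_{Y(\underline{\cA}(*))} \{\eta_{\mathrm{alg}}\},
\]
which, by the assumption that $Y$ admits an algebraic cotangent complex $\mathbb{L}_{Y/K}$, is corepresented by the anima $\mathrm{Map}_{\underline{\cA}(*)}(\eta_{\mathrm{alg}}^{*}\mathbb{L}_{Y/K},\,M(*))$. Next I would identify this with a mapping space in $D_{\sol}(\cA)$ via the adjunction between the condensation functor $j_{\cA}\colon \rqcoh(\underline{\cA}(*))\to D_{\sol}(\cA)$ and the global sections functor $(-)(*)$; this gives
\[
\mathrm{Map}_{\underline{\cA}(*)}\bigl(\eta_{\mathrm{alg}}^{*}\mathbb{L}_{Y/K},\,M(*)\bigr)\;\simeq\; \mathrm{Map}_{D_{\sol}(\cA)}\bigl(j_{\cA}(\eta_{\mathrm{alg}}^{*}\mathbb{L}_{Y/K}),\,M\bigr).
\]
Finally, naturality of $j$ lets one write $j_{\cA}(\eta_{\mathrm{alg}}^{*}\mathbb{L}_{Y/K}) \simeq \eta^{*}(j^{\mathrm{an},*}\mathbb{L}_{Y/K})$, where $j^{\mathrm{an}}\colon Y^{\mathrm{an}}\to Y^{\mathrm{alg}}$ is the counit of analytification. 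This identifies the relative cotangent complex as $\mathbb{L}_{Y^{\mathrm{an}}/\Spa\cK} \simeq j^{\mathrm{an},*}\mathbb{L}_{Y/K}$, which is locally almost connective because $\mathbb{L}_{Y/K}$ is and pullback is right-$t$-exact.

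For the ``in particular'' clause, I would invoke \Cref{map from fredholm}: a perfectly Tannakian stack of geometric nature automatically satisfies $Y^{\mathrm{an}}(\cA)\simeq Y(\underline{\cA}(*))$ for Gelfand $\cA$, hence is Fredholm in the sense used above. Such stacks (being Artin with quasi-affine diagonal) admit an algebraic cotangent complex by standard deformation theory, and the first part of the proposition applies.

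The main obstacle I anticipate is the square-zero extension step, namely checking that $\cA\oplus M$ is Gelfand (or at least that $Y^{\mathrm{an}}$ continues to be computed by the naive formula on such extensions). If this fails for general connective $M$, one would have to argue through a device like writing $M$ as a sifted colimit of free modules and showing both sides of the desired representability statement commute with such colimits, or alternatively restrict the class of $M$ used to probe the cotangent complex and invoke left Kan extension to recover the general case. The remaining identifications (adjunction between condensation and global sections; naturality of $j^{\mathrm{an}}$) are essentially formal once the Fredholm reduction is in place.
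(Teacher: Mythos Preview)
Your proposal is correct and follows essentially the same route as the paper: use the Fredholm hypothesis to identify $Y^{\mathrm{an}}(\cA)\simeq Y(\underline{\cA}(*))$ and $Y^{\mathrm{an}}(\cA\oplus M)\simeq Y(\underline{\cA}(*)\oplus M(*))$, invoke the algebraic cotangent complex of $Y$, and then pass back through the condensation/global-sections adjunction to exhibit $j_Y^{*}\mathbb{L}_Y$ as the cotangent complex of $Y^{\mathrm{an}}$. The one point you single out for care---that $\cA\oplus M$ is again Gelfand (hence Fredholm)---is exactly the step the paper also asserts without elaboration, so your caution there is well placed rather than a divergence.
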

\begin{proof}
    By one sufficient to check that, there is an object $\mathbb{L}_{Y^{\mathrm{an}}}$ such that for any $\cA\in \mathrm{Nilperfd}$, $\eta\in Y(\cA)$ and  any connective $\cA$-module $M$, we have that the functor sending $M$ to the fiber product 
       $$\begin{tikzcd}
            & \eta \arrow[d] \\
Y^{\mathrm{an}}(\cA\oplus M) \arrow[r] & Y^{\mathrm{an}}(\cA)          
\end{tikzcd}$$
Both $\cA\oplus M$ and $\cA$ are \textit{Fredholm},thus by \Cref{map from fredholm} $Y^{\mathrm{an}}(\cA\oplus M)\simeq Y(\underline{A}(*)\oplus M(*))$ and $Y^{\mathrm{an}}(\cA)\simeq Y(\underline{\cA}(*))$, in particular $\eta$ correspond to some $\eta(*)\in Y(\underline{\cA}(*))$. Thus the fiber product above is equivalent to $\mathrm{Hom}_{\underline{\cA}(*)}((\eta(*))^{*}\mathbb{L}_{Y}, M(*))\simeq \mathrm{Hom}_{\cA}((\eta(*))^{*}\mathbb{L}_{Y}\tensor_{\underline{\cA}(*)}\cA, M)$. But $(\eta(*))^{*}\mathbb{L}_{Y}\tensor_{\underline{\cA}(*)}\cA\simeq (j_Y\circ \eta)^{*}\mathbb{L}_Y$ via the obvious commutative diagram.
$$\begin{tikzcd}
\Anspec(\cA) \arrow[d] \arrow[r, "\eta"] & Y^{\mathrm{an}} \arrow[d, "j_{Y}"] \\
\Spec (\cA(*)) \arrow[r, "\eta(*)"]      & Y                        
\end{tikzcd}$$
This shows that $\mathbb{L}_{Y^{\mathrm{an}}}\coloneq j_Y^{*}\mathbb{L}_Y$ gives the cotangent complex on $Y^{\mathrm{an}}$.
\end{proof}

\begin{proof}[Proof of \Cref{analytification of cotangent complex}]

Consider the following diagram\upshape: 
$$
\begin{tikzcd}
(U\times_X U)^{\mathrm{an}} \arrow[dd, "p"] \arrow[rr, "p_1"] \arrow[rd, "j_{U\times_X U}"] &                                                      & U^{\mathrm{an}} \arrow[dd, "f"'] \arrow[rd] &                   \\
                                                                                     & U\times_X U \arrow[dd, "\pi_1"] \arrow[rr, "\pi_2"'] &                                    & U \arrow[dd, "g"] \\
U^{\mathrm{an}} \arrow[rr, "f"] \arrow[rd, "j_U"]                                             &                                                      & X^{\mathrm{an}} \arrow[rd, "j_X"]           &                   \\
                                                                                     & U \arrow[rr, "g"]                                    &                                    & X                
\end{tikzcd}$$
since $f$ is a $!$ cover, sufficient to prove natural map $p^*f^{*}j_{X}^{*}\mathbb{L}_X\rightarrow p^*f^{*}\mathbb{L}_{X^{\mathrm{an}}}$ induces an equivalence. 
This follows from the following diagram:

$$\begin{tikzcd}
p^{*}f^*\mathbb{L}_{X^{\mathrm{an}}} \arrow[r]\arrow[dd] & p^{*}\mathbb{L}_{U^{\mathrm{an}}} \arrow[r] \arrow[d, "by\ \Cref{idempotent cotangent}"] & \mathbb{L}_{(U\times_X U)^{\mathrm{an}}/U^{\mathrm{an}}} \arrow[d, "\simeq"] \\
 & p^*j_{U}^{*}\mathbb{L}_{U} \arrow[r] \arrow[d,"\simeq"]               & j_{U\times_{X}U}^{*}\mathbb{L}_{U\times_X U/U}\arrow[d, "\simeq"]      \\ 
 p^*f^{*}j_{X}^{*}\mathbb{L}_X\arrow[r] & j_{U\times_X U}^{*}\pi_{1}^{*}\mathbb{L}_{U}\arrow[r] & j_{U\times_{X}U}^{*}\mathbb{L}_{U\times_X U/U}
\end{tikzcd}
$$
Where each row forms a fiber sequence (the last row forms a fiber sequence since we have that $p^*f^{*}j_{X}^{*}\mathbb{L}_X\simeq j_{U\times_X U}^{*}\pi_{1}^{*}g^{*}\mathbb{L}_{X}$). 
\end{proof}

 \begin{prop}
     Let $Y/K$ be a \textit{perfectly Tannakian stack with geometric nature} and $X$ be a proper scheme over $K$, then $\underline{\mathrm{Map}}(X^{\mathrm{an}},Y^{\mathrm{an}})$ admits a cotangent complex and we have that $j^* \mathbb{L}_{\underline{\mathrm{Map}}(X,Y)}\simeq \mathbb{L}_{\underline{\mathrm{Map}}(X^{\mathrm{an}},Y^{\mathrm{an}})}$. Here, $j\colon \underline{\mathrm{Map}}(X^{\mathrm{an}},Y^{\mathrm{an}})\rightarrow\underline{\mathrm{Map}}(X,Y)$ is the counit map of analytification functor.
 \end{prop}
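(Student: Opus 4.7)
The plan is to combine the main comparison theorem \Cref{analytification} with the two preceding results on cotangent complexes of analytifications, namely the proposition asserting that the analytification of a Fredholm stack with a cotangent complex inherits one, and \Cref{analytification of cotangent complex} which gives the algebraic–analytic compatibility. The whole argument is essentially a bookkeeping exercise once one arranges these in the right order.

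First I would invoke \Cref{mapp ana} (i.e.\ \Cref{analytification}) to identify $\underline{\mathrm{Map}}(X^{\mathrm{an}},Y^{\mathrm{an}})\simeq \underline{\mathrm{Map}}(X,Y)^{\mathrm{an}}$, after which the counit $j$ coincides with the natural map $\underline{\mathrm{Map}}(X,Y)^{\mathrm{an}}\to \underline{\mathrm{Map}}(X,Y)$. By the standing hypothesis of \Cref{analytification}, $\underline{\mathrm{Map}}(X,Y)$ is an Artin stack, hence admits an algebraic cotangent complex $\mathbb{L}_{\underline{\mathrm{Map}}(X,Y)}$.

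Next I would verify that $\underline{\mathrm{Map}}(X,Y)$ is a $\cK$-Fredholm stack in the sense defined before \Cref{stupid ana preserve colimit}, i.e.\ that the presheaf $\underline{\mathrm{Map}}(X,Y)^{\mathrm{P\text{-}an}}$ is already a sheaf. This is not given directly, but in fact is already established in the course of proving \Cref{analytification}: the map $\delta\colon \underline{\mathrm{Map}}(X,Y)^{\mathrm{S\text{-}an}}\rightarrow \underline{\mathrm{Map}}(X^{\mathrm{an}},Y^{\mathrm{an}})$ is shown there to be an equivalence, but the argument (using Tannaka duality on both sides together with the relative GAGA theorem \Cref{main}) proceeds affinoid-by-affinoid on test objects $\mathrm{GSpec}\,\cA$ and so actually shows that $\underline{\mathrm{Map}}(X,Y)^{\mathrm{P\text{-}an}}$ itself maps isomorphically to $\underline{\mathrm{Map}}(X^{\mathrm{an}},Y^{\mathrm{an}})$; since the latter is a sheaf, so is the former. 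Granted this, the preceding proposition (Fredholm stacks with a cotangent complex have analytifications with a cotangent complex) yields existence of $\mathbb{L}_{\underline{\mathrm{Map}}(X,Y)^{\mathrm{an}}}$, equivalently of $\mathbb{L}_{\underline{\mathrm{Map}}(X^{\mathrm{an}},Y^{\mathrm{an}})}$.

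Finally, applying \Cref{analytification of cotangent complex} to the Artin stack $\underline{\mathrm{Map}}(X,Y)$ gives the identification
\[
\mathbb{L}_{\underline{\mathrm{Map}}(X,Y)^{\mathrm{an}}/\Spa(K,K^{+})}\simeq j^{\ast}\mathbb{L}_{\underline{\mathrm{Map}}(X,Y)/\Spec K},
\]
which under the equivalence from \Cref{mapp ana} is exactly the asserted formula. The main obstacle, and in fact the only substantive point, is the Fredholm-stack claim for $\underline{\mathrm{Map}}(X,Y)$; everything else is a direct citation. If one prefers not to re-extract this from inside the proof of \Cref{analytification}, the alternative is to construct $\mathbb{L}_{\underline{\mathrm{Map}}(X^{\mathrm{an}},Y^{\mathrm{an}})}$ directly via \Cref{cot of mapp} applied to the projection $\pi\colon X^{\mathrm{an}}\times \underline{\mathrm{Map}}(X^{\mathrm{an}},Y^{\mathrm{an}})\to \underline{\mathrm{Map}}(X^{\mathrm{an}},Y^{\mathrm{an}})$ (using \Cref{proper is prim} to see $1_{X^{\mathrm{an}}}$ is $\pi$-prim and $\pi$-suave) and the analytic cotangent complex of $Y^{\mathrm{an}}$ produced by the preceding proposition, then match with $j^\ast \pi_{\#}^{\mathrm{alg}}\,\mathrm{ev}^{\ast}\mathbb{L}_{Y/K}$ by base change and the projection formula; I would use this as a cross-check.
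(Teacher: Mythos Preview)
Your approach is correct and matches the paper's, which simply cites \Cref{analytification of cotangent complex} and \Cref{analytification} in one line. You are more careful than the paper in spelling out why the cotangent complex of $\underline{\mathrm{Map}}(X^{\mathrm{an}},Y^{\mathrm{an}})$ exists in the first place (a hypothesis that \Cref{analytification of cotangent complex} requires); your two routes for this---either extracting the Fredholm-stack property of $\underline{\mathrm{Map}}(X,Y)$ from the proof of \Cref{analytification} and feeding it into the preceding proposition, or invoking \Cref{cot of mapp} directly---are both valid, and the paper leaves this step implicit.
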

 \begin{proof}
     This is a direct consequence of \Cref{analytification of cotangent complex} and \Cref{analytification}.
 \end{proof}
 \begin{eg}
     Let $C$ be a smooth projective curve over $\Spec K$, $G$ be a smooth affine group. By  \Cref{cot of mapp} $\mathrm{Bun}_{G}(C^{\mathrm{an}})\coloneq\underline{\mathrm{Map}}(C^{\mathrm{an}},BG^{\mathrm{an}})$ admits a cotangent complex $\mathbb{L}_{\mathrm{Bun}_G(C^{\mathrm{an}})}$, and that $\mathbb{T}^{an,*}\mathrm{Bun}_G(C^{\mathrm{an}})\simeq (\mathbb{T}^{*}\mathrm{Bun}_G(C))^{\mathrm{an}}\simeq (\mathrm{Higgs}_G(C))^{\mathrm{an}}\simeq \mathrm{Higgs}_{G}(C^{\mathrm{an}})$.
 \end{eg}

\csub{Virtual cotangent complex}
At the end, for further applications, we discuss a variant of analytic cotangent complex. 
\begin{definition}[Virtual cotangent complex]
    Let $\cF\colon \AnRing^{\mathrm{op}}\rightarrow \Ani$ be a presheaf. We denote the \emph{virtual cotangent complex} $\mathbb{L}^{vir}_{\cF}$  as the following object in $D_{\sol}(\cF)$
    $$\mathbb{L}^{vir}_{\cF}\coloneq \underset{\cA, \eta\in \cF(\cA)}{\lim} \eta_{*}\mathbb{L}_{\cA}$$
    If $f\colon \cF\rightarrow\cF'$ is a natural transform of presheaves, we denote the \emph{virtual relative cotangent complex} $\mathbb{L}^{vir}_{\cF/\cF'}$ as follow
    $$\mathbb{L}^{vir}_{\cF/\cF'}\coloneq\underset{\cA, \eta\in \cF'(\cA)}{\lim}\eta_{f,*}\mathbb{L}_{\cF\stackrel{\eta}{\times}_{\cF'}\Anspec \cA/\Anspec \cA}$$
    where $\eta_f$ denote the base change of $\eta$ along $f$.\\
    For $f\colon \cF\rightarrow \cF'$, there is a natural map $f^{*}\mathbb{L}^{vir}_{\cF'}\rightarrow \mathbb{L}^{vir}_{\cF}$ via the following composition:
    $$f^{*}\mathbb{L}^{vir}_{\cF'}=f^{*}\underset{\cA, \eta\in \cF'(\cA)}{\lim} \eta_{*}\mathbb{L}_{\cA}\rightarrow\underset{\cA, \eta\in \cF'(\cA)}{\lim} f^{*}\eta_{*}\mathbb{L}_{\cA}\rightarrow \underset{\cB, \eta'\in \cF\stackrel{\eta}{\times}_{\cF}\Anspec \cA(\cB)}{\lim} (\eta_f\circ\eta')_{*}\mathbb{L}_{\cB}\rightarrow \underset{\cB, \eta''\in \cF(\cB)}{\lim} \eta_{*}\mathbb{L}_{\cB}$$
    Where for the last term we have $\underset{\cB, \eta''\in \cF(\cB)}{\lim} \eta_{*}\mathbb{L}_{\cB}=\mathbb{L}^{vir}_{\cF}$, and $\eta_f$ denote the base change of $\eta$ along $f$.
\brem It is clear from definition that for analytic rings $\cA\in \AnRing$, $\mathbb{L}^{vir}_{\Anspec \cA}\simeq \mathbb{L}_{\cA}$. 
\erem
\end{definition}
\begin{prop}\label{computing vir cot}
\benuma
 
\item If $X$ is a derived Berkovich space, then $\mathbb{L}_X^{\mathrm{vir}}\simeq \mathbb{L}_X$.
   
\item If $X$ is a derived Berkovich space and $f\colon X\rightarrow Y$ is morphism of Gelfand stacks such that $X\times_Y X$ is representable by a smooth derived Berkovich space over $X$, then we have the following fiber sequence 
    $f^{*}\mathbb{L}^{vir}_Y\rightarrow \mathbb{L}_X\rightarrow \mathbb{L}^{vir}_{X/Y}.$ Moreover, we have a natural equivalence $\mathbb{L}_{Y^{\mathrm{an}}}^{vir}\simeq \underset{[n]\in \Delta^{\mathrm{op}}}{\lim} \mathbb{L}_{X^{\times n/Y}}$.
    \eenum
\end{prop}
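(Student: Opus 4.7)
For part (1), my plan is to build a natural comparison map $\mathbb{L}_X \to \mathbb{L}_X^{\mathrm{vir}}$ and verify it is an equivalence by local reduction. I would construct the map by noting that for each point $\eta : \Anspec \cA \to X$, the base-change equivalence $\eta^* \mathbb{L}_X \simeq \mathbb{L}_\cA$ (available via Proposition \ref{berk has cot} together with Proposition \ref{base change of cotan}) gives by adjunction a map $\mathbb{L}_X \to \eta_* \mathbb{L}_\cA$, and these assemble over the indexing category of points to yield $\mathbb{L}_X \to \mathbb{L}_X^{\mathrm{vir}}$. To show this is an equivalence, I would first treat the affinoid case $X = \mathrm{GSpec}\cB$: the identity $\mathrm{id}: \Anspec\cB \to X$ is terminal in the indexing diagram of pointed affinoids (every $\eta: \Anspec\cA \to X$ maps to it by $\eta$ itself), so the limit computing $\mathbb{L}_X^{\mathrm{vir}}$ collapses to $\mathbb{L}_\cB = \mathbb{L}_X$. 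For a general derived Berkovich space, I would pick an affinoid cover $\{u_i: U_i \hookrightarrow X\}$ and verify the comparison after pullback to each $U_i$; since $u_i$ is an open immersion, $u_i^*$ preserves arbitrary limits and satisfies base change $u_i^* \eta_* \simeq \eta'_* v^*$ against pushforwards along open immersions, reducing to the affinoid case.

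For part (2), the plan is to produce the desired fiber sequence one $\eta$ at a time and then assemble via $\lim_\eta$. Given $\eta: \Anspec\cA \to Y$, form the Cartesian square with $X_\cA := X \times_Y \Anspec\cA$, $h: X_\cA \to X$, and $g: X_\cA \to \Anspec\cA$. The hypothesis that $X \times_Y X$ is representable smooth over $X$ implies, after a $!$-cover of $\Anspec\cA$ through which $\eta$ lifts to $X$, that $X_\cA$ is itself a derived Berkovich space; by Proposition \ref{berk has cot} it carries a cotangent complex, and by Proposition \ref{fiber seq of cotan} we have a fiber sequence
\[
g^* \mathbb{L}_\cA \longrightarrow \mathbb{L}_{X_\cA} \longrightarrow \mathbb{L}_{X_\cA/\cA}.
\]
Applying $h_* = \eta_{f,*}$ and then taking $\lim_\eta$ produces a candidate sequence whose terms I would identify as follows: the first as $f^*\mathbb{L}_Y^{\mathrm{vir}}$ via the base-change identity $f^*\eta_* \simeq \eta_{f,*} g^*$ together with $f^*$ commuting with the limit; the middle as $\mathbb{L}_X \simeq \mathbb{L}_X^{\mathrm{vir}}$ (using part (1)) after a Fubini rearrangement identifying pairs $(\eta, \delta \in X_\cA(\cB))$ bijectively with maps $\gamma = h\circ\delta: \Anspec\cB \to X$ subject to $f\circ\gamma = \eta$; and the last as $\mathbb{L}_{X/Y}^{\mathrm{vir}}$ by definition.

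For the ``moreover'' part, iterating the smoothness-representability of the diagonal shows that each Čech power $X^{\times n/Y}$ is a derived Berkovich space and so admits an ordinary cotangent complex $\mathbb{L}_{X^{\times n/Y}}$. Since $f: X \to Y$ is a $!$-cover, $!$-descent gives $D_{\sol}(Y) \simeq \lim_{[n]\in\Delta^{\mathrm{op}}} D_{\sol}(X^{\times n/Y})$, and the simplicial system $[n] \mapsto \mathbb{L}_{X^{\times n/Y}}$ with its natural structure maps descends to a single global object of $D_{\sol}(Y)$. I would then verify that this object co-represents the square-zero-extension functor defining $\mathbb{L}_Y^{\mathrm{vir}}$, by matching the indexing diagram of pointed affinoids over $Y$ with the Čech levels via descent.

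The main obstacle will be cleanly executing the base-change and limit-commutation steps: specifically, confirming that $f^*$ commutes with the large limit defining $\mathbb{L}_Y^{\mathrm{vir}}$ and that the base-change identity $f^*\eta_* \simeq \eta_{f,*} g^*$ holds in the six-functor formalism for Gelfand stacks. These statements are standard for special classes of morphisms (open immersions, proper maps, representable smooth maps), but care is needed in the present generality. Fortunately the smoothness hypothesis on the diagonal keeps all relevant fiber products within the class of derived Berkovich spaces, where ordinary cotangent complexes exist and descend, so the difficulty is essentially bookkeeping rather than a conceptual gap.
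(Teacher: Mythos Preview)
Your approach to (1) is correct and amounts to a direct unwind of what the paper defers to the argument of Proposition~\ref{berk has cot}.

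For (2) the paper proceeds differently, and the difference is substantive. The paper's key observation (via Lemma~\ref{descent of cotangent complex}) is that the assignment $\cA \mapsto (D_{\sol}(\cA),\mathbb{L}_{\cA})$ is a $!$-sheaf of pointed $\infty$-categories; consequently $\mathbb{L}^{\mathrm{vir}}_Z$ may be computed using \emph{any} single presentation $Z \simeq \colim_i \mathrm{GSpec}(A_i)$, not only the full over-category of affinoid points. The ``moreover'' identification $\mathbb{L}_Y^{\mathrm{vir}} \simeq \lim_{[n]} \mathbb{L}_{X^{\times n/Y}}$ is then immediate by choosing the Čech nerve of an affinoid cover of $X$ composed with $f$, and the fiber sequence is assembled level-wise from that specific cosimplicial diagram.

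Your plan instead manipulates the large defining limit directly. The commutation of $f^*$ with this limit and the base-change $f^*\eta_* \simeq \eta_{f,*}g^*$ for \emph{arbitrary} $\eta$ both require $f^*$ to admit a left adjoint, i.e.\ $f$ to be cohomologically smooth; you do not derive this from the hypothesis on $X\times_Y X$ (which gives it only once $f$ is known to be a cover so smoothness descends). More seriously, your sketch for the ``moreover'' part---``verify that this object co-represents the square-zero-extension functor''---is a confusion: $\mathbb{L}_Y^{\mathrm{vir}}$ is not defined by a co-representability property but as the large limit itself, so what must be shown is that the large limit agrees with the Čech-nerve limit. That is precisely the content of the sheaf property you are missing, and without it this step is not bookkeeping but the heart of the matter.
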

\begin{proof} (1) follows from the proof of \Cref{berk has cot}, We now prove (2).
    Notice that by \Cref{descent of cotangent complex}, the assignment $\mathbb{L}\coloneq\cA\mapsto (D_{\sol}(\cA), \mathbb{L}_{\cA})$ defines a $!$-sheaf values in pointed $\infty$-category, thus by construction we have that $(D_{\sol}(X),\mathbb{L}_X^{\mathrm{vir}})\simeq \mathbb{L}(X)$ for any Gelfand stack, moreover $\mathbb{L}$ sends colimit to limit. Thus to compute $\mathbb{L}_X^{\mathrm{vir}}$ for a Gelfand stack, one sufficient to find one diagram of affinoid $\{\mathrm{GSpec}(A_i)\}_{i\in I}$ such that $X\simeq \colim_{i\in I} \mathrm{GSpec}(A_i)$, then taking the limit of corresponding pushforwards of $\mathbb{L}_{A_i}.$ Thus by \Cref{idempotent cotangent} the first one follows, since one can always write $X$ as union of affinoid. For the second one, we first find an affinoid cover $\eta_{i\in I}\colon \{\mathrm{GSpec}(B_i)\}_{i\in I}\surjects X$, and denote by $\{\mathrm{GSpec(B)^{[n]}}\}_{[n]\in\Delta^{op}}$ its Čech nerve, then by the discussion above, we see that 
    $$ f^{*}\mathbb{L}^{vir}_{Y}=f^{*}\underset{[n]\in \Delta^{\mathrm{op}}}{\lim} f\circ\eta_{*}^{[n]}\mathbb{L}_{\mathrm{GSpec}(B)^{[n]}}\simeq\underset{[n]\in \Delta^{\mathrm{op}}}{\lim}f^* f\circ\eta_{*}^{[n]}\mathbb{L}_{\mathrm{GSpec}(B)^{[n]}}\simeq \underset{[n]\in \Delta^{\mathrm{op}}}{\lim} h_{*}^{[n]}g^*\mathbb{L}_{\mathrm{GSpec}(B)^{[n]}}. $$
      Here, $h^{[n]}$ and $g$ denote the arrows in the following Cartesian diagram. Here $X_{B}^{[n]}=X\times_{Y} \mathrm{GSpec}(B)^{[n]}$ and $X_{B}^{[n]}$ are derived Berkovich spaces.
  \begin{equation}
     \begin{tikzcd}
X_{B}^{[n]} \arrow[d, "h^{[n]}"] \arrow[r, "g"] & \mathrm{GSpec}(B)^{[n]} \arrow[d, "f\circ\eta^{[n]}"] \\
X \arrow[r, "f"]                     & Y                            
\end{tikzcd}
  \end{equation}
  We now claim that $\mathbb{L}_X^{\mathrm{vir}}\simeq\mathbb{L}_X\simeq \underset{[n]\in \Delta^{\mathrm{op}}}{\lim} h_{*}^{[n]}\mathbb{L}_{X_{B}^{[n]}}.$ Indeed, we notice that by construction $h^{[n]}$ factors through $X\times_Y X$ and $p_1\colon X\times_Y X\rightarrow X$ is a rigid smooth cover of Berkovich spaces, on which the cotangent complex descends according to the proof of \cite{anschütz2025analyticrhamstacksfarguesfontaine}*{proposition 5.2.1}. Thus the claim follows from both $p_1$ and the base change of $\eta^{[n]}$ descends the cotangent complex (which is a affinoid cover).
At the end, we can also compute $\mathbb{L}_{X/Y}^{\mathrm{vir}}$ via the chosen cover $\eta_{i\in I}$, simlimar argument shows that $\mathbb{L}_{X/Y}^{\mathrm{vir}}\simeq \underset{[n]\in\Delta^{\mathrm{op}}}{\lim}h_{*}^{[n]}\mathbb{L}_{X_{B}^{[n]}/\mathrm{GSpec}(B)^{[n]}}.$ where the desired fiber sequence then follows from the fiber sequence 
$$\underset{[n]\in\Delta^{\mathrm{op}}}{\lim}h_{*}^{[n]}g^*\mathbb{L}_{\mathrm{GSpec}(B)^{[n]}}\rightarrow \underset{[n]\in \Delta^{\mathrm{op}}}{\lim} h_{*}^{[n]}\mathbb{L}_{X_{B}^{[n]}}\rightarrow \underset{[n]\in\Delta^{\mathrm{op}}}{\lim}h_{*}^{[n]}\mathbb{L}_{X_{B}^{[n]}/\mathrm{GSpec}(B)^{[n]}}$$
induced by the fiber sequences $g^*\mathbb{L}_{\mathrm{GSpec}(B)^{[n]}}\rightarrow\mathbb{L}_{X_{B}^{[n]}}\rightarrow \mathbb{L}_{X_{B}^{[n]}/\mathrm{GSpec}(B)^{[n]}}.$
\end{proof}

\begin{lemma}\label{descent of cotangent complex}
    If $f\colon \mathrm{GSpec}\cB\rightarrow \mathrm{GSpec} \cA$ is a descendable cover of Gelfand rings, then we have the following
    $$\mathbb{L}_{\cA}\simeq \underset{[n]\in \Delta^{\mathrm{op}}}{\lim} f_{n,*}\mathbb{L}_{\cB^{\otimes n/\cA}}$$
    As a consequence, we have that $\mathbb{L}_{\cB/\cA}\simeq\underset{[n]\in \Delta^{\mathrm{op}}}{\lim} \mathbb{L}_{\cB^{\otimes n+1/\cA}/\cB^{\otimes n/\cA}}.$ The last assersion also follows from $\mathbb{L}$ being a sheaf.
\end{lemma}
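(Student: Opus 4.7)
The plan is to construct a natural comparison map and check it is an equivalence by reducing to module descent; the consequence will then follow from base change, and can alternatively be read off from the sheaf property of $\mathbb{L}$ alluded to in the statement.

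First, I would build the natural map
$$\phi\colon \mathbb{L}_{\cA} \longrightarrow \lim_{[n]\in\Delta^{\mathrm{op}}} f_{n,*}\mathbb{L}_{\cB^{\tensor n/\cA}}$$
using functoriality of the cotangent complex: each structure map $f_n\colon \cA \to \cB^{\tensor n/\cA}$ yields (via \Cref{fiber seq of cotan} applied to $\mathrm{GSpec}\cB^{\tensor n/\cA}\to \mathrm{GSpec}\cA\to \ast$) a canonical map $f_n^{*}\mathbb{L}_{\cA} \to \mathbb{L}_{\cB^{\tensor n/\cA}}$, whose adjoint $\mathbb{L}_{\cA} \to f_{n,*}\mathbb{L}_{\cB^{\tensor n/\cA}}$ is cosimplicial in $[n]$ and assembles to $\phi$.

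To show $\phi$ is an equivalence, I would apply \Cref{fiber seq of cotan} once more to $\cA\to \cB^{\tensor n/\cA}\to \ast$ to obtain the fiber sequence
$$f_n^{*}\mathbb{L}_{\cA} \longrightarrow \mathbb{L}_{\cB^{\tensor n/\cA}} \longrightarrow \mathbb{L}_{\cB^{\tensor n/\cA}/\cA}$$
in $D_{\sol}(\cB^{\tensor n/\cA})$, then push forward by $f_{n,*}$ and take $\lim_{[n]\in\Delta^{\mathrm{op}}}$. Since $f$ is descendable, module descent gives $\lim_{[n]} f_{n,*}f_n^{*}\mathbb{L}_{\cA}\simeq \mathbb{L}_{\cA}$, so the comparison map $\phi$ fits into a fiber sequence whose third term is $\lim_{[n]} f_{n,*}\mathbb{L}_{\cB^{\tensor n/\cA}/\cA}$. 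It therefore suffices to show this cosimplicial limit vanishes.

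For this, I would prove the second assertion first. Each coface $\cB^{\tensor n/\cA}\to \cB^{\tensor n+1/\cA}$ is obtained by base change of $f\colon\cA\to \cB$ along $\cA\to \cB^{\tensor n/\cA}$, so by \Cref{base change of cotan}
$$\mathbb{L}_{\cB^{\tensor n+1/\cA}/\cB^{\tensor n/\cA}} \simeq \mathbb{L}_{\cB/\cA}\tensor_{\cA}^{\sol} \cB^{\tensor n/\cA},$$
and the cosimplicial diagram is the Amitsur resolution of $\mathbb{L}_{\cB/\cA}\in D_{\sol}(\cB)$ along the descendable cover $\cB\to \cB\tensor_{\cA}\cB$ (itself the base change of $f$); its limit is $\mathbb{L}_{\cB/\cA}$ by module descent. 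Iterating this base-change analysis along the transitivity sequence, $f_{n,*}\mathbb{L}_{\cB^{\tensor n/\cA}/\cA}$ decomposes into $n$ base-changed copies of $\mathbb{L}_{\cB/\cA}$ (one per tensor factor), yielding a cosimplicial object that admits an extra degeneracy coming from insertion of the unit, hence has contractible limit.

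The main obstacle is this last contractibility/vanishing step, which requires a careful bookkeeping of the cosimplicial structure. A cleaner alternative, consistent with the remark that ``the last assertion also follows from $\mathbb{L}$ being a sheaf,'' is to establish directly that the assignment $\cA\mapsto(D_{\sol}(\cA),\mathbb{L}_{\cA})$ is a $!$-sheaf of pointed $\infty$-categories on $\mathrm{GelfStk}_{\cK}$ (combining $!$-descent of $D_{\sol}$ with the universal property \Cref{def of cotangent} against square-zero extensions). Both assertions then follow from the fact that the Čech nerve of a descendable cover realizes $\mathrm{GSpec}\cA$ as a $!$-colimit in $\mathrm{GelfStk}_{\cK}$, which bypasses the explicit vanishing argument altogether.
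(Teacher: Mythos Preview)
Your proposal is correct and follows essentially the same approach as the paper: both reduce via the transitivity fiber sequence and module descent to showing $\lim_{[n]}f_{n,*}\mathbb{L}_{\cB^{\tensor n/\cA}/\cA}\simeq 0$, then decompose $\mathbb{L}_{\cB^{\tensor n/\cA}/\cA}\simeq \bigoplus_{i=1}^n \mathbb{L}_{\cB/\cA}\tensor_{\cB}\cB^{\tensor n/\cA}$ and conclude by a null-homotopy/extra-degeneracy argument. The paper makes this last step precise by recognizing the resulting cosimplicial object as the image of $\mathbb{L}_{\cB/\cA}$ under the left adjoint to the forgetful functor from cosimplicial $\cB(\bullet)$-modules to $\cB$-modules and citing \cite{bhatt2012completionsderivedrhamcohomology}*{Lemma 2.5}; your ``extra degeneracy from insertion of the unit'' is the same phenomenon, though you should be aware that this is where the precise bookkeeping lives.
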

\begin{proof}
   
  From \Cref{fiber seq of cotan} and \cite{camargo2024analyticrhamstackrigid}*{Remark 3.4.6} we have the following fiber sequence.
    $$\mathbb{L}_{\cA}\tensor_{\cA}\cB\rightarrow\mathbb{L}_{\cB}\rightarrow \mathbb{L}_{\cB/\cA}$$
    By descent, this reduce us to show that $\underset{[n]\in \Delta^{\mathrm{op}}}{\lim} \mathbb{L}_{\cB^{\tensor n/\cA}/\cA}\simeq 0$. We first claim that $$\mathbb{L}_{\cB^{\tensor n/\cA}/\cA}\simeq \underset{i=1,2,...n}{\bigoplus}(\mathbb{L}_{\cB/\cA}\tensor_\cB \cB^{\tensor n/\cA})$$ where each component correspond to base change along difference projections. Indeed, this follows from universal property of cotangent complex: for any $\cB^{\tensor n/\cA}$-algebra $\eta\colon \cB\rightarrow\cC$, and an almost connective module $M\in \mathrm{Mod}_{\cC}(D_{\sol}(\mathbb{Q}_p))$, we have that 
    $$\mathrm{Map}_{\cA}(\cB^{\tensor n/\cA}, \cC\oplus M)\times_{\mathrm{Map}_{\cA}(\cB^{\tensor n/\cA}, \cC)}\eta \simeq \mathrm{Map}_{\cC}(\cC\tensor_{\cB^{\tensor n/\cA}}\mathbb{L}_{\cB^{\tensor n/\cA}/\cA}, M)$$
    But for the left hand side we have that 
    \begin{align*}
        \mathrm{Map}_{\cA}(\cB^{\tensor n/\cA}, \cC\oplus M)\times_{\mathrm{Map}_{\cA}(\cB^{\tensor n/\cA}, \cC)}\eta\simeq &\mathrm{Map}_{\cA}(\cB, \cC\oplus M)^{n}\times_{\mathrm{Map}_{\cA}(\cB, \cC)^n} (\eta_1\times\eta_2\times...\eta_n)\\
        \simeq& \underset{i=1,2,...n}{\prod} (\mathrm{Map}_{\cA}(\cB, \cC\oplus M)\times_{\mathrm{Map}_{\cA}(\cB, \cC)}\eta_i)\\
        \simeq & \underset{i=1,2,...n}{\prod}\mathrm{Map}_{\cC}(\eta_i^{*}\mathbb{L}_{\cB/\cA}, M)\\
        \simeq &\ \ \mathrm{Map}_{\cC}(\cC\tensor_{\cB^{\tensor n/\cA}}\mathbb{L}_{\cB^{\tensor n/\cA}/\cA}, M)
        \end{align*}
    Where $\eta_i$ correspond to the different compositions of $\cB\rightarrow \cB^{\tensor n/\cA}\rightarrow \cC$.
    Consider the cosimplicial algebra $\cB(\bullet)\simeq \mathrm{Cech}(\cA\rightarrow \cB)\in c\mathrm{CAlg}(D_{\sol}(\mathbb{Q}_p))$, denote by $F: \mathrm{Mod}_{\cB}(D_{\sol}(\mathbb{Q}_p))\rightarrow c\mathrm{Mod}_{\cB(\bullet)}(D_{\sol}(\mathbb{Q}_p))$ the left adjoint of forget full functor. The above shows that, the cosimplicial object $\mathbb{L}_{\cB(\bullet)/\cA}\in c\mathrm{Mod}_{\cA}(D_{\sol}(\mathbb{Q}_p))$ is obtained as applying $F$ to $\mathbb{L}_{\cB/\cA}$. Now by \cite{bhatt2012completionsderivedrhamcohomology}*{Lemma 2.5}, this cosimplitial object is homotopy equivalent to $0$, thus its totalizaton, $\underset{[n]\in \Delta^{\mathrm{op}}}{\lim} \mathbb{L}_{\cB^{\tensor n/\cA}/\cA}\simeq 0.$ 
\end{proof}
\begin{thm}
    Let $X$ be an Artin stack, then choosing any representable \up{smooth} chart $U\surjects X$, we have a natural equivalence $\mathbb{L}_{X^{\mathrm{an}}}^{vir}\simeq \underset{[n]\in \Delta^{\mathrm{op}}}{\lim} \mathbb{L}_{({U^{\times n/X}})^{\mathrm{an}}}$.
    Moreover, we have that $j_{X}^{*} \mathbb{L}_{X}\simeq \mathbb{L}_{X^{\mathrm{an}}}^{vir}$, where $j_X$ is defined as in \Cref{analytification def}.
\end{thm}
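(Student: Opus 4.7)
The plan is to reduce this to the scheme case (\Cref{analytification of cotangent complex}) combined with the descent formula for virtual cotangent complexes (\Cref{computing vir cot}(2)), since $X$ being an Artin stack gives us a smooth representable chart $\pi\colon U\surjects X$ with each iterated fiber product $U^{\times n/X}$ being a scheme. I would take $U$ to be (a disjoint union of) affine schemes, so that $U^{\mathrm{an}}$ is a derived Berkovich space. A preliminary step is to observe that analytification commutes with fiber products for representable morphisms, so that $(U^{\times n/X})^{\mathrm{an}}\simeq (U^{\mathrm{an}})^{\times n/X^{\mathrm{an}}}$; this follows from the explicit Cartesian description of $(\Spec A)^{\mathrm{an}}$ in \Cref{an for affine} combined with \Cref{an preserve colimit}. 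Moreover, since $\pi$ is smooth representable, $U\times_X U\to U$ is a smooth morphism of schemes, hence its analytification is a smooth morphism of derived Berkovich spaces over $U^{\mathrm{an}}$.

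For the first equivalence, I would apply \Cref{computing vir cot}(2) to the morphism $f\colon U^{\mathrm{an}}\to X^{\mathrm{an}}$: its hypotheses are met by the previous paragraph, so
\[
\mathbb{L}^{\mathrm{vir}}_{X^{\mathrm{an}}}\;\simeq\;\varprojlim_{[n]\in\Delta^{\mathrm{op}}} \pi^{(n),\mathrm{an}}_{*}\,\mathbb{L}_{(U^{\times n/X})^{\mathrm{an}}},
\]
where $\pi^{(n),\mathrm{an}}\colon (U^{\times n/X})^{\mathrm{an}}\to X^{\mathrm{an}}$; interpreting the statement's limit in $D_{\sol}(X^{\mathrm{an}})$ through these pushforwards, this is exactly the desired formula. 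For the second equivalence, each $U^{\times n/X}$ is a scheme, so \Cref{analytification of cotangent complex} applies: $\mathbb{L}_{(U^{\times n/X})^{\mathrm{an}}}\simeq j^{*}_{U^{\times n/X}}\mathbb{L}_{U^{\times n/X}}$. Combining this with base change along the Cartesian square of analytification $j$-maps gives $\pi^{(n),\mathrm{an}}_{*}j^{*}_{U^{\times n/X}}\mathbb{L}_{U^{\times n/X}}\simeq j^{*}_{X}\pi^{(n)}_{*}\mathbb{L}_{U^{\times n/X}}$. Smooth descent of the cotangent complex in algebraic geometry identifies $\mathbb{L}_{X}\simeq\varprojlim_{[n]}\pi^{(n)}_{*}\mathbb{L}_{U^{\times n/X}}$, and commuting $j^{*}_{X}$ through this limit yields $j^{*}_{X}\mathbb{L}_{X}\simeq\mathbb{L}^{\mathrm{vir}}_{X^{\mathrm{an}}}$.

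The main obstacle is the technical verification of the base-change identity $\pi^{(n),\mathrm{an}}_{*}j^{*}_{U^{\times n/X}}\simeq j^{*}_{X}\pi^{(n)}_{*}$ and the commutation of $j^{*}_{X}$ with the simplicial limit. The former requires the smooth analytification $\pi^{(n),\mathrm{an}}$ to behave well under the six-functor formalism (the relevant cohomological-suaveness assertion is the analytic counterpart of smoothness in algebraic geometry); the latter is not automatic since $j^{*}_{X}$ is a priori only a left adjoint, but it follows because the cotangent complexes $\mathbb{L}_{U^{\times n/X}}$ are uniformly bounded in amplitude (as $\pi$ is smooth), so the limit is essentially finite in each cohomological degree. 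An alternative, more direct route—which I would fall back on if the six-functor manipulations prove cumbersome—is to observe that both $\mathbb{L}^{\mathrm{vir}}_{X^{\mathrm{an}}}$ and $j^{*}_{X}\mathbb{L}_{X}$ are determined by their pullbacks to the cover $U^{\mathrm{an}}\to X^{\mathrm{an}}$ (and iterated fiber products), where both sides identify with $j^{*}_{U^{\times n/X}}\mathbb{L}_{U^{\times n/X}}\simeq\mathbb{L}_{(U^{\times n/X})^{\mathrm{an}}}$ by the scheme case; the two expressions are then equal as objects of $D_{\sol}(X^{\mathrm{an}})$ by $!$-descent.
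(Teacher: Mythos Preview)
Your proposal is correct and uses the same two ingredients the paper invokes: \Cref{computing vir cot}(2) for the descent formula and reduction to the scheme case. The paper's one-line proof simply says to rerun the diagram-chase of \Cref{analytification of cotangent complex} with \Cref{computing vir cot} in place of the fiber sequences there; your ``alternative route'' at the end (pulling back to the cover and checking term by term via $!$-descent) is exactly that argument, while your main approach packages the same content slightly differently by citing \Cref{analytification of cotangent complex} as a black box on each $U^{\times n/X}$ and then assembling via the limit formula.

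One small comment: your main approach leans on the base-change identity $\pi^{(n),\mathrm{an}}_{*}\,j^{*}_{U^{\times n/X}}\simeq j^{*}_{X}\,\pi^{(n)}_{*}$ and on commuting $j_X^*$ through a cosimplicial limit, both of which you correctly flag as needing justification. The paper's diagram-chase route avoids these global manipulations by working after pullback to $(U\times_X U)^{\mathrm{an}}$ and comparing fiber sequences directly, which is why your fallback is cleaner here. Either way, the identification $(U^{\times n/X})^{\mathrm{an}}\simeq (U^{\mathrm{an}})^{\times n/X^{\mathrm{an}}}$ you invoke follows from $(-)^{\mathrm{an}}$ preserving colimits (\Cref{an preserve colimit}) applied to the Čech nerve, since the resulting simplicial object is automatically a groupoid object in the $\infty$-topos $\mathrm{GelfStk}_{\cK}$.
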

\begin{proof}
    Essentially the same proof as in  \Cref{analytification of cotangent complex} except that we use the \Cref{computing vir cot}.
\end{proof}
\begin{appendix}

    \section{Discrete adic space}\label{appendic A}
     In this appendix we record a proof of a well known schematic description of discrete adic space which was promised in \Cref{adic spcae and valuation rings}.
     
    Let $R$ be a commutative ring and $A$ a $R$-algebra. Following \cite{cite-key}, recall that a valuation on $A$ is a map $|\cdot|\colon A\rightarrow \Gamma\cup 0$ where $\Gamma$ is a totally ordered group such that  
    \benuma
    \item $|a+b|\leq \mathrm{Max}\{|a|,|b|\}$ for all $a,b\in A$.
    \item $|ab|=|a||b|$ for all $a,b\in A$.
    \item $|0|=0$ and $|1|=1$.
    \eenum
    The set of $|\cdot|^{-1}(0)$ is call the \textit{support} of $A$, denote by $\mathrm{supp}_{|\cdot|}$. Notice that for a valuation $|\cdot|$ on $A$, $A/\mathrm{supp}_{|\cdot|}$ is an integral ring by (2) and $\mathrm{Frac}(A/\mathrm{supp}_{|\cdot|})$ is a valuation field.
    
Let $|\mathrm{Spa}(A,R)|=\{|\cdot|\colon A\rightarrow \Gamma\cup 0| v(R)\leq 1\}/\sim$ to be the set of valuations on $A$ with values in some totally ordered abelian group $\Gamma$, where two valuations are call equivalence if they have the same support and the same sets of elements in $\mathrm{Frac}(A/\mathrm{supp}_{|\cdot|})$ whose valuation $\leq 1$ (see also \cite{wedhorn2019adicspaces}*{Definition 1.27}).

\begin{theorem}We have a bijection of sets\textup{:}
    $$|\mathrm{Spa}(A,R)|=\{\text{valuation ring V over R}, \text{R-homomorphisms }f\colon A\rightarrow \mathrm{Frac}(V) \}/\sim$$
    Here the equivalence relation is\textup{:} two valuation ring $V$,$W$ over $R$ together with homomorphisms $f\colon A\rightarrow \mathrm{Frac}(V)$, $g\colon A\rightarrow \mathrm{Frac}(W)$ are equivalent if there exist a faithful flat map $\pi\colon V\rightarrow W$ such that we have commuatative diagram 
    $$\begin{tikzcd}
A \arrow[r, "f"'] \arrow[rr, "g", bend left] & \mathrm{Frac}(V) \arrow[r, "\mathrm{Frac}(\pi)"'] & \mathrm{Frac}(W) \\
R \arrow[u] \arrow[r]                        & V \arrow[r, "\pi"] \arrow[u]                      & W. \arrow[u]     
\end{tikzcd}$$
\end{theorem}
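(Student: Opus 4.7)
The plan is to construct maps in both directions between $|\mathrm{Spa}(A,R)|$ and the set of pairs $(V,f)/\!\sim$, and verify they are mutually inverse at the level of equivalence classes.

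First I would define the forward map $\Phi$ by sending a valuation $|\cdot|\colon A\to \Gamma\cup\{0\}$ with $|R|\leq 1$ to the pair $(V_{|\cdot|}, f_{|\cdot|})$, where $\mathfrak{p}=\mathrm{supp}_{|\cdot|}$, $K=\mathrm{Frac}(A/\mathfrak{p})$, the valuation is extended canonically to $K$, and $V_{|\cdot|}=\{x\in K : |x|\leq 1\}$ is the associated valuation ring of $K$, with $f_{|\cdot|}\colon A\to K=\mathrm{Frac}(V_{|\cdot|})$ the natural map. The condition $|R|\leq 1$ guarantees that $R\to V_{|\cdot|}$. Two Huber-equivalent valuations share both support and the same $\leq 1$ relation on the common fraction field, so they yield \emph{the same} pair, not merely an equivalent one; hence $\Phi$ descends to equivalence classes.

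Next I would define the inverse map $\Psi$ by sending $(V, f)$ to the composition $|\cdot|_V\circ f\colon A\to \Gamma_V\cup\{0\}$, where $|\cdot|_V$ is the canonical valuation on $\mathrm{Frac}(V)$ coming from $V$. The central verification is that $\Psi$ respects the equivalence relation on pairs. Given a faithfully flat $R$-homomorphism $\pi\colon V\to W$ relating $(V, f)$ to $(W, g = \mathrm{Frac}(\pi)\circ f)$, I would first observe that flatness over the valuation ring $V$ is torsion-freeness, so $\pi$ is injective and extends to an injection $\mathrm{Frac}(\pi)\colon \mathrm{Frac}(V)\hookrightarrow\mathrm{Frac}(W)$; faithful flatness then forces $\pi$ to be local, i.e.\ $\pi^{-1}(\mathfrak{m}_W)=\mathfrak{m}_V$. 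The crucial consequence is the identity $V=\mathrm{Frac}(\pi)^{-1}(W)$: if $x\in\mathrm{Frac}(V)\setminus V$ then $x^{-1}\in \mathfrak{m}_V$, so $\mathrm{Frac}(\pi)(x)^{-1}\in\mathfrak{m}_W$ by locality, forcing $\mathrm{Frac}(\pi)(x)\notin W$. This identity says exactly that the two valuations $|\cdot|_V\circ f$ and $|\cdot|_W\circ g$ on $A$ share the support $\ker f$ and induce the same $\leq 1$ relation on $A/\ker f \hookrightarrow\mathrm{Frac}(V)$, which is the defining condition of Huber equivalence.

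Finally I would check the two round trips. For $\Psi\circ\Phi$: the valuation $|\cdot|_{V_{|\cdot|}}\circ f_{|\cdot|}$ differs from $|\cdot|$ only by an order-preserving isomorphism of value groups (the image of $|\cdot|$ in $\Gamma$, identified with $\Gamma_{V_{|\cdot|}}$), so the two are Huber-equivalent. For $\Phi\circ\Psi$: starting from $(V, f)$ one obtains $(V', f')$ with $V'=V\cap\mathrm{Frac}(A/\ker f)$ inside $\mathrm{Frac}(V)$; the inclusion $V'\hookrightarrow V$ is torsion-free hence flat, and local by construction ($\mathfrak{m}_{V'}=V'\cap\mathfrak{m}_V$), hence faithfully flat, yielding the required equivalence. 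The main obstacle I anticipate is the key identity $V=\mathrm{Frac}(\pi)^{-1}(W)$ for a faithfully flat $\pi\colon V\to W$; everything before and after it is essentially bookkeeping with supports, fraction fields, and value groups, but this single step is where the geometric content of faithful flatness for valuation rings is really being used.
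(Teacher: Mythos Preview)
Your proposal is correct and follows essentially the same approach as the paper: both construct the map from pairs $(V,f)$ to valuations by composition with $|\cdot|_V$, produce the canonical preimage via the valuation ring of $\mathrm{Frac}(A/\mathrm{supp})$, and hinge the compatibility with the equivalence relation on the identity $V=\mathrm{Frac}(\pi)^{-1}(W)$ coming from the fact that a faithfully flat map of valuation rings is injective and local. The only cosmetic difference is that the paper packages this as a single surjection $\eta\colon \mathrm{Val}(A,R)\to |\mathrm{Spa}(A,R)|$ whose fibers are the equivalence classes, whereas you phrase it as a pair of mutually inverse maps; the underlying verifications are identical.
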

\begin{proof} We denote by $\mathrm{Val}(A,R)$ the set $\{\text{valuation ring V over R}, \text{R-homomorphisms }f\colon A\rightarrow \mathrm{Frac}(V) \}$.
    We define the map 
    \begin{align*}
        \eta\colon \mathrm{Val}(A,R)&\rightarrow |\mathrm{Spa}(A,R)|\\
        (V,f)&\mapsto |\cdot|_{(V,f)}\colon A\rightarrow \mathrm{Frac}(V)\underset{|\cdot|_V}{\rightarrow}\Gamma_V
    \end{align*}
    where $\Gamma_V\simeq \mathrm{Frac}(V)^*/V^*$ is a valuation group of $V$ and $|\cdot|_V$ is the valuation induced from $V$.
    This is surjective since for any valuation $v\colon A\rightarrow \Gamma\in |\mathrm{Spa}(A,R)|$ , the fraction field of $A/\mathrm{supp}(v)$ is a valuation field, and the composite $R\rightarrow A\rightarrow \mathrm{Frac}(A/\mathrm{supp}(v))$ factors through its valuation ring since $v(R)\leq 1$.
    Sufficient to prove that any two homomorphisms $f\colon A\rightarrow \mathrm{Frac(V)}$, $g\colon A\rightarrow \mathrm{Frac}(W)$ induces the same valuation on $A$ via $\eta$ if only if they are equivalent.
    
    We first show the if part, we need to check that for a faithful flat map $\pi\colon V\rightarrow W$, and two maps $f\colon A\rightarrow \mathrm{Frac}(V)$, $\mathrm{Frac}(\pi)\circ f\colon A\rightarrow \mathrm{Frac}(W)$, $\eta$ will give rise to two valuation on $A$ that is equivalent. It is clear that  they have the same support. By \Cref{ff is local and inj}, the map $V\rightarrow W$ is injective and local hence $W\cap \mathrm{Frac}(V)=V$. Hence, those elements with $|\cdot|_{(W,g)}\leq 1$, i.e., images in $\mathrm{Frac}(W)$ are contained in $W$, also satisfies $|\cdot|_V\leq 1$.

    Now we prove the only if part, which will follows from that if we have a pair $(V,f)\in \mathrm{Val}(A,R)$, then the valuation ring $V'$ of $\mathrm{Frac}(A/\mathrm{supp}(v_{V,f}))$ maps to $V$ injectively and is local. By construction, $f$ factors as 
    $$A\rightarrow A/\mathrm{supp(v)}\rightarrow \mathrm{Frac}(A/\mathrm{supp(v)})\rightarrow \mathrm{Frac(V)}$$. Indeed, since kernel of $f$ is $\mathrm{supp}(v_{(V,f)})$ and $A/\mathrm{supp}(v_{(V,f)})$ is integral. Hence the valuation ring $V'$ of $\mathrm{Frac}(A/\mathrm{supp(v)}$, which is the subring with valuation $\leq 1$ maps to $V=(\mathrm{Frac}(V))^{|\cdot|_V\leq 1}$ injectively, moreover local since $V\cap \mathrm{Frac}(A/\mathrm{supp(v)}=\mathrm{Frac}(A/\mathrm{supp(v)}^{|\cdot|_V\leq 1}=V'$.
\end{proof}
\begin{lemma}\label{ff is local and inj}
        A map $V\rightarrow W $ between valuation rings is faithful flat if and only if it is injective and local.
    \end{lemma}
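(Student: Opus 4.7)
The plan is to prove each direction separately, using the standard fact that torsion-free modules over a valuation ring are flat, and the characterization of faithfully flat modules over local rings.

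For the ``if'' direction, I would argue as follows. Suppose $V\hookrightarrow W$ is injective and local. Since $V$ embeds in $W$ as a subring of a domain, $W$ is torsion-free as a $V$-module: any $v\in V\setminus\{0\}$ maps to a nonzero element of $W$, and multiplication by $f(v)$ is injective on $W$ since $W$ is a domain. Over a valuation ring every torsion-free module is flat (this is the classical result that valuation rings are Prüfer, so torsion-free $=$ flat). Hence $W$ is flat over $V$. Now for a local map $V\to W$ between local rings, flatness plus the condition $\mathfrak{m}_V W\subseteq \mathfrak{m}_W$ implies faithful flatness: the quotient $W/\mathfrak{m}_V W$ surjects onto the nonzero residue field $W/\mathfrak{m}_W$, so $W\otimes_V k(V)\neq 0$, which together with flatness gives faithful flatness.

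For the ``only if'' direction, assume $V\to W$ is faithfully flat. Injectivity is a standard consequence of faithful flatness: if $v\in V$ maps to $0$ in $W$, then the multiplication-by-$v$ map on $V$ becomes the zero map after tensoring with $W$, and faithful flatness forces it to already be zero on $V$, so $v=0$ (using $1\neq 0$ in $V$). For locality, I would argue by contradiction. Suppose there exists $v\in\mathfrak{m}_V$ with $f(v)\notin \mathfrak{m}_W$, so $f(v)\in W^\times$. Tensoring the exact sequence
\[
0\longrightarrow V\xrightarrow{\cdot v} V\longrightarrow V/vV\longrightarrow 0
\]
with $W$ (using flatness) yields $V/vV\otimes_V W\simeq W/f(v)W=0$ since $f(v)$ is a unit. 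Faithful flatness then gives $V/vV=0$, i.e., $v\in V^\times$, contradicting $v\in\mathfrak{m}_V$.

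I do not anticipate any significant obstacle here: both directions reduce to one-line applications of standard module-theoretic facts (torsion-free equals flat over valuation rings; flat$+$local$=$faithfully flat between local rings; faithful flatness is preserved and reflected under quotients). The only mildly subtle point is the locality argument in the ``only if'' direction, which uses crucially that valuation rings have elements in their maximal ideal that one can test against—but this is automatic unless $V$ is already a field, in which case the statement is vacuous.
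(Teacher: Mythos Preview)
Your proof is correct and essentially unpacks the content of the Stacks Project references the paper cites: torsion-free over a valuation ring implies flat (Tag 0539), and a flat local homomorphism of local rings is faithfully flat, while faithful flatness forces injectivity and locality (around Tag 00HP). Your final remark is slightly misleading---the locality argument needs no extra hypothesis, since if $\mathfrak{m}_V=0$ the condition $f(\mathfrak{m}_V)\subseteq\mathfrak{m}_W$ is vacuous---but the argument itself is fine.
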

    \begin{proof}
        See \cite[\href{https://stacks.math.columbia.edu/tag/0539}{Tag 0539}]{stacks-project} and \cite[\href{https://stacks.math.columbia.edu/tag/00HP}{Tag 00HP}]{stacks-project}.
    \end{proof}
\end{appendix}
\bibliographystyle{plain}

\bibliography{ref.bib}
\end{document}